\theoremstyle{plain}
\newtheorem{theo+}{Theorem}
\numberwithin{theo+}{section}
\newtheorem{prop+}[theo+]{Proposition}
\newtheorem{coro+}[theo+]{Corollary}
\newtheorem{lemm+}[theo+]{Lemma}
\newtheorem{conjecture}[theo+]{Conjecture}
\theoremstyle{definition}
\newtheorem{defi+}[theo+]{Definition}
\newtheorem{problem}[theo+]{Problem}
\newtheorem*{pb-prime}{Problem 1$\mathbb{'}$}
\newtheorem{not+}[theo+]{Notation}
\theoremstyle{remark}
\newtheorem{rema+}[theo+]{Remark}
\newtheorem{example}[theo+]{Example}
\newenvironment{theorem}{\begin{theo+}}{\end{theo+}}
\newenvironment{proposition}{\begin{prop+}}{\end{prop+}}
\newenvironment{corollary}{\begin{coro+}}{\end{coro+}}
\newenvironment{lemma}{\begin{lemm+}}{\end{lemm+}}
\newenvironment{remark}{\begin{rema+}}{\end{rema+}}
\newenvironment{definition}{\begin{defi+}}{\end{defi+}}
\newenvironment{notation}{\begin{not+}}{\end{not+}}
\newcommand{\defin}[1]{%
\relax\ifmmode

\textcolor{blue}{#1}%
\else \textcolor{blue}{\emph{#1}}%
\fi%
}
\newcommand{\setC}{\mathbb{C}}
\newcommand{\setR}{\mathbb{R}}
\newcommand{\setN}{\mathbb{N}}
\newcommand{\setRS}{\bar{\mathbb{C}}} 
\newcommand{\bC}{\setC} 
\newcommand{\bR}{\setR}
\newcommand{\flex}{\mathcal{I}} 
\newcommand{\RR}{R} 
\newcommand{\zvec}{\mathbf{z}}
\renewcommand{\Re}{\operatorname{Re}}
\renewcommand{\Im}{\operatorname{Im}}
\newcommand{\infl}{\mathfrak{I}}
\newcommand{\diffz}{{\frac{d}{dz}}}
\newcommand{\minvset}[1]{{{\mathrm{M}}_{#1}^T}}
\newcommand{\julia}{\mathcal{J}}
\newcommand{\TTT}{\mathfrak{T}}
\newcommand{\ttt}{\mathfrak{tr}}
\newcommand{\zeros}{\mathcal{Z}}
\newcommand{\al}{\alpha}
\newcommand{\be}{\beta}
\numberwithin{equation}{section}
\begin{document}

\title[First order differential operators and Hutchinson invariant sets]
{Linear first order differential operators  and their Hutchinson invariant sets}

\author[P.~Alexandersson]{Per Alexandersson}
\address{Department of Mathematics, Stockholm University, SE-106 91 Stockholm,
      Sweden}
\email{per.w.alexandersson@gmail.com}

\author[N.~Hemmingsson]{Nils Hemmingsson}
\address{Department of Mathematics, Stockholm University, SE-106 91 Stockholm,
      Sweden}
\email{nils.hemmingsson@math.su.se}

\author[D.~Novikov]{Dmitry Novikov}
\address{Faculty of Mathematics and Computer Science, Weizmann Institute of Science, Rehovot, 7610001 Israel}
\email{dmitry.novikov@weizmann.ac.il}

\author[B.~Shapiro]{Boris Shapiro}
\address{Department of Mathematics, Stockholm University, SE-106 91 Stockholm,
      Sweden}
\email{shapiro@math.su.se}

\author[G.~Tahar]{Guillaume Tahar}
\address{Beijing Institute of Mathematical Sciences and Applications, Huairou District, Beijing, China}
\email{guillaume.tahar@bimsa.cn}

\date{\today}

\keywords{Action of linear differential operators,
Hutchinson operators, invariant subsets of $\setC$}
\subjclass[2020]{Primary 37F10, 37E35; Secondary 34C05}

\begin{abstract} 
In this paper, we initiate the study of a new interrelation between linear ordinary 
differential operators and  complex  dynamics which we discuss in detail in the simplest 
case of  operators of order $1$. Namely, assuming that such an operator $T$  
has polynomial coefficients, we interpret it as a continuous family of 
Hutchinson operators acting on the space of positive powers of linear forms. Using this interpretation of $T$, we introduce
its continuously  Hutchinson invariant  subsets of the complex plane and investigate a variety of 
their properties. In particular, we  prove that for any  $T$ with non-constant 
coefficients, there exists a unique minimal under inclusion invariant 
set $\minvset{CH}$ and find explicitly what operators $T$ have the property that $\minvset{CH}=\bC$. 
\end{abstract}

\maketitle


\tableofcontents

\section{Introduction}\label{sec:intro} 

\subsection{Motivating notions from complex dynamics}
In this paper, we study certain families of sets associated with linear first order ordinary differential operators.
These sets are closely related to the attractors of Hutchinson operators and to the Julia sets of rational functions.

\smallskip
 Namely, in complex dynamics one often considers a map $\mathcal{F}: 2^{\setRS} \to 2^{\setRS}$
 from the set of subsets of the Riemann sphere to itself and tries to find the  ``fixed points'' of this map, i.e., non-empty subsets  
 $S\subseteq \setRS$ such that  $\mathcal{F}(S) = S$.
For example, the Julia set of a rational map $f(z)$ of degree $2$, is the minimal closed "fixed point" of
\begin{equation}\label{eq:juliaIntro}
\mathcal{F}(S) = \{ f(z) : z \in S \}\bigcup_{u \in S} \{ z:  f(z) = u \}
\end{equation} 
containing at least three points.

\smallskip
Another well-studied instance of such situation occurs in the case of 
Hutchinson operators, i.e. operators $\mathcal{F}: 2^{\bC} \to 2^{\bC}$ of the form 
\begin{equation}\label{eq:hutchinsonfinite}
\mathcal{F}(S) =  \bigcup_{z \in S}  \left\{ \phi_1(z),\dotsc,\phi_\ell(z) \right\}, 
\end{equation}
where $\phi_1,\dotsc,\phi_\ell : \bC \to \bC$ is a finite collection of contracting maps.
Such collections of contractions  are usually referred to as \defin{iterated function systems} (IFS for short).
Then it is classically known that the equation $\mathcal{F}(S) = S$
has a unique non-trivial closed solution, namely, the \defin{attractor} of $\mathcal{F}$, see \cite{Hutchinson1981}.
Examples of such attractors are e.g. \emph{the Sierpinski triangle}, \emph{Koch's snowflake}
and \emph{Barnsley's fern}.

\smallskip
In most of the cases discussed in the existing literature, a non-trivial closed solution to $\mathcal{F}(S) = S$
is  unique due to the fact that $\mathcal{F}$ under consideration
is a contraction in a suitable topology. A good introductory text to this subject is \cite{Barnsley2013DevelopmentsIF}. Significant research has been devoted to the case when the contracting maps $\phi_1,\phi_2 : \bC \to \bC$ are linear (and $\ell=2$). In particular, in \cite{BARNSLEY1985421}, the notion of the "Mandelbrot set" $\mathcal{M}$ for a pair of linear contractions $(\lambda z-1,\lambda z+1):=(f_1,f_2)$, with $\lambda<1$ was introduced. The set $\mathcal M$ is the set of all points in the unit disk for which the unique attractor $S\subset \bC$ of the IFS defined by $(f_1,f_2)$ is connected. The set $\mathcal M$ is contained in the closed annulus of radii $1/2$ and $1$. Among other results it is proved in \cite{BARNSLEY1985421}  that the attractor $S$ is connected if and only if $f_1(S)\cap f_2(S)\neq \emptyset$ and is otherwise totally disconnected. Furthermore,  $\mathcal M$ is different from the closure of its interior. Moreover, if $S$ is connected, it is also locally connected.  In \cite{Bousch1992SurQP} it is proved that $\mathcal M$ is connected and locally connected. (Recall that in classical holomorphic dynamics, local connectedness of the classical Mandelbrot set is one of, if not the, main open question.) In \cite{ChristophBandt2002} the author gives a computer assisted proof that $\{z\in \bC: |z|< 1\}\setminus \mathcal M$ has more than one component. 
 
 \smallskip
The situation which we consider below is quite similar to that of Hutchinson operators.
However in our case,  the map $\mathcal{F}$ is not a contraction and we therefore  do not have unique ``fixed point".
Hence, instead of an attractor we have families of what we call \defin{invariant sets}.

\smallskip
Finally, in our situation the set of maps appearing in \cref{eq:hutchinsonfinite} is  not finite,
but rather parameterized by $t\ge 0$.
Iterated function systems with uncountable many maps 
are not very common in the mathematical literature, but they do appear, see e.g.~\cite{Strobin2021}.
However, such systems are abundant in the fractal art community, whose goal is to  generate and color
interesting attractors.
We refer to the seminal paper by S.~Draves \cite{dravesflame} for background on this topic
and the computer software Apophysis.

\subsection{Invariant sets for linear first order differential operators}
 Let us now describe our basic set-up in detail.  
 Given  two polynomials $P,Q$ non-vanishing  identically, consider the first order linear 
differential operator   
\begin{equation}\label{eq:1st}
T=Q(z)\frac{d}{dz}+ P(z).
\end{equation}
We say that a closed subset $S \subset \setC$ is \defin{Hutchinson $T$-invariant} 
(or \defin{$T_H$-invariant} for short) if for any $u\in S$ and $n \in \setN$,
we have that the polynomial $T[(z-u)^n]$ is either identically zero, or has all its roots in $S$.
In other words, a closed subset $S\subseteq \setC$ is Hutchinson $T$-invariant 
if and only if it is a (closed) ``fixed point" of the operator 
\begin{align*}
  \mathcal{F}(S) &= \bigcup_{u \in S} \bigcup_{n \in \setN} 
  \left\{ z : n Q(z) (z-u)^{n-1} + P(z) (z-u)^n =0 \right\} \\
   &= \bigcup_{u \in S} \bigcup_{n \in \setN} 
  \left\{ z : n Q(z) + P(z) (z-u) =0 \right\}. 
\end{align*}
Unfortunately, it seems  rather difficult to study Hutchinson $T$-invariant sets $S$ for a 
somewhat general operator \eqref{eq:1st}, but we hope to return to this topic in the future. In \cite{hemmingsson2023equidistribution} the second author studied a similar problem to the one above, focusing on sets that are \defin{Hutchinson invariant in degree $n$}. Given $n$ and $T$ (not only of order one but of arbitrary finite degree), these are the closed subsets $S \subset \setC$ such that for any $u\in S$ we have that the polynomial $T[(z-u)^n]$ is either identically zero or has all its zeros in $S$. This problem is directly related to iterations of holomorphic correspondences and it was proved that for a large class of $T$ and sufficiently large $n$, the minimal Hutchinson invariant in degree $n$ exists and is the support of a measure that describes the (equi)distribution of forward iterations of the holomorphic correspondence mapping $u$ to $z$ defined by $T[(z-u)^n]=0$.

Below we concentrate on another related case that is easier to
handle than the Hutchinson $T$-invariant sets, where one replaces a non-negative integer power $n$ with a non-negative real parameter $t$. 
Therefore our main definition  is as follows. 

\begin{definition}
A non-empty closed set $S\subset \setC$ is \defin{continuously Hutchinson invariant}
(or \defin{$T_{CH}$-invariant} for short) if for any $u\in S$ and  an \emph{arbitrary non-negative number} $t$, the 
image $T(f)$ of the function $f(z)=(z-u)^t$ 
has all roots in $S$ or vanishes identically. 
\end{definition}
\begin{remark}
Since for $T$ given by \eqref{eq:1st}, one has $T[(z-u)^t]=(z-u)^{t-1}( t \, Q(z) + (z-u) P(z))$, the polynomial equation
\begin{equation}\label{eq:main}
 t \, Q(z) + (z-u) P(z) =0
\end{equation}
is our main object of interest; note that for any 
$T_{CH}$-invariant set $S\subset \setC$,  every $u \in S$, and any $t \geq 0$, 
the roots of \eqref{eq:main} must also belong to $S$.
\end{remark}

We now observe that a closed set $S$ is $T_{CH}$-invariant if it is a
fixed-point of 
\begin{align}
\mathcal{F}_T(S) &= \overline{ \bigcup_{u \in S} \bigcup_{t\geq 0} \left\{ z : t Q(z) + (z-u)P(z) =0 \right\} } \notag \\
               &= \overline{ \bigcup_{u \in S} \bigcup_{t\geq 0} \left\{ z : z + t\frac{ Q(z)}{P(z)} = u \right\} }, \label{eq:juliaConnection}
\end{align}
where $\overline{\cdots}$ denotes the closure. 
By looking at \eqref{eq:juliaConnection} and comparing it with \eqref{eq:juliaIntro},
it is fairly straightforward to show that any $T_{CH}$-invariant
set must contain the Julia set associated with the map $z \mapsto z + t\frac{ Q(z)}{P(z)}$, for any fixed $t>0$, see \cref{prop:juliaSetIsSubset}. 
This connection is illustrated in \cref{fig:cochleoid}.

\begin{example}
The operator $T = z^2\diffz + (z-1)$ has a unique minimal (under inclusion) $T_{CH}$-invariant set,
whose boundary in polar coordinates can be parameterized  as 
$r(\theta) = \frac{\sin{\theta}}{\theta}$, see the leftmost picture in \cref{fig:cochleoid}.
This boundary curve is called the \defin{cocheloid}. 
The central picture (constructed via a type of Monte-Carlo simulation, hence the artificial void in the center) illustrates 
the minimal invariant set 
associated with iterations of
\begin{equation}\label{eq:tgeq1iterations}
 \mathcal{F}_T(S) = \overline{ \bigcup_{u \in S} \bigcup_{t\geq 1} \left\{ z : z + t\frac{ Q(z)}{P(z)} = u \right\} }.
\end{equation}
Note that having $t\geq 1$ seem to give a fractal boundary.
Finally, the rightmost picture shows the union of several Julia sets associated with iterations of 
$z + t\frac{z^2}{z-1}$.
\begin{figure}[!ht]
\centering
\includegraphics[width=0.29\textwidth]{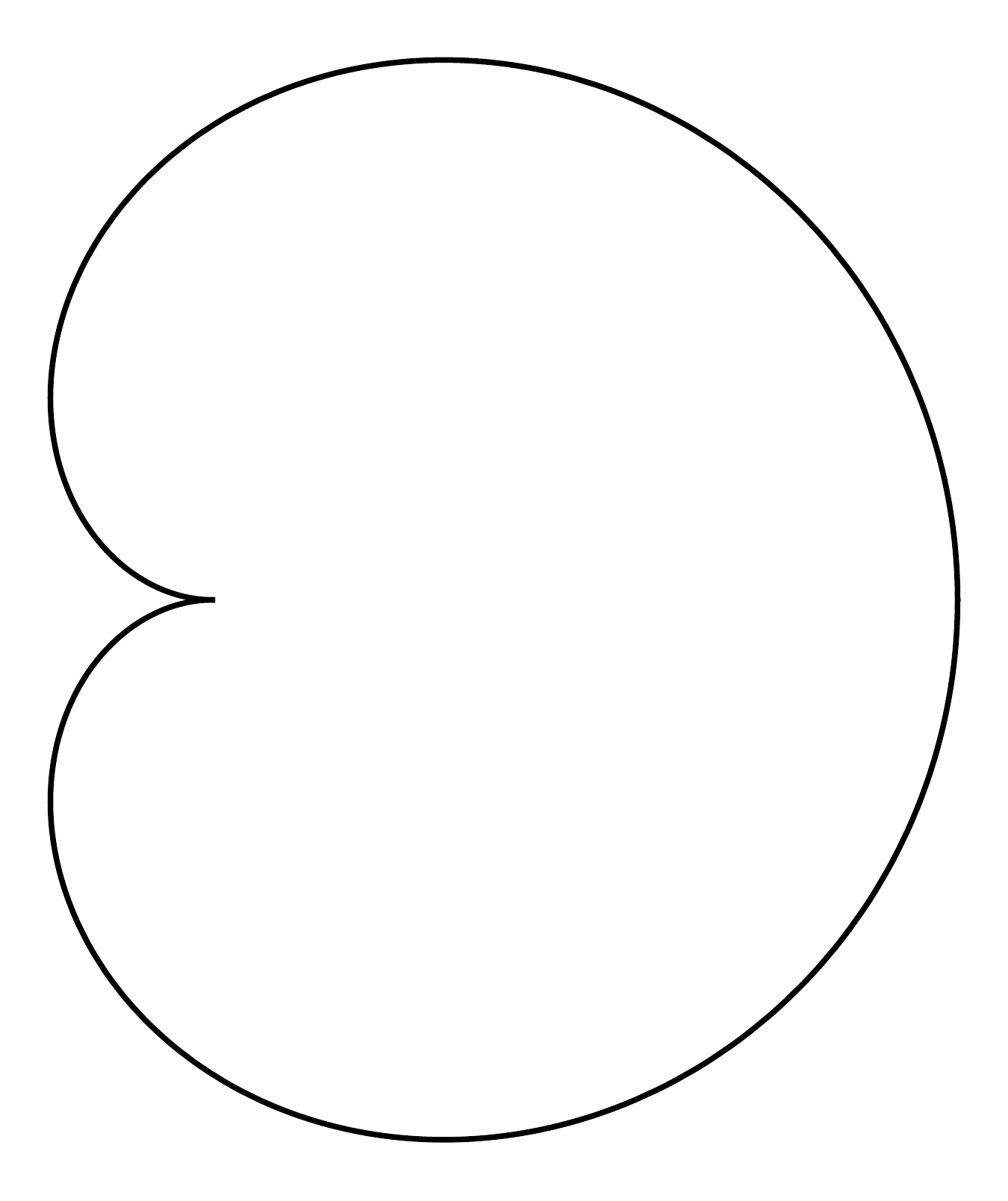}\hspace{0.25cm}
\includegraphics[width=0.32\textwidth]{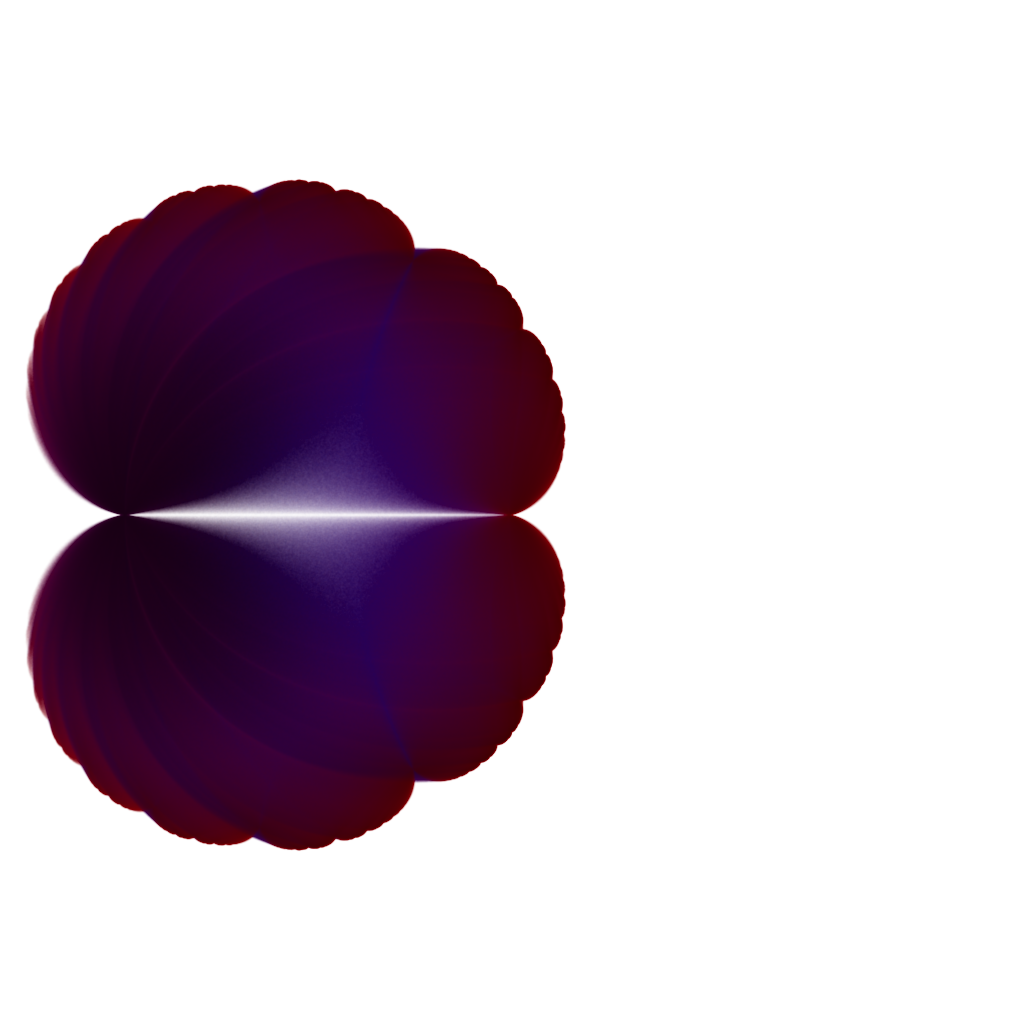}\hspace{-0.75cm}
\includegraphics[width=0.32\textwidth]{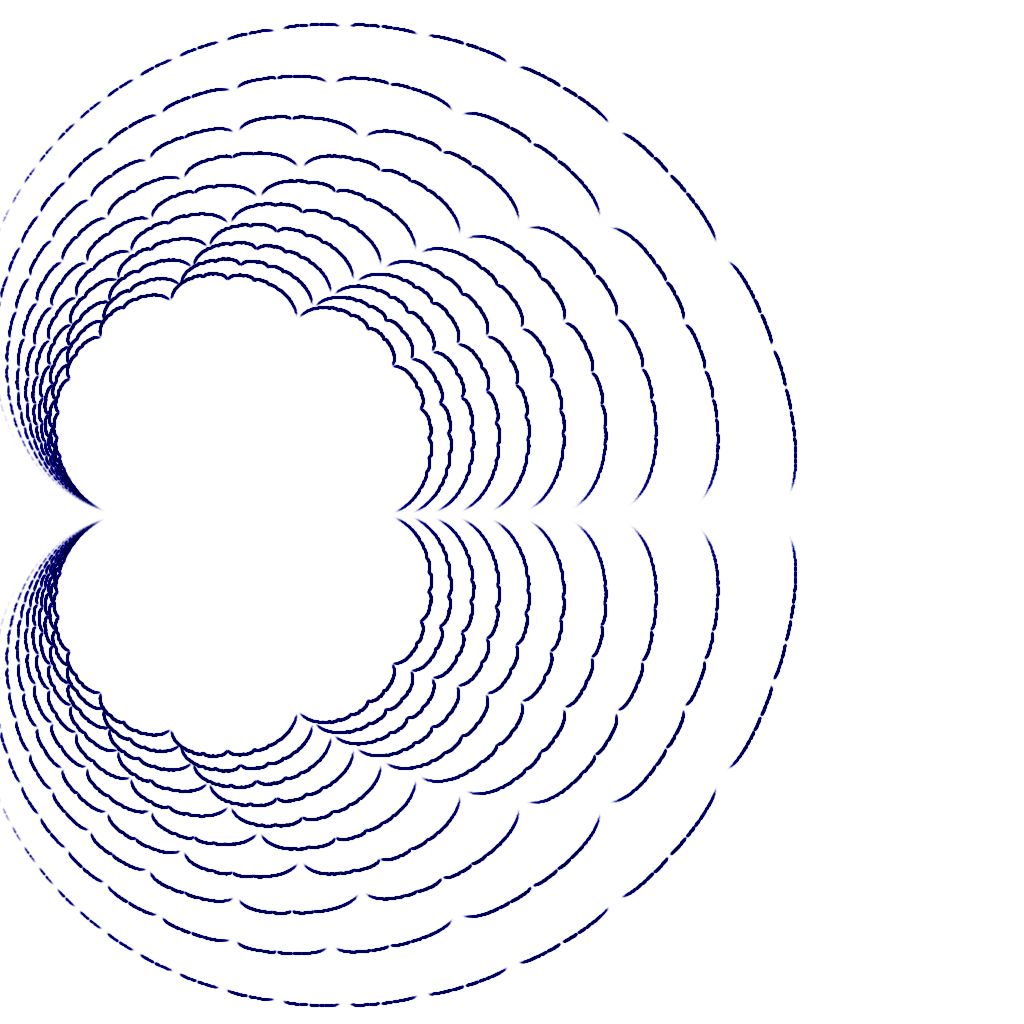}
\caption{The leftmost image shows the boundary of the minimal $T_{CH}$-invariant set for
the operator $T = z^2\diffz + (z-1)$. 
The center image shows (a numerical approximation of) the minimal invariant set, which is a fixed-point of \eqref{eq:tgeq1iterations}.
The rightmost figure shows the union of the Julia sets of the maps $z \mapsto z + t\frac{z^2}{z-1}$, for
$t \in \{0.2, 0.4, 0.6,\dotsc, 1.8, 2.0\}$.
}
\label{fig:cochleoid}
\end{figure}
\end{example}

\medskip 

Below we, in particular,  consider the following questions.

\begin{problem}\label{prob:exist}
For which linear differential operators $T$ given by \eqref{eq:1st}, does
 there exist a unique minimal under inclusion $T_{CH}$-invariant set?
\end{problem}

Below we denote this minimal set (if it exists) by \defin{$\minvset{CH}$}. 

\begin{problem}\label{prob:charac}
Find possible alternative characterizations of $T_{CH}$-invariant sets in terms of the polynomials $P$ and $Q$. 
Describe topological and geometric properties of $T_{CH}$-invariant sets and their boundaries.  
\end{problem}

\begin{problem}\label{prob:compact}
Assuming that for a linear differential operator $T$, the minimal set $\minvset{CH}$ exists,
under which additional conditions is it compact in $\bC$?
\end{problem}

Observe that $\minvset{CH}$ is closed by definition, 
so its compactness is equivalent to its boundedness. 

\smallskip
In the sequel \cite{AHNST} we will concentrate on the following question. 

\begin{problem}\label{prob:boundaryMIN}
Describe properties of the boundary of $\minvset{CH}$ and 
describe explicitly $\minvset{CH}$ for some classes of operators $T$.
\end{problem}

\medskip
To finish the introduction, we present here a  sample of our results related to the above questions.
%
\begin{proposition}\label{prop:basic}
For any linear differential operator $T$ given by \eqref{eq:1st} 
such that at least one of $Q(z)$ and $P(z)$ are non-constant, the next statements hold:

\smallskip
\noindent
{\rm (i)} any $T_{CH}$-invariant set $S\subset \setC$ contains  all roots of both $P(z)$ and $Q(z)$;

\noindent
{\rm (ii)} there exists a unique  minimal under inclusion $T_{CH}$-invariant set \defin{$\minvset{CH}$}.

\end{proposition}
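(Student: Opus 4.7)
\medskip

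\textbf{Part (i).} The plan splits according to which polynomial we are targeting. For the roots of $P$, I would specialize the definition of $T_{CH}$-invariance at $t = 0$. Picking any $u \in S$ (available since the set is non-empty by definition), one computes $T[(z-u)^{0}] = T[1] = P(z)$. Because $P \not\equiv 0$ by the standing assumption, every root of $P$ must lie in $S$.

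For the roots of $Q$, I would use a limiting argument as $t \to \infty$. Fix any $u \in S$, and consider the family of polynomials $F_t(z) = tQ(z) + (z-u)P(z)$. After dividing by $t$, the polynomial $Q(z) + t^{-1}(z-u)P(z)$ has coefficients converging to those of $Q(z)$. By continuity of roots of a polynomial with respect to its coefficients (equivalently a Hurwitz/Rouché argument on small circles around each root of $Q$), for every root $z_0$ of $Q$ and every $\varepsilon > 0$ there is $t_0$ such that for all $t \ge t_0$, some root of $F_t$ lies within $\varepsilon$ of $z_0$. Since each such root lies in $S$ and $S$ is closed, we conclude $z_0 \in S$.

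\textbf{Part (ii).} I would set
\[
\minvset{CH} \;=\; \bigcap_{S \text{ is } T_{CH}\text{-invariant}} S,
\]
and verify the three properties: non-emptiness, closedness, and $T_{CH}$-invariance. The family being intersected is non-empty because $\bC$ itself is trivially $T_{CH}$-invariant. Closedness is immediate. Non-emptiness of the intersection follows from (i): the hypothesis that at least one of $P,Q$ is non-constant guarantees at least one root in $\bC$, and every such root belongs to every $T_{CH}$-invariant set. For the $T_{CH}$-invariance of the intersection, pick $u \in \minvset{CH}$ and $t \ge 0$; then $u$ lies in each invariant $S_\alpha$, so the roots of $T[(z-u)^t]$ (when this polynomial is not identically zero) lie in every $S_\alpha$, hence in $\minvset{CH}$. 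Minimality under inclusion is then built into the construction.

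The main obstacle I anticipate is the limiting step in part (i) for roots of $Q$: when $\deg P + 1 > \deg Q$, the polynomial $F_t$ has more roots than $Q$ does, and some of them escape to infinity as $t \to \infty$. One must be careful to argue that exactly $\deg Q$ of the roots stay bounded and converge precisely to the roots of $Q$ with correct multiplicities. This is standard, but it is the only place where a short quantitative estimate (rather than purely formal manipulation) is required.
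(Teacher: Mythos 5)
Your proposal is correct and follows essentially the same route as the paper: roots of $P$ are obtained from the $t=0$ specialization, roots of $Q$ from the $t\to\infty$ limit via continuity of roots (the paper likewise notes that when $\deg P\ge\deg Q$ the excess roots escape to infinity while $\deg Q$ of them converge to the roots of $Q$), and the minimal set is realized as the intersection of all $T_{CH}$-invariant sets, which is non-empty by part (i) and invariant by the same pointwise argument you give. No gaps.
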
 

\smallskip
We call $\minvset{CH}$ \defin{trivial} if it (and therefore every $T_{CH}$-invariant set) coincides with the whole $\bC$.

\begin{notation}  For an operator $T$ given by \eqref{eq:1st}, throughout the whole paper  a very important role will be played by the rational vector field $\dot z= R(z)$ where $R(z)=\frac {Q(z)}{P(z)}$. To distinguish the latter vector field from the rational function $R(z)$, we will denote it by $R(z)\partial_z$. We will also  frequently use the vector field $-R(z)\partial_z$. 
\end{notation}

\begin{notation}\label{not:pq}
We define $p_{\infty},q_{\infty} \in \mathbb{C}^{\ast}$, and $p,q \in \mathbb{N}$ such that $P(z)=p_{\infty}z^{p}+o(z^{p})$ and $Q(z)=q_{\infty}z^{q}+o(z^{q})$.\newline
Then, we have $\lambda = \frac{q_{\infty}}{p_{\infty}} \in \mathbb{C}^{\ast}$ and $\phi_{\infty}=\arg(\lambda)$.\newline
Similarly, for any point $\alpha \in \mathbb{C}$, we have $R(z)=r_{\alpha}(z-\alpha)^{m_{\alpha}}+o(|z-\alpha|^{m_{\alpha}})$ with $r_{\alpha} \neq 0$ and $m_{\alpha} \in \mathbb{Z}$.
\end{notation}

\subsection{Main results}

\begin{theorem}\label{th:main1} 
For any linear differential operator $T=Q(z)\frac{d}{dz}+P(z)$
where $P,Q$ do not vanish identically, there is a non-trivial
$T_{CH}$-invariant set in $\mathbb{C}$
if and only if one of the following statements holds:
\begin{itemize}
\item $\deg Q -\deg P=-1$;
\item $\deg Q -\deg P=0$;
\item $\deg Q -\deg P= 1$, $\deg P \geq 1$ and $Re(\lambda) \geq 0$;
\item $\deg Q=1$, $\deg P=0$ and $\lambda \notin \mathbb{R}^{-}$.
\end{itemize}
\end{theorem}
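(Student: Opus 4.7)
The plan is to prove the theorem by case analysis on $d := \deg Q - \deg P$: for each of the four listed cases we exhibit a non-trivial $T_{CH}$-invariant set, while for each excluded case we show that every $T_{CH}$-invariant set equals $\bC$. The primary tool is the asymptotic description of the roots of $tQ(z)+(z-u)P(z) = 0$ as $|u| \to \infty$ with $t \geq 0$ free. Substituting $z = wu$ (or $z = wu^{1/d}$ when $d \geq 2$) and comparing dominant monomials, $p$ of the roots converge to the zeros of $P$, while the remaining ``moving'' roots behave asymptotically as $u - t\lambda u^{d} + o(u^{d})$ (a single root) if $d \leq 0$; as $u/(1+t\lambda) + o(u)$ (a single root) if $d = 1$; and as $\zeta_{d}^{\,k}\bigl(u/(t\lambda)\bigr)^{1/d} + o(u^{1/d})$ for $k = 0, \ldots, d-1$ ($d$ roots) if $d \geq 2$, where $\zeta_d$ is a primitive $d$-th root of unity.

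For sufficiency, in each listed case we produce an explicit invariant set. For $d = 0$, a closed half-plane $\{z : \Re(\bar\lambda z) \leq M\}$ with $M$ large enough to contain the zeros of $PQ$ absorbs the shift $u \mapsto u - t\lambda$, since $\Re(\bar\lambda(u-t\lambda)) = \Re(\bar\lambda u) - t|\lambda|^{2} \leq M$. A more delicate but analogous construction handles $d = -1$, exploiting that $R(z) \to 0$ at infinity so the moving root stays close to $u$; one combines a large region with neighborhoods of the zeros of $P$ and checks the full polynomial equation. For $d = 1$ with $\deg P \geq 1$ and $\Re(\lambda) \geq 0$, the inequality $|1+t\lambda|^2 = 1 + 2t\Re(\lambda) + t^{2}|\lambda|^{2} \geq 1$ for all $t \geq 0$ guarantees $|u/(1+t\lambda)| \leq |u|$, so a sufficiently large closed disk centered at the origin is invariant. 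In the subcase $d = 1$, $\deg P = 0$, the defining equation becomes linear in $z$ with a single preimage given by a Möbius-type formula, and one checks directly that a suitable set containing the root $\alpha$ of $Q$ is invariant provided $\lambda \notin \mathbb{R}^{-}$, the only case in which the denominator $1+t\lambda$ does not vanish for some $t \in [0,\infty)$.

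For necessity, we show iterates of $\mathcal{F}_T$ from any starting point are dense in $\bC$ in each excluded case. For $d \geq 2$, the $d$ moving roots populate $d$ equally-spaced angular directions at scale $|u|^{1/d}$; iteration mixes these densely in $[0, 2\pi)$ at every sufficiently large scale, and closedness forces $S = \bC$. For $d \leq -2$, as $t$ ranges over $[0,\infty)$ the single moving root traces a continuous curve from $u$ (at $t=0$) to a zero of $Q$ (at $t\to\infty$), so the invariant set must contain a curve bridging the far and bounded regions; varying $\arg u$ and iterating then sweeps all angles and scales. For $d = 1$ with $\deg P \geq 1$ and $\Re(\lambda) < 0$, the function $|1+t\lambda|$ attains a minimum strictly less than $1$ at $t_{0} = -\Re(\lambda)/|\lambda|^{2} > 0$, so iterating the moving root with $t$ near $t_{0}$ grows the modulus unboundedly, while the phase of $1/(1+t\lambda)$ rotates the argument non-trivially as $t$ varies, yielding density. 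The main obstacle lies in this last case: unlike the $d \geq 2$ regime with its geometric angular spreading from $d$-fold root symmetry, here only one moving root is available, and the argument must carefully combine multiplicative modulus growth, phase rotation, and the interaction with the bounded roots near the zeros of $P$ to rule out closed proper invariant subsets.
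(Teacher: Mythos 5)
Your sufficiency construction breaks at the boundary case $\deg Q-\deg P=1$, $\Re(\lambda)=0$. There the inequality $|1+t\lambda|^2=1+t^2|\lambda|^2\ge 1$ yields a contraction of the leading term of order only $t^2|u|$, whereas the neglected lower-order terms in the root location (coming from the $O(1)$ term of $R(z)=\lambda z+c_0+O(1/z)$) displace the root by an amount of order $t$; for small $t>0$ and suitable $\arg u$ this pushes the root strictly outside any round disk, so ``a sufficiently large closed disk centered at the origin is invariant'' is false in the limiting case (and recentering does not save it). The paper handles $\Re\lambda=0$ by a separate argument: $\infty$ is then a center of $R(z)\partial_z$, and the compact region bounded by a closed convex trajectory near $\infty$ is invariant because the associated ray at each point of such a trajectory is tangent to it and points outward. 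A related problem afflicts your $d=-1$ case: the heuristic ``the moving root stays close to $u$'' is wrong, since for $\deg Q-\deg P=-1$ two branches of every root trail escape to $\infty$ (Lemma~\ref{le:rootsEndingInInfty}), so every invariant set is unbounded in two opposite directions and no bounded enlargement of a ``large region'' can work; the paper's actual construction (Lemma~\ref{firstcase}) is the complement of a thin cone with apex on the inflection curve $\infl_R$, using monotonicity of $\arg R$ along the limiting half-lines, and you have supplied no substitute.

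The necessity half is where the two approaches genuinely diverge and where your argument is essentially absent. The paper never argues by orbit density: it shows that for any connected component $X$ of $S^{c}$ the set $\sigma(X)=\arg R(X)\subset\mathbb{S}^{1}$ is invariant under $\theta\mapsto\phi_{\infty}+(\deg Q-\deg P)\theta$ (Lemma~\ref{lem:SlopesCC2}), so $|\deg Q-\deg P|\ge 2$ forces $\sigma(X)=\mathbb{S}^{1}$, hence $S$ compact (Corollary~\ref{cor:TourCone}), hence $\deg Q-\deg P=1$ by Proposition~\ref{prop:compact} --- a contradiction; and for $\deg Q-\deg P=1$, $\Re\lambda<0$ it invokes the existence of a separatrix of $-R(z)\partial_z$ running from $\infty$ to a zero of $P$ (a translation-surface fact), which lies in every invariant set and forces it to be unbounded, whereupon Corollary~\ref{cor:Raw1} forces $\phi_{\infty}=0$. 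Your replacements --- ``iteration mixes these densely'', ``sweeps all angles and scales'', and, for $\Re\lambda<0$, combining modulus growth of $|1/(1+t\lambda)|$ with phase rotation --- assert precisely the hard part: density of arguments at large scales does not give $S=\bC$ without filling in bounded regions and controlling the $o(\cdot)$ errors uniformly over infinitely many iterations, the rotation could a priori be compatible with a closed union of spirals unless ruled out, and you yourself flag the $\Re\lambda<0$ case as unresolved. As written, the ``only if'' direction is a program rather than a proof.
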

\begin{theorem}\label{th:main2} 
For any linear differential operator $T=Q(z)\frac{d}{dz}+P(z)$
where $P,Q$ do not vanish identically, there is a compact $T_{CH}$-invariant set
in $\mathbb{C}$ if and only if $\deg Q -\deg P= 1$
and one of the following two statements holds:
\begin{itemize}
\item $\deg P \geq 1$ and $\Re(\lambda) \geq 0$;
\item $\deg Q=1$, $\deg P=0$ and $\lambda \notin \mathbb{R}{<0}$.
\end{itemize}
\end{theorem}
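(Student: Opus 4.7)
The strategy is to reduce the question to Theorem \ref{th:main1}, which already classifies when a nontrivial $T_{CH}$-invariant set exists. By Proposition \ref{prop:basic}, compactness of some $T_{CH}$-invariant set is equivalent to $\minvset{CH}$ being bounded. Hence the proof splits into two tasks: (i) in the cases of Theorem \ref{th:main1} with $\deg Q - \deg P \in \{-1, 0\}$, show that any $T_{CH}$-invariant set is unbounded; (ii) in the remaining cases, where $\deg Q - \deg P = 1$, show that $\minvset{CH}$ is bounded.

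For (i), parametrize the roots $z$ of \eqref{eq:main} for each fixed $u$ by the function
\[
 t(z, u) = -\frac{z-u}{R(z)}.
\]
When $\deg Q - \deg P = -1$ one has $R(z) \sim \lambda/z$ at infinity, giving $t(z, u) \sim -z^2/\lambda$; when $\deg Q - \deg P = 0$, $R(z) \to \lambda$ and $t(z, u) \sim -z/\lambda$. In both regimes, along a suitable ray $\arg z = \theta_{\ast}$ chosen so that the leading term is positive real, $t(z, u)$ is asymptotic to a positive real tending to $+\infty$. Starting from any $u_0 \in \minvset{CH}$ (which exists by Proposition \ref{prop:basic}), the real-analytic curve $\{z : t(z, u_0) \in \mathbb{R}_{\geq 0}\}$ accumulates at infinity along this ray, so the one-step image $\mathcal{F}_T(\{u_0\}) \subseteq \minvset{CH}$ already contains points of arbitrarily large modulus, proving $\minvset{CH}$ is unbounded.

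For (ii), I would try to exhibit a compact $T_{CH}$-invariant set in the form of a sufficiently large closed disk $D_R = \{|z| \leq R\}$, which would immediately give $\minvset{CH} \subseteq D_R$. The invariance of $D_R$ amounts to showing that for all $|u| \leq R$ and $|z| > R$, $t(z, u) \notin \mathbb{R}_{\geq 0}$. When $\deg Q - \deg P = 1$, $R(z) \sim \lambda z$ at infinity, so $t(z, u) \to -1/\lambda$ uniformly in bounded $u$. The hypotheses ensure $-1/\lambda$ is at positive distance from $\mathbb{R}_{\geq 0}$: in the first bullet ($\deg P \geq 1$, $\Re(\lambda) \geq 0$) one has $\Re(-1/\lambda) \leq 0$ with $-1/\lambda \neq 0$, while in the second bullet ($\lambda \notin \mathbb{R}^-$) one directly rules out $-1/\lambda \in \mathbb{R}_{> 0}$.

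The main obstacle is the quantitative form of this uniform estimate. A direct Laurent expansion at infinity only yields $t(z, u) + 1/\lambda = O\bigl((1+|u|)/|z|\bigr)$, whose supremum over $|u| \leq R$ and $|z| > R$ is comparable to $1/|\lambda|$, which is exactly the distance from $-1/\lambda$ to $\mathbb{R}_{\geq 0}$ when $\Re(\lambda) \geq 0$; so the naive bound is not strict. One must therefore track the \emph{direction} of the error $t(z, u) + 1/\lambda$ rather than only its modulus, or work with a compact region adapted to the dominant linear map $z \mapsto (1 + t\lambda) z$, which preserves disks whenever $|1 + t\lambda| \geq 1$, a condition valid for all $t \geq 0$ precisely when $\Re(\lambda) \geq 0$. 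Particular care is required in the boundary sub-case $\Re(\lambda) = 0$ (purely imaginary $\lambda$), where $|1+t\lambda|$ approaches $1$ only to second order as $t \to 0^+$ and the lower-order terms of $R(z)$ can compete with the dominant linear behavior; handling this likely requires treating small $t$ separately.
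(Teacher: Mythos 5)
Your overall architecture is reasonable and partly matches the paper: the reduction via Theorem~\ref{th:main1}, the observation that compactness of some invariant set is equivalent to boundedness of $\minvset{CH}$, and the large-disk construction for $\deg Q-\deg P=1$, $\Re(\lambda)>0$ (this last is literally the paper's Lemma~\ref{third case}). Your part (i) is also essentially sound, and is in fact a more elementary route than the paper's: the paper deduces non-compactness for $\deg Q-\deg P\in\{-1,0\}$ from the winding-number argument of Proposition~\ref{prop:compact} and the analysis of ends (Corollary~\ref{cor:Raw1}), whereas you exhibit unbounded root trails directly; to make your sketch airtight you should note that $\tau(z)=-(z-u_0)P(z)/Q(z)$ has a pole at $\infty$ (since $\deg\bigl((z-u_0)P\bigr)>\deg Q$), so $\tau^{-1}\bigl([T,+\infty)\bigr)$ is unbounded for every $T$ --- this is exactly Lemma~\ref{le:rootsEndingInInfty}.

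The genuine gap is in part (ii), and it is twofold. First, the case $\deg P\geq 1$, $\Re(\lambda)=0$: you correctly identify that the naive disk estimate is not strict here, but you do not resolve it, and the suggested fixes will not close it as stated (the error term $t(z,u)+1/\lambda$ does not decay uniformly when $|u|$ is allowed to be comparable to $|z|$). The paper's resolution is qualitatively different: for $\Re(\lambda)=0$ the point $\infty$ is a \emph{center} of $R(z)\partial_z$, its nearby trajectories are closed convex curves (Lemma~\ref{lem:blueCurveCompact}), and since the associated ray at each point of such a curve $\Gamma$ is tangent to $\Gamma$ and points outward, Proposition~\ref{prop:raysGiveCHinv} shows the compact region bounded by $\Gamma$ is invariant. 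Second, and more seriously, your disk approach simply fails for the second bullet when $\Re(\lambda)<0$ but $\lambda\notin\mathbb{R}^{-}$: with $R(z)=\lambda(z-z_0)$ the associated ray of $z$ is $z_0+(z-z_0)(1+t\lambda)$, and $|1+t\lambda|<1$ for small $t>0$, so \emph{every} disk centered at $z_0$ (and in fact every disk of positive radius) is entered by rays from its exterior; no disk is invariant. The compact invariant set in this case is the singleton $\{z_0\}$ consisting of the root of $Q$, since $tQ(z)+(z-z_0)P=(z-z_0)(tq_1+p_0)$ has $z_0$ as its only root for all $t\geq 0$ precisely when $\lambda\notin\mathbb{R}^{-}$ (cf.\ \S\ref{ssec:special}). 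Your proposal does not notice this case and would need a separate argument for it.
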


Therefore non-trivial (i.e. different from the whole $\bC$) minimal invariant set $\minvset{CH}$ can only exist when $\deg Q -\deg P\in\{-1, 0, 1\}$. 
As we will show below,  $\minvset{CH}$  is necessarily unbounded and  nontrivial when $\deg Q -\deg P\in\{-1, 0\}$. 

\begin{remark} We do not know of any other situation related to analytic vector fields in which zeros of orders exactly $1, 2$ and $3$ are more special than zeros or poles of other orders.  
\end{remark} 

To move further, we need to introduce the following natural topological property. 

\begin{definition}\label{def:regularSets}
Let $S\subset \bC$ and denote by $S^\circ$ the set of all its interior points.  
Points in $\overline{S^\circ}$ are called \defin{regular} and  points in $S\setminus \overline{S^\circ}$ are called \defin{irregular} points of $S$.  We also call $S\setminus \overline{S^\circ}$ the \defin{irregular locus} of $S$.
We say that  $S\subset \bC$ is (topologically) \defin{regular} if all its points are regular, i.e., $S=\overline{S^\circ}$ 
(which implies that $\partial{S}=\partial{S}^\circ$). 
Otherwise, the set $S$ is called \defin{irregular}. We say that an irregular set $S$ is \defin{fully irregular} if  $S^\circ$ is empty and \defin{partially irregular} otherwise. (Obviously every point of an irregular $S$ is irregular if and only if $S$ is fully irregular). 
\end{definition}

\smallskip
Surprisingly, irregular $T_{CH}$-invariant sets can only occur for operators $T$ enjoying a  very specific property. Namely, up to multiplication by a complex constant and an affine change of the variable $z$, such operator $T$ must  have real-valued coefficients $Q$ and $P$ subject to the restriction $|\deg Q -\deg P|\le 1$.  Existence of fully irregular sets require additional restrictions which can nevertheless be made explicit.  More exact statement is as follows. 

\begin{theorem}\label{thm:ClassReg}
In connection  with (ir)regularity of $T_{CH}$-invariant sets, any linear differential operator $T=Q(z)\frac{d}{dz}+P(z)$
with  $P$ nor $Q$  not identically vanishing belongs to one of the three classes presented in Table~\ref{tab:ClassReg}.
\end{theorem}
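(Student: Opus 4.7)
The plan is to analyze when the iterative construction $\mathcal{F}_T$ can produce $T_{CH}$-invariant sets of empty (fully irregular), mixed (partially irregular), or full (regular) interior. The central geometric observation is that for each $u\in\mathbb{C}$ the solution locus of $tQ(z)+(z-u)P(z)=0$ traces a real-analytic arc in $\mathbb{C}$ as $t$ ranges over $\mathbb{R}_{\geq 0}$, emanating from $u$. Irregularity of an invariant $S$ therefore forces all such arcs over all $u\in S$ (and the iterates thereof) to remain confined in a lower-dimensional real-analytic subset of $\mathbb{C}$.

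First, I would introduce the parametrization $\Phi:\mathbb{R}_{\geq 0}\times \mathbb{C}\to \mathbb{C}$ sending $(t,u)$ to a chosen branch $z(t,u)$ of $tQ(z)+(z-u)P(z)=0$ and compute its real Jacobian near generic points. The partial $\partial_t z$ is of order $-R(z)/(1+tR'(z))$ in a local coordinate computation, while $\partial_u z$ is essentially $P(z)/(P(z)+tQ'(z)+\ldots)$; the two tangent vectors are almost never $\mathbb{R}$-proportional in a neighborhood, so $\Phi$ is typically a submersion of real rank $2$ onto an open subset. Consequently, unless a rank deficiency persists on a full open set of parameters, the image $\mathcal{F}_T(S)$ already has non-empty interior, and the same is true after further iteration, ruling out irregularity.

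Second, I would show that persistent rank deficiency is a rigid condition: it forces $\partial_t z$ and $\partial_u z$ to be everywhere real-proportional, which, by analytic continuation of the branch $z(t,u)$, means the image of $\Phi$ lies on a real-analytic curve $\Gamma\subset \mathbb{C}$ preserved by $\mathcal{F}_T$. A holomorphic rational vector field $R(z)\partial_z$ that preserves a non-trivial real-analytic curve must preserve (after a Möbius normalization and an affine change of $z$) the real line $\mathbb{R}$. This rigidity forces, up to multiplication of $T$ by a nonzero complex constant and an affine change of variable, that both $P$ and $Q$ have real coefficients with $|\deg Q-\deg P|\leq 1$, as asserted in the discussion preceding the theorem. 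This is where I would invoke \cref{th:main1} to eliminate degree combinations incompatible with the existence of a non-trivial invariant set.

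Third, within the real-normalized class, I would refine the analysis by studying the interaction of the flow of $R(z)\partial_z$ with the invariant real axis. Fully irregular sets occur precisely when the arcs generated by $\Phi$ from every real starting point $u$ remain on $\mathbb{R}$ for all $t\geq 0$; this translates into a sign/positivity condition on $R$ along $\mathbb{R}$ and on the locations of real zeros and poles of $P,Q$. Partially irregular sets arise when some starting points feed the iteration back to the real line but others escape into $\mathbb{C}\setminus\mathbb{R}$ along flow arcs, producing an open body with attached real arcs. Regular sets correspond to all remaining operators, including the generic complex-coefficient case handled by the rank-$2$ argument of step one. Matching these three behaviors against the degree table produced via \cref{th:main1,th:main2} fills in the three rows of Table~\ref{tab:ClassReg}.

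The main obstacle I anticipate is the clean separation of fully versus partially irregular in the real-coefficient regime. Proving that real-coefficient operators automatically force irregularity of the real-line component is straightforward, but showing that no invisible two-dimensional ``bubble'' sneaks into $\minvset{CH}$ in the fully irregular rows requires a delicate forward-invariance argument for the real axis under \emph{every} branch of $\Phi$ and every $t\geq 0$, together with the absence of fixed points of $R\partial_z$ off the real axis that could seed such a bubble. This backward-orbit analysis of the rational semigroup generated by $\{z\mapsto z+tR(z):t\geq 0\}$ is what I expect to be the technically hardest step.
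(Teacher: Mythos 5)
Your overall architecture (force irregular sets onto a one-dimensional locus, normalize to the real line, then split fully versus partially irregular) parallels the paper's, but two steps do not hold up as written. First, the rigidity claim in your second step is false: a rational vector field $R(z)\partial_z$ can preserve a non-trivial real-analytic curve without being affinely conjugate to a field preserving $\mathbb{R}$ --- for instance $R(z)=iz$ preserves every circle centered at the origin, and no affine change of variable turns such a circle into a line (M\"obius normalizations are not available here, since they neither preserve the class of operators \eqref{eq:1st} nor send associated rays to associated rays). The mechanism that actually works, and that \cref{lem:IrrLine} exploits, is not invariance of a curve under the flow of $R(z)\partial_z$ but invariance under the family of \emph{straight-ray} maps $z\mapsto z+tR(z)$: combining $\gamma(0)=\gamma(t)+tR(\gamma(t))$ with $\gamma'(t)=-R/(tR'+1)$ shows that the $t$-trace through an irregular point is locally a genuine line segment, so the confining locus is a line $\Lambda$ along which $R$ takes values parallel to $\Lambda$, and only then does the affine normalization to real $P,Q$ with $|\deg Q-\deg P|\le 1$ follow. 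Your submersion computation in step one is essentially sound (indeed $\partial_u z=1/(tR'+1)$ is a rank-two real-linear map away from the moving poles), but it must be coupled to this linearity of the associated rays, not to a general statement about vector fields preserving analytic curves.

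Second, the part you flag as the anticipated obstacle is precisely where the substance of the classification lies, and the tools you propose (backward orbits of the rational semigroup, absence of off-axis fixed points) are not what makes it work. The paper rules out two-dimensional ``bubbles'' and pins down the conditions of Table~\ref{tab:ClassReg} --- interlacing of the roots of $P$ and $Q$, simplicity of all roots, and $r_\alpha<0$ at each root $\alpha$ of $P$ --- by showing that forward trajectories and $P$-starting separatrices of $-R(z)\partial_z$ are contained in every $T_{CH}$-invariant set (\cref{prop:invSetClosedUnderForwardIntegralCurves} and \cref{pro:separatrixIsInInv}); full irregularity then forces these separatrices to start along the real axis, which gives $r_\alpha<0$ via \cref{prop:rootTrajectoryDirections}, while \cref{prop:qRootSinks} excludes adjacent roots of $Q$ and yields interlacing, and the degenerate classes Ib and Ic ($R=\lambda$ and $R=\lambda(z-\alpha)$) must be split off separately. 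The Class II row further requires explicit constructions of partially irregular invariant sets (a compact set or half-plane with a real ``tail'' attached), carried out case by case for $\deg Q-\deg P\in\{1,0,-1\}$ in \S~\ref{sec:partirreg}; none of this is supplied or sketched in your proposal, so the trichotomy of the table is not yet established.
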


\begin{table}
\begin{tabular}{|p{2.5cm}||p{4.5cm}|p{7cm}|} 
\hline 
\centering \textbf{Class of operators}  & \centering \textbf{Shape of $T_{CH}$-invariant sets} & \centering \textbf{Characterization}\tabularnewline 
\hline
\centering Class Ia & 
At least one $T_{CH}$-invariant set is fully irregular.

&
Up to an affine change of variables and the multiplication of $P,Q$ by a common non-zero constant, $P$ and $Q$ are real on $\mathbb{R}$.\newline

Roots of $P$ and $Q$ are real and interlacing (see Section~\ref{sub:realirregular}).\newline

At each root pole of $R$, we have $r_{\alpha}<0$.\newline
\tabularnewline  
\hline

\centering Class Ib & At least one $T_{CH}$-invariant set is fully irregular.

&

$R(z)=\lambda$ with $\lambda \in \mathbb{C}^{\ast}$
\tabularnewline  
\hline

\centering Class Ic & At least one $T_{CH}$-invariant set is fully irregular.

&

$R(z)=\lambda(z-\alpha)$ with $\lambda \notin \mathbb{R}_{<0}$ and $\alpha \in \mathbb{C}$ and $\deg Q=1, \deg P=0$.
\tabularnewline  
\hline

\centering Class Id & At least one $T_{CH}$-invariant set is fully irregular.

&

$R(z)=\lambda(z-\alpha)$ with $\lambda \in \mathbb{R}_{>0}$
\tabularnewline  
\hline

\centering Class II & 
At least one $T_{CH}$-invariant set is irregular,
but no $T_{CH}$-invariant set is fully irregular.

&
Up to an affine change of variables and the multiplication of $P,Q$ by a non-vanishing constant, $P$ and $Q$ are real on $\mathbb{R}$.\newline

$|\deg Q- \deg P| \leq 1$.\newline

If $\deg Q - \deg P = \pm 1$, then $\lambda \in \mathbb{R}_{>0}$.\newline

Does not belong to class I.
\tabularnewline  
\hline  
\centering Class III & 
Every $T_{CH}$-invariant set is regular.
&
Does not belong to classes I or II.
\tabularnewline  
\hline  

\end{tabular}
    \vskip1ex
    \caption{Characterization of linear operators w.r.t. existence of irregular $T_{CH}$-invariant sets.}
    \label{tab:ClassReg}
\end{table}

\medskip
The structure of the paper is as follows. 
In \S~\ref{sec:basic}  we present a general sufficient condition for the existence of  $\minvset{CH}$  
and its implicit description in terms of complex dynamics. 

In \S~\ref{sec:roottrails} we introduce the \emph{root trails/trajectories/$t$-traces} which are solutions of \eqref{eq:main} 
for a fixed initial $u$ and $t\in [0,+\infty)$ and discuss their properties.  
We also introduce a time-dependent vector which describes their dynamics. 
 Additionally we  give a general characterization of $T_{CH}$-invariant sets in terms of the family of \emph{associated rays} of $T$ which is the family of half-lines in $\bC$ spanned by the rational vector field  $R(z){\partial_z}$.  
 
 In \S~\ref{sec:asympt} we introduce a number of notions which will allow us to describe the cases in which all $T_{CH}$-invariant sets of a given operator $T$ are trivial, i.e. coincide with $\bC$.   
 In \S~\ref {sec:topol}  we discuss two basic topological properties of $T_{CH}$-invariant sets, namely their connectedness and compactness.  
 In \S~\ref{sec:triviality} we give necessary and sufficient conditions describing operators $T$ all $T_{CH}$-invariant sets of which are trivial. 
 
  In \S~\ref{sec:irreg}--\S~\ref{sec:partirreg} we discuss irregular  $T_{CH}$-invariant sets and present necessary and sufficient conditions for their existence.  In particular, we completely describe the case when there exist fully irregular $T_{CH}$-invariant sets.  
    In \S~\ref{sec:outlook} we describe many open questions related to our set-up. Finally, Appendix~\ref{sec:ratfields} contains relevant classical material on rational vector fields on $\bC P^1$, special properties of separatrices,  and description of curves of inflections for trajectories of analytic vector fields. 
\medskip

\begin{remark}
Although this paper is an outgrowth of our still unpublished studies \cite{ABS1,ABS2}, it is self-contained and independent of the latter manuscripts.
All the necessary notions and results are presented below. In the sequel \cite{AHNST}  we study in substantially more detail minimal continuous Hutchinson invariant sets $\minvset{CH}$ and their boundaries. 
\end{remark}

\medskip\noindent
\emph{Acknowledgments:}
The fourth author wants to acknowledge the financial support of his
research provided by the Swedish Research Council grants 2016-04416 and 2021-04900. 
The third and the fifth authors were supported by the ISRAEL SCIENCE FOUNDATION (grant No. 1167/17),
	by funding received from the MINERVA Stiftung with the funds
	from the BMBF of the Federal Republic of Germany and
	by funding from the European Research Council (ERC) under the
	European Union's Horizon 2020 research and innovation programme
	(grant agreement No 802107).

\section{Initial facts about $T_{CH}$-invariant sets and $\minvset{CH}$} \label{sec:basic} 

\subsection{Existence}\label{sub:exist} 

\smallskip
For an operator $T$ given by \eqref{eq:1st} and a set $\Omega \subset \setC$,
we call by the \emph{$T_{CH}$-extension} $\TTT(\Omega)$ of $\Omega$ 
the set obtained by the following iterative procedure.
Set $\Omega_0\coloneqq \Omega$ and for any positive integer $j=1,2,\dotsc ,$ define
\[
\Omega_j\coloneqq \bigcup_{u\in \Omega_{j-1}}\ttt_u,
\]
where $\ttt_u$ stands for the set of all solutions of \eqref{eq:main} for a given fixed $u$ and all $t\ge 0$, see details in \cref{sec:roottrails}. (In what follows $\ttt_u$ will be called the \emph{root trail} of $u$.)  
The closure of $\cup_{j=0}^\infty \Omega_j$ is denoted by $\TTT(\Omega)$. If $\Omega=\{\omega\}$ is a singleton we use the notation $\TTT(\Omega)=\TTT(\omega)$.

\smallskip
We start with the following simple claims.

\begin{lemma}\label{le:zerosPQ}
For any linear differential operator $T$ given by \eqref{eq:1st} 
such that at least one of $Q(z)$ and $P(z)$ are non-constant, any $T_{CH}$-invariant set $S\subset \setC$ contains  all roots of both $P(z)$ and $Q(z)$;
\end{lemma}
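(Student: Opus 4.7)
The plan is to exploit the two extreme values of the parameter $t$ in the defining equation $tQ(z)+(z-u)P(z)=0$. First I would handle the roots of $P$ using $t=0$: the equation reduces to $(z-u)P(z)=0$, whose zero set is $\{u\}\cup\{\text{roots of }P\}$. Since by definition $S$ is non-empty and every $u\in S$ produces, for each admissible $t\ge 0$, a root set contained in $S$, taking $t=0$ forces every root of $P$ into $S$. This part is essentially free.

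For the roots of $Q$ I would use the opposite limit $t\to+\infty$ via the substitution $s=1/t$. Dividing the main equation by $t$ turns it into the deformation
\[
Q(z)+s(z-u)P(z)=0,
\]
of $Q(z)=0$. Fix a zero $\beta$ of $Q$ of multiplicity $m\ge 1$. If $P(\beta)=0$, then $\beta$ is already a root of $P$ and hence in $S$ by the previous step, so assume $P(\beta)\neq 0$. Pick any $u\in S$; if $u=\beta$ there is nothing to prove, so assume $u\neq \beta$. Near $\beta$, the deformed polynomial is a small perturbation of $Q$ with nonzero perturbation value $s(\beta-u)P(\beta)$ at $z=\beta$. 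By Hurwitz's theorem (or a direct Rouch\'e estimate on a small disk around $\beta$), for all sufficiently small $s>0$ the perturbed polynomial has exactly $m$ roots in that disk, and these roots converge to $\beta$ as $s\to 0^+$. Each such root corresponds to some admissible $t>0$ and therefore lies in $S$ by $T_{CH}$-invariance; since $S$ is closed, $\beta\in S$.

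The one point that warrants care is that when $\deg P+1>\deg Q$ the polynomial $tQ(z)+(z-u)P(z)$ actually \emph{drops degree} as $t\to\infty$, so some of its roots escape to infinity. This does not break the argument because the Hurwitz/Rouch\'e step is purely local at $\beta$ and only claims existence of $m$ roots accumulating there; the roots that escape to infinity are irrelevant. A small bookkeeping remark at the end should dispose of the degenerate sub-cases permitted by the hypothesis ``at least one of $P,Q$ non-constant'': if $P$ is a non-zero constant, the $P$-statement is vacuous and only the $t\to\infty$ argument is used; if $Q$ is a non-zero constant, the $Q$-statement is vacuous and only $t=0$ is used. I expect the entire proof to be short, with the $t\to\infty$ limit being the only step requiring more than one line.
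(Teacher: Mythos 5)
Your proposal is correct and follows essentially the same route as the paper: the $t=0$ specialization captures the roots of $P$, and the $t\to\infty$ (equivalently $s=1/t\to 0^+$) limit of $Q(z)+s(z-u)P(z)$ together with closedness of $S$ captures the roots of $Q$. Your version is slightly more careful than the paper's, making the convergence precise via Hurwitz/Rouch\'e and explicitly noting that roots escaping to infinity when $\deg P+1>\deg Q$ do not affect the local count at a zero of $Q$.
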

\begin{proof}
Take a non-empty $T_{CH}$-invariant set $S\subseteq \setC$ and recall that for $t\geq 0$ and $u\in S$, 
\[
T[(z-u)^t ]= (z-u)^{t-1} (t Q(z) + (z-u) P(z)) .
\]
By our assumption, for any $t\ge 0$, all roots of the r.h.s. of the latter expression must belong to $S$. 
  For $t=0$ and $u\in S$,  the roots of
\[
tQ(z)+(z-u)P(z)=0
\]
 are  all the zeros of $P$  together with $z=u$.

For  $t>0$,  dividing both sides of the equation by $t (z-u)^{t-1}$ we get 
\begin{equation}\label{eq:factor}
\frac{T[(z-u)^t]}{t (z-u)^{t-1}} = Q(z) + \frac{1}{t} P(z)(z-u).
\end{equation}
When $t\to \infty$ the second term in the r.h.s. tends 
to $0$. Therefore if $\deg P< \deg Q$ then for $t\to \infty$, all  roots of the right-hand side with respect to $z$  will necessarily tend to those of $Q(z)$. If $\deg P\ge \deg Q$ then $\deg P-\deg Q+1$ roots will tend to infinity and $\deg Q$ roots will tend to those of $Q(z)$. 
Conclusion follows.  

\end{proof}

\begin{lemma}\label{lm:simple} For any linear differential operator $T$ given by \eqref{eq:1st},  the next statements hold:  

\noindent
\rm {(i)} For any $\Omega\subset \setC$,
its $T_{CH}$-extension $\TTT(\Omega)$ is $T_{CH}$-invariant; 

\smallskip\smallskip
\noindent

\noindent
{\rm (ii)} Let $(S_i)_{i\in I}\subset \setC$ be $T_{CH}$-invariant sets.  If $S=\cap_{i\in I}S_i$ is non-empty, then $S$ is invariant.

\noindent
{\rm (iii)} If $S_1$ and $S_2$ are  $T_{CH}$-invariant, then $S_1\cup S_2$ is $T_{CH}$-invariant.

\end{lemma}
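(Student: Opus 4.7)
The plan is to handle the three claims in order, with (i) requiring the most care because of the closure in the definition of $\TTT(\Omega)$, while (ii) and (iii) are essentially immediate set-theoretic manipulations.

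For part (i), I would first verify invariance on the dense subset $\bigcup_{j=0}^\infty \Omega_j$, and then extend to its closure by a continuity argument. More precisely, take any $u \in \Omega_j$; by construction $\ttt_u \subset \Omega_{j+1} \subset \TTT(\Omega)$, which establishes the defining property of $T_{CH}$-invariance on $\bigcup_j \Omega_j$. For a general $u \in \TTT(\Omega) = \overline{\bigcup_j \Omega_j}$, pick a sequence $u_n \to u$ with $u_n \in \bigcup_j \Omega_j$. For each fixed $t \geq 0$, the polynomial $t Q(z) + (z-u_n) P(z)$ has coefficients depending continuously (in fact affinely) on $u_n$, so its roots (counted with multiplicity) depend continuously on $u_n$ in the usual sense. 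Each such root lies in $\TTT(\Omega)$ by the case already proved, and since $\TTT(\Omega)$ is closed, the limit roots corresponding to $u$ also lie in $\TTT(\Omega)$. A minor wrinkle is the possibility that $tQ(z)+(z-u_n)P(z)$ drops degree in the limit (a root escapes to infinity), but this only removes a root rather than producing a new one outside $\TTT(\Omega)$, and the identically-vanishing case is allowed by the definition of $T_{CH}$-invariance. Finally $\TTT(\Omega)$ is closed by construction, completing (i).

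For part (ii), if $u \in S = \bigcap_{i \in I} S_i$ then $u \in S_i$ for every $i$, so for every $t \geq 0$ all roots of $tQ(z)+(z-u)P(z)$ lie in every $S_i$, hence in $S$ (or the polynomial vanishes identically, which is fine). Closedness of $S$ follows from closedness of each $S_i$. For part (iii), given $u \in S_1 \cup S_2$, we have $u \in S_k$ for some $k \in \{1,2\}$, so the corresponding roots lie in $S_k \subseteq S_1 \cup S_2$; and $S_1 \cup S_2$ is closed as a finite union of closed sets.

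The only genuinely substantive step is the continuity argument in (i); everything else is bookkeeping about closed sets. The potential obstacle there is confirming that no new unwanted roots are created in passing to the limit $u_n \to u$, which is handled by the elementary fact that the zero set of a polynomial depends upper-semicontinuously on its coefficients and by the escape-to-infinity observation above.
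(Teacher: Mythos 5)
Your proof is correct and follows essentially the same route as the paper: part (i) rests on the same two facts (continuity of the roots of $tQ(z)+(z-u)P(z)$ in $u$ for fixed $t$, plus closedness of $\TTT(\Omega)$), with the only difference being that you argue directly while the paper phrases it as a contradiction; part (ii) is identical. For part (iii) the paper instead invokes the identity $\TTT(S_1\cup S_2)=\TTT(S_1)\cup\TTT(S_2)$, but your pointwise argument is an equally valid (and arguably more elementary) way to reach the same conclusion.
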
 
\begin{proof} 
Let $u_0\in \TTT(\Omega)$. We need to show that the closure of the root-trail of $u_0$ is in $\TTT(\Omega)$. Suppose not. We note that when $t\to 0$, the zeros of \eqref{eq:main} tend to zeros of $P$ together with  $u$ and some zeros might  tend to $\infty$ and when $t\to \infty$, the zeros tend to the zeros of $Q$ and  some zeros might tend to $\infty$. Hence, we can deduce that there is a solution $z_0$ to \eqref{eq:main} for $u=u_0$ with $t=t_0\in (0,\infty)$ such that $z_0\notin \TTT(\Omega)$. Now, there is a sequence $(u_k)_{k=1}^\infty$ converging to $u_0$ such that $u_k\in \bigcup_{j=0}^\infty \Omega_j$. Consider the sequence of sets $(U_k)_{k=1}^\infty$ of solutions to \eqref{eq:main} satisfying $u=u_k, t=t_0$. We have that $U_k\subset \bigcup_{j=0}^\infty \Omega_j$. Since the set of zeros of a polynomial is continuous in the coefficients, it follows that there are $z_k\in U_k$ such that $z_k\to z_0$. Hence, $z_0\in \overline{ \bigcup_{j=0}^\infty \Omega_j}=\TTT(\Omega)$. Contradiction. We conclude (i).

For item (ii), suppose the conclusion is false. Then there is a point $u\in S$ such that $T[(z-u)^t]$ has a zero $z_0$ not in $S$. However, each $S_i$ is invariant and contains $u$, so $z_0\in S_i$ for each $i$. This is a contradiction and the statement follows.
 To settle (iii)
recall that a subset of $\bC$ is $T_{CH}$-invariant, if it coincides with its $T_{CH}$-extension,
so $\TTT(S_1) = S_1$ and $\TTT(S_2) = S_2$.
The definition of $T_{CH}$-extension implies directly that for any sets $\Omega_1$, $\Omega_2$,
we have 
\[
 \TTT(\Omega_1 \cup \Omega_2) = \TTT(\Omega_1) \cup \TTT(\Omega_2).
\]
Thus,
\[
 \TTT(S_1 \cup S_2) = \TTT(S_1) \cup \TTT(S_2) = S_1 \cup S_2
\]
which implies that  $S_1 \cup S_2$ is $T_{CH}$-invariant.
\end{proof}

\begin{proof}[Proof of Proposition~\ref{prop:basic}] 
Item (i) is \cref{le:zerosPQ}.

Item (ii)  follows from item (i) together with item (ii) from \cref{lm:simple}. Indeed, by definition any $T_{CH}$-invariant set is closed and  we  conclude  the existence of a unique minimal $T_{CH}$-invariant set $\minvset{CH}$ 
obtained as the intersection of the complete family of $T_{CH}$-invariant sets.   Notice that depending on $T$ the minimal set $\minvset{CH}$  might or might not be bounded which will be discussed in details below. 
\end{proof}

Observe that the assumption that $T$ has non-constant coefficients is essential for the 
existence of $\minvset{CH}$, see \cref{ssec:constant}.

\begin{remark}\label{re:affineChange}
When using an affine change of variable, we typically apply it to  the vector 
field $R(z)\partial_z=\frac{Q(z)}{P(z)}\partial_z$ and not the 
rational function $R(z)$. 
That is, for $z \mapsto aw+b$,  we get a new vector field 
$\hat{R}(w)\partial_w \coloneqq \frac{Q(aw+b)}{a(P(aw+b))} \partial_w$.
This is equivalent to considering the change of variables in the 
operator $T=Q(z)\diffz + P(z)$.
Indeed, the change of variables $z \mapsto aw+b$ yields 
\[
T[(aw+b-u)^t]=tQ(aw+b)(aw+b-u)^{t-1}+P(aw+b)(aw+b-u)^t.
\] 
Dividing with $\frac{(aw+b-u)^{t-1}}{a}$ gives 
\[
\frac{t}{a}Q(aw+b)+P(aw+b)\left(w-\frac{u-b}{a}\right).
\]
Since $w=\frac{z-b}{a}$, we see that a set $S$ is $T_{CH}$-invariant for $T$
if and only if $\hat{S}=\{\frac{z-b}{a}:z\in S\}$ is $\hat{T}_{CH}$-invariant where
\[
\hat{T} = \frac{1}{a}Q(aw+b)\frac{d}{dw}+P(aw+b).
\] 
As stated, here we obtain $\hat{R}(w)=\frac{Q(aw+b)}{a(P(aw+b))}.$ 
\end{remark}

\subsection{Implicit characterization of $\minvset{CH}$ and $1$-point generated  sets}
  For any operator $T$ with non-constant coefficients, let us now provide 
a general implicit description of the minimal $T_{CH}$-invariant set $\minvset{CH}$ in spirit of complex dynamics.

\begin{lemma}\label{lm:trivial}

For any operator $T$ given by \eqref{eq:1st} with non-constant $Q(z)$ or non-constant $P(z)$,
its minimal $T_{CH}$-invariant set $\minvset{CH}$ is the $T_{CH}$-extension of any  point belonging to $\minvset{CH}$.  
\end{lemma}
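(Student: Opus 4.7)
The plan is to deduce the identity $\TTT(\omega) = \minvset{CH}$ from the minimality of $\minvset{CH}$ together with the $T_{CH}$-invariance of the extension $\TTT(\omega)$, which was already established in \cref{lm:simple}(i). Since $Q$ is non-constant, \cref{le:zerosPQ} guarantees that $\minvset{CH}$ is non-empty (it contains at least the zeros of $Q$), so picking $\omega \in \minvset{CH}$ makes sense.

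First I would prove the inclusion $\TTT(\omega) \subseteq \minvset{CH}$ by induction on the iterative construction of the $T_{CH}$-extension. The base case is $\Omega_0 = \{\omega\} \subseteq \minvset{CH}$ by the choice of $\omega$. For the inductive step, assume $\Omega_{j-1} \subseteq \minvset{CH}$. Since $\minvset{CH}$ is $T_{CH}$-invariant, for every $u \in \Omega_{j-1} \subseteq \minvset{CH}$ the root trail $\ttt_u$ lies in $\minvset{CH}$, so
\[
\Omega_j = \bigcup_{u \in \Omega_{j-1}} \ttt_u \subseteq \minvset{CH}.
\]
Hence $\bigcup_{j \geq 0} \Omega_j \subseteq \minvset{CH}$, and because $\minvset{CH}$ is closed by definition, taking the closure yields $\TTT(\omega) \subseteq \minvset{CH}$.

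For the reverse inclusion I would invoke minimality directly. By \cref{lm:simple}(i), the set $\TTT(\omega)$ is itself $T_{CH}$-invariant and (being non-empty as it contains $\omega$) qualifies as a $T_{CH}$-invariant set. The minimality of $\minvset{CH}$ under inclusion, established in \cref{prop:basic}(ii), then forces $\minvset{CH} \subseteq \TTT(\omega)$. Combining the two inclusions gives $\TTT(\omega) = \minvset{CH}$, as desired.

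I do not expect a genuine obstacle here: the argument is essentially formal once the existence of $\minvset{CH}$ and the invariance of $\TTT(\omega)$ are in hand. The only subtle point worth double-checking is compatibility with the closure operation in the definition of $\TTT$, namely that $\minvset{CH}$ is closed so that $\overline{\bigcup_j \Omega_j}$ still embeds in it; this is ensured because $T_{CH}$-invariant sets are closed by \cref{def:regularSets}-style convention built into the definition of $T_{CH}$-invariance itself.
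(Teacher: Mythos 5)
Your proof is correct and follows essentially the same route as the paper: both arguments rest on the observation that $\TTT(\omega)\subseteq \minvset{CH}$ for any $\omega\in\minvset{CH}$ (by invariance and closedness of $\minvset{CH}$) while $\TTT(\omega)$ is itself $T_{CH}$-invariant by \cref{lm:simple}(i), so minimality forces equality. The paper phrases this as a one-line contradiction whereas you spell out the two inclusions explicitly, but the content is identical; the only blemish is your citation of \cref{def:regularSets} for the closedness of invariant sets, which is built into the definition of $T_{CH}$-invariance itself rather than that definition.
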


\begin{proof}

Indeed if there is a point $u\in \minvset{CH}$
whose $T_{CH}$-extension $\TTT(u)$, which is $T_{CH}$-invariant by (i) of \cref{lm:simple}, is strictly contained in $\minvset{CH}$
then would not be the minimal under inclusion $T_{CH}$-invariant set.  
\end{proof}

Using the above definitions we obtain the following. 

\begin{corollary}\label{cor:simple}
For any operator $T$
given by \eqref{eq:1st} such that  $\deg Q(z)\ge 1$,
the minimal $T_{CH}$-invariant set $\minvset{CH}$ is the $T_{CH}$-extension of
an arbitrary root of $Q(z)$. Similarly, if $\deg P(z)\ge 1$ 
then $\minvset{CH}$ is the $T_{CH}$-extension of any root of $P(z)$.
\end{corollary}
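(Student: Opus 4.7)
The plan is straightforward: combine \cref{le:zerosPQ} with \cref{lm:trivial}. The first of these guarantees that every root of $P$ and $Q$ lies in every $T_{CH}$-invariant set, while the second characterises $\minvset{CH}$ as the $T_{CH}$-extension of any of its own points. Taking a root to play the role of ``any point'' will yield the corollary essentially immediately.

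In more detail, I would proceed in two symmetric steps. First, fix a root $\alpha$ of $Q$; such an $\alpha$ exists because $\deg Q\ge 1$. By \cref{le:zerosPQ}, $\alpha$ belongs to every $T_{CH}$-invariant set, in particular $\alpha\in\minvset{CH}$. Since $Q$ is non-constant, the hypothesis of \cref{lm:trivial} is satisfied and we conclude $\minvset{CH}=\TTT(\alpha)$, which is the first claim. For the second statement, replace $\alpha$ by a root $\beta$ of $P$, which exists because $\deg P\ge 1$; again $\beta\in\minvset{CH}$ by \cref{le:zerosPQ}, and we need to conclude $\minvset{CH}=\TTT(\beta)$.

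The only mild point to address is that \cref{lm:trivial} is stated under the hypothesis that $Q$ is non-constant, so in the case $\deg P\ge 1$ with $Q$ constant one must verify that the same conclusion is still available. This is not a genuine obstacle: inspecting the reasoning behind \cref{lm:trivial}---namely, that $\TTT(u)$ is itself $T_{CH}$-invariant by \cref{lm:simple}\,(i), whence any strict inclusion $\TTT(u)\subsetneq\minvset{CH}$ for $u\in\minvset{CH}$ would contradict minimality---one sees that the degree of $Q$ is never actually used. All that is needed is the existence of $\minvset{CH}$, which in the second case is guaranteed by \cref{prop:basic}\,(ii) because $P$ is non-constant. Consequently the same one-line minimality argument applies verbatim in both cases, and the corollary follows. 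There is no substantial obstacle here; the work is entirely in assembling the earlier lemmas in the right order.
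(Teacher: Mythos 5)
Your proof is correct and follows essentially the same route as the paper, which simply cites \cref{lm:trivial} together with \cref{prop:basic}\,(i). Your extra remark about the hypothesis of \cref{lm:trivial} (which is stated only for non-constant $Q$) is a point the paper glosses over, and your resolution of it is the right one.
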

\begin{proof}
The statement follows immediately from \cref{lm:trivial} and item (i) of \cref{prop:basic}. 
\end{proof}

\cref{lm:trivial} and Corollary~\ref{cor:simple} show that $\minvset{CH}$ is an example a of  
\defin{$1$-point generated $T_{CH}$-invariant set}  which, by definition, is the $T_{CH}$-extension of a single point $u\in \bC$. 
Moreover $\minvset{CH}$ is generated by any of its points which is not true for more general $1$-point generated $T_{CH}$-invariant sets. 

Nevertheless these sets are ``building blocks" of arbitrary $T_{CH}$-invariant sets.  
Namely,  by item (iii) of \cref{lm:simple} any $T_{CH}$-invariant set is (the closure of) 
a union of some collection of $1$-point generated sets. 

Thus they play a special role in our set-up and unless $\minvset{CH}=\bC$ there exist $1$-point
generated sets different from  $\minvset{CH}$. 
Another natural application of $1$-point generated sets is related to the following claim. 

\begin{theorem}\label{th:reduc} Assume that polynomials $Q(z)$ and $P(z)$ have a common factor  $U(z)$ with roots $z_1,\dots, z_\ell$ so that $Q(z)=\widetilde Q(z) U(z)$ and $P(z)=\widetilde P(z) U(z)$. Also assume that neither $Q(z)$ nor $P(z)$ vanish identically and at least one of them is non-constant. Then the minimal invariant set $\minvset{CH}$ equals the union of $1$-point generated $\widetilde T_{CH}$-invariant sets $S_1, S_2,\dots, S_\ell$ generated by $z_1, z_2,\dots, z_\ell$ respectively, where  $\widetilde T=\widetilde Q(z)\frac{d}{dz}+ \widetilde P(z)$.
\end{theorem}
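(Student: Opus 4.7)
The plan is to reduce $T_{CH}$-invariance to $\widetilde{T}_{CH}$-invariance via an algebraic factorization, and then to combine this reduction with the fact that $\minvset{CH}$ is the smallest closed $T_{CH}$-invariant set.

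First, I would record the identity
\[
t\, Q(z) + (z-u)\, P(z) \;=\; U(z)\bigl[\, t\, \widetilde{Q}(z) + (z-u)\, \widetilde{P}(z)\,\bigr],
\]
which is immediate from $Q = \widetilde{Q}U$, $P = \widetilde{P}U$. It follows that for every $u\in\bC$ and every $t\ge 0$, the zero set (in $z$) of \eqref{eq:main} for $T$ equals $\{z_1,\dots,z_\ell\}$ together with the zero set of \eqref{eq:main} for $\widetilde{T}$, and moreover the two sides vanish identically in $z$ for exactly the same pairs $(u,t)$ (since $U\not\equiv 0$). From this I would extract the \emph{bookkeeping principle}: a closed subset $S\subseteq\bC$ containing $\{z_1,\dots,z_\ell\}$ is $T_{CH}$-invariant if and only if it is $\widetilde{T}_{CH}$-invariant.

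Granting this principle, the theorem follows by a short double-inclusion. Set $S := S_1\cup\cdots\cup S_\ell$. Each $S_i$ is closed and $\widetilde{T}_{CH}$-invariant by \cref{lm:simple}(i), and contains its generator $z_i$; iterating \cref{lm:simple}(iii) shows that $S$ is closed, $\widetilde{T}_{CH}$-invariant, and contains $\{z_1,\dots,z_\ell\}$. By the bookkeeping principle $S$ is also $T_{CH}$-invariant, so minimality of $\minvset{CH}$ forces $\minvset{CH}\subseteq S$. For the reverse inclusion, \cref{prop:basic}(i) gives $\{z_1,\dots,z_\ell\}\subseteq \minvset{CH}$, since each $z_i$ is a common root of $P$ and $Q$; the bookkeeping principle then upgrades $T_{CH}$-invariance of $\minvset{CH}$ to $\widetilde{T}_{CH}$-invariance, and a straightforward induction on the iteration defining $S_i$ — using only that $\minvset{CH}$ is closed, contains $z_i$, and is $\widetilde{T}_{CH}$-invariant — yields $S_i\subseteq \minvset{CH}$ for every $i$, hence $S\subseteq\minvset{CH}$.

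I do not anticipate a serious obstacle. The one place that merits care is the ``vanishes identically'' clause in the definition of $T_{CH}$-invariance, which must transport cleanly through the factor $U(z)$; this works because $U\not\equiv 0$, so $U(z)[t\widetilde{Q}(z)+(z-u)\widetilde{P}(z)]\equiv 0$ as a polynomial in $z$ exactly when the bracket does. A secondary subtlety is that the theorem places no hypothesis guaranteeing that $\widetilde{P}$ or $\widetilde{Q}$ is non-constant, so a minimal $\widetilde{T}_{CH}$-invariant set for $\widetilde{T}$ need not exist. This is harmless for the proof: the one-point generated sets $S_i$ are defined directly as closures of iterated root-trails under $\widetilde{T}$ and are well-defined regardless, so the argument never needs to invoke the existence of a $\widetilde{T}$-analogue of $\minvset{CH}$.
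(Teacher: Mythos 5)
Your proof is correct and follows essentially the same route as the paper: the factorization $tQ+(z-u)P = U\cdot\bigl(t\widetilde{Q}+(z-u)\widetilde{P}\bigr)$ showing that $T_{CH}$-invariance and $\widetilde{T}_{CH}$-invariance coincide for closed sets containing the roots of $U$, combined with minimality of $\minvset{CH}$. The paper states this more tersely, while you usefully make explicit the double inclusion via \cref{lm:simple} and the harmlessness of the ``vanishes identically'' clause, but the underlying argument is identical.
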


\begin{proof} 
Indeed, any $T_{CH}$-invariant set must contain all roots of $Q(z)$ and $P(z)$ and, in particular, all roots of $U(z)$. Equation~\ref{eq:main} factorizes as 
\[
U(z)(t \tilde Q(z) +(z-u) \tilde P(z))=0
\]
which means that any $T_{CH}$-invariant set  is in fact a $\widetilde T_{CH}$-invariant containing all roots of $U(z)$ and vice versa. Thus  $\minvset{CH}=S_1\cup S_2\cup \dots \cup S_\ell$. 
\end{proof} 

\begin{remark} Notice that Theorem~\ref{th:reduc} describes  $\minvset{CH}$ not as a $T_{CH}$-extension but as a $\tilde T_{CH}$-extension and that Lemma~\ref{lm:trivial}  holds both in the case when $Q$ and $P$ are coprime and when they have a common factor. 
\end{remark} 

\subsection{Trivial special cases}\label{ssec:trivial}

For the sake of completeness we discuss here some very degenerate and  trivial cases which will be exceptional from the of view of  our  general framework and results described later.  

\subsubsection{Case of operators with constant coefficients}\label{ssec:constant}\label{subsublinear}  
In \cref{sub:exist} we have shown the existence of $\minvset{CH}$ for any operator $T$ given by \eqref{eq:1st} with at least one non-constant coefficient. Let us separately cover the case $T=\al \frac{d}{dz}+\be$ where $\al$ and $\be$ are non-vanishing complex numbers. 

\begin{definition} Given a complex number $\xi\neq 0$, we say that a set $S\subset \setC$ is \emph{closed in direction $\xi$} if for any point $p\in S$, the ray $p+t \xi$ where $t$ runs over non-negative numbers belongs to $S$. 
\end{definition}

\begin{lemma} \label{lm:const} Given an operator $T=\al \frac{d}{dz}+\be$ where $\al$ and $\be$ are non-vanishing complex numbers, a closed set $S\subset \setC$ is $T_{CH}$-invariant if and only if 
$S$ is closed in direction $\xi=-\frac{\al}{\be}$. 
\end{lemma}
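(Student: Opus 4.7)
The plan is to proceed by a direct computation, since the operator is so simple that the root trail of each point becomes an explicit ray.

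First I would apply $T$ to the test function $(z-u)^t$ and factor:
\begin{equation*}
T[(z-u)^t] = \alpha t (z-u)^{t-1} + \beta (z-u)^t = (z-u)^{t-1}\bigl(\alpha t + \beta(z-u)\bigr).
\end{equation*}
Since $\beta \neq 0$, the unique zero of the nontrivial linear factor (as a function of $z$) is
\begin{equation*}
z(t,u) = u - \frac{\alpha}{\beta} t = u + t\xi, \qquad \xi = -\frac{\alpha}{\beta}.
\end{equation*}
So as $t$ ranges over $[0,+\infty)$, the root trail $\ttt_u$ is exactly the closed ray $\{u + t\xi : t \geq 0\}$ emanating from $u$ in direction $\xi$ (the factor $(z-u)^{t-1}$ contributes no new zeros in the $T_{CH}$-invariance test, as it vanishes identically as a function when combined, and at $t=0$ the trail just contains $u$ itself).

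Next, I would unwind the definitions. By definition, $S$ is $T_{CH}$-invariant if and only if $\ttt_u \subset S$ for every $u \in S$, which in view of the computation above is equivalent to $u + t\xi \in S$ for all $u \in S$ and all $t \geq 0$; this is exactly the condition that $S$ is closed in direction $\xi$. The closedness of $S$ as a subset of $\setC$ is assumed on both sides of the equivalence, so no further topological check is needed.

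There is no real obstacle here: the constant-coefficient case collapses the root trail to a single explicit ray, so the equivalence is immediate. The only mild care is in noting that the prefactor $(z-u)^{t-1}$ does not introduce additional roots for $t \geq 0$ (for $t=0$ the trail is $\{u\} \subset S$ automatically, and for $t>0$ the only contribution to the root set in the sense of the $T_{CH}$ definition is $z = u + t\xi$). The lemma then follows.
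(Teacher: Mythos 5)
Your proposal is correct and follows essentially the same route as the paper: reduce equation \eqref{eq:main} to $\alpha t+\beta(z-u)=0$, observe that the root trail of $u$ is the closed ray $u+t\xi$ with $\xi=-\frac{\alpha}{\beta}$, and identify $T_{CH}$-invariance with closedness in direction $\xi$. Your remarks on the prefactor $(z-u)^{t-1}$ and on the two directions of the equivalence are harmless extra care beyond the paper's terser argument.
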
 
\begin{proof} For any $T$ as above, equation~\eqref{eq:main} takes the form $\al t +\be (z-u)=0$. Therefore if $S$ is $T_{CH}$-invariant and $u\in S$, then  $z(t)=u-\frac{\al}{\be}t$ belongs to $S$ for all non-negative $t$. The latter condition coincides with the requirement that $S$ is closed in direction $\xi=-\frac{\al}{\be}$. 
\end{proof} 

In particular, in the above notation the $T_{CH}$-invariant set generated by a point $u\in \bC$ coincides with the closed ray starting at $u$ and having its direction vector equal to  $\xi$.

\subsubsection{Case when either $P$ or $Q$ vanish identically}
Note that for constant coefficients, using the notation of \S~\ref{subsublinear},  if at least one of $\al$ and $\be$ are equal to zero, then any closed subset of $\setC$ is $T_{CH}$-invariant. If however $P\equiv 0$ and $\deg Q\ge 1$, then a set is $T_{CH}$-invariant if and only if it contains the roots $Q$. In the same vein, if $Q\equiv 0$ and $\deg P\ge 1$, then a set is $T_{CH}$-invariant if and only if it contains the roots of $P$.

\subsubsection{Case when $Q=\alpha(z-z_0),P=-\delta\alpha$ and $\delta>0$}\label{ssec:special}
In this very special situation, set $u=z_0$ and consider the roots of 
\[t\alpha(z-z_0)-\delta\alpha(z-u)=0 \iff  (t-\delta)(z-z_0)=0.\]
When $t=\delta$, it is not immediately clear what the roots of this equation are supposed to be. In order to make this case consistent with the others and comply with \cref{th:charact} below, we define the roots of the equation to be the whole of $\bC$ so that $\minvset{CH}=\bC$.

\medskip
\emph{In what follows (unless explicitly mentioned) we shall  assume that $Q$ and $P$ have no common roots and that $T$ is not as in \cref{ssec:constant}--\ref{ssec:special}.}

\section{Root trails, associated rays and explicit criterion of $T_{CH}$-invariance} \label{sec:roottrails} 

In this section we introduce a number of technical tools to be used throughout the paper.   
\begin{definition}
For an operator $T$ given by \eqref{eq:1st}, a complex number $u$ and $t> 0$,
we call by the  \defin{root divisor} $\ttt_u(t)$ of the pair $(u,t)$ the set of all solutions of  \eqref{eq:main} considered in $\setRS=\bC P^1$ and by the  \defin{root trail} $\ttt_u$ of $u$ the
closure  in $\setRS$ of the union $\cup_{t> 0}\ttt_u(t)$. 
\end{definition} 

Observe that for any complex number $u$, the root divisor $\ttt_u(0)$ which we call the \defin{initial divisor} contains $u$ and 
 the zero locus of $P(z)$ (together with  $\infty\in \setRS$ if $\deg Q> \deg P+1$). Further,   the root divisor $\ttt_u(\infty)=\lim_{t\to +\infty}\ttt_u(t)$ which we call the \defin{final divisor}  contains $Q(z)$ (together with  $\infty\in \setRS$ if $\deg P\ge \deg Q$).

\begin{definition}
We say that a  number $u\in \bC$ is \defin{$T$-generic}  if for all $t>0$, 
the root divisors $\ttt_u(t)\subset \setRS$ are simple, i.e., have no multiple roots. 
By definition, for a generic $u$, its \defin{open trail} $\ttt_u^\circ=\cup_{t>0}\ttt_u(t)$ splits into $\max(\deg Q, \deg P+1)\coloneqq N$ smooth non-intersecting connected components which we call \defin{$t$-trajectories} of $u$. (Notice that by our assumption $Q$ and $P$ are coprime.) 
\end{definition} 

\begin{definition}
For any fixed $u$, we call by a \defin{$t$-trace} a continuous  function $\gamma_u(t): [0, +\infty)\to \setRS$ such that $\gamma_u(t)$ solves \eqref{eq:main} for each $t\geq 0$. The notation $\gamma(t)$ will also be used and we say that a $t$-trace $\gamma(t)$ corresponds to  $u\in \bC$ if it solves \eqref{eq:main}.
\end{definition}

\begin{figure}[!ht]
\begin{center}
 \includegraphics[width=0.7\textwidth]{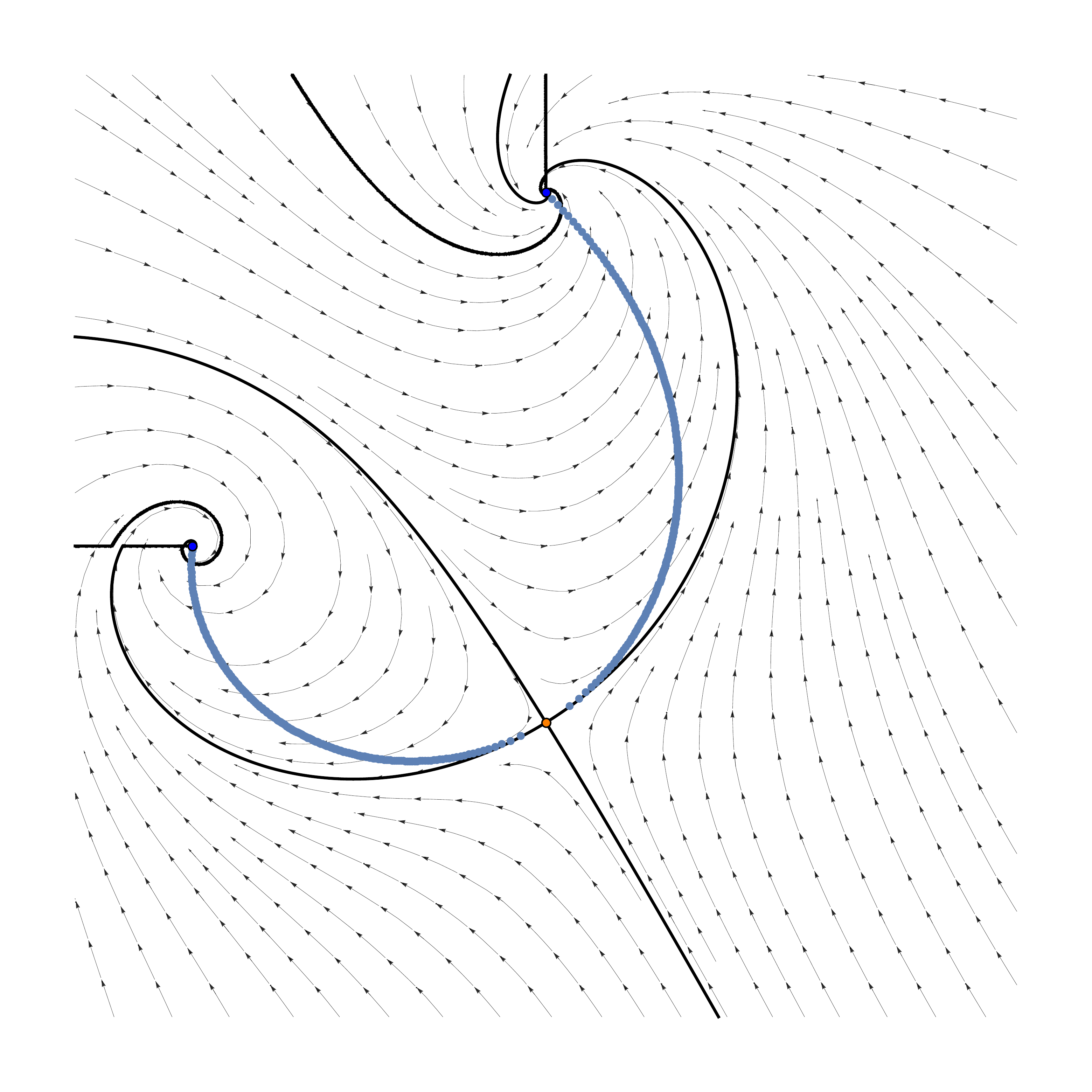}
\end{center}
\caption{For $Q(z) = (z + 1) (z - i)$, $P(z)=  2 z + i$, 
we show a zero $z_0 = -i/2$ of $P(z)$, 
two separatrices (see \cref{def:septrix} for a definition) emerging from $z_0$ (black, thin),
and $t$-trajectories originating at $z_0$ (blue, thick).
}
\end{figure}

Below we will describe the set of $T$-non-generic complex numbers $u$ and the subset of $\bC$ where such $\ttt_u$ are non-generic. It is somewhat surprising that this set 
is a part of the so-called \emph {curve  $\infl_R$ of inflection points} of the vector field $R{ \partial_z}$ where $R(z)=\frac{Q(z)}{P(z)}$,  see \S~\ref{ssec:inflections}.

\subsection{Non-generic root trails}

Interpreting $t$ as time we will show that $t$-traces
form a time-dependent flow in $\setC$ and we will find explicitly the singular time-dependent vector field in the space $\setC\times \bR_{\ge 0}$ which generates  this flow. The time-dependent singularities of this field will be closely related to non-generic root trails, i.e., those which do not be split into separate $t$-trajectories over the half line $t>0$. 

For the next result and throughout the text we will use the notation $\zeros(f)$ for the set of zeros of the function $f$, with the convention that if $f\equiv 0$, $\zeros(f)=\emptyset$. (To fully understand the result below our readers need to take a brief look at \S~\ref{ssec:inflections} which introduces and describes  the plane curve consisting of inflection points of trajectories of the  vector field given by an arbitrary analytic function in $\bC$.) 

\begin{lemma}\label{le:genericu}
{\rm (i)} For any operator $T$ given by \eqref{eq:1st},   a  starting
point $u\in \bC $ is generic, i.e.,  its  root divisors $\ttt_u(t)$ are simple for all positive $t$, if and only if the rational function $\Psi_u(z)=\frac{(u-z)P(z)}{Q(z)}$ has no positive  critical values.

\noindent 
{\rm (ii)} The set  $\Theta_T\subset \bC$  consisting of points $z_0$ such that  there exists $u\in \bC$ for which $z_0$ is a critical point of $\Psi_u$ having a positive critical value coincides with the negative part of the curve of inflections, i.e. with $\infl_R^-\subset \bC$. 

\noindent 
{\rm (iii)}  The set $\theta_T \subset \bC$ consisting of all $T$-non-generic $u$ is the image of $\infl_R^-\subset \bC$ under the mapping $z\mapsto u$ given by $u=z+\frac{PQ}{P^\prime Q-Q^\prime P}$. 
\end{lemma}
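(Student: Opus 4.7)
The starting point is the observation that, where $Q(z)\neq 0$, equation \eqref{eq:main} is equivalent to $\Psi_u(z)=t$; moreover, for $t>0$ a solution cannot occur at a zero of $Q$ since $P,Q$ are coprime. Hence for every $t>0$ the root divisor $\ttt_u(t)$ is precisely the fibre $\Psi_u^{-1}(t)$ of the rational map $\Psi_u\colon \setRS\to\setRS$, and a multiple root at $z_0$ is exactly a critical point of $\Psi_u$ with critical value $t$. Part (i) is then an immediate reformulation: $u$ is $T$-generic iff no critical value of $\Psi_u$ lies in $(0,+\infty)$.

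To establish (ii) and (iii), I would rewrite $\Psi_u(z)=(u-z)/R(z)$ using $R=Q/P$, which yields
\[
 \Psi_u'(z) = -\frac{1}{R(z)} - \frac{(u-z)R'(z)}{R(z)^2}.
\]
Solving $\Psi_u'(z_0)=0$ for $u$ gives
\[
 u = z_0 - \frac{R(z_0)}{R'(z_0)} = z_0 + \frac{P(z_0)Q(z_0)}{P'(z_0)Q(z_0)-P(z_0)Q'(z_0)},
\]
where the second equality uses the identity $P'Q-Q'P=-P^{2}R'$. This is precisely the map appearing in (iii). Substituting back, the corresponding critical value is
\[
 \Psi_u(z_0) = \frac{u-z_0}{R(z_0)} = -\frac{1}{R'(z_0)},
\]
which is a positive real number if and only if $R'(z_0)$ is a negative real number.

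It then remains to identify the locus $\{z_0\in\bC : R'(z_0)\in\bR_{<0}\}$ with $\infl_R^-$ via the appendix. Using that the signed curvature of a trajectory of $R(z)\partial_z$ at $z$ is proportional to $\Im R'(z)$, the curve of inflections $\infl_R$ coincides with $\{\Im R'=0\}$, and its negative part $\infl_R^-$ is by definition the sublocus on which additionally $\Re R'<0$. This yields $\Theta_T=\infl_R^-$ (part (ii)), and postcomposition with $z\mapsto z-R(z)/R'(z)$ gives $\theta_T=u(\infl_R^-)$ (part (iii)). The main obstacle I anticipate is not the core calculation but rather the careful bookkeeping of the degenerate loci — points where $P$, $Q$, or $R'$ vanish — at which the displayed formulas must be interpreted by continuity (or by passing to $\setRS$ and tracking multiplicities), together with matching these interpretations to the appendix's precise definition of $\infl_R^-$.
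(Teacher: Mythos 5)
Your proposal is correct and follows essentially the same route as the paper's proof: both reformulate \eqref{eq:main} as $t=\Psi_u(z)$ so that multiple roots become critical points of $\Psi_u$ with positive critical value, both solve $\Psi_u'(z_0)=0$ for $u$ to get $u=z_0+\frac{PQ}{P'Q-Q'P}$, compute the critical value $-1/R'(z_0)$, and identify the locus $R'<0$ with $\infl_R^-$ via the appendix. The only cosmetic difference is that you differentiate $(u-z)/R(z)$ directly while the paper works with $\rho=1/R$; the degenerate-locus bookkeeping you flag is likewise left implicit in the paper.
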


\begin{proof} To settle (i), 
notice that for a fixed starting point $u$, its root divisor $\ttt_u(t)$ is given by the equation \eqref{eq:main} which is 
 equivalent to 
\begin{equation}\label{eq:t}
t=\frac{(u-z)P(z)}{Q(z)}.
\end{equation}
Thus for $u$ fixed, the roots of the latter equation (w.r.t the variable $z$)
for distinct values of $t$ belong to different
level sets of the rational function $\frac{(u-z)P(z)}{Q(z)}$ and therefore
are necessarily disjoint. It might however happen that for a fixed $t_0$, 
some of the roots of \eqref{eq:t} are multiple which corresponds to the
case when $t_0$ is a critical value of $\frac{(u-z)P(z)}{Q(z)}$.
This occurs, for example, for $t=0$ if $u$ is chosen to be a root of $P(z)$.
If, for a given $u$, $t>0$ is never a critical value of 
$\frac{(u-z)P(z)}{Q(z)}$ then $\ttt_u^\circ$ is not self-intersecting implying that $u$ is generic. Item (i) follows.

To settle (ii), notice that for $u$ fixed, 
\[
\diffz\Psi_u(z)=-\frac{P}{Q}+(u-z)\left( \frac{P}{Q} \right)^\prime, 
\]
where the symbol $^\prime$ means that the derivative is taken w.r.t. $z$. The critical points of $\Psi_u(z)$ satisfy the equation 
\[
u\rho^\prime=z\rho^\prime+\rho \iff u=z+\frac{\rho}{\rho^\prime}=z+\frac{PQ}{P^\prime Q- Q^\prime P},
\]
where $\rho(z)=\frac{P(z)}{Q(z)}=\frac{1}{R(z)}$. Thus if we  fix $z_0$ and want to find $u_0$ such that $z_0$ is 
a critical point of $\Psi_{u_0}(z)$ then we should take $u_0=z_0+\frac{\rho(z_0)}{\rho^\prime(z_0)}$.
Let us now calculate the critical value of  $\Psi_{u_0}(z)$  at $z_0$. We get 
\[
\Psi_{u_0}(z_0)=(u_0-z_0)\rho(z_0)=\frac{\rho(z_0)}{\rho^\prime(z_0)}\rho(z_0)=
\frac{P^2(z_0)}{P^\prime (z_0) Q(z_0)- Q^\prime (z_0)P(z_0)}=- \frac{1}{R^\prime(z_0)}.
\]
The requirement that  $\Psi_{u_0}(z_0)$ is positive is equivalent 
to the requirement that $R^\prime(z_0)$ is negative, which by \cref{lm:infl} defines the negative part of the curve of inflections $\infl_R^-$.

\smallskip
Finally, to settle (iii), we already noted that for a given $z_0$, to make it  a critical point of $\Psi_{u_0}(z)$ one 
should take $u_0=z_0+\frac{\rho(z_0)}{\rho^\prime(z_0)}$. 
Thus the set $\theta_T$ of all non-generic $u$ is obtained from  the set $\Theta_T$ under the latter mapping.
\end{proof} 

\begin{remark} 
Notice that if $Q(z)$ has a multiple root then for any choice of $u$,
$\ttt_u(\infty)$ has a multiple root. 
If all roots of $P(z)$ are simple then for any $u$ which is not a root of $P(z)$,
$\ttt_u(0)$ has no multiple roots. 
(If $\deg Q>\deg P+1$ then $\deg Q - \deg P -1$ roots of $\ttt_u$ will be coming from $\infty\in \setRS$.)
\end{remark}

Observe that for any pair $(z_0,t_0)$,  where $z_0$ is an arbitrary complex number different 
from a root of $P(z)$ and $t_0> 0$, there exists a unique $u_0$ such that  $\gamma_{u_0}(,t_0)=z_0$. Indeed if $z_0 $ is not a root of $P(z)$, then
the starting point $u_0$ is given by  $z_0+t_0R(z_0)$. (This circumstance will be very important later when we introduce the notion of associated rays). 
In other words, equation~\eqref{eq:main} can be solved for $u$ in case $t_0>0$ and
 $z_0$ different from any root of $P(z)$ and it can not be solved for $u$ 
in case $t>0$ and $z_0$ being a root of $P(z)$. 
For $t=0$, equation \eqref{eq:main} can be solved for $u$ for all $z_0$ including the roots of $P(z)$. 
\begin{remark}
Notice that the root divisors $\ttt_{u_1}(t), \ttt_{u_2}(t)$ of two starting points $u_1\neq u_2$ can be such that $\bC\supset \left(\ttt_{u_1}(t_1)\cap\ttt_{u_2}(t_2)\right)\neq \emptyset$ with either $t_1>0$ or $t_2>0$, but only if $t_1\neq t_2$.
\end{remark} 
The following result holds.

\begin{proposition} 
Given $u\in \bC$, its root trail $\ttt_{u}$ is (the closure in $\setRS$ of) the real semi-algebraic curve given by
\begin{equation}
  \Im\left( P(z) \overline{Q(z)} (z-u) \right) =0, \qquad 
  \Re\left( P(z) \overline{Q(z)} (z-u) \right) \leq 0.
\end{equation}
\end{proposition}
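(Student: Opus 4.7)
The plan is to solve equation~\eqref{eq:main} for the variable $t$ and translate the condition that the resulting value is real and positive into the two real conditions in the statement. For any $z\in\bC$ with $Q(z)\neq 0$, the linear equation $tQ(z)+(z-u)P(z)=0$ in $t$ has the unique solution $t=-\frac{(z-u)P(z)}{Q(z)}$, so $z$ belongs to the open trail $\ttt_u^\circ=\bigcup_{t>0}\ttt_u(t)$ if and only if this quantity is a positive real number.

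Next, I would multiply numerator and denominator by $\overline{Q(z)}$ to rewrite the formula as $t=-\frac{P(z)\overline{Q(z)}(z-u)}{|Q(z)|^2}$. Since $|Q(z)|^2>0$, positivity of $t$ is equivalent to the vanishing of $\Im\bigl(P(z)\overline{Q(z)}(z-u)\bigr)$ together with strict negativity of $\Re\bigl(P(z)\overline{Q(z)}(z-u)\bigr)$. Hence the portion of $\ttt_u^\circ$ contained in $\{Q(z)\neq 0\}$ is exactly the open semi-algebraic curve cut out by $\Im=0$ and $\Re<0$, and what remains is to compare the closure of this open piece with the closed semi-algebraic set $S$ defined by the non-strict inequalities in the statement.

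To this end I would argue that $S$, being closed in $\bC$, automatically contains the closure of $\ttt_u^\circ$. For the reverse inclusion, any point of $S$ outside the open trail must satisfy both $\Im=0$ and $\Re=0$, i.e.\ $(z-u)P(z)\overline{Q(z)}=0$, so it must be $u$ itself, a zero of $P$, or a zero of $Q$. By continuity of the roots of a polynomial in its coefficients, the first two types are limits of the open trail as $t\to 0^+$ (they are members of the initial divisor $\ttt_u(0)$) and the third is a limit as $t\to+\infty$, so all three lie in the closure. Passing from $\bC$ to $\setRS$ just adjoins $\infty$ precisely when the semi-algebraic curve is unbounded, which by the same root-continuity reasoning happens exactly when the root trail itself escapes to infinity as $t\to 0^+$ or $t\to+\infty$. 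The main obstacle I foresee is purely bookkeeping: checking that each boundary point of the semi-algebraic set is matched by an endpoint of the trail at $t\in\{0,+\infty\}$ or by $\infty\in\setRS$, particularly handling the case $\deg P\geq\deg Q$ in which some solutions of \eqref{eq:main} do tend to $\infty$ as $t\to 0^+$.
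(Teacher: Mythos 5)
Your argument is correct and follows essentially the same route as the paper's proof: solve \eqref{eq:main} for $t$, multiply by $\overline{Q(z)}$, and read off the reality and sign conditions from $t=-P(z)\overline{Q(z)}(z-u)/|Q(z)|^2$. The additional bookkeeping you supply about matching the boundary locus $\{\Re=0\}$ with the initial and final divisors and the point at infinity is a sound refinement of details the paper's (very terse) proof leaves implicit.
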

\begin{proof}
Suppose $t Q(z) + (z-u)P(z) =0$. 
Multiplying both sides by $\overline{Q(z)}$ and solving for $t$ gives
\[
 t   = -\frac{P(z)\overline{Q(z)}(z-u)}{ |Q(z)|^2 }.
\]
This expression must be real, so we get the first condition. Moreover,
if we want $t \geq 0$, we need the second condition.
\end{proof}

\subsection{Time-dependent vector field} 

\begin{theorem}\label{th:addit}
For any operator $T$ given by \eqref{eq:1st}, the following claims are valid.
\smallskip
\noindent
{\rm (i)} The $t$-traces are trajectories of the time-dependent vector field $V(z,t)\partial_z$, where
\begin{equation}\label{eq:field}
	V(z, t)=-\frac{R(z)}{t R^\prime(z)+1}.
\end{equation}

\noindent{\rm (ii)} If $P(z)$ and $Q(z)$ are coprime, then for $t>0$  the roots of $P(z)$  are repelling zeros of the vector field \eqref{eq:field} while the multiple zeroes of $Q(z)$ are  zeroes of \eqref{eq:field} of the same multiplicity. Moreover,  simple zeros of $Q$ are attracting simple zeros of  \eqref{eq:field} for $t\gg 1$. 

\noindent{\rm (iii)} In addition to the above time-independent zeros,   the vector field \eqref{eq:field} has moving poles given by the equation $R^\prime(z)=-\frac{1}{t}$.  When $t\to 0^+$, for any root $z_0$ of $P$ of multiplicity $n$ there are $n+1$  moving poles tending  to $z_0$  giving at the limit $t=0$, together with the simple zero of $V$ at $z_0$, a pole of order $n$. 
\end{theorem}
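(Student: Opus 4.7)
The overall plan is to derive the vector field in (i) by implicit differentiation of the defining relation, and then to handle (ii) and (iii) through local expansions of $V$ near zeros of $P$ and $Q$ and through examining the locus $1 + tR'(z) = 0$ as $t \to 0^+$. For (i), I would begin from the equivalent form $u = \gamma(t) + tR(\gamma(t))$ of \eqref{eq:main}. Holding $u$ fixed and differentiating in $t$ yields
\[
0 = \gamma'(t) + R(\gamma(t)) + tR'(\gamma(t))\,\gamma'(t),
\]
and isolating $\gamma'(t)$ gives the claimed formula. This shows that each $t$-trace is an integral curve of $V(z,t)\partial_z$.

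For (ii), near a root $z_0$ of $P$ of multiplicity $k$ (with $Q(z_0) \neq 0$ by coprimality), I would write $R(z) = c(z-z_0)^{-k}(1 + O(z-z_0))$ so that $R'(z) = -kc(z-z_0)^{-k-1}(1 + O(z-z_0))$. Multiplying numerator and denominator of $V(z,t)$ by $(z-z_0)^{k+1}$ and letting $z \to z_0$ for fixed $t > 0$, I obtain
\[
V(z,t) = \frac{z - z_0}{kt} + O\bigl((z-z_0)^2\bigr),
\]
a simple zero whose linear coefficient $1/(kt)$ is real and positive, so the flow at $z_0$ is repelling. For a zero $z_0$ of $Q$ of multiplicity $m$ (with $P(z_0) \neq 0$), $R$ vanishes to order $m$ and $R'$ to order $m-1$; when $m \geq 2$, the denominator $1 + tR'(z)$ is holomorphic and nonzero at $z_0$, so $V$ inherits a zero of multiplicity exactly $m$; when $m = 1$, writing $R(z) \sim b(z-z_0)$ gives $V(z,t) \sim -b(z-z_0)/(1 + tb)$ near $z_0$, and for $t$ large the linear coefficient is asymptotic to $-1/t$, producing the attracting behavior claimed.

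For (iii), the moving poles of $V$ are the solutions of $R'(z) = -1/t$. Since $-1/t \to -\infty$ as $t \to 0^+$, these solutions must accumulate at the poles of $R'$, which are precisely the roots of $P$ with multiplicities increased by one. Near a root $z_0$ of $P$ of multiplicity $n$ one has $R'(z) \sim -nc(z-z_0)^{-n-1}$, so $R'(z) = -1/t$ becomes $(z-z_0)^{n+1} \sim nct$, which admits $n+1$ solutions tending to $z_0$ via a standard Puiseux expansion. Setting $t = 0$ in \eqref{eq:field} gives $V(z,0) = -R(z)$, which has a pole of order exactly $n$ at $z_0$; this is consistent with the collision of the $n+1$ moving poles and the simple zero of $V$ at $z_0$ established in (ii), yielding net pole order $n+1 - 1 = n$.

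The main obstacle is the bookkeeping of multiplicities, leading coefficients, and signs across the various cases in (ii), combined with a rigorous justification in (iii) that all $n+1$ Puiseux branches of $R'(z) = -1/t$ actually remain localized near $z_0$ for every sufficiently small $t > 0$ (rather than escaping to another pole of $R'$). Coprimality of $P$ and $Q$ ensures that the poles of $R'$ are disjoint from the zeros of $R$, which is what makes the local expansions valid and allows the colliding-poles count to be read off cleanly from the $t = 0$ limit.
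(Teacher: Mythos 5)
Your proposal is correct and follows essentially the same route as the paper: implicit differentiation of $u=\gamma(t)+tR(\gamma(t))$ for (i), local expansions of $V$ at the zeros and poles of $R$ (distinguishing order $n>1$, $n=1$, and $n<0$) for (ii), and counting the $n+1$ roots of $R'(z)=-1/t$ converging to each pole of $R'$ as $t\to 0^+$ for (iii). The only cosmetic difference is that you make explicit the sign/reality check of the linear coefficients and the Puiseux localization, which the paper leaves implicit.
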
 

\begin{remark} 
Observe that the moving poles traverse the negative part $\infl_R^-$ of the curve of inflections, see \S~\ref{ssec:inflections}  which is exactly the set where different $t$-trajectories with the same starting point $u$ can ``merge".
\end{remark} 
\begin{proof}
To settle (i) we invoke \eqref{eq:main}. 
Fixing $t\ge 0$, we want to find the derivative $\gamma^\prime(t)$ which will provide the formula for the vector field in question.
The starting position $u$ for the trajectory
under consideration is given by
\begin{equation}\label{eq:begin}
u=\gamma(t)+tR(\gamma(t)).
\end{equation}
Taking the  derivative with respect to $t$, we get
\begin{equation*}
	0=\gamma'(t)+R(\gamma(t))+tR'(\gamma(t))\cdot \gamma'(t),
\end{equation*}
which is \eqref{eq:field}.

\smallskip
To settle (ii),  for any $z_0\in\mathcal{Z}(PQ)$ let $n=n(z_0)$ be the order of zero of $R$ at $z_0$, i.e. $R(z)=\alpha (z-z_0)^{n} + o\left((z-z_0)^n\right)$ as $z\to z_0$. Then 
\begin{equation*}
	V(z,t)=-\alpha (z-z_0)^n + o\left((z-z_0)^n\right),\quad\text{for } n>1,
\end{equation*}
\begin{equation*}
	V(z,t)=-\frac{\alpha}{1+t\alpha} (z-z_0) + o\left((z-z_0)\right),\quad\text{for } n=1,
\end{equation*}
and
\begin{equation*}
	V(z,t)=-\frac{1}{tn} (z-z_0) + o\left((z-z_0)\right),\quad\text{for } n<0.
\end{equation*}

\smallskip
Finally, to settle (iii), note that
\[
t R^\prime(z)+1=0 \quad \iff \quad  R^\prime(z) = -\frac{1}{t}<0,
\]
and, as $\operatorname{ord}_{z_0}R^\prime=-n-1$, there are $n+1$ simple roots of this equation converging to $z_0$ as $t\to 0+$.
%
%
\end{proof}

\subsection{Dependence of \texorpdfstring{$t$}{t}-trajectories on \texorpdfstring{$u$}{u}}

Let $\gamma_u(t)$ be a $t$-trace corresponding to a starting point $u$.  It depends locally on a complex parameter $u$ and a real/positive parameter $t$. In Theorem~\ref{th:addit} we have calculated the derivative of a $t$-trace with respect to the time parameter $t$. We can also find the partial derivative of  $\gamma_u(t)$ with respect to the starting point $u$, when it exists.  

\begin{lemma}\label{prop:deru} 
Let $u_0\in \bC$, $\gamma_u(t)$ be a $t$-trace such that $P(\gamma_{u_0}(t_0))\neq 0$ and $\gamma_{u_0}(t_0)\neq\infty$ and $\gamma_{u_0}(t_0)$ is not a multiple root of \eqref{eq:main} with $t=t_0, u=u_0$. Then
\[
\frac{\partial\gamma_u(t_0)}{\partial u}\Big|_{u=u_0}=\frac{1}{t_0R'(\gamma_{u_0}(t_0))+1}.
\]

\end{lemma}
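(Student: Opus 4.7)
The proof plan is to apply implicit differentiation to the relation~\eqref{eq:begin}, which reads
\[
u = \gamma(u,t) + t\, R(\gamma(u,t)),
\]
viewing it as an implicit equation defining $\gamma$ as a function of $u$ with $t=t_0$ held fixed.

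First, I would justify that $\gamma(u,t_0)$ is indeed a well-defined differentiable function of $u$ on a neighborhood of $u_0$. For this, I would rewrite \eqref{eq:begin} (equivalently \eqref{eq:main}) as $F(z,u) \coloneqq t_0 Q(z) + (z-u)P(z) = 0$ and apply the holomorphic implicit function theorem at the point $(z,u)=(\gamma(u_0,t_0),u_0)$. The required non-degeneracy condition is $\partial F/\partial z \neq 0$ at this point, and a short computation using $u_0 - \gamma(u_0,t_0) = t_0 R(\gamma(u_0,t_0))$ and $R' = (Q'P - QP')/P^2$ yields
\[
\frac{1}{P(\gamma(u_0,t_0))}\frac{\partial F}{\partial z}\Big|_{(\gamma(u_0,t_0),u_0)} = t_0 R'(\gamma(u_0,t_0)) + 1.
\]
Thus the hypothesis that $\gamma(u_0,t_0)$ is not a multiple root of \eqref{eq:main} (together with $P(\gamma(u_0,t_0))\neq 0$ and $\gamma(u_0,t_0)\neq\infty$) is precisely what guarantees that $t_0 R'(\gamma(u_0,t_0)) + 1 \neq 0$, so the implicit function theorem applies and $\gamma(\cdot,t_0)$ is holomorphic near $u_0$.

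Once differentiability is secured, the computation is direct: differentiating both sides of \eqref{eq:begin} with respect to $u$ at $u=u_0$ (with $t=t_0$ fixed) gives
\[
1 = \frac{\partial\gamma(u,t_0)}{\partial u}\Big|_{u=u_0} + t_0\, R'(\gamma(u_0,t_0))\cdot \frac{\partial\gamma(u,t_0)}{\partial u}\Big|_{u=u_0},
\]
and solving for the derivative yields the asserted formula.

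I do not anticipate a serious obstacle here; the only subtle point is verifying that the multiplicity hypothesis on the root translates exactly to the non-vanishing of $t_0 R'(\gamma(u_0,t_0))+1$, but this is the same algebraic manipulation already implicit in the proof of \cref{le:genericu}, so no new ideas are required.
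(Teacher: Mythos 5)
Your proposal is correct and follows essentially the same route as the paper: both differentiate the relation $u=\gamma(u,t_0)+t_0R(\gamma(u,t_0))$ implicitly in $u$ after invoking the simple-root hypothesis to guarantee local differentiability. Your extra computation identifying $\frac{1}{P}\frac{\partial F}{\partial z}$ with $t_0R'+1$ only makes explicit a step the paper leaves implicit, so no new ideas are involved.
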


\begin{proof}
Since $\gamma_{u_0}(t_0)$ is not a multiple root of \eqref{eq:main} for $t=t_0$, $u=u_0$, $\gamma_u(t_0)$ is a function in $u$ on some neighborhood of $u$.
	Then $\gamma_u(t_0)$ satisfies the equation	
\[
t_0R(\gamma_u(t_0))+\gamma_u(t_0)-u=0.
\]
Differentiating with respect to $u$ at $u=u_0$ we obtain 
\[
(tR'+1)\cdot\frac{\partial\gamma_u(t_0)}{\partial u}\Big|_{u=u_0}-1=0,
\]
which proves the Lemma.
\end{proof}

\begin{remark}
If $u$ is generic and $t>0$ then $\frac{\partial\gamma_u(t)}{\partial u}$ exists and is non-zero.
Furthermore, for a given point $z\notin \zeros(P)\cup\{\infty\}$, there is a family of $t$-traces defined by the condition  $\gamma_u(t)=z$, $u=z+tR(z)$. For all but at most one pair $(u,t)$ such that $u=z+tR(z)$, the derivative $\frac{dz}{du}\coloneqq \frac{\partial \gamma_u(t)}{\partial u}$ exists and is non-zero. 
\end{remark}

\subsection{Behavior of \texorpdfstring{$t$}{t}-trajectories near roots of \texorpdfstring{$P$}{P}}

We now show how the $t$-traces behave when $t\to 0$.

\begin{proposition}\label{prop:rootTrajectoryDirections}
Suppose that $P$ and $Q$ have no common zeros and that $P(z)=(z-z_0)^m G(z)$ 
where $G(z_0) \neq 0$ and $m \geq 1$. 
Consider the Laurent series expansion
\[
  R(z) = \sum_{j\geq -m} b_j (z-z_0)^j \text{ where } b_{-m} = \frac{Q(z_0)}{G(z_0)}.
\]

\smallskip
\noindent
The following facts hold:

\noindent
{\rm (i)}
If $\gamma(t)$ solves the equation $t Q(z)+(z-u)P(z)=0$, where $\gamma(0) = z_0\neq u$ and $\eta(t)\coloneqq\gamma(t^m)$     
then
\[
  (\dot{\eta}(0))^m  = -\frac{Q(z_0)}{(z_0-u)G(z_0)}.
\]
 In particular, if $m=1$, we have 
\[
  \dot{\eta}(0) = -\frac{Q(z_0)}{(z_0-u)P'(z_0)}.
\]

\noindent
{\rm (ii)} If $\gamma(t)$ solves $t Q(z)+(z-u)P(z)=0$,  where $\gamma(0)=z_0=u$, and  $\eta(t)\coloneqq \gamma(t^{m+1})$  
then
\[
  (\dot{\eta}(0))^{m+1} = -\frac{Q(z_0)}{G(z_0)}.
\]

\end{proposition}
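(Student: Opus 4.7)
The plan is to substitute the factorization $P(z) = (z-z_0)^m G(z)$ into the defining equation, rewrite it in a local coordinate near $z_0$, and then use a Newton polygon / Puiseux argument combined with the implicit function theorem to extract the leading-order behaviour of $\gamma(t)$.

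First, setting $w = z - z_0$, the equation $tQ(z) + (z-u)P(z) = 0$ becomes
\begin{equation*}
  F(w,t) := tQ(z_0+w) + (z_0 - u + w)\,w^m\,G(z_0+w) = 0.
\end{equation*}
For case (i) we have $u \neq z_0$, so $F(w,0) = (z_0 - u + w)\,w^m\,G(z_0+w)$ vanishes at $w=0$ to order exactly $m$ (using $G(z_0)\neq 0$ and $z_0\neq u$, plus the fact that $P$ and $Q$ share no zeros so the other roots of $F(\cdot,0)$ stay away from $0$). The Newton polygon of $F$ at the origin therefore has a single edge of slope $1/m$, which suggests introducing $s = t^{1/m}$ and $w = sv$. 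Dividing the resulting equation by $s^m$ yields
\begin{equation*}
  Q(z_0+sv) + (z_0 - u + sv)\,v^m\,G(z_0+sv) = 0,
\end{equation*}
which at $s=0$ reads $Q(z_0) + (z_0-u)\,v^m\,G(z_0) = 0$, having $m$ simple roots $v_0$ with $v_0^m = -Q(z_0)/((z_0-u)G(z_0))$. By the implicit function theorem each such $v_0$ lifts to a unique analytic branch $v(s)$ with $v(0)=v_0$, and the corresponding solution of the original equation is $\gamma(t) = z_0 + t^{1/m}\,v(t^{1/m})$. Consequently $\eta(t) := \gamma(t^m) = z_0 + t\,v(t)$ is analytic at $t=0$ with $\dot{\eta}(0) = v_0$, yielding the identity claimed in (i). The $m=1$ case is then immediate from $G(z_0) = P'(z_0)$.

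For case (ii) the only structural change is that $u = z_0$ turns the factor $(z_0 - u + w)$ into $w$, so $F(w,0) = w^{m+1}\,G(z_0+w)$ vanishes to order $m+1$; the Newton polygon now has slope $1/(m+1)$, and the correct uniformizing parameter is $s = t^{1/(m+1)}$. Writing $w = sv$ and dividing by $s^{m+1}$ gives $Q(z_0+sv) + v^{m+1}\,G(z_0+sv) = 0$, whose $m+1$ simple roots at $s=0$ satisfy $v_0^{m+1} = -Q(z_0)/G(z_0)$. The same implicit function theorem argument yields $\eta(t) = \gamma(t^{m+1}) = z_0 + t\,v(t)$ analytic with the required derivative.

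The only non-routine step is justifying the application of the implicit function theorem after rescaling, which reduces to showing that the partial derivative in $v$ of the leading polynomial at $s=0$ does not vanish at $v_0$. Since in each case that leading polynomial in $v$ is a binomial with $m$ (respectively $m+1$) distinct nonzero roots, all of them are simple and the required nondegeneracy is automatic. I therefore do not anticipate any substantial obstacle beyond setting up the scaling correctly and verifying which root of the leading equation corresponds to the prescribed branch $\gamma(t)$.
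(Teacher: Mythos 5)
Your proposal is correct and follows essentially the same route as the paper: a leading-order Puiseux analysis of the equation at $z_0$, identifying the $m$ (resp.\ $m+1$) tangent directions as the roots of the binomial leading equation and upgrading them to analytic branches via the implicit/inverse function theorem. The only cosmetic difference is that the paper first divides by $P$ to write $t^m=-(\eta-u)/R(\eta)=(\eta-z_0)^m U(\eta)$ and takes an $m$-th root, whereas you rescale the polynomial equation directly with $w=sv$, $t=s^m$; both yield the same leading coefficient.
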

\begin{proof}
In case (i), note that $\eta(t)$ solves $t^{m}R(z) + (z-u)=0$.
	Equivalently, 
	\[
	t^m=-\frac{\eta(t)-u}{R(\eta(t))}.
	\]
Therefore 
\[
t^m=(\eta(t)z-z_0)^m U(\eta(t)), \quad\text{where } U(z)=-\frac {z_0-u} {b_{-m}}+O(z-z_0).
\]
Taking an $m$-th root and applying an inverse function theorem, we get 
\[
\eta(t)-z_0=\xi\left(-\frac {b_{-m}} {z_0-u}\right)^{1/m}t+o(t), \quad \xi^m=1,
\]
which implies (i).
Similarly, if $u=z_0$ then 
\[
t^{m+1}=(\eta(t)z-z_0)^{m+1} U(\eta(t)), \quad\text{where } U(z)=-\frac {1} {b_{-m}}+O(z-z_0)
\]
and 
\[
\eta(t)-z_0=\xi\left(-{b_{-m}}\right)^{\frac{1}{m+1}}t+o(t), \quad \xi^{m+1}=1,
\]
which implies (ii).
\end{proof}

Item (ii) above shows that the directions of the
$t$-trajectories originating at $u=z_*\in \zeros(P)$
agree with the directions of the root-starting separatrices of the vector field $-R(z){\partial_z}$ (c.f. \cref{prop:dirSeptrix}).

\begin{example}\label{ex:rootTrajectoryDirections}
Set $Q(z)=z (z + \frac{1 + i}{3}) (z^2 + \frac14)$ and $P(z) = (z - 1)^2 (z - i)$.
In \cref{fig:tTrajectoriesExample}, we show the vector field $-R(z){\partial_z}$
together with the $t$-trajectories of the zeros of $P$; 
the $t$-trajectories associated with the double root $u=1$ are shown in  black,
while the ones with $u=i$ are shown in blue.
\begin{figure}[H]
 \includegraphics[width=0.4\textwidth]{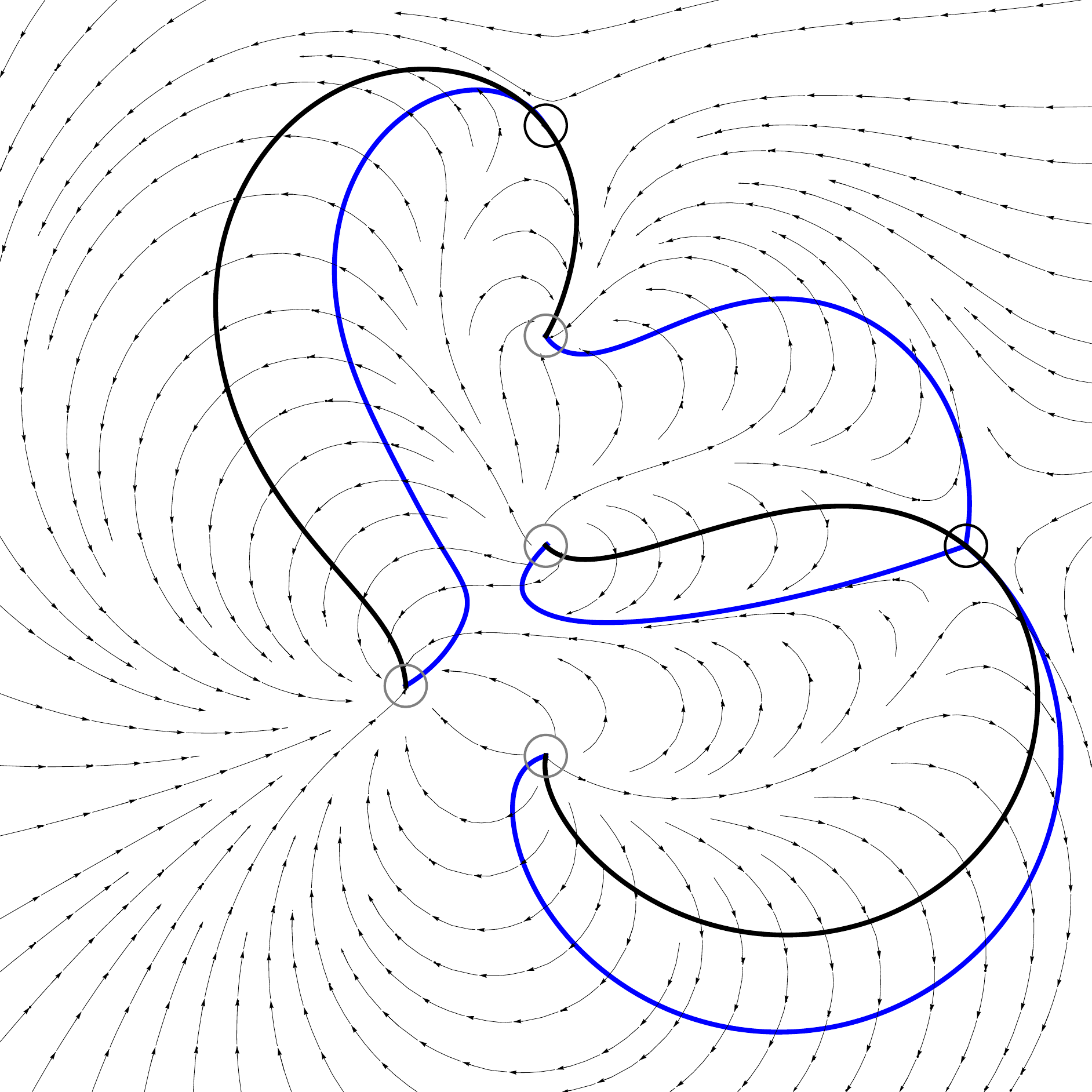}
 \includegraphics[width=0.4\textwidth]{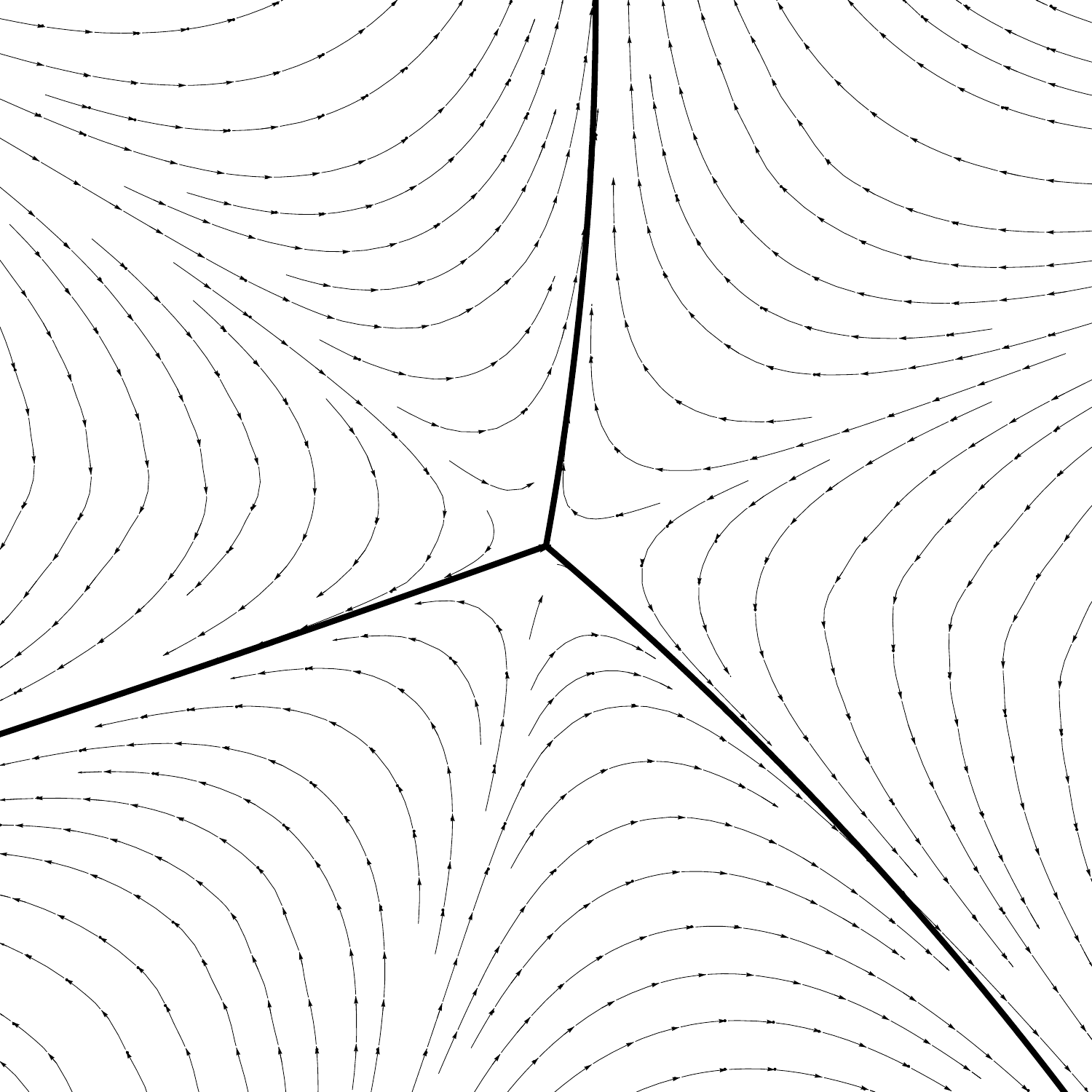}
 \caption{The vector field $-R(z){\partial_z}$ and $t$-trajectories for the above $P$ and $Q$ whose zeros are encircled. The figure on the right is a closeup near $z = 1$.
 Note how close the (black) curves are   to being separatrices of $-R(z){\partial_z}$ near this point.
 }\label{fig:tTrajectoriesExample}
\end{figure}

\end{example}

\subsection{Unbounded $t$-traces}

\begin{lemma}\label{le:rootsStartingInInfty}
Assume that $L=\deg Q-\deg P-1>0$. Then for all $u$ and all sufficiently small $t_0$ there exist $L$ components of $\ttt_u((0,t_0))$ that start at $\infty\in \setRS$. The arguments of the $t$-traces that tend to $\infty$ as $t\to 0^+$ tend to the arguments of the solutions of
\[
z^L+\frac{p_\infty}{q_\infty}=0. 
\]
\end{lemma}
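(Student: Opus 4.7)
My plan is to combine a degree-counting argument with an explicit scaling substitution to extract the asymptotics as $t\to 0^+$.

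First I would observe the counting: for $t>0$, the polynomial $tQ(z)+(z-u)P(z)$ has leading term $tq_\infty z^q$ (here $q=\deg Q$, $p=\deg P$, and by assumption $q-p-1=L>0$), hence degree exactly $q$ and $q$ roots counted with multiplicity. At $t=0$, it degenerates to $(z-u)P(z)$, a polynomial of degree $p+1$. Since the coefficients depend continuously (indeed polynomially) on $t$ and the leading coefficient degenerates, exactly $L=q-(p+1)$ roots must escape to infinity as $t\to 0^+$; this can be made rigorous either by the Rouch\'e-type continuity of roots in parameters, or by passing to the reverse polynomial in $w=1/z$ and noting that $L$ of its roots pass through $w=0$ at $t=0$. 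This establishes the first assertion and produces $L$ continuous branches of solutions going to $\infty$.

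Next I would pin down their arguments via the rescaling $z=\zeta\, t^{-1/L}$ with $t>0$, so that $t^{-1/L}\in\mathbb{R}_{>0}$. Substituting and extracting the dominant monomials,
\begin{equation*}
tQ(z)+(z-u)P(z) = t^{-(p+1)/L}\Bigl( q_\infty \zeta^q + p_\infty \zeta^{p+1} + E(\zeta,t)\Bigr),
\end{equation*}
where $E(\zeta,t)\to 0$ uniformly on compact subsets of $\zeta$ bounded away from $0$ and $\infty$ as $t\to 0^+$ (the exponents match precisely because $q/L-(p+1)/L=1$). Setting the limit to zero and factoring gives $\zeta^{p+1}(q_\infty\zeta^{L}+p_\infty)=0$, whose non-zero solutions are exactly the $L$ roots of $\zeta^L+p_\infty/q_\infty=0$.

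I would then close the argument by a standard continuity/implicit function statement: each simple non-zero root $\zeta_k$ of $q_\infty\zeta^L+p_\infty=0$ persists as a simple root of the perturbed equation for small $t>0$, yielding a branch $\zeta_k(t)\to\zeta_k$, which corresponds to an escaping root $z_k(t)=\zeta_k(t)\, t^{-1/L}$ of the original equation. The $L$ branches so obtained account for all escaping roots, since we already counted $L$ of them. Because $t^{-1/L}>0$, one has $\arg z_k(t)\to \arg \zeta_k$, which are precisely the arguments of the solutions of $z^L+p_\infty/q_\infty=0$.

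The main obstacle is the bookkeeping around the scaling: one must verify that no escaping root remains in a region where $\zeta$ is either much larger than $O(1)$ or much smaller (e.g.\ $\zeta\to 0$), so that all $L$ escaping branches are captured by the $L$ non-zero roots of the limit equation rather than by the extraneous $\zeta^{p+1}$ factor. This is handled by noting that the $p+1$ roots of $\zeta^{p+1}=0$ in the rescaled equation correspond, under $z=\zeta t^{-1/L}$, to the $p+1$ finite roots of $(z-u)P(z)=0$ (which remain bounded and thus give $\zeta\to 0$), so the $L$ escaping roots must accumulate at the non-zero solutions $\zeta_k$ of $q_\infty\zeta^L+p_\infty=0$, as required.
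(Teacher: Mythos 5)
Your proof is correct and takes essentially the same route as the paper: both rest on the degree count ($p+1$ roots at $t=0$ versus $q=\deg Q$ roots for $t>0$) and on the dominant balance $t\,z^{L}\to -p_\infty/q_\infty$, which the paper extracts by solving \eqref{eq:main} for $t$ and you extract via the rescaling $z=\zeta t^{-1/L}$. Your packaging has the minor advantage of making the paper's assertion that the asymptotic relation ``has $L$ solutions $\gamma_j(t)=a_jt^{-1/L}+o(t^{-1/L})$'' fully rigorous, via persistence of the simple nonzero roots of the limit polynomial $\zeta^{p+1}\left(q_\infty\zeta^{L}+p_\infty\right)$.
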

\begin{proof}
The fact that there are $L$ components of $\ttt_u((0,t_0))$ emanating from $\infty$ is evident from the fact that \cref{eq:main} have $\deg P+1$ solutions in $\bC$ for $t=0$  and  $\deg Q$ solutions for $t>0$. Now, suppose that $\gamma(t)$ is a $t$-trace such that $t\to 0$ yields $\gamma(t)\to \infty$. We have that 
\[
t=-\frac{(\gamma(t)-u)P(\gamma(t))}{Q(\gamma(t))}.
\]
As $\gamma(t)\to \infty$
this yields
\[
t\gamma(t)^L+\frac{p_\infty}{q_\infty}=O(\gamma(t)^{-1})\quad \text{as $\gamma(t)\to \infty$},
\]
which has $L$ solutions $\gamma_j(t)=a_jt^{-\frac 1 L}+o(t^{-\frac 1 L})$, $j=1, \dots, L$, with $a_j^L=-\frac{p_\infty}{q_\infty}$.
Taking $t\to 0^+$, we get the required result.

\end{proof}
\begin{lemma}\label{le:rootsEndingInInfty}
As above, set $\deg Q-\deg P-1=L$ and assume that $L<0$.  Then for all $u$ and all sufficiently large $t_0$ there exist $-L$ components of $\ttt_u((t_0,\infty))$ which end at $\infty$. The arguments of the $t$-traces that tend to $\infty$ as $t\to \infty$ tend to the arguments of the solutions to
\[
z^{-L}+\frac{q_\infty}{p_\infty}=0.
\]
\end{lemma}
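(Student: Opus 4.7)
The plan is to mirror the proof of Lemma~\ref{le:rootsStartingInInfty}, swapping the roles of $t \to 0^+$ and $t \to \infty$.

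First I would verify the count. Since $L < 0$, i.e., $\deg P + 1 > \deg Q$, the polynomial $tQ(z) + (z-u)P(z)$ has degree $\deg P + 1$ for every $t \geq 0$, hence exactly $\deg P + 1$ roots in $\bC$. Dividing \eqref{eq:main} by $t$ and letting $t \to \infty$ shows that at most $\deg Q$ of these roots can stay bounded, since they must converge to the zeros of $Q$. Consequently, precisely $-L = \deg P + 1 - \deg Q$ of them must escape to $\infty$ as $t\to \infty$; this is the required component count.

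Next I would extract the asymptotics of the escaping traces. Starting from
\[
t = -\frac{(\gamma(t)-u)P(\gamma(t))}{Q(\gamma(t))}
\]
and inserting the expansions $P(z) = p_\infty z^{\deg P}(1 + O(z^{-1}))$, $Q(z) = q_\infty z^{\deg Q}(1 + O(z^{-1}))$ as $z \to \infty$, I obtain
\[
t\,\gamma(t)^{L} + \frac{p_\infty}{q_\infty} = O\bigl(\gamma(t)^{-1}\bigr) \quad\text{as } \gamma(t)\to \infty.
\]
Since $-L > 0$, this is equivalent to
\[
\gamma(t)^{-L}\bigl(1+o(1)\bigr) = -\frac{q_\infty}{p_\infty}\,t.
\]
Extracting an $(-L)$-th root and invoking the inverse function theorem exactly as in Lemma~\ref{le:rootsStartingInInfty} produces $-L$ distinct branches of the form
\[
\gamma_j(t) = a_j\, t^{1/(-L)} + o\bigl(t^{1/(-L)}\bigr), \quad a_j^{-L} = -\frac{q_\infty}{p_\infty},\quad j = 1,\dots,-L,
\]
so the $a_j$ are exactly the solutions of $z^{-L} + \frac{q_\infty}{p_\infty} = 0$.

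Finally, since $t$ ranges over positive reals, $\arg\bigl(t^{1/(-L)}\bigr) = 0$, and therefore $\arg(\gamma_j(t)) \to \arg(a_j)$ as $t \to \infty$, which is precisely the claim. I do not anticipate a serious technical obstacle beyond what was already handled in Lemma~\ref{le:rootsStartingInInfty}; the only point requiring care is that the $-L$ branches given by the inverse function theorem account for \emph{all} escaping trajectories, which follows by the degree count carried out in the first step.
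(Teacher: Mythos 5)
Your proposal is correct and follows essentially the same route as the paper's own proof: the component count via the degree of $tQ(z)+(z-u)P(z)$, followed by the asymptotic expansion of $t=-\frac{(\gamma(t)-u)P(\gamma(t))}{Q(\gamma(t))}$ at infinity and extraction of an $(-L)$-th root. If anything, your version handles the signs more carefully than the paper, which writes $t^{1/L}$ and ``$L$ solutions'' where $t^{1/(-L)}$ and $-L$ are meant.
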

\begin{proof}

The fact that there are $-L$ components of $\ttt_u((t_0,\infty))$  going to $\infty$ is evident from \cref{eq:factor} since for $t\in \mathbb R$, the RHS has $\deg P+1$ solutions and as $t\to \infty$, $\deg Q$ roots will tend to the roots of $Q$ and the others will tend to $\infty$. Now, suppose that $\gamma(t)$ is a $t$-trace such that  $\gamma(t)\to \infty$ when $t\to \infty$. We have that 
\[
t=-\frac{(\gamma(t)-u)P(\gamma(t))}{Q(\gamma(t))}.
\]
For large $t$, this becomes 
\[
t\gamma(t)^{-L}+\frac{p_\infty}{q_\infty }=O(t^{-1/L})\quad \text{as $\gamma(t)\to \infty$},
\]
which has $L$ solutions $\gamma_j(t)=a_jt^{\frac 1 L}+o(t^{\frac 1 L})...$, $j=1, \dots, L$, with $a_j^L=-\frac{p_\infty}{q_\infty}$. Taking $t\to \infty$, we get the required result.
\end{proof}


\subsection {Associated rays and explicit criterion  of  $T_{CH}$-invariance} 

\begin{definition}
 In the above notation, for a point $p\in \setC$,
 define its \defin{associated ray} $r_p\coloneqq \{p+t R(p)\}$,
 where $t\in [0,\infty)$. Observe that $r_p$ is well-defined
 unless $p$ is a pole of $R(z)$. Additionally, if $p\in \zeros(R)$ then $r_p$ is merely a point. 
 \end{definition}
 
\begin{lemma}\label{lm:ray} 
Any point $p\in \setC$ which is not a pole of $R(z)$, lies on the root 
trail $\ttt_u$ if and only if $u\in r_p$. 
\end{lemma}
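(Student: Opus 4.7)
The proof reduces to a single algebraic rearrangement of the defining equation \eqref{eq:main}, combined with a continuity argument to handle the closure in the definition of $\ttt_u$. The key observation is that at the point $z = p$, the polynomial $tQ(z) + (z-u)P(z)$ factors as
\[
tQ(p) + (p-u)P(p) = P(p)\bigl(p - u + tR(p)\bigr),
\]
and the hypothesis that $p$ is not a pole of $R$ together with the standing assumption that $P$ and $Q$ are coprime gives $P(p) \neq 0$. So the vanishing of the left-hand side is equivalent to the single equation $u = p + tR(p)$, which is precisely the defining equation of the ray $r_p$ at parameter $t$.

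For the ``if'' direction, the plan is to take $u = p + t_0 R(p)$ with $t_0 \geq 0$ and read the above identity from right to left: one obtains $t_0 Q(p) + (p - u)P(p) = 0$, so $p \in \ttt_u(t_0) \subseteq \ttt_u$ directly. For the ``only if'' direction, if $p$ itself solves \eqref{eq:main} at some $t_0 \geq 0$, the identity read from left to right gives $u = p + t_0 R(p) \in r_p$. Thus in both directions, the core argument is nothing more than dividing \eqref{eq:main} by the nonzero number $P(p)$.

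The remaining case is when $p$ lies in $\ttt_u$ only as a limit point of $\bigcup_{t\geq 0}\ttt_u(t)$. Here I would pick $p_n \to p$ with $p_n \in \ttt_u(t_n)$ for some $t_n \geq 0$, and use the identity on each $p_n$ (legitimate once $n$ is large enough that $P(p_n) \neq 0$, by continuity of $P$ at $p$) to obtain $u = p_n + t_n R(p_n)$. Because $R$ is continuous at $p$, if $\{t_n\}$ has a finite accumulation point $t_\infty$, passing to the limit along a subsequence yields $u = p + t_\infty R(p) \in r_p$.

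The main obstacle, therefore, is ruling out (or appropriately interpreting) the case $t_n \to \infty$. In that regime $R(p_n) \to R(p)$ is finite, so in order for $u = p_n + t_n R(p_n)$ to stay bounded at $u$ one must have $R(p) = 0$; then $r_p$ collapses to the point $\{p\}$ and the conclusion forces $u = p$. I would handle this by arguing directly from \eqref{eq:main} at $z = p$ with $R(p) = 0$: passing $n\to\infty$ in $t_n Q(p_n) + (p_n - u)P(p_n) = 0$ and using $Q(p) = 0$ together with $P(p) \neq 0$ gives $(p - u)P(p) = 0$, hence $u = p$ as required. This is the only place the closure in $\setRS$ plays a genuinely substantial role; everywhere else the lemma is an immediate reformulation of \eqref{eq:main} via the equivalence $tQ(p) + (p-u)P(p) = 0 \Leftrightarrow u = p + tR(p)$.
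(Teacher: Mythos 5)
Your core argument is exactly the paper's: the paper's entire proof is the observation that if $p$ solves $tQ(p)+(p-u)P(p)=0$ for some $t\ge 0$ and $P(p)\neq 0$, then dividing by $P(p)$ gives $u=p+tR(p)$, i.e.\ $u\in r_p$. Your first two paragraphs reproduce this, and both directions are fine. Where you go beyond the paper is in trying to handle points $p$ that lie in $\ttt_u$ only as limit points of $\bigcup_{t>0}\ttt_u(t)$ --- a case the paper's one-line proof silently ignores --- and it is precisely there that your argument breaks.

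In the regime $t_n\to\infty$ you claim that passing to the limit in $t_nQ(p_n)+(p_n-u)P(p_n)=0$ "using $Q(p)=0$" yields $(p-u)P(p)=0$ and hence $u=p$. This does not follow: $t_nQ(p_n)$ is an indeterminate form $\infty\cdot 0$, and the equation itself tells you that $t_nQ(p_n)=-(p_n-u)P(p_n)\to -(p-u)P(p)$, which is nonzero whenever $u\neq p$. So there is no contradiction, and in fact no argument can close this case: if $p$ is a zero of $Q$ (hence $R(p)=0$ and $r_p=\{p\}$), then for \emph{every} $u$ some root of \eqref{eq:main} tends to $p$ as $t\to\infty$ (the final divisor contains $\zeros(Q)$), so $p$ lies in the closure $\ttt_u$ even though $u\notin r_p$ unless $u=p$. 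The "only if" direction of the lemma, read literally with $\ttt_u$ the closure, therefore fails at zeros of $Q$. The resolution is that the lemma is intended (and is used later, e.g.\ in Theorem~\ref{th:charact}, where the relevant $p$ lies in $S^c$ and hence avoids $\zeros(PQ)$) for points $p$ that actually solve \eqref{eq:main} for some finite $t\ge 0$; your instinct that the closure needs separate treatment was right, but the honest conclusion of that analysis is that the statement must be read for the open trail together with the initial divisor, not that the limit case can be argued away.
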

\begin{proof}
 Indeed, under our assumptions $p$ solves the equation $tQ(p)+(p-u)P(p)=0$ for some non-negative $t$. 
 If $P(p)\neq 0$, i.e., $p$ is not a pole of 
 $R(z)$ then 
 \[
 u-p=tR(p) \iff  u=p+tR(p)
 \]
 which means that $u\in r_p$.

\end{proof}
Note again that  unless $p$ is a root of $R(z)$ the ray $r_p$ does not degenerate to a point.

The following result will be intensively used throughout the rest of the paper. 

\begin{theorem}\label{th:charact}  For an operator $T$ given by \eqref{eq:1st} with  $P(z)$ and $Q(z)$ not  identically vanishing, 
   $S\subset \bC$ is $T_{CH}$-invariant set if and only if 
  
 {\rm (i)} $S$ is closed; 
  
{\rm   (ii)} $S$ contains $\zeros(PQ)$;  
  
 {\rm (iii)} the associated rays of all points in $S^c:=\bC\setminus S$ are contained in $S^c$.
\end{theorem}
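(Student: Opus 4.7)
\medskip

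\textbf{Plan of proof.} The argument is a direct translation between the defining equation \eqref{eq:main} and the associated rays, using \cref{lm:ray} as the main bridge. I will prove the two implications separately.

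\medskip

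\emph{Necessity.} Assume $S$ is $T_{CH}$-invariant. Condition (i) is part of the definition. Condition (ii) is exactly the statement of \cref{le:zerosPQ}. For (iii), I argue by contrapositive: suppose $p \in S^c$ but $r_p \not\subset S^c$, so there is some $u = p + t_0 R(p) \in S$ with $t_0 \geq 0$ (in particular $p$ is not a pole of $R$ since $r_p$ is defined). If $t_0 = 0$ then $u = p$, forcing $p \in S$, a contradiction. If $t_0 > 0$, then by \cref{lm:ray}, $p$ lies on the root trail $\ttt_u$; since $u \in S$ and $S$ is $T_{CH}$-invariant, $\ttt_u \subset S$, giving $p \in S$, again a contradiction.

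\medskip

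\emph{Sufficiency.} Assume (i)--(iii). Fix $u \in S$ and $t \geq 0$, and let $z_0$ be any root of $t Q(z) + (z-u) P(z) = 0$; I must show $z_0 \in S$. Split into two cases according to whether $P(z_0)$ vanishes. If $P(z_0) = 0$, then the equation reduces to $t Q(z_0) = 0$. Since $P$ and $Q$ are coprime by the standing assumption of \S\ref{sec:basic}, $Q(z_0) \neq 0$, forcing $t = 0$; then $z_0 \in \zeros(P) \subset S$ by (ii). If instead $P(z_0) \neq 0$, then the equation may be solved as
\[
u = z_0 + t \, R(z_0),
\]
which exhibits $u$ as a point of the associated ray $r_{z_0}$. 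If $z_0 \in S^c$, condition (iii) gives $r_{z_0} \subset S^c$, whence $u \in S^c$, contradicting $u \in S$. Thus $z_0 \in S$, as required. (The degenerate case $T[(z-u)^t] \equiv 0$ is allowed by the definition and needs no argument.)

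\medskip

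\textbf{Main obstacle.} There is no serious obstruction: the statement is essentially a reformulation of the equation \eqref{eq:main} via \cref{lm:ray}. The only bookkeeping point is to dispose of the case when $z_0$ is a pole of $R$ (i.e.\ a root of $P$), which is handled cleanly by invoking coprimality of $P$ and $Q$ together with property (ii); without coprimality one would need an extra reduction via \cref{th:reduc}, but this is excluded by the standing assumption recorded at the end of \S\ref{sec:basic}.
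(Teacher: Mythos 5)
Your proof is correct and follows essentially the same route as the paper's: both directions hinge on the equivalence $u=z_0+tR(z_0)$ from \cref{lm:ray}, with condition (ii) used to rule out the troublesome case where a point of $S^c$ is a pole of $R$. Your version is slightly more explicit in separating the $t=0$ and $P(z_0)=0$ cases, but the underlying argument is identical.
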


Theorem~\ref{th:charact}  applies even  when $P(z)$ and $Q(z)$ 
are non-vanishing constants in which case item (ii) is empty. 
\begin{proof} 
Items (i) and (ii) are trivial. Indeed, a $T_{CH}$-invariant set must be closed by definition. 
Moreover, in \cref{prop:basic}, for an operator $T$ given 
by \eqref{eq:1st} with non-constant and not identically vanishing $Q(z)$
and $P(z)$, we have established  existence
and uniqueness of its minimal $T_{CH}$-invariant set  $\minvset{CH}$.
We have also shown that $\minvset{CH}$ must contain all the
zeros of $Q(z)$ and $P(z)$. 

To settle the necessity of item (iii) for the 
$T_{CH}$-invariance of $S$, notice the following. If $S$ is $T_{CH}$-invariant 
then no point in $S^c$ lies on  the root trail $\ttt_u$ of a point $u\in S$. 
Observe that $S^c$ is open in $\setC$.  Since $S$ contains both all roots and 
all poles of $R(z)$ we get that by \cref{lm:ray} the ray $r_p$ of any point $p\in S^c$
must completely lie in $S^c$. 
To prove the sufficiency of items (i) -- (iii) for the $T_{CH}$-invariance of $S$, 
we argue by contradiction. Assume that $S$ is not $T_{CH}$-invariant although (i) -- (iii) hold. 
This means that there exists a point $u_0\in S$ such that its root trail $\ttt_{u_0}$ leaves $S$. 
In other words, there is a point $p\in S^c$, $u_0\in S$ and some $t\geq 0$ such that solves
\[tQ(p)+(p-u_0)P(p)=0.\]
by (ii), $p\notin \zeros(P)$ so $p+tR(p)=u_0$.
This contradicts item (iii). 
\end{proof} 

Using the notation in \cref{re:affineChange}, note that
under the affine change of variables $z \mapsto aw +b$,
the associated ray $\{z_0+tR(z_0):t\geq 0\}$ 
is mapped to 
\[
\left\{\frac{z_0-b}{a}+\frac{t}{a}R(z_0):t\geq 0 \right\}=\{w_0+ t\hat{R}(w_0):t\geq 0\},
\]
where $z_0 = aw_0+b$. The family of affine maps is precisely the family
which sends straight lines to straight lines.

We also have this following alternative formulation of \cref{th:charact}.
\begin{corollary}
A set $S$ is $T_{CH}$-invariant if and only if its complement $S^c$ is open, 
does not contain $\zeros(P)$, and is forward-invariant under the family of maps
\[
\{z \mapsto z+tR(z), \; t\geq 0\}.
\]
If neither $Q(z)$ nor $P(z)$ are identically zero and at 
least one of them has positive degree, there exists a maximal open subset $S^c \subset \bC$ with the above  
properties and whose complement coincides with $\minvset{CH}$.
\end{corollary}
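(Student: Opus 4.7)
The plan is to derive both halves of the corollary directly from Theorem \ref{th:charact}. For the equivalence, I would translate each of the three defining conditions of a $T_{CH}$-invariant set into a condition on $S^c := \bC\setminus S$: closedness of $S$ becomes openness of $S^c$; the inclusion $\zeros(PQ)\subseteq S$ becomes the disjointness of $S^c$ from $\zeros(PQ)$; and condition (iii) of Theorem \ref{th:charact}, that every associated ray $r_p = \{p + tR(p) : t\geq 0\}$ of a point $p\in S^c$ lies in $S^c$, is literally the forward-invariance of $S^c$ under the family $\{z\mapsto z+tR(z):t\geq 0\}$. The maps in this family are well-defined on $S^c$ precisely because $S^c\cap\zeros(P)=\emptyset$, so the formulation is internally consistent.

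For the maximality claim I would consider the collection $\mathcal{U}$ of all open subsets of $\bC$ that are disjoint from $\zeros(PQ)$ and forward-invariant under the family of maps. This collection is closed under arbitrary unions: the union of opens is open, disjointness from a fixed set passes to unions, and if $p\in\bigcup_{\alpha}U_\alpha$ then $p\in U_{\alpha_0}$ for some $\alpha_0$, so the whole ray $r_p\subseteq U_{\alpha_0}\subseteq \bigcup_\alpha U_\alpha$. Hence the union $U^{\ast}$ of all members of $\mathcal{U}$ is itself the unique maximal element of $\mathcal{U}$.

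By the equivalence just established, $(U^{\ast})^c$ is a $T_{CH}$-invariant set. Moreover, for any $T_{CH}$-invariant set $S$ we have $S^c\in\mathcal{U}$, so $S^c\subseteq U^{\ast}$, i.e., $(U^{\ast})^c\subseteq S$. Combined with the existence of $\minvset{CH}$ guaranteed by Proposition \ref{prop:basic}(ii) under the stated hypotheses on $P$ and $Q$, this identifies $(U^{\ast})^c$ as the minimal element of the family of $T_{CH}$-invariant sets, that is, $(U^{\ast})^c = \minvset{CH}$.

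I expect the only delicate point to be a small discrepancy between the printed condition ``$S^c$ does not contain $\zeros(P)$" and what is actually required for the argument, namely $S^c\cap\zeros(PQ)=\emptyset$: zeros of $Q$ that are not zeros of $P$ are fixed points of every map $z\mapsto z+tR(z)$, so forward-invariance by itself does not exclude them from $S^c$. I would resolve this either by reading $\zeros(P)$ as a shorthand for $\zeros(PQ)$, or by separately invoking Lemma \ref{le:zerosPQ} to insert the $\zeros(Q)$ condition into the definition of $\mathcal{U}$; either way, the argument above goes through unchanged.
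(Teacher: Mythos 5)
Your proposal is correct and follows exactly the route the paper intends: the corollary is stated without proof as an immediate reformulation of Theorem~\ref{th:charact}, and your translation of its three conditions plus the union-of-opens argument for maximality (combined with Proposition~\ref{prop:basic}) is precisely what is being left to the reader. Your observation that the printed condition ``does not contain $\zeros(P)$'' should really be ``is disjoint from $\zeros(PQ)$'' (since zeros of $Q$ are fixed points of every map $z\mapsto z+tR(z)$ and forward-invariance alone does not exclude them, e.g.\ $U=\bC$ for $T=z\frac{d}{dz}+1$) is a legitimate catch, and your proposed fix via Lemma~\ref{le:zerosPQ} is the right one.
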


Under some natural additional assumptions conditions of item (iii) of Theorem~\ref{th:charact} can be verified only on the boundary of a set $S$ and not in the whole complement $S^c$.   Namely, we say that a closed set $M\subset \bC$  is \defin{regular} if it coincides with the closure of the set $M^o$ of its interior points.  

\begin{proposition}\label{prop:raysGiveCHinv}
Suppose that $S\subset \bC$ is a regular set such that
   
    \rm{(i)} $\zeros(PQ)$ lies in $ S^\circ$; 
    
    \rm{(ii)} 
    for every point $p\in \partial S$, 
the associated ray $r_p$ lies in $\overline{S^c}$.

Then $S$ is $T_{CH}$-invariant.
\end{proposition}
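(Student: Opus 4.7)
The strategy is to verify the three conditions of \cref{th:charact}. Conditions~(i) (``$S$ is closed'') and~(ii) (``$S\supset\zeros(PQ)$'') of \cref{th:charact} are immediate from regularity and from hypothesis~(i) of the proposition, respectively. The heart of the proof is therefore to check condition~(iii) of \cref{th:charact}: for every $p\in S^c$, the associated ray $r_p$ stays entirely in $S^c$.

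I would argue by contradiction. Given $p_0\in S^c$ with $r_{p_0}\cap S\ne\emptyset$, set $t_0=\inf\{t\ge 0:p_0+tR(p_0)\in S\}$. Openness of $S^c$ yields $t_0>0$, and continuity combined with closedness of $S$ and regularity place $q:=p_0+t_0R(p_0)$ in $\partial S$. Hypothesis~(ii) of the proposition applied at $q$ then gives $r_q\subset\overline{S^c}$; however, because the direction $R(p_0)$ continuing past $q$ differs in general from $R(q)$, this is not yet a contradiction.

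To close the argument, I would introduce the open set
\[
B=\{z\in\bC\setminus\zeros(P):r_z\cap S^\circ\ne\emptyset\},
\]
which is open by continuity of the map $\phi_t(z)=z+tR(z)$ (well-defined on $\overline{S^c}$ thanks to hypothesis~(i) of the proposition) together with openness of $S^\circ$. Trivially $S^\circ\setminus\zeros(P)\subset B$, and hypothesis~(ii) yields $B\cap\partial S=\emptyset$. Since regularity forces $\partial S$ to separate the two disjoint open sets $S^\circ$ and $S^c$, every connected component of $B$ lies entirely in one of them; the remaining task is to forbid any component of $B$ from sitting inside $S^c$. I plan to connect $p_0$ by a path $\gamma$ in $\overline{S^c}$ to a point of $\partial S$, take the supremum $\sigma^*$ of parameters at which $\gamma$ still lies in $B$, and use the implicit function theorem applied to $\phi_t$—whose Jacobian $1+tR'(z)$ is nonvanishing off the inflection curve $\infl_R^-$ identified in \cref{ssec:inflections} and \cref{le:genericu}—combined with the regularity $S=\overline{S^\circ}$ to locate a boundary point $q'\in\partial S$ close to $\gamma(\sigma^*)$ whose ray $r_{q'}$ actually meets $S^\circ$, contradicting~(ii).

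The main obstacle is the degenerate case in which $r_{p_0}$ is only tangent to $\partial S$ at $q$ rather than crossing transversally into $S^\circ$; there the naive transversal IFT perturbation fails. To handle it I would exploit the analyticity of the rational vector field $R$ and perturb $p_0$ slightly so as to rotate $R(p_0)$, breaking the tangency and reducing to the transversal scenario already settled above.
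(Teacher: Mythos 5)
Your reduction to condition (iii) of \cref{th:charact} is a legitimate starting point, but the two steps you yourself flag as remaining are exactly where the proof lives, and neither is carried out correctly. First, the path argument: if a component of $B$ sits inside $S^c$, the point $\gamma(\sigma^*)$ at which your path leaves $B$ lies on the frontier of $B$, and there is no reason for it to lie anywhere near $\partial S$ --- the set $B\cap S^c$ can perfectly well have its frontier deep inside $S^c$, so the step ``locate a boundary point $q'\in\partial S$ close to $\gamma(\sigma^*)$'' is unjustified; the invocation of the implicit function theorem and of $\infl_R^-$ does not repair this. Second, the tangency case: perturbing $p_0$ so as to rotate $R(p_0)$ produces a nearby straight ray, but since no smoothness is assumed on $\partial S$, there is no reason this rotated ray should enter $S^\circ$ rather than miss $S$ altogether or keep grazing $\partial S$. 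These are genuine gaps, not technicalities.

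The missing idea --- and the one the paper's proof is built on --- is to perturb the \emph{endpoint} $u$ of the offending ray rather than its base point, and to follow $t$-traces rather than rays. If $r_{p_0}$ meets $S$ at $q=p_0+t^*R(p_0)$, then by \cref{lm:ray} the point $p_0$ lies on the root trail $\ttt_q$ of $q\in S$ and has positive distance $\epsilon_0$ to $S$. Regularity of $S$ lets you replace $q$ by a nearby point $u_1\in S^\circ$, and continuity of polynomial roots in the coefficients gives a root $z_1$ of $t^*Q(z)+(z-u_1)P(z)$ within $\epsilon_0/2$ of $p_0$, hence still outside $S$. The $t$-trace through $z_1$ has at least one finite endpoint among $\{u_1\}\cup\zeros(PQ)\subset S^\circ$, so by continuity it crosses $\partial S$ at some $q'$ with $u_1=q'+t_1R(q')\in r_{q'}\cap S^\circ$, contradicting hypothesis (ii) directly. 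This single argument disposes of your transversal and tangent cases uniformly and makes the auxiliary set $B$ unnecessary.
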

\begin{proof}
Suppose that $S$ is not $T_{CH}$-invariant.
Then there exist a $u_0\in S$ and a corresponding $t$-trace $\gamma(t)$
such that $\gamma(t_0)\notin S$ for some $t_0>0$.

Let $\epsilon_0>0$ be the distance between 
$\gamma(t_0)$ and $S$.
Note that  the roots of 
\[
tQ(z)+(z-u)P(z)=0
\]
depend continuously on the coefficients.
Moreover, since $u_0 \in S$ and $S$ is a regular domain
so for any neighborhood $U $ of $ u_0$, 
there is a point $u_1 \neq u_0$ in $S^\circ \cap U$. 

Hence, if $u_0 \notin S^\circ$, we can choose $u_1$ close enough to $u_0$
 to guarantee that there is a solution $z_1$ of
\[
t_0 Q(z)+(z-u_1)P(z)=0
\]
whose distance to $\gamma(t_0)$
is at most $\epsilon_0/2$.
In particular, $z_1 \notin S$.

But then there is a $t$-trace $\eta(t)$ such that
\[
tQ(\eta(t))+(\eta(t)-u_1)P(\eta(t))=0.
\]
Moreover, since all zeros of $PQ$ lie in  $S^\circ$ and any $t$-trace has a startpoint or an endpoint 
among the zeros of $PQ$, it follows that one or 
both of $\lim_{t\to 0}\eta(t)$ and $\lim_{t\to \infty} \eta(t)$ belong to $ S^\circ$.

Furthermore, there is at most 
one $\tau\in [0,\infty]$ such that $\lim_{t\to \tau}\eta(t)=\infty$ (this is true for any $t$-trace). 
Hence, by continuity, there is a $t_1\in (0,\infty)$
such that $\eta(t_1) \in \partial S$.
But then 
\[
u_1 = \eta(t_1) + t_1R(\eta(t_1))
\]
which contradicts the assumption that the associated 
rays of points in $\partial S \setminus \zeros(P)$ belong to   $\overline{S^c}$. 
\end{proof}

\begin{proposition}
If $S\subset \bC$ is such that there is a point $p\in \partial S$ for which $r_p$ intersects the interior of $S$, then $S$ is not $T_{CH}$-invariant. 
\end{proposition}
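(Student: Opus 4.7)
The plan is to exhibit a point $u_0 \in S$ whose root trail $\ttt_{u_0}$ contains a point outside $S$, directly violating $T_{CH}$-invariance. Let $p \in \partial S$ and $t^* \geq 0$ satisfy $u^* := p + t^* R(p) \in S^\circ$. We may assume $S$ is closed (otherwise it is not $T_{CH}$-invariant by definition). Since $r_p$ is defined we have $P(p) \neq 0$. Moreover $t^* > 0$, for $t^* = 0$ would force $u^* = p \in \partial S$, contradicting $u^* \in S^\circ$; and $R(p) \neq 0$, for otherwise $r_p = \{p\}$ and the hypothesis gives $p \in S^\circ$, again impossible.

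The core idea is that the map $\Phi_t(z) := z + tR(z)$ is holomorphic in $z$ with derivative $\Phi_t'(z) = 1 + tR'(z)$; where this derivative is nonzero, $\Phi_t$ is a local biholomorphism sending open neighborhoods of $z$ to open neighborhoods of $\Phi_t(z)$, in agreement with \cref{prop:deru}. I would choose a parameter $t^{**}$ close to $t^*$ at which $\Phi_{t^{**}}$ is noncritical at $p$: if $1 + t^* R'(p) \neq 0$ take $t^{**} = t^*$; otherwise $R'(p) = -1/t^* \neq 0$, and $t^{**} = t^* + \epsilon$ for any small $\epsilon > 0$ gives $\Phi_{t^{**}}'(p) = \epsilon R'(p) \neq 0$. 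For $\epsilon$ small enough, $u^{**} := \Phi_{t^{**}}(p) = u^* + \epsilon R(p)$ remains in $S^\circ$, since $S^\circ$ is open around $u^*$.

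Next I would pick a neighborhood $V$ of $p$ small enough that $\Phi_{t^{**}}|_V$ is a biholomorphism onto an open neighborhood $W$ of $u^{**}$ (using the holomorphic inverse function theorem), while further shrinking $V$ so that $W \subset S^\circ$ and $V$ avoids the zeros of $P$. Since $p \in \partial S \cap V$ and $S$ is closed, there exists $z_0 \in V \setminus S$; let $u_0 := \Phi_{t^{**}}(z_0) \in W \subset S^\circ \subset S$. By construction $u_0 = z_0 + t^{**} R(z_0)$, equivalently $t^{**} Q(z_0) + (z_0 - u_0) P(z_0) = 0$, so $z_0 \in \ttt_{u_0}$. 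Thus $u_0 \in S$ while its root trail leaves $S$, contradicting $T_{CH}$-invariance.

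The only subtlety is the degenerate case $1 + t^* R'(p) = 0$ where the naive application of \cref{prop:deru} fails at $(u^*, t^*)$; this is handled by the $t$-perturbation above, since the critical condition $1 + tR'(p) = 0$ determines at most one value of $t$, while the open set $S^\circ$ around $u^*$ provides the necessary wiggle room to keep $u^{**}$ inside $S^\circ$ after the perturbation.
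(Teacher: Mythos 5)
Your proof is correct and follows essentially the same route as the paper: perturb the boundary point $p$ to a nearby point $z_0\in S^c$ whose image under $z\mapsto z+tR(z)$ still lands in $S^\circ$, which produces a point $u_0\in S$ whose root trail contains $z_0\notin S$ (equivalently, an exterior point whose associated ray meets $S$, contradicting the ray characterization of invariance). The inverse function theorem and the perturbation of $t$ are unnecessary, though harmless: since $\Phi_{t^*}(z)=z+t^*R(z)$ is continuous at $p$ and $\Phi_{t^*}(p)=u^*$ lies in the open set $S^\circ$, any sufficiently small neighborhood $V$ of $p$ satisfies $\Phi_{t^*}(V)\subset S^\circ$, and one may take $z_0\in V\setminus S$ directly.
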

\begin{proof}
By continuity of $R$ there is a point in the exterior of $S$ close to $p$ such that its associated ray intersects $S$. By \cref{prop:raysGiveCHinv}, $S$ is not $T_{CH}$-invariant. 
\end{proof}

\section{Asymptotic geometry of $T_{CH}$-invariant sets} \label{sec:asympt}

\subsection{Ends of an unbounded subset of $\mathbb{C}$}
Below we introduce a number of notions important for the study of the asymptotic behavior  of unbounded  $T_{CH}$-invariant sets. 

\medskip
Given  an open set $X\subseteq \bC$, consider the sequence of disks $\mathbb{B}(0,n)$ centered at the origin and having radius $n \in \mathbb{N}$.

\begin{definition}
An \textit{end} of $X$ is a non-increasing sequence $U_{1} \supseteq U_{2} \supseteq \dots$ of subsets in $\bC$ such that for any $n$, $U_{n}$ is a connected component of $X \setminus \mathbb{B}(0,n)$.
\end{definition}

A sequence $(u_{n})_{n\in \mathbb{N}}$ of points in $X$ or a ray $z_0+\bR_+z_1 \subset X$ is said \emph{ to converge to the end $\kappa$} corresponding to the sequence $U_{1} \supseteq U_{2} \supseteq \dots$ as above if for any positive integer  $k$, there exists a positive integer  $\sharp(k)$ (resp. $t_k>0$) such that for any $n \geq \sharp(k)$, we have that $u_{n} \in U_{k}$ (resp. $z_0+tz_1\in U_k$).

\begin{definition}
For any end $\kappa$ of an open set $X \subset \mathbb{C}$, we define  the subset of $I_{\kappa}\subseteq \mathbb{S}^{1}$ formed by the accumulation points of sequences $(arg(u_{n}))_{n \in \mathbb{N}}$,  where $(u_{n})_{n\in \mathbb{N}}$  is a sequence of points in $X$ converging to $\kappa$. Similarly, we define  the union  $I_{X}:=\bigcup_\kappa I_{\kappa}\subseteq \mathbb{S}^1$ where $\kappa$ runs over the set of all ends of $X$.
\end{definition}

\begin{lemma}
For any end $\kappa$ of some open unbounded set $X$, $I_{\kappa}$ is a closed interval.
\end{lemma}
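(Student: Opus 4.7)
The plan is to identify $I_{\kappa}$ with a nested intersection of closed arcs in $\mathbb{S}^{1}$ and then invoke standard facts about connected closed subsets of the circle. Let $\kappa$ be given by the nested sequence $U_{1}\supseteq U_{2}\supseteq \cdots$, and set
\[
J_{k} \;:=\; \overline{\arg(U_{k})} \;\subseteq\; \mathbb{S}^{1},
\]
where $\arg\colon \mathbb{C}\setminus\{0\}\to \mathbb{S}^{1}$ is the continuous argument map (well-defined on $U_k$ since $U_k\subset \mathbb{C}\setminus \mathbb{B}(0,k)$). First I would check that each $J_{k}$ is a non-empty closed connected subset of $\mathbb{S}^{1}$: the set $U_k$ is a connected component of $X\setminus\mathbb{B}(0,k)$, hence connected, so $\arg(U_{k})$ is connected, and its closure remains connected. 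Non-emptiness follows because any end $\kappa$ has non-empty $U_{k}$'s, so one can pick $u_{k}\in U_{k}$, and then $|u_{k}|\geq k$ while $\arg(u_{k})\in J_k$.

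Next I would verify the two key identifications: the sequence $(J_{k})$ is decreasing, i.e.\ $J_{k+1}\subseteq J_{k}$, which is immediate from $U_{k+1}\subseteq U_{k}$; and
\[
I_{\kappa} \;=\; \bigcap_{k\geq 1} J_{k}.
\]
For the inclusion $I_{\kappa}\subseteq \bigcap_k J_k$: if $(u_{n})\to\kappa$ with $\arg(u_{n})\to \theta$, then for each fixed $k$ we have $u_{n}\in U_{k}$ eventually, so $\arg(u_{n})\in \arg(U_{k})\subseteq J_{k}$; since $J_k$ is closed, $\theta\in J_{k}$. For the reverse inclusion, given $\theta\in\bigcap_k J_k$, for each $k$ pick $z_{k}\in U_{k}$ with $|\arg(z_{k})-\theta|<1/k$; then $|z_k|\geq k\to\infty$ and for any fixed index $j$ one has $z_k\in U_k\subseteq U_j$ for all $k\ge j$, so $(z_{k})$ converges to $\kappa$ in the sense of the definition, and $\arg(z_k)\to\theta$.

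Having these, the conclusion is purely topological: $(J_{k})$ is a decreasing sequence of non-empty closed connected subsets of the compact Hausdorff space $\mathbb{S}^{1}$, so their intersection $I_{\kappa}$ is non-empty, closed, and connected. Every connected closed subset of $\mathbb{S}^{1}$ is either the whole circle, a point, or a closed arc, i.e.\ a closed interval in $\mathbb{S}^{1}$, which is exactly what the lemma asserts.

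The only subtle step is the identification $I_{\kappa}=\bigcap_k J_k$ together with the topological lemma that a decreasing intersection of non-empty closed connected sets in a compact Hausdorff space is connected; both are standard, so I do not expect any genuine obstacle. One small caveat I would mention explicitly is that the degenerate cases (a single point or the whole of $\mathbb{S}^{1}$) are to be regarded as closed intervals as well, which is consistent with how the authors will use $I_{\kappa}$ later.
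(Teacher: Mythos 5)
Your proof is correct, but it follows a genuinely different route from the one in the paper. You realize $I_{\kappa}$ as the nested intersection $\bigcap_{k}\overline{\arg(U_{k})}$ of non-empty closed connected subsets of $\mathbb{S}^{1}$ (each $U_{k}$ being a connected component, hence connected, and $\arg$ being continuous away from the origin), verify both inclusions of that identification, and then invoke the classical fact that a decreasing intersection of non-empty compact connected sets in a compact Hausdorff space is non-empty, compact and connected; combined with the classification of closed connected subsets of the circle this gives the claim. The paper instead argues directly with two prescribed limit directions $\theta_{0},\theta_{1}\in I_{\kappa}$: it chooses sequences $(y_{n})$ and $(z_{n})$ converging to $\kappa$ with arguments tending to $\theta_{0}$ and $\theta_{1}$, joins $y_{n}$ to $z_{n}$ by arcs $\gamma_{n}$ inside $X\setminus\mathbb{B}(0,n)$, and uses an intermediate-value argument on $\arg(\gamma_{n})$ (after passing to a subsequence to get an ascending family of intervals) to produce, for every $\theta$ in an arc joining $\theta_{0}$ to $\theta_{1}$, a sequence converging to $\kappa$ with arguments tending to $\theta$. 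Your version is shorter and avoids the subsequence and monotonicity bookkeeping, at the price of importing the compactness-of-nested-connected-sets lemma; the paper's version is more constructive in that it exhibits explicit witnesses for each intermediate direction, which is in the spirit of how the sets $I_{\kappa}$ are manipulated later (e.g.\ in Lemma~\ref{lem:ArcCone} and Lemma~\ref{lem:disjoint}). Your closing caveat about admitting a single point or all of $\mathbb{S}^{1}$ as degenerate closed intervals matches the paper's own phrasing at the end of its proof.
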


\begin{proof}
By definition, $I_{\kappa}$ is a closed subset of $\mathbb{S}^{1}$. Furthermore, for any pair of directions $\theta_{0}$ and $\theta_{1}$ in $I_{\kappa}$, there exist two sequences $(y_{n})_{n \in \mathbb{N}}$ and $(z_{n})_{n \in \mathbb{N}}$ of points in $X$ such that:
\begin{itemize}
    \item $(y_{n})_{n \in \mathbb{N}}$ and $(z_{n})_{n \in \mathbb{N}}$ converge to $\kappa$;
    \item $arg(y_{n}) \longrightarrow \theta_{0}$ and $arg(z_{n}) \longrightarrow \theta_{1}$;
    \item $arg(y_{n})$ and $arg(z_{n})$ are monotone in the cyclic order on the circle $\mathbb{S}^{1}$;
    \item for any $n \in \mathbb{N}$, $x_{n},y_{n} \in X \setminus \mathbb{B}(0,n)$.
\end{itemize}
For every $n$, we can find an arc $\gamma_{n}$ connecting $y_{n}$ and $z_{n}$ inside $X \setminus \mathbb{B}(0,n)$. It follows from the hypothesis that (up to a choice of  a subsequence), there exists an ascending family of intervals $I_{1} \subset I_{2} \subset \dots$ such that:
\begin{itemize}
    \item for each $n \in \mathbb{N}$, boundary points of $I_{n}$ coincide with $arg(y_{n})$ and $arg(z_{n})$;
    \item for each $n \in \mathbb{N}$, $I_{n}$ is contained in the image of $arg(\gamma_{n})$.
\end{itemize}
Finally, there exists a closed interval $I$ whose endpoints are $\theta_{0}$ and $\theta_{1}$ such that for any $\theta$, we can pick a point $z_{n}\in \gamma_{n}$ such that the sequence $\{z_{n}\}_{n \in \mathbb{N}}$ converges to $\kappa$ and the sequence $\{\arg(z_{n})\}$ converges to $\theta$.  

Consequently, $I_{\kappa}$ contains an interval connecting $\theta_{0}$ and $\theta_{1}$. Therefore $I_{\kappa}$ is connected implying that  $I_{\kappa}$ is topologically  a closed subinterval of $\mathbb{S}^{1}$ or coincides with the whole $\mathbb{S}^{1}$. 
\end{proof}

\subsection{Special compactification of the complex plane}

In this section we introduce a certain compactification of $\bC$ which we baptise as  the \textit{extended complex plane} $\mathbb{C} \cup \mathbb{S}^{1}\supset \mathbb{C}$. (Notice that the most frequently used compactification of $\bC$ is $\setRS=\bC P^1$.) \newline

\emph{The extended complex plane} $\mathbb{C} \cup \mathbb{S}^{1}$ is set-theoretically the union of $\bC$ and $\mathbb{S}^1$ endowed with the topology defined by the following basis of neighborhoods:
\begin{itemize}
    \item for a point  $x\in \mathbb{C}$, we choose the usual open neighborhoods of $x$ in $\mathbb{C}$;
    \item for a direction $\theta \in \mathbb{S}^{1}$, we choose open neighborhoods of the form $I \cup C(z,I)$ where $I$ is an open interval of $\mathbb{S}^{1}$ containing $\theta$ and $C(z,I)$ is an open cone with apex $z \in \mathbb{C}$ whose opening (i.e. the interval of directions) is $I$.
\end{itemize}

One can easily see that the extended plane $\mathbb{C} \cup \mathbb{S}^{1}$ is compact and homeomorphic to a closed disk. In particular, usual straight  lines in $\mathbb{C}$ have compact closures in $\mathbb{C} \cup \mathbb{S}^{1}$. (Below we will make no distinction between a real line in $\mathbb{C}$ and its closure in $\mathbb{C} \cup \mathbb{S}^{1}$).   \textit{Open half-planes} in $\mathbb{C} \cup \mathbb{S}^{1}$ are, by definition, connected components of the complement to a line.\newline

Given a $T_{CH}$-invariant  set $S \subset \mathbb{C}$, we denote by $\overline{S}$ its closure  in the extended plane $\mathbb{C}\cup \mathbb{S}^{1}$.\newline

\subsection{Ends of connected components of $S^{c}$}
Recall that $\sigma(z)=\arg R(z)$ and $r(z)=\{z+tR(z): t\geq 0\}$

\begin{lemma}\label{lem:ArcCone}
Given an $T_{CH}$-invariant $S\subset \bC$,  let $\gamma:[0,1]\rightarrow \mathbb{C}$ such that:
\begin{itemize}
    \item $\forall t \in (0,1)$, $\gamma(t) \in S^{c}$;
    \item $\sigma(\gamma(0)) \neq \sigma(\gamma(1))$;
    \item $\sigma(\gamma)$ is homotopic to the positive arc from $\sigma(\gamma(0))$ to $\sigma(\gamma(1))$.
\end{itemize}
If  $\Gamma$ denotes the connected component containing $(\sigma(\gamma(0)),\sigma(\gamma(1)))$ in the complement of $r(\gamma(0)) \cup \gamma \cup r(\gamma(1))$, then $\Gamma \subset S^{c}$.
\end{lemma}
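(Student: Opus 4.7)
The plan is to realize $\Gamma$ as a union of associated rays $r(\gamma(t))$ with $t\in(0,1)$, and then quote \cref{th:charact}~(iii) to put each such ray in $S^c$. By \cref{th:charact}~(ii) every zero of $PQ$ is contained in $S$, so for any $t\in(0,1)$ the point $\gamma(t)\in S^c$ is neither a zero nor a pole of $R$; in particular $R(\gamma(t))\in\bC^{\ast}$ and $r(\gamma(t))$ is a genuine half-line with $r(\gamma(t))\subseteq S^c$ by \cref{th:charact}~(iii).

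To implement the sweep, consider the continuous map
\[
 \Phi\colon [0,1]\times[0,\infty]\longrightarrow \bC\cup\mathbb{S}^1,\qquad
 \Phi(t,s) \coloneqq \gamma(t)+sR(\gamma(t)),
\]
extended to $\Phi(t,\infty)\coloneqq\sigma(\gamma(t))\in\mathbb{S}^1$, i.e.\ the limit direction in the extended plane of the ray $r(\gamma(t))$. The four sides of the closed square are mapped respectively onto $\gamma([0,1])$ (the bottom $s=0$), onto $r(\gamma(0))\cup\{\sigma(\gamma(0))\}$ and $r(\gamma(1))\cup\{\sigma(\gamma(1))\}$ (the vertical edges), and onto the positive arc from $\sigma(\gamma(0))$ to $\sigma(\gamma(1))$ in $\mathbb{S}^1$ (the top $s=\infty$); this last identification uses precisely the homotopy hypothesis on $\sigma\circ\gamma$. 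Hence $\Phi$ sends the boundary of the square onto the topological circle $\partial\overline{\Gamma}\subset\bC\cup\mathbb{S}^1$, and by its very definition $\Gamma$ is the component of the complement of this circle that contains the interval of directions $(\sigma(\gamma(0)),\sigma(\gamma(1)))$.

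A standard winding-number argument then shows that for every $w\in\Gamma$ the loop $\Phi|_{\partial}$ has winding number $\pm 1$ about $w$, so $w$ lies in the image of $\Phi$. The preimage of $w$ cannot lie on the boundary of the square, for otherwise $w\in\partial\overline{\Gamma}$, contradicting $w\in\Gamma$. Therefore $w=\gamma(t)+sR(\gamma(t))$ for some $t\in(0,1)$ and $s\in(0,\infty)$, i.e.\ $w\in r(\gamma(t))\subseteq S^c$, which gives $\Gamma\subseteq S^c$.

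The main technical obstacle is the surjectivity step: the parametrization $\Phi$ need not be injective and rays may cross each other or re-meet $\gamma$. Nevertheless the image of $\Phi$ is closed in $\bC\cup\mathbb{S}^1$, so any putative point of $\Gamma$ missed by $\Phi$ would force the boundary loop to wind zero times around it, which is incompatible with $\Gamma$ being the component singled out by the asymptotic interval $(\sigma(\gamma(0)),\sigma(\gamma(1)))$. Making this rigorous is essentially an application of Jordan–Schoenflies to $\overline{\Gamma}\subset\bC\cup\mathbb{S}^1$ together with the elementary fact that a continuous self-map of a disk whose boundary has degree $\pm1$ onto the boundary circle is surjective.
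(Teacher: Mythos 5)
Your proof is correct and takes essentially the same route as the paper: the paper's comparison of two direction fields on the loop $\gamma \cup r(\gamma(0)) \cup r(\gamma(1)) \cup [\sigma(\gamma(0)),\sigma(\gamma(1))]$ --- namely $\arg(y-\alpha)$ of index $-1$ against the associated-ray field of index $0$ (which is where the homotopy hypothesis enters) --- is exactly your computation that $\Phi$ restricted to the boundary of the square has winding number $\pm 1$ about each $w\in\Gamma$, and the paper's observation that the coincidence point must lie on $\gamma$ rather than on the rays or the arc is your observation that the preimage of $w$ lies in the open square. Both arguments then place $y$ on some associated ray $r(\gamma(t))$ with $t\in(0,1)$ and invoke \cref{th:charact} to conclude $y\in S^{c}$.
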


\begin{proof}
Given a point $y \in \Gamma$, we will prove that there exists $t \in (0,1)$ such that $y \in r(\gamma(t))$. 
First, we introduce a closed loop $\alpha$ in the extended complex plane $\mathbb{C} \cup \mathbb{S}^{1}$. This loop $\alpha$ is formed by (in the given order):
\begin{itemize}
    \item path $\gamma$ from $\gamma(0)$ to $\gamma(1)$;
    \item associated ray $r(\gamma(1))$ from $\gamma(1)$ to $\sigma(\gamma(1))$;
    \item boundary arc $[\sigma  (\gamma(0)),\sigma (\gamma(1))]$ from $\sigma(\gamma(1))$ to $\sigma(\gamma(0))$ in positive direction;
    \item associated ray $r(\gamma(0))$ from $\sigma(\gamma(0))$ to
    $\gamma(0)$. 
\end{itemize}
Now we consider two continuous fields of directions on loop $\alpha$. The first one is given by $\arg(y-\alpha)$. Since $y \in \Gamma$, the index of this field is $-1$.\newline
The second field is defined in the following way:
\begin{itemize}
    \item on $\gamma$, the field coincides with $\sigma$ (direction of the associated ray);
    \item on $r(\gamma(1))$, the field is a constant equal to $\sigma (\gamma(1))$;
    \item at any point $\theta$ of boundary arc $[\sigma (\gamma(0)),\sigma (\gamma(1))]$, the field is given by $\theta$;
    \item on $r(\gamma(0))$, the field is a constant equal to $\sigma (\gamma(0))$.
\end{itemize}
Since $\sigma (\gamma)$ is defined to be hopotopic to the positive path from $\sigma (\gamma(0))$ to $\sigma (\gamma(1))$, this second field has index $0$ along the loop $\alpha$. \newline
The two continuous fields have distinct indices so there is exists a point on the loop $\alpha$ where the two fields coincide. The two fields are opposite on boundary arc $[\sigma  (\gamma(0)),\sigma (\gamma(1))]$. Besides, $y$ does not belong to $r(\gamma(0))$ or $r(\gamma(1))$ so a point of $\alpha$ where the two fields coincide automatically belongs to $\gamma$. There is $t \in ]0,1[$ such that $\sigma(\gamma(t))=\arg(y-\gamma(t))$. Thus, $y \in r(\gamma(t))$ and $y \in S^{c}$. Finally, we obtain that $\Gamma \subset S^{c}$.\newline
\end{proof}

\section{Topology of $T_{CH}$-invariant sets}\label{sec:topol} 

\subsection{Winding number}

Below we will need the following topological notion.

Consider a point $z_{0} \in \mathbb{C}$ and a closed loop  $\gamma$ in $\mathbb{C} \setminus \lbrace{ z_{0} \rbrace}$. We define the \emph{topological index} $\iota_{\gamma}(z_{0})$ as the degree of the map $\mathcal{C}_\gamma: \mathbb{S}^{1}\to \mathbb{S}^{1}$ given by $t \mapsto \arg(\gamma(t)-z_{0})$, $t\in \mathbb{S}^{1}$. (Notice that $\iota_{\gamma}(z_{0})$  is independent of the parametrization of $\gamma$.) 

\begin{definition}\label{def:assind} 
Let $\gamma$ be a closed loop in $\mathbb{C} \setminus \mathcal{Z}(PQ)$. The \textit{associated index} $\mathcal{A}_{\gamma}$ is, by definition,  the degree of the map of $\mathcal{C}_{\sigma,\gamma}: \mathbb{S}^{1}\to \mathbb{S}^{1}$ defined as $t \mapsto \sigma \circ \gamma(t)$.  (Recall that $\sigma(z)=\arg R(z)$).  
\end{definition}

\begin{lemma}\label{lem:Index}
Given  a closed loop  $\gamma$  in $\mathbb{C} \setminus \mathcal{Z}(PQ)$,  let $z_{0}$ be a point of $\mathbb{C} \setminus \gamma$. Then at least one of the following statements holds:
\begin{itemize}
    \item[\rm(i)] $\exists t \in \mathbb{S}^{1}$ such that $z_{0}$ belongs to the associated ray $r(\gamma(t))$ of some point $\gamma(t)$;
    \item[\rm(ii)] $\mathcal{A}_{\gamma}=\iota_{\gamma}(z_{0})$.
\end{itemize}
\end{lemma}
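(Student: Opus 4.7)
The plan is to show that the negation of (i) provides a direct homotopy between the two circle maps whose degrees are $\iota_{\gamma}(z_0)$ and $\mathcal{A}_{\gamma}$, after which (ii) follows from homotopy invariance of degree.

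First I would unpack what the negation of (i) says pointwise. The point $z_{0}$ belongs to the associated ray $r(\gamma(t))$ if and only if $z_{0}-\gamma(t) = s\, R(\gamma(t))$ for some $s \geq 0$. Since by hypothesis $z_{0} \notin \gamma$, the case $s=0$ is excluded, so lying on the associated ray is equivalent to $\arg(z_{0}-\gamma(t)) = \arg R(\gamma(t))$, equivalently $\arg(\gamma(t)-z_{0}) = \sigma(\gamma(t))+\pi$ (mod $2\pi$). Thus, assuming (i) fails, the two continuous maps
\[
\mathcal{C}_{\gamma}(t)=\arg(\gamma(t)-z_{0}), \qquad \mathcal{C}_{\sigma,\gamma}(t)=\sigma(\gamma(t))
\]
from $\mathbb{S}^{1}$ to $\mathbb{S}^{1}$ are \emph{never antipodal}, i.e.\ $\mathcal{C}_{\gamma}(t)\neq \mathcal{C}_{\sigma,\gamma}(t)+\pi$ for all $t\in\mathbb{S}^{1}$. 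Note that both maps are well defined: $\mathcal{C}_{\gamma}$ because $z_{0}\notin \gamma$, and $\mathcal{C}_{\sigma,\gamma}$ because $\gamma$ avoids $\mathcal{Z}(PQ)$, so $R(\gamma(t))\in \bC^{\ast}$.

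Next I would invoke the standard ``non-antipodal maps are homotopic'' principle. Regarding $\mathcal{C}_{\gamma}(t)$ and $\mathcal{C}_{\sigma,\gamma}(t)$ as unit vectors $v_{1}(t), v_{2}(t)\in \mathbb{S}^{1}\subset \bR^{2}$, the affine combination
\[
H(t,s)=\frac{(1-s)v_{1}(t)+s\,v_{2}(t)}{\lVert(1-s)v_{1}(t)+s\,v_{2}(t)\rVert}, \qquad (t,s)\in \mathbb{S}^{1}\times[0,1],
\]
is well defined because the denominator vanishes only where $v_{1}(t)=-v_{2}(t)$, which is ruled out by the non-antipodality. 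Hence $H$ is a continuous homotopy from $\mathcal{C}_{\gamma}$ to $\mathcal{C}_{\sigma,\gamma}$.

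Finally, by homotopy invariance of the degree of continuous self-maps of $\mathbb{S}^{1}$, we obtain $\deg \mathcal{C}_{\gamma}=\deg \mathcal{C}_{\sigma,\gamma}$; by the definitions of $\iota_{\gamma}(z_{0})$ and $\mathcal{A}_{\gamma}$ this is exactly statement (ii). I do not foresee a genuine obstacle here: the only place to be careful is the translation between ``$z_{0}\in r(\gamma(t))$'' and the antipodal relation of the two argument maps (the $\pi$-shift coming from the sign in $z_{0}-\gamma(t)$ versus $\gamma(t)-z_{0}$), and making sure the non-vanishing of $R$ along $\gamma$ is used to ensure $\mathcal{C}_{\sigma,\gamma}$ is continuously defined on all of $\mathbb{S}^{1}$.
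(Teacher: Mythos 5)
Your proof is correct and follows essentially the same route as the paper's: the paper argues the contrapositive (if $\mathcal{A}_{\gamma}\neq\iota_{\gamma}(z_{0})$ then the two argument maps must agree somewhere, producing a point with $z_{0}\in r(\gamma(t))$), while you negate (i) to get non-antipodal maps and write out the normalized straight-line homotopy explicitly — the same underlying degree-theoretic fact. Your translation of ``$z_{0}\in r(\gamma(t))$'' into the antipodal relation, including the exclusion of $s=0$ via $z_{0}\notin\gamma$, is handled correctly.
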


\begin{proof}
Indeed if $\mathcal{A}_{\gamma} \neq \iota_{\gamma}(z_{0})$, then there exists $t \in \mathbb{S}^{1}$ such that $R\circ \gamma(t)$ has the same argument as $z_{0}-\gamma(t)$. Consequently, the half-line starting at $\gamma(t)$ in the direction of $z_{0}-\gamma(t)$ coincides with the associated ray $r(\gamma(t))$. In other words, $z_{0}$ belongs to  $r(\gamma(t))$.
\end{proof}

\begin{lemma}[Argument principle]\label{lem:DegreeDisk}
For an oriented closed loop $\gamma:\mathbb{S}^{1}\longrightarrow \mathbb{C}\setminus\mathcal{Z}(PQ)$ bounding a topological disk $\mathcal{D}$, the topological degree of $\arg(R \circ \gamma)$ coincides with the sum of the degrees of zeros and poles  of $R(z)$ contained in $\mathcal{D}$.  
\end{lemma}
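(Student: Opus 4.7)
\medskip
\noindent\textbf{Proof plan for Lemma~\ref{lem:DegreeDisk}.} The statement is essentially the classical argument principle applied to the meromorphic function $R(z)=Q(z)/P(z)$, so my plan is to reduce it to that statement. First I would observe that the topological degree of the map $t\mapsto \arg(R\circ \gamma(t))\colon \mathbb{S}^1\to \mathbb{S}^1$ is, by definition, the winding number $W(R\circ\gamma,0)$ of the closed loop $R\circ\gamma$ around the origin in $\bC^\ast$. This is well-defined because $\gamma$ avoids $\mathcal{Z}(PQ)$, which is precisely the set of zeros and poles of $R$, so $R\circ\gamma$ takes values in $\bC^\ast$.

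Next, I would invoke the standard argument principle for meromorphic functions on a neighborhood of the closed disk $\overline{\mathcal{D}}$: if $R$ is meromorphic with zeros $\{\zeta_j\}$ of orders $m_j>0$ and poles $\{\pi_k\}$ of orders $n_k>0$ inside $\mathcal{D}$, and has no zeros or poles on $\partial\mathcal{D}=\gamma$, then
\begin{equation*}
W(R\circ\gamma,0)=\frac{1}{2\pi i}\oint_\gamma \frac{R'(z)}{R(z)}\,dz=\sum_j m_j-\sum_k n_k.
\end{equation*}
Since $R=Q/P$ with $Q$ and $P$ polynomials, the zeros of $R$ are the zeros of $Q$ (minus common factors with $P$) and the poles of $R$ are the zeros of $P$ (minus common factors with $Q$); in both cases the local order at a point $\alpha$ coincides with the integer $m_\alpha$ introduced in Notation~\ref{not:pq}. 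Under the convention of the lemma that the ``degree'' of a zero is positive and that of a pole is negative, the right-hand side is exactly the sum of degrees of zeros and poles of $R$ lying in $\mathcal{D}$.

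The only mild point to address carefully is orientation: the statement presumes the standard (counterclockwise) orientation of $\gamma$ as the boundary of the disk $\mathcal{D}$, which is consistent with the sign conventions in the argument principle. I would include a sentence verifying that the definition of $\mathcal{C}_{\gamma}$ via $\arg$ matches the sign of the winding number so that no spurious minus sign appears. The main (and really only) obstacle is bookkeeping: making sure the ``sum of degrees'' convention in the statement lines up with the ``zeros minus poles'' convention in the argument principle; once that is aligned, the lemma follows immediately with no further calculation.
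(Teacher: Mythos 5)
Your reduction to the classical argument principle for the meromorphic function $R=Q/P$ is correct, and it is exactly the justification the paper intends: the lemma is stated as a classical fact (labeled ``Argument principle'') with no proof given, so your identification of the topological degree of $\arg(R\circ\gamma)$ with the winding number $\frac{1}{2\pi i}\oint_\gamma \frac{R'}{R}\,dz$ and hence with zeros minus poles (counted with multiplicity, poles entering with negative sign) is precisely what is meant. Your attention to the orientation and sign conventions is appropriate and resolves the only point that could cause confusion.
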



We start our topological discussions with the most basic  property of $T_{CH}$-invariant sets. 
\subsection{On (dis)connectedness of $T_{CH}$-invariant sets}
Observe that for any closed subset $S\subset \bC$,  its complement  $S^{c}$ is  open in $\mathbb{C}$ and its connected components coincide with its path-connected components.  

\begin{lemma}\label{lem:contractible}
Every connected component of a $T_{CH}$-invariant set  $S\subset \bC$ is contractible.
\end{lemma}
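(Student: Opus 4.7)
The plan is to invoke Theorem~\ref{th:charact}, according to which a closed set $S$ is $T_{CH}$-invariant iff $\zeros(PQ)\subseteq S$ and, for every $p\in S^{c}$, the associated ray $r_{p}=\{p+tR(p):t\ge 0\}$ lies entirely in $S^{c}$. Because $p\in S^{c}$ forces $p\notin\zeros(PQ)$, one has $R(p)\in\setC^{\ast}$, so $r_{p}$ is an honest unbounded half-line in $\setC$. Everything in the proof will be driven by this one observation.

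\emph{Step 1 (components of $S^{c}$ are unbounded).} Let $U$ be a connected component of $S^{c}$ and pick $p\in U$. The ray $r_{p}\subseteq S^{c}$ is connected and contains $p$, so $r_{p}\subseteq U$; in particular $U$ is unbounded.

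\emph{Step 2 (Jordan curves in $K$ bound disks in $K$).} Let $K$ be a connected component of $S$, let $\gamma\subseteq K$ be a Jordan curve, and let $D\subseteq\setC$ be the bounded open disk it encloses. If $D$ met $S^{c}$ at some point $p$, the unbounded ray $r_{p}\subseteq S^{c}$ would have to exit the bounded set $D$ and therefore cross $\gamma\subseteq S$, contradicting $r_{p}\cap S=\emptyset$. Hence $D\subseteq S$; since $\overline{D}$ is connected, lies in $S$, and meets $K$ along $\gamma$, maximality of $K$ gives $\overline{D}\subseteq K$. Consequently $\setC\setminus K$ has no bounded component, so $\setRS\setminus K$ is connected (each component of $\setC\setminus K$ sees $\infty$ in $\setRS$), and $K$ is simply connected.

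\emph{Step 3, the main obstacle.} The passage from "$K$ simply connected" to "$K$ contractible" is not purely formal for a closed planar set that may be unbounded or fail to be locally connected. My plan is to exploit the ray field itself: on $S^{c}$ the assignment $p\mapsto p+tR(p)$ yields a non-singular continuous semi-flow that pushes every point monotonically to infinity along the half-line $r_{p}$, so $\setC\setminus K$ admits an explicit strong deformation retraction "outwards". Combined with Step~2, this lets one collapse a small open neighbourhood of $K$ onto $K$ and then pull $K$ onto an interior reference point (for instance a zero of $Q$), yielding the required null-homotopy. For compact $K$ one may alternatively appeal to the classical Whyburn-type fact that a simply connected planar continuum is cellular, and extend to unbounded $K$ by an exhaustion argument using nested closed disks. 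This third step is where I expect the genuine technical work to lie; Steps~1 and 2 are straightforward consequences of the associated-ray characterization of $T_{CH}$-invariance.
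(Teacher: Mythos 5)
Your Steps 1 and 2 are exactly the paper's proof, merely phrased in the positive rather than the contrapositive: the paper argues that a non-contractible component of $S$ would contain a simple loop $\gamma$ enclosing a connected component $X$ of $S^{c}$, while every $x\in X$ has its associated ray $r(x)\subset X$ (a genuine half-line, since $\zeros(PQ)\subset S$ forces $R(x)\in\setC^{\ast}$), so $X$ cannot be bounded. That is precisely your ray-exits-the-disk argument. So on the substance of the lemma you have reproduced the paper's reasoning.

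Your Step 3, however, is not a proof --- it is a plan (``my plan is\dots'', ``this is where I expect the genuine technical work to lie''), and the semi-flow $p\mapsto p+tR(p)$ you propose is not actually a semi-flow (iterating it does not keep you on the ray $r_p$, since $R$ changes along the way), so the sketched deformation retraction would need real justification. That said, the subtlety you are worried about --- that ``every Jordan curve in $K$ bounds a disk in $K$'' does not formally imply contractibility for a closed, possibly unbounded, possibly non-locally-connected planar set (Warsaw-circle-type continua have no essential simple loops yet are not contractible) --- is equally unaddressed in the paper, whose proof silently identifies non-contractibility with the existence of an enclosing simple loop. So: your proposal matches the paper's argument and is at least as rigorous as the published proof; the residual point-set issue you flag is a shared gap, not one you introduced, but as written you have not closed it either.
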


\begin{proof}
Assume that a connected component $S_{0}\subset S$ is non-contractible, then there exists a simple loop $\gamma \subset S_{0}$ such that a connected component $X$ of $S^{c}$ is contained in the bounded part of $\mathbb{C} \setminus \gamma$. However, for any $x\in X$, the associated ray $r(x)$ lies in $X$.  Therefore $X$ cannot be bounded.  
\end{proof}

\begin{proposition}\label{prop:compact}
For a $T_{CH}$-invariant set $S\subset \bC$, if there exists a connected component $X$ of $S^{c}$ which is not simply connected, then $S$ is contractible and compact. In the latter case, for the operator $T$,  one has $\deg Q-\deg P=1$.
\end{proposition}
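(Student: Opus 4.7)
The plan is to exploit the non-contractible loop in $X$ to confine $S$ to a bounded disk, apply the argument principle to compute $\deg Q-\deg P$, and then rerun the index argument to upgrade ``each component of $S$ is contractible'' (\cref{lem:contractible}) to ``$S$ is contractible''. I begin by choosing a simple closed Jordan curve $\gamma\subset X$ that is non-contractible in $X$; such $\gamma$ exists because $X$ is an open connected subset of $\bC$ which fails to be simply connected. Orient $\gamma$ counterclockwise, let $\mathcal{D}$ denote the bounded Jordan domain with $\partial\mathcal{D}=\gamma$, and set $\Omega=\bC\setminus\overline{\mathcal{D}}$; non-contractibility of $\gamma$ in $X$ forces $\mathcal{D}\not\subseteq X$. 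I first claim $\mathcal{D}\setminus X\subseteq S$: any component $X'\neq X$ of $S^{c}$ is disjoint from $\gamma\subset X$, so if $X'$ met $\mathcal{D}$ then connectedness would force $X'\subseteq\mathcal{D}$, rendering it bounded. But every component of $S^{c}$ is unbounded, because by \cref{le:zerosPQ} its points avoid zeros and poles of $R$, hence for each such $p$ the associated ray $r_p=\{p+tR(p):t\geq 0\}$ is a genuine half-line, and by \cref{th:charact}(iii) it lies entirely in $S^{c}$, hence in the component of $p$.

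Next I apply \cref{lem:Index} to $\gamma$. For any $z_0\in S$, option (i) would place $z_0$ on an associated ray of some $\gamma(t)\in X\subseteq S^{c}$, and by \cref{th:charact}(iii) that ray lies in $S^{c}$, contradicting $z_0\in S$; thus option (ii) holds and $\mathcal{A}_\gamma=\iota_\gamma(z_0)$. Picking $z_0\in S\cap\mathcal{D}$ (nonempty by the previous step) yields $\mathcal{A}_\gamma=\iota_\gamma(z_0)=1$, whereas any $z_0\in S\cap\Omega$ would give $\mathcal{A}_\gamma=0$. Therefore $S\cap\Omega=\emptyset$, so $S\subseteq\overline{\mathcal{D}}$; since $\gamma\subset S^{c}$, in fact $S\subseteq\mathcal{D}$, proving that $S$ is bounded and hence compact. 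The argument principle (\cref{lem:DegreeDisk}) then equates $\mathcal{A}_\gamma$ with the sum of orders of zeros minus poles of $R$ inside $\mathcal{D}$; since $\zeros(PQ)\subseteq S\subseteq\mathcal{D}$ captures all of them, this sum equals $\deg Q-\deg P$, giving $\deg Q-\deg P=1$.

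It remains to show that $S$ is connected, for then \cref{lem:contractible} completes the proof. Suppose instead $S=S_1\sqcup S_2$ with $S_1,S_2$ nonempty closed (hence compact at positive distance). I can find a Jordan curve $\delta\subset S^{c}$ enclosing $S_1$ but not $S_2$, for example the smoothed boundary of an $\epsilon$-neighborhood of $S_1$ with $\epsilon<d(S_1,S_2)/2$. Rerunning the \cref{lem:Index} argument on $\delta$ then forces $\mathcal{A}_\delta=\iota_\delta(z_0)=1$ from any $z_0\in S_1$ and $\mathcal{A}_\delta=\iota_\delta(z_0)=0$ from any $z_0\in S_2$, a contradiction. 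The main subtlety is really the topological bookkeeping of the first step---establishing the unboundedness of every component of $S^{c}$ and using it to localize $\zeros(PQ)$ inside $\mathcal{D}$; once this is in place, \cref{lem:Index} and \cref{lem:DegreeDisk} deliver compactness, the degree identity, and connectedness almost mechanically from the same template.
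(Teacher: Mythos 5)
Your argument is correct and follows essentially the same route as the paper's: a non-contractible simple loop $\gamma\subset X$, the dichotomy of \cref{lem:Index} to force $\mathcal{A}_{\gamma}=\iota_{\gamma}(z_0)$ for every $z_0\in S$ (whence $S$ is trapped in the bounded Jordan domain $\mathcal{D}$ and is compact), and \cref{lem:DegreeDisk} to extract $\deg Q-\deg P=1$; in fact you supply details the paper leaves implicit, such as why $\mathcal{D}$ must meet $S$ via the unboundedness of every component of $S^{c}$. The one step whose justification is incomplete is the connectedness argument: for two arbitrary disjoint compacta $S_1,S_2$ a single Jordan curve in the complement enclosing $S_1$ but not $S_2$ need not exist (it fails, for instance, if $S_1$ topologically surrounds $S_2$), and the boundary of an $\epsilon$-neighborhood of a compact set is in general a finite union of closed curves rather than one Jordan curve. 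Both obstructions do vanish in your setting — if one piece of $S$ surrounded another, some component of $S^{c}$ would be confined to a bounded region of the plane, contradicting the unboundedness you already established; and one can then take the outermost boundary component of a finite union of small disks covering $S_1$ — but you should make this explicit, since the existence of $\delta$ is precisely the point at which the argument would collapse for general compact sets. With that repair the proof is complete, and it is, if anything, more careful than the paper's own treatment of connectedness.
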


\begin{proof}
We consider a non-contractible positively oriented simple loop $\gamma$ in $X$ encompassing 
a (compact) connected component $S_{1}$ of $S$.
For a point $z_{0} \in S_{1}$, the associated ray $r(\gamma(t))$ of any point $\gamma(t)$ does not contain $z_{0}$.
Then, following Lemma~\ref{lem:Index}, we have $\mathcal{A}_{\gamma}=\iota_{\gamma}(z_{0})=1$. However for any $z$ belonging to the unbounded part $\mathcal{U}$ of $\mathbb{C} \setminus \gamma$, we have $\iota_{\gamma}(z)=0$.
 Therefore Lemma~\ref{lem:Index} implies that $z$ belongs to some associated ray $r(\gamma(t))$.
Consequently, $S \cap \mathcal{U} = \emptyset$. This reasoning applies to every non-contractible loop of $S^{c}$. Thus, $S$ is connected and contractible by Lemma~\ref{lem:contractible}.
Using Lemma \ref{lem:DegreeDisk}, we conclude that $\mathcal{A}_{\gamma}$ coincides with the sum of multiplicities of zeros and poles of $R(z)$ inside $\mathcal{D}$.
Since every root of $PQ$ belongs to $S$, we obtain $\deg Q-\deg P=\mathcal{A}_{\gamma}=1$.
\end{proof}

\smallskip
\subsection{Discussion of connectedness for non-compact $T_{CH}$-invariant sets} 

Observe  that without the assumption of compactness,  connectedness of $T_{CH}$-invariant sets might fail.  For example, consider an operator $T=Q(z)\frac{d}{dz}+P(z)$ with  $Q(z)=\lambda P(z)$ where $\lambda \in \bC^\ast$, i.e. $R(z)\equiv \lambda\neq 0 $ is a constant function.  Then a closed subset $S\subset \bC$ containing roots of $PQ$ is $T_{CH}$-invariant if and only if it is closed under positive translations in the $\theta$-direction where $\theta$ is the argument of $\lambda$. In particular, in this case there exist disconnected $T_{CH}$-invariant sets.
 
Furthermore, it is not hard to find operators $T$  with $R(z)\not \equiv const$ for which there still exist disconnected $T_{CH}$-invariant sets. In particular,  one can take  $Q=P+\epsilon$ with  sufficiently small $\epsilon\in \bC^\ast$.

 To provide an explicit example, take \[Q=z^2+1,\; P=Q+\frac{1}{10}=z^2+\frac{11}{10}.\] Let $A$ be the intersection of the two sets: \[S_1=\left\{z\in \bC \vert \; \Im z\in \left[\frac{1}{2},2-\Re z\right]\right\}\]  and
  \[S_2=\left\{z\in \bC\vert\; \Im z\geq \frac{1}{2}+\Re z\right\}.\]  Let $B$ be the reflection of $A$ in the real axis. We shall prove that $M=A\cup B$ is $T_{CH}$-invariant. It is clear that $M$ is disconnected in $\bC$ and $T_{CH}$-invariance of $M$ will prove the disconnectedness of $\minvset{CH}$.
  
Firstly, observe that $M$ is regular and that the zeros of $P,Q$ are in the interior of $M$. We shall study the associated rays $r(x)$ of every point $x$ on the boundary of $M$ and note that they do not intersect the interior of $M$ which by Proposition~\ref{prop:raysGiveCHinv}  will imply that $M$ is indeed $T_{CH}$-invariant. 

By the symmetry of $M$,  it suffices to consider $\partial A$. Firstly, take $z(s)\; \vert\; \Im z=1/2,\; \Re z=s\in (-\infty,0]$. Then the associated ray $r(z(s))$ at $z(s)$ is given by $s+i/2+tR(s+i/2)$ which, in its turn,  equals    \[s+\frac{i}{2}+\left(1-\frac{1}{10 \left(\frac{11}{10}+\left(s+\frac{i}{2}\right)^2\right)}\right) t.\] Hence, for $s=0$,  the associated ray is parallel to the real axis and does not intersect it. If however $s<0$, then the intersection of the associated ray with the real line occurs at the point $-5 s^3-12 s-\frac{51}{16 s}$.   We can easily see that this expression is strictly greater than 1. In particular, the associated ray $r(z(s))$ does not intersect $M$ in any other point different from  its starting point $z(s)$.

Next, we consider the associated rays at the points $z(s)=i/2+s(1+i),\; s\in (0,1]$. We can check that $\arg(R(z))\in (0,\pi/4)$ and hence that the associated rays of the latter points do not intersect $M$ at any other point either. 

Finally, it is easy to see that for a point $z$ belonging to  the remaining part of the boundary one has the following. If $\Re z\geq 0$, then the associated ray at $z$ does not intersect the real axis and if $\Re z<0$, then it does so at a point with real part greater than $1$. Therefore we conclude that on all three parts of the boundary of $A$, the associated rays do not intersect $M$ at any other point than the initial. By symmetry the same is true for $B$ and we obtain that $M$ is $T_{CH}$-invariant by \cref{prop:raysGiveCHinv}.

\medskip
\begin{remark} 
Observe that every $T_{CH}$-invariant set $S$ is connected if and only if the minimal invariant set $\minvset{CH}$ is connected. 
We plan to return to the question about connectedness of $\minvset{CH}$ in  \cite{AHNST}.
\end{remark}

\subsection{Happy ends} 

\begin{lemma}\label{lem:ConstantEnd} For a $T_{CH}$-invariant set $S\subset \bC$, 
let $X$ be a connected component of $S^{c}$. Then there exists a unique topological end $\kappa$ of $X$ such that for every point $x \in X$, the ray $r(x)$ converges to $\kappa$. In particular, $X$ is unbounded.
\end{lemma}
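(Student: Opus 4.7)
\emph{Step 1: Ray inclusion and unboundedness.} My first move would be to observe that for any $x \in X$, the associated ray $r(x)=\{x+tR(x) : t\geq 0\}$ is a genuine unbounded half-line contained in $X$. Indeed, \cref{th:charact}(iii) gives $r(x)\subset S^c$; since $r(x)$ is connected and meets $X$, and $X$ is a connected component of $S^c$, the inclusion $r(x)\subset X$ follows. The standing coprimality of $P$ and $Q$ together with \cref{th:charact}(ii) ensures $R(x)\in\bC^*$ throughout $X$, so $r(x)$ is an honest half-line. In particular $X$ is unbounded, and for every $n\in\setN$ the tail $r(x)\cap\{|z|>n\}$ is connected (the intersection of a straight ray with a disk complement), hence sits in a unique connected component of $X\setminus \overline{\mathbb{B}(0,n)}$; the resulting nested family defines an end $\kappa(x)$ of $X$.

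\emph{Step 2: Local constancy of $\kappa$.} The core step will be to show that $x\mapsto \kappa(x)$ is locally constant; connectedness of $X$ then forces $\kappa$ to be globally constant, which is exactly the claim. Fix $x_0\in X$ and choose an open disk $U\ni x_0$ with $\overline{U}\subset X$ small enough that the oscillation of $\sigma:=\arg\circ R$ on $U$ is strictly less than $\pi$. For $x\in U$ with $\sigma(x)\neq\sigma(x_0)$, after possibly swapping the roles of $x_0$ and $x$, I take $\gamma:[0,1]\to U$ to be the straight segment from $x_0$ to $x$. Because the angular variation of $\sigma\circ\gamma$ is less than $\pi$, the curve does not wind, so it is homotopic rel endpoints to the positive arc from $\sigma(x_0)$ to $\sigma(x)$; thus \cref{lem:ArcCone} delivers a cone-region $\Gamma\subset S^c$ bounded by $r(x_0)$, $\gamma$, $r(x)$ and the corresponding arc of $\mathbb{S}^1$ at infinity. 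Since $\Gamma\cup r(x_0)$ is connected and lies in $S^c$, necessarily $\Gamma\subset X$. For all $n$ so large that $\overline{U}\subset \mathbb{B}(0,n)$, the set $(\Gamma\cup r(x_0)\cup r(x))\cap\{|z|>n\}$ is a connected subset of $X\setminus\overline{\mathbb{B}(0,n)}$ containing both tails, forcing $\kappa(x)=\kappa(x_0)$.

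\emph{Step 3: Handling parallel rays and concluding.} Under the standing assumptions, $R$ is a non-constant rational function on $X$, hence a non-constant open map, so $\sigma$ cannot be constant on any non-empty open subset of $X$. Hence in the remaining case $\sigma(x)=\sigma(x_0)$ I would pick an auxiliary point $x'\in U$ with $\sigma(x')\notin\{\sigma(x_0),\sigma(x)\}$ and apply Step 2 to the pairs $(x_0,x')$ and $(x,x')$ to conclude $\kappa(x)=\kappa(x')=\kappa(x_0)$. Since $\kappa$ is locally constant on the connected set $X$, it is constant; the common value is the unique end of $X$ receiving every associated ray, and unboundedness of $X$ has already been seen in Step 1.

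\emph{Expected obstacle.} The chief technical point will be verifying the homotopy hypothesis of \cref{lem:ArcCone}: that $\sigma\circ\gamma$ be homotopic to the positive arc between its endpoints. Shrinking $U$ so that $\sigma$-oscillation is less than $\pi$ is precisely what prevents winding and renders this hypothesis automatic. A secondary subtlety, but routine, is the need to interpret the cone region $\Gamma$ inside the extended plane $\bC\cup\mathbb{S}^1$: it is the boundary arc at infinity that keeps $\Gamma\cap\{|z|>n\}$ connected and lets one link the two ray-tails inside $X\setminus\overline{\mathbb{B}(0,n)}$.
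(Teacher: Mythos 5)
Your proof is correct and follows essentially the same route as the paper's: rays are half-lines contained in $X$ (hence $X$ is unbounded and each $x$ determines an end), and the map $x\mapsto\kappa(x)$ is locally constant, hence constant on the connected component. The paper merely asserts the local constancy ("the end is invariant under small perturbations of $x$"), whereas you justify it via \cref{lem:ArcCone}; your only imprecision is the claim that $r(x)\cap\{|z|>n\}$ is always connected (a ray may leave and re-enter a disk), but since an end only records the terminal behavior of the ray this is harmless.
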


\begin{proof}
For any $x \in X$, $R(x) \in \mathbb{C}^{\ast}$ (because every root of $PQ$ belongs to $S$). Thus, the associated ray $r(x)$ is a half-line contained in  $X$ which implies that $X$ is unbounded. We denote by $\kappa$ the end of $X$ to which the ray $r(x)$ converges. The end  $\kappa$ is invariant under small perturbations of $x$. Thus, it is a topological invariant of the whole component $X$.
\end{proof}

\begin{definition}
We will call the above special end as the \textbf{happy end} of $X$. 
\end{definition} 

\begin{lemma}\label{lem:interval}
For a non-trivial $T_{CH}$-invariant set $S\subset \bC$, let $X$ be a connected component of $S^{c}$. Then $\sigma(X)$ is an open interval of $\mathbb{S}^{1}$.
\end{lemma}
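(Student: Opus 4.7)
The plan is to factor $\sigma|_X$ through the holomorphic non-vanishing function $R$ and the argument map, and then combine the open mapping theorem with the connectedness of $X$ to identify $\sigma(X)$ as a connected open subset of $\mathbb{S}^{1}$.

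First, I would observe that by Proposition~\ref{prop:basic}(i), every zero of $P$ and every zero of $Q$ lies in $S$, so $R = Q/P$ has neither zeros nor poles on $X = S^{c}$. Thus $R|_{X}\colon X \to \bC^{\ast}$ is a well-defined holomorphic non-vanishing map, and under the standing hypotheses of \S\ref{ssec:constant} (in particular, $P$ and $Q$ coprime and $T$ not of constant-coefficient type), $R$ is non-constant as a rational function. By the open mapping theorem applied to $R$, the image $R(X)$ is an open subset of $\bC^{\ast}$.

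Next, the argument map $\arg\colon \bC^{\ast}\to \mathbb{S}^{1}$ is continuous and open, since under the identification $\bC^{\ast}\simeq \mathbb{R}_{>0}\times \mathbb{S}^{1}$ it is simply the projection onto the second factor. Therefore $\sigma(X) = \arg(R(X))$ is open in $\mathbb{S}^{1}$. The set $X$, being a connected component of the open set $S^{c}$, is connected and (by the non-triviality assumption $S\neq \bC$) non-empty, so $\sigma(X)$ is a non-empty connected open subset of $\mathbb{S}^{1}$, that is, an open interval in the broad sense that allows the whole $\mathbb{S}^{1}$ as the degenerate extreme (which really does occur: for example, for $T=z\diffz+1$ one has $\minvset{CH}=\{0\}$ and $\sigma(X)=\mathbb{S}^{1}$).

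I do not foresee a substantive obstacle here. The only two facts that require checking, namely that $R$ is non-constant and that $R$ has no zeros or poles on $X$, are immediate from the standing hypotheses and Proposition~\ref{prop:basic}(i) respectively, and the remainder is a direct application of the open mapping theorem together with the elementary fact that connected open subsets of $\mathbb{S}^{1}$ are arcs.
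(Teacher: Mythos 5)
Your proposal is correct and follows essentially the same route as the paper: the paper's (very terse) proof likewise notes that $R$ is non-constant, hence $R$ and $\sigma$ are open continuous maps, and concludes that $\sigma(X)$ is an open connected subset of $\mathbb{S}^{1}$. You have merely supplied the details the paper leaves implicit (that $\zeros(PQ)\subset S$ makes $R|_X$ a non-vanishing holomorphic map, the open mapping theorem, and the openness of $\arg$), so there is nothing to correct.
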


\begin{proof}
Since $R(z)$ is non-constant we get that both $R(z)$ and $\sigma$ are  open and continuous maps.
Since $S$ is closed, $X$ is open. Consequently, $\sigma(X)$ is an open connected subset of $\mathbb{S}^{1}$.
\end{proof}

\begin{lemma}\label{lem:argkappa}
For a non-trivial $T_{CH}$-invariant set $S\subset \bC$, let $X$ be a connected component of $S^{c}$ and $\kappa$ be the happy end of $X$. Then $\sigma(X) \subset I_{\kappa}$.
\end{lemma}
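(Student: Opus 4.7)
The plan is to use the happy end structure directly: each point $x\in X$ emits an associated ray entirely contained in $X$ that converges to $\kappa$, and the argument along such a ray tends to $\sigma(x)=\arg R(x)$. So $\sigma(x)$ lands in $I_\kappa$ essentially by definition.

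In detail, I would proceed as follows. Fix an arbitrary $x\in X$. Since $S$ is non-trivial and $X$ is a connected component of $S^c$, the point $x$ is not a zero of $PQ$, so $R(x)\in\bC^\ast$ and the associated ray $r(x)=\{x+tR(x):t\geq 0\}$ is a genuine half-line. By \cref{lem:ConstantEnd} this half-line lies in $X$ and converges to the happy end $\kappa$ of $X$. Now consider the parametrization $z(t)=x+tR(x)$. For $t\to+\infty$ we have $|z(t)|\to+\infty$ and
\[
\arg z(t)=\arg\bigl(tR(x)+x\bigr)=\arg R(x)+\arg\bigl(t+x/R(x)\bigr)\longrightarrow \arg R(x)=\sigma(x).
\]
Hence the ray is a ray in $X$ (in the precise sense of the preliminaries of \S~\ref{sec:asympt}) converging to $\kappa$ whose arguments tend to $\sigma(x)$. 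By the very definition of $I_\kappa$ as the set of accumulation points of arguments of sequences in $X$ converging to $\kappa$, we conclude that $\sigma(x)\in I_\kappa$.

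Since $x$ was arbitrary in $X$, this shows $\sigma(X)\subset I_\kappa$, completing the proof. There is no real obstacle here; the only point worth double-checking is that the definition of ``convergence to $\kappa$'' for a ray (given right after the definition of an end) applies verbatim to $r(x)$, which is immediate from \cref{lem:ConstantEnd}. Note that \cref{lem:interval} is not needed for this particular statement; it only serves to identify $\sigma(X)$ as an interval, which combined with the present lemma places that whole interval inside the closed interval $I_\kappa$.
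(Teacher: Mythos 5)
Your proposal is correct and follows essentially the same route as the paper: pick a point (equivalently, a direction $\theta\in\sigma(X)$ realized at some point), observe via \cref{lem:ConstantEnd} that its associated ray stays in $X$ and converges to the happy end $\kappa$, and note that the arguments along that ray tend to $\sigma(x)$, which places $\sigma(x)$ in $I_\kappa$ by definition. The explicit limit computation $\arg(x+tR(x))\to\arg R(x)$ is left implicit in the paper but is exactly the step being used there.
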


\begin{proof}
If $\theta \in \sigma(X)$, then there exists $z_{0} \in X$ such that for any $t >0$, one has that $z_{0}+te^{i\theta} \in X$. Following Lemma \ref{lem:ConstantEnd},  this associated ray converges to  the happy end $\kappa$ of $X$. Thus $\theta \in I_{\kappa}$.
\end{proof}

For $z \in \mathbb{C}$ and an open interval  $I$  of $\mathbb{S}^{1}$, the \textit{cone} $C(z,I)$ is the $1$-parameter family of half-lines starting at $z$ in the directions of $I$.

\begin{lemma}\label{lem:cone}
For a non-trivial $T_{CH}$-invariant set $S\subset \bC$, let $X$ be a connected component of $S^{c}$ and $\kappa$ be its happy end. If $\theta \in \sigma(X)$, then there exists $z_{0} \in X$ and an open interval $I \subset \mathbb{S}^{1}$ containing $\theta$ such that:
\begin{itemize}
    \item the cone $C(z_{0},I)$ is contained in $X$;
    \item every half-line in the cone $C(z_{0},I)$ converges to $\kappa$.
\end{itemize}
\end{lemma}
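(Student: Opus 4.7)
The plan is to invoke \cref{lem:ArcCone} to build an unbounded region $\Gamma \subset X$ whose ``asymptotic arc'' strictly contains $\theta$, and then produce the cone by placing its apex deep inside the asymptotic wedge bounded by two associated rays.

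By \cref{lem:interval}, $\sigma(X)$ is an open arc of $\mathbb{S}^1$ containing $\theta$. First I would pick $\theta_1, \theta_2 \in \sigma(X)$ with $\theta_1, \theta, \theta_2$ appearing in this order along the positive direction of $\mathbb{S}^1$, and close enough that the positive arc from $\theta_1$ to $\theta_2$ is contained in $\sigma(X)$. Choose $w_i \in X$ with $\sigma(w_i) = \theta_i$ for $i = 1, 2$. Since $X$ is a connected open subset of $\bC$ it is path-connected, so there is a path in $X$ from $w_1$ to $w_2$; by perturbing it within $X$ I may arrange that $\sigma \circ \gamma$ is homotopic to the positive arc from $\theta_1$ to $\theta_2$, which is possible precisely because that arc is contained in $\sigma(X)$ and is the key point needed to apply \cref{lem:ArcCone}.

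Next, \cref{lem:ArcCone} yields a connected component $\Gamma \subset S^c$ of the complement of $r(w_1) \cup \gamma \cup r(w_2)$ whose asymptotic directions form the arc $(\theta_1, \theta_2)$. Since $\Gamma$ meets $X$ (through a tubular neighborhood of the interior of either $r(w_i)$, which lies in $X$ by \cref{lem:ConstantEnd}), we have $\Gamma \subset X$. Outside a sufficiently large disk, $\Gamma$ is the open planar wedge bounded by the two half-lines $w_i + \bR_+ e^{i\theta_i}$. I would then take $z_0 \in \Gamma$ lying deep inside this asymptotic wedge, for instance a point of the form $w_1 + s_0 e^{i\theta}$ for $s_0$ so large that this point falls in $\Gamma$ (such $s_0$ exists because $\theta$ is in the interior of the arc $(\theta_1,\theta_2)$), and let $I \subset (\theta_1, \theta_2)$ be a sufficiently small open interval around $\theta$. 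A routine planar-geometry check — every half-line from $z_0$ in a direction $\phi$ strictly between $\theta_1$ and $\theta_2$ stays inside the wedge provided $z_0$ is far enough inside it and $I$ is narrow enough — gives $C(z_0, I) \subset \Gamma \subset X$.

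For the final claim, each half-line in $C(z_0, I)$ lies in $X$ and escapes to infinity. For any $n$, the unbounded part of $\Gamma$ outside $\mathbb{B}(0, n)$ is connected (an asymptotic wedge) and contains tails of both $r(w_1)$ and $r(w_2)$, which lie in the $\kappa$-component of $X \setminus \mathbb{B}(0, n)$ by \cref{lem:ConstantEnd}; hence that whole piece, and in particular each half-line of the cone, converges to $\kappa$. The main obstacle is the simultaneous homotopy-class adjustment of $\sigma \circ \gamma$ (so that \cref{lem:ArcCone} may be invoked) together with the explicit choice of $z_0$ deep enough in the asymptotic wedge; both are made possible by the openness of $\sigma(X)$ and the fact that the wedge geometry is fully controlled outside a large disk, while everything inside the disk can be circumvented by pushing $z_0$ outward.
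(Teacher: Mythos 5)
Your proof is correct and follows essentially the same route as the paper's: apply \cref{lem:ArcCone} to a path whose $\sigma$-image realizes the positive arc around $\theta$, obtain an unbounded region of $S^c$ that is asymptotically a wedge, note it lies in $X$ by connectedness, place the cone deep inside the wedge, and identify the limiting end via the tails of the associated rays $r(w_i)$. The one step where your justification is slightly off is the homotopy normalization of $\sigma\circ\gamma$ (a small perturbation of the path cannot change the homotopy class; the clean fix is to take a short path near a preimage of $\theta$, using that $\sigma$ is an open map, or to note that when $\sigma(X)$ is a proper arc the class is automatically the right one), but this is a cosmetic point that the paper's own proof also glosses over.
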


\begin{proof}
If $\theta \in \sigma(X)$, then there exists $z \in X$ such that $\sigma(z)=\theta$. Then we can find a curve $\gamma(t):I\to X$  and an open interval $I$ containing $\theta$ such that $I$ coincides with $(\sigma(\gamma(0)),\sigma(\gamma(1)))$. It follows from Lemma~\ref{lem:ArcCone} that $S^{c}$ contains some open cone $C(z_{0},I)$.

Finally, outside a compact subset, the associated ray $r(z)$ is contained in $C(z_{0},I)$. This implies that $C(z_{0},I)$ belongs to $X$ and that every half-line of the cone $C(z_{0},I)$ converges to $\kappa$.
\end{proof}

\begin{corollary}\label{cor:TourCone} For a $T_{CH}$-invariant set $S\subset \bC$, 
let $X$ be a component of $S^{c}$. If $\sigma(X)=\mathbb{S}^{1}$, then $X^{c}$ is compact and so is $S$.
\end{corollary}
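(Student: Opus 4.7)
The plan is to combine Lemma~\ref{lem:cone} with the compactness of $\mathbb{S}^{1}$ to show that the complement of a sufficiently large disk lies entirely inside $X$.

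First, for each $\theta \in \mathbb{S}^{1} = \sigma(X)$, Lemma~\ref{lem:cone} produces a point $z_{\theta} \in X$ and an open interval $I_{\theta} \subset \mathbb{S}^{1}$ containing $\theta$ such that the cone $C(z_{\theta}, I_{\theta})$ is contained in $X$. Since $\mathbb{S}^{1}$ is compact, I extract a finite subcover, obtaining apexes $z_{1}, \ldots, z_{k} \in X$ and open intervals $I_{1}, \ldots, I_{k}$ with $\bigcup_{j=1}^{k} I_{j} = \mathbb{S}^{1}$ and $C(z_{j}, I_{j}) \subset X$ for each $j$.

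Next, I shrink to a closed subcover: choose closed subintervals $\overline{I}_{j} \subset I_{j}$ whose interiors still cover $\mathbb{S}^{1}$, and let $\varepsilon = \min_{j} \operatorname{dist}(\overline{I}_{j}, \mathbb{S}^{1} \setminus I_{j}) > 0$. Set $M = \max_{j} |z_{j}|$. For $|w|$ sufficiently large with respect to $M$ and $\varepsilon$, elementary geometry yields $|\arg(w - z_{j}) - \arg(w)| < \varepsilon$ simultaneously for all $j = 1, \ldots, k$. Hence if $R$ is chosen large enough and $|w| > R$, then $\arg(w)$ lies in some $\overline{I}_{j}$, and so $\arg(w - z_{j})$ lies in $I_{j}$, which by definition means $w \in C(z_{j}, I_{j}) \subset X$. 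Thus $\{w \in \mathbb{C} : |w| > R\} \subset X$, which is exactly the statement that $X^{c}$ is bounded. Since $X$ is open, $X^{c}$ is closed and bounded, hence compact. Finally, because $X \subset S^{c}$ we have $S \subset X^{c}$, and $S$ is closed by definition, so $S$ is a closed subset of a compact set, hence compact.

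The only delicate point is the uniform angular approximation $|\arg(w - z_{j}) - \arg(w)| < \varepsilon$, which is routine once a Lebesgue-type shrinkage $\overline{I}_{j} \subset I_{j}$ is fixed; nothing beyond elementary planar geometry is needed. I do not anticipate a serious obstacle.
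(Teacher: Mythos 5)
Your proof is correct and follows essentially the same route as the paper: cover $\mathbb{S}^{1}$ by the intervals of directions of cones from Lemma~\ref{lem:cone}, extract a finite subcover by compactness of $\mathbb{S}^{1}$, and observe that the complement of the union of these finitely many cones is bounded. The paper states the last step without proof, whereas you supply the (routine but welcome) angular estimate justifying it; there is no substantive difference in approach.
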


\begin{proof}
For each $\theta \in \mathbb{S}^{1}$, there is a point $z_{\theta} \in X$ and an open interval $I_{\theta}$ containing $\theta$ such that the cone $C(z_{\theta},I_{\theta})$ is contained in $X$, see Lemma~\ref{lem:cone}. 

Since $\mathbb{S}^{1}$ is compact, we can extract a finite collection of such cones whose intervals of directions form a cover of $\mathbb{S}^{1}$. The complement to the union of these cones is compact. Thus, the complement $X^{c}$ to $X$ in the extended plane  is compact.
\end{proof}

\begin{lemma}\label{lem:disjoint}
For a non-trivial $T_{CH}$-invariant set $S\subset \bC$, let $X,Y$ be two (possibly identical) connected components of $S^{c}$. Let $\kappa$ be an end of $Y$  different from  the happy end of $X$. Then  $\sigma(X) \cap I_{\kappa}=\emptyset$.
\end{lemma}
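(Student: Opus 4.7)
My plan is to proceed by contradiction: assume there exists $\theta \in \sigma(X) \cap I_\kappa$ and derive a conflict with the uniqueness statement of Lemma~\ref{lem:ConstantEnd}. The approach combines the cone construction from Lemma~\ref{lem:cone} with the definition of $I_\kappa$ as the set of limit arguments along sequences converging to $\kappa$.

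First, since $\theta \in \sigma(X)$, apply Lemma~\ref{lem:cone} to produce a point $z_{0}\in X$ and an open interval $I\subset \mathbb{S}^{1}$ with $\theta\in I$ such that the cone $C(z_{0},I)$ is contained in $X$ and every half-line in $C(z_{0},I)$ converges to the happy end $\kappa_{X}$ of $X$. Next, since $\theta \in I_{\kappa}$, select a sequence $(y_{n})_{n\in \mathbb{N}}$ of points in $Y$ converging to $\kappa$ with $\arg(y_{n})\to \theta$. Because $|y_{n}|\to \infty$, one has $\arg(y_{n}-z_{0})\to \theta$ as well, so for all sufficiently large $n$ the direction from $z_{0}$ to $y_{n}$ lies in $I$, meaning $y_{n}\in C(z_{0},I)\subset X$.

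Now split into cases. If $X \neq Y$, then $X\cap Y = \emptyset$ (as distinct components of $S^{c}$), contradicting $y_{n}\in X\cap Y$. If $X=Y$, I need the auxiliary observation that any unbounded sequence inside $C(z_{0},I)$ must converge to $\kappa_{X}$: indeed, for $n$ large enough the truncated cone $C(z_{0},I)\setminus \mathbb{B}(0,n)$ is connected and lies in a single connected component of $X\setminus \mathbb{B}(0,n)$, which by Lemma~\ref{lem:ConstantEnd} is the one defining $\kappa_{X}$. Applied to $(y_{n})$, this forces $\kappa=\kappa_{X}$, contradicting the hypothesis that $\kappa$ is different from the happy end of $X$. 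Either way we reach a contradiction, so $\sigma(X)\cap I_{\kappa}=\emptyset$.

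The main subtlety — and the only step that is not bookkeeping — is justifying that unbounded sequences inside the cone $C(z_{0},I)$ converge to $\kappa_{X}$ and not to some other end of $X$. The argument above reduces this to connectedness of the truncated cone, which is clear geometrically; however, one must be attentive that the cone truly lies in $X$ (supplied by Lemma~\ref{lem:cone}) so that its truncation sits inside a single component of $X\setminus \mathbb{B}(0,n)$. The rest is a direct application of the definitions of happy end, associated argument interval $I_{\kappa}$, and the cone lemma.
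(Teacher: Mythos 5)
Your proposal is correct and follows essentially the same route as the paper: contradiction via the cone from Lemma~\ref{lem:cone}, placing the tail of a sequence $(y_n)$ witnessing $\theta\in I_\kappa$ inside that cone, and concluding $\kappa$ must be the happy end of $X$. The only differences are cosmetic — you make explicit the $X\neq Y$ versus $X=Y$ case split and the connectedness of the truncated cone, both of which the paper leaves implicit.
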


\begin{proof}
We assume by contradiction that $\theta \in \sigma(X) \cap I_{\kappa}$. Following Lemma~\ref{lem:cone}, there exists $z_{0} \in X$ and an open interval $I$ containing $\theta$ such that the cone $C(z_{0},I)$ is contained in $X$ and such that each of its half-lines converges to the happy end of $X$.

Furthermore there exists a sequence $(y_{n})_{n \in \mathbb{N}}$ of points of $Y$ converging to the end $\kappa$ and such that $\arg(y_{n})$ tends to $\theta$. Then there is a bound $N>0$ such that for every $n \geq N$, $y_{n}$ belongs to the cone $C(z_{0},I)$. Consequently, as $n \to \infty$, $y_{n}$ also tends to the happy end of $X$ (just like every half-line of the cone) which thus coincides with $\kappa$. This contradicts to the assumption  and implies that $\sigma(X) \cap I_{\kappa}=\emptyset$.
\end{proof}

\begin{corollary}\label{cor:distinctarg}
For a non-trivial $T_{CH}$-invariant set $S\subset \bC$, let $X,Y$ be two different  connected components of $S^{c}$, then $\sigma(X) \cap \sigma(Y) = \emptyset$.
\end{corollary}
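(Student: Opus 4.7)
The corollary is essentially a direct consequence of the two preceding lemmas combined with the basic observation that ends are attached to a single component. I would organize the argument as follows.

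First, I would fix two distinct connected components $X,Y$ of $S^{c}$ and let $\kappa_{Y}$ denote the happy end of $Y$ supplied by Lemma~\ref{lem:ConstantEnd}. By Lemma~\ref{lem:argkappa} applied to $Y$, we already have the inclusion $\sigma(Y)\subset I_{\kappa_{Y}}$, so it suffices to establish that $\sigma(X)\cap I_{\kappa_{Y}}=\emptyset$. This reduces the corollary to a single application of Lemma~\ref{lem:disjoint} with the roles of $X$ and $Y$ as stated and $\kappa=\kappa_{Y}$.

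The only thing to verify before invoking Lemma~\ref{lem:disjoint} is that $\kappa_{Y}$ is genuinely different from the happy end $\kappa_{X}$ of $X$. Here I would appeal to the very definition of an end: an end of $Y$ is a nested sequence whose $n$-th term is a connected component of $Y\setminus\mathbb{B}(0,n)$, and analogously for $X$. Since $X$ and $Y$ are distinct connected components of the open set $S^{c}$, they are disjoint, hence no component of $Y\setminus\mathbb{B}(0,n)$ can coincide with a component of $X\setminus\mathbb{B}(0,n)$ for any $n$. In particular $\kappa_{Y}\neq \kappa_{X}$.

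With this verification in hand, Lemma~\ref{lem:disjoint} yields $\sigma(X)\cap I_{\kappa_{Y}}=\emptyset$, and combining with the inclusion $\sigma(Y)\subset I_{\kappa_{Y}}$ gives the desired conclusion $\sigma(X)\cap\sigma(Y)=\emptyset$. I do not anticipate a real obstacle: the bulk of the work has been done in Lemmas~\ref{lem:argkappa} and~\ref{lem:disjoint}, and the corollary mostly consists in checking that one can legitimately feed the happy end of one component into the hypothesis of Lemma~\ref{lem:disjoint} applied to the other. The only subtlety worth stating explicitly is the disjointness of ends for disjoint components, which is why I would make the end-definition check above explicit rather than tacit.
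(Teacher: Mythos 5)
Your proof is correct and follows exactly the paper's route: the paper's own proof is the one-line remark that the claim follows from Lemmas~\ref{lem:argkappa} and~\ref{lem:disjoint}, and you have simply made explicit the (valid) check that the happy end of $Y$ is distinct from that of $X$ because distinct components have distinct ends.
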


\begin{proof}
The claim follows from Lemmas \ref{lem:argkappa} and \ref{lem:disjoint}.
\end{proof}

\smallskip
Let us present one application of the notion of happy end.  

\begin{lemma}\label{lem:convex}
For a non-trivial $T_{CH}$-invariant set $S\subset \bC$, take $x,y \in S$, consider a connected component $X$ of $S^{c}$,  and let $\kappa$ be the happy end of $X$.  Define $X_{0}$ as  the connected component of $S^{c} \cap [x,y]^{c}$ containing  $\kappa$.  
Then for any $z \in X_{0}$, its associated ray $r(z)$ is disjoint from $[x,y]$.
\end{lemma}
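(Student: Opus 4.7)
The plan is to argue by contradiction. Suppose some $z \in X_{0}$ satisfies $r(z) \cap [x,y] \neq \emptyset$. First I would dispose of the degenerate possibilities: if $r(z)$ contains one of the endpoints $x,y$, or if $r(z)$ is collinear with $[x,y]$ (in which case, since $z \notin [x,y]$ lies on the same line, the ray must contain an endpoint), then say $x \in r(z)$. Since $z \in X_{0} \subset X \subset S^{c}$, \cref{th:charact} gives $r(z) \subset X$, placing $x$ in $X \cap S = \emptyset$, a contradiction. Thus I may reduce to the generic case $r(z) \cap [x,y] = \{w\}$, a single transversal crossing at an interior point of $[x,y]$.

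Next I would construct a closed loop in $X$ with unequal winding numbers at $x$ and at $y$. Write $w = z + t^{*}R(z)$ and pick $z_{j} = z + t_{j}R(z)$ with $0 < t_{1} < t^{*} < t_{2}$. Both points lie in $X_{0}$: $z_{1}$ is joined to $z \in X_{0}$ by the sub-ray $\{z+tR(z) : 0 \leq t \leq t_{1}\} \subset X \setminus [x,y]$, and $z_{2}$ is joined to $\kappa$ by the tail $\{z+tR(z) : t \geq t_{2}\} \subset X \setminus [x,y]$, which converges to $\kappa$ by \cref{lem:ConstantEnd}. Since $X_{0}$ is open and path-connected, pick a path $\alpha \subset X_{0}$ from $z_{1}$ to $z_{2}$; it is automatically disjoint from $[x,y]$. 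Concatenating $\alpha$ with the reversed sub-ray from $z_{2}$ back to $z_{1}$ yields a closed loop $\beta$ in $X$ crossing $[x,y]$ transversally exactly once, at $w$. The standard winding-number/signed-intersection formula then gives $\iota_{\beta}(x) - \iota_{\beta}(y) = \pm 1$, so $\iota_{\beta}(x) \neq \iota_{\beta}(y)$.

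Finally I would apply \cref{lem:Index} to $\beta$ at each of $x$ and $y$. Because $\beta \subset X$ avoids $\zeros(PQ)$ and neither $x$ nor $y$ lies on $\beta$, for each of the two points either (i) that point lies on some associated ray $r(\beta(s))$, or (ii) $\mathcal{A}_{\beta}$ equals its winding number. Option (i) is excluded exactly as in the degenerate step above, since it would put $x$ or $y$ in $X \cap S = \emptyset$. So (ii) holds at both, forcing $\iota_{\beta}(x) = \mathcal{A}_{\beta} = \iota_{\beta}(y)$ and contradicting the previous paragraph.

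The main obstacle I anticipate is making the general-position claims precise: the path $\alpha$ must be chosen so that $\beta$ meets $[x,y]$ only at the single transversal crossing $w$, which is routine inside the open set $X_{0}$ but should be stated carefully. The conceptual key that keeps the argument uniform---avoiding a case split depending on whether $\sigma(X) = \mathbb{S}^{1}$ (where $\sigma \circ \beta$ might have nonzero degree) or not---is to invoke \cref{lem:Index} twice, once at each endpoint of $[x,y]$, so that $\mathcal{A}_{\beta}$ never needs to be computed explicitly via the argument principle.
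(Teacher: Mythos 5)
Your proof is correct, but it takes a genuinely different route from the paper's. The paper's argument is a two-line parity count: the ray $r(z)$ is an arc joining $z\in X_{0}$ to the happy end $\kappa$, which also lies in $X_{0}$, so $r(z)$ must cross $\partial X_{0}$ an even number of times; since $r(z)\subset S^{c}$ by \cref{th:charact}, the only portion of $\partial X_{0}$ it could meet is $[x,y]$, and a straight ray meets a straight segment at most once, so the even count forces zero crossings. You instead run a proof by contradiction through the winding-number machinery: after reducing (correctly, via the collinearity dichotomy and \cref{th:charact}) to a single transversal crossing $w$, you close the ray segment between $z_{1}$ and $z_{2}$ into a loop $\beta\subset X$ using a path in $X_{0}$ --- your justification that $z_{2}\in X_{0}$ via the tail of the ray converging to $\kappa$ (\cref{lem:ConstantEnd}) is the right way to see this --- and then apply \cref{lem:Index} at both $x$ and $y$, ruling out alternative (i) at each endpoint exactly as in your degenerate step, to force $\iota_{\beta}(x)=\mathcal{A}_{\beta}=\iota_{\beta}(y)$ against the signed-intersection computation $\iota_{\beta}(x)-\iota_{\beta}(y)=\pm 1$. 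Both arguments hinge on the same geometric fact (a ray meets a segment at most once); what your version buys is that the transversality and general-position issues are made fully explicit and the somewhat informal ``crosses the boundary an even number of times'' step is replaced by a precise index computation, at the cost of invoking heavier machinery than the paper needs. Your closing remark that applying \cref{lem:Index} at both endpoints avoids computing $\mathcal{A}_{\beta}$ via the argument principle is a genuine simplification worth keeping if one goes this route.
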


\begin{proof}
Consider $z \in X_{0}$ and its associated ray $r(z)$. As $r(z)$ connects $z$ and $\kappa$, then if $r(z)$ crosses the boundary of $X_{0}$,  it should cross it an even number of times. However,  by definition, $r(z)$ is  disjoint from $S$. Moreover since two lines intersect each other (at most) once then if $r(z)$ crosses $[x,y]$,  it should cross it at most once. Consequently, $r(z)$ remains disjoint from $[x,y]$.
\end{proof}

\begin{corollary}
Let $S\subset \bC$ be a $T_{CH}$-invariant set and $X$ be a connected component of $S^{c}$. Then $X^{c}$ is $T_{CH}$-invariant. Moreover, its convex hull $Conv(X^{c})$ is also $T_{CH}$-invariant.\newline
\end{corollary}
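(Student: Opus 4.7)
I would split the proof following the two claims. The first assertion, that $X^{c}$ is $T_{CH}$-invariant, follows directly by verifying the criterion of Theorem~\ref{th:charact}: $X^{c}$ is closed as the complement of the open set $X$, it contains $\zeros(PQ)\subset S\subset X^{c}$, and Lemma~\ref{lem:ConstantEnd} guarantees that every associated ray emanating from a point of $X=(X^{c})^{c}$ stays in $X$.

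For the second assertion I would interpret $\mathrm{Conv}(X^{c})$ as the closed convex hull, so that closedness and the inclusion $\zeros(PQ)\subset X^{c}\subset \mathrm{Conv}(X^{c})$ are automatic. What remains is the ray condition: for every $z\notin \mathrm{Conv}(X^{c})$ the ray $r(z)$ should avoid $\mathrm{Conv}(X^{c})$. Since $(\mathrm{Conv}(X^{c}))^{c}\subset X$, the first part yields $r(z)\subset X$, and in particular $r(z)\cap X^{c}=\emptyset$. Arguing by contradiction, I let $w$ be the first point at which $r(z)$ meets $\mathrm{Conv}(X^{c})$; then $w\in\partial \mathrm{Conv}(X^{c})\setminus X^{c}$.

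The structural step is to show that \emph{every extreme point of $\mathrm{Conv}(X^{c})$ belongs to $S$}. An extreme point $e$ must lie in $\overline{X^{c}}=X^{c}$; if it lay in an open component $Y\ne X$ of $S^{c}$, a whole neighborhood would lie in $Y\subset X^{c}\subset \mathrm{Conv}(X^{c})$, contradicting extremality. Consequently $w$ cannot be extreme and therefore lies in the relative interior of an edge $[a,b]\subset\partial \mathrm{Conv}(X^{c})$ with $a,b\in S$. Lemma~\ref{lem:convex} applied with $x=a,y=b$ then forbids any point of the component $X_{0}$ of $S^{c}\cap[a,b]^{c}$ containing the happy end $\kappa$ of $X$ from sending a ray through $[a,b]$, contradicting $w\in r(z)\cap[a,b]$ once $z\in X_{0}$ is secured.

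The heart of the argument, and its main obstacle, is establishing $z\in X_{0}$. My plan is to note that $(\mathrm{Conv}(X^{c}))^{c}$ is open, disjoint from $S\cup[a,b]$, and, in the generic case where $\mathrm{Conv}(X^{c})$ has non-empty interior, connected (as the complement of a closed convex planar body); hence it sits inside a single connected component of $S^{c}\cap[a,b]^{c}$. To identify that component with $X_{0}$, I would pick a direction $\theta\in\sigma(X)$ and use Lemma~\ref{lem:cone} to extract a cone $C(z_{0},I)\subset X$ whose half-lines converge to $\kappa$; translating the apex $z_{0}$ sufficiently far along a direction in $I$, the tail of the translated cone escapes $\mathrm{Conv}(X^{c})$, exhibiting points of $(\mathrm{Conv}(X^{c}))^{c}$ that accumulate at $\kappa$. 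This places $(\mathrm{Conv}(X^{c}))^{c}$, and hence $z$, in $X_{0}$. The collinear degenerate case, where $X^{c}$ lies on a line and $\mathrm{Conv}(X^{c})$ is a segment, is handled by the same scheme, since the complement of a segment in $\mathbb{C}$ is still connected.
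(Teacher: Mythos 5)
The paper states this corollary with no proof at all, presenting it as an immediate consequence of Lemma~\ref{lem:convex} and Theorem~\ref{th:charact}, so your argument has to stand on its own. Your first claim is correct and complete: $X^{c}$ is closed, contains $\zeros(PQ)$, and the associated ray of any point of $X$ lies in $S^{c}$ and, being connected, in $X$; Theorem~\ref{th:charact} then applies. The architecture of your second claim --- put the first contact point $w\in r(z)\cap\partial\mathrm{Conv}(X^{c})$, which lies in $X$, onto a segment $[a,b]$ with $a,b\in S$, and then rule this out by Lemma~\ref{lem:convex} once $z$ is located in the component $X_{0}$ --- is the natural one.

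There is, however, a genuine gap in the structural step. From ``$w$ is not an extreme point'' you conclude that $w$ lies in the relative interior of a \emph{bounded} edge $[a,b]\subset\partial\mathrm{Conv}(X^{c})$ whose endpoints are extreme and hence in $S$. For an unbounded closed convex set --- and $\mathrm{Conv}(X^{c})$ is unbounded in every case except when $S$ is compact --- the face of the boundary through $w$ may be a ray (one endpoint) or an entire line (no endpoints), and may contain no point of $S$ at all. Concretely, for $R\equiv -1$ the set $S=\{x+iy:\ y\le \arctan x\}$ (with a zero of $P$ placed inside it) is $T_{CH}$-invariant, $X^{c}=S$, and $\overline{\mathrm{conv}}(X^{c})$ is the closed half-plane $\{y\le \pi/2\}$, whose boundary line lies entirely in $X$ and carries neither extreme points nor points of $S$; your argument says nothing about a hypothetical $w$ on such a face, so the unbounded-face case must be treated separately (for instance by writing $\mathrm{Conv}(X^{c})$ as an intersection of closed half-planes and arguing with the direction data $\sigma(X)$, $I_{\kappa}$). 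Two further weak points: (a) the assertion that translating the apex of the cone $C(z_{0},I)$ of Lemma~\ref{lem:cone} along a direction of $I$ makes its tail leave $\mathrm{Conv}(X^{c})$ is false whenever $I$ lies in the recession cone of $\mathrm{Conv}(X^{c})$; the correct route to $z\in X_{0}$ is to take a closed half-plane $H\supseteq\mathrm{Conv}(X^{c})$ missing $z$, observe that the open half-plane $H^{c}\subseteq X$ is disjoint from $S\cup[a,b]$ and defines an end of $X$ whose interval of directions has positive length, hence equals the happy end $\kappa$ by Lemma~\ref{lem:multipleEnds}, and then use connectedness of $(\mathrm{Conv}(X^{c}))^{c}$; (b) the Milman-type claim that extreme points of $\overline{\mathrm{conv}}(X^{c})$ lie in $X^{c}$ is true for closed planar sets but is not a one-line fact for unbounded ones and needs a justification (e.g.\ via exposed points).
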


\subsection{Directions of ends}
Recall the following definition. We define $p_{\infty},q_{\infty} \in \mathbb{C}^{\ast}$, and $p,q \in \mathbb{N}$ such that $P(z)=p_{\infty}z^{p}+o(z^{p})$ and $Q(z)=q_{\infty}z^{q}+o(z^{q})$.\newline
Then, we have $\lambda = \frac{q_{\infty}}{p_{\infty}} \in \mathbb{C}^{\ast}$ and $\phi_{\infty}=\arg(\lambda)$ (the reader may recall the definiition from \cref{not:pq}).\newline

We introduce the function $d:\mathbb{S}^{1}\longrightarrow \mathbb{S}^{1}$ such that $d(\theta)=\phi_{\infty}+(\deg Q-\deg P)\theta$.

\begin{lemma}\label{lem:SlopesCC2} For a $T_{CH}$-invariant set $S\subset \bC$, 
let $X$ be a connected component of $S^{c}$. Then the following statements hold:
\begin{itemize}
    \item if $\deg Q-\deg P \neq 0$, then $\sigma(X)$ is a  connected open subset of $\mathbb{S}^{1}$ and for any $\theta \in \sigma(X)$, $d(\theta)$ belongs to ${\sigma(X)}$;
    \item if $\deg Q-\deg P=0$, then for any $\theta \in \sigma(X)$, $d(\theta)=\phi_{\infty}$ belongs to $\overline{\sigma(X)}$.
\end{itemize}
\end{lemma}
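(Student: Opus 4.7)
The plan is to exploit the structure of associated rays combined with the asymptotic expansion $R(z)=\lambda z^{\deg Q-\deg P}(1+o(1))$ as $|z|\to\infty$. Fix $\theta\in\sigma(X)$ and choose $z_{0}\in X$ with $\sigma(z_{0})=\theta$; note that $R(z_{0})\neq 0$ because $Q$ has no zeros in $X$ by item (i) of Proposition~\ref{prop:basic}. Item (iii) of Theorem~\ref{th:charact} then places the entire associated ray $r(z_{0})=\{z(t)=z_{0}+tR(z_{0}):t\geq 0\}$ inside $S^{c}$, and since $r(z_{0})$ is connected and meets $X$, it is contained in the component $X$.

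Along this ray we have $|z(t)|\to\infty$ and $\arg z(t)\to\arg R(z_{0})=\theta$ as $t\to\infty$. Combined with the leading-order expansion of $R$ at infinity this gives
\[
\sigma(z(t))=\arg R(z(t))\longrightarrow\phi_{\infty}+(\deg Q-\deg P)\theta=d(\theta).
\]
Since $z(t)\in X$ for every $t\geq 0$, the values $\sigma(z(t))$ lie in $\sigma(X)$, so $d(\theta)\in\overline{\sigma(X)}$. This already yields the second bullet, in which $d\equiv\phi_{\infty}$ is constant. For the first bullet, Lemma~\ref{lem:interval} already supplies the assertion that $\sigma(X)$ is a connected open subset of $\mathbb{S}^{1}$. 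When $\deg Q-\deg P\neq 0$ the map $d:\mathbb{S}^{1}\to\mathbb{S}^{1}$ is a $|\deg Q-\deg P|$-fold covering and hence a local homeomorphism. Applying the previous argument to every $\theta'$ in a small open neighborhood $U\subset\sigma(X)$ of $\theta$, we get $d(U)\subset\overline{\sigma(X)}$; but $d(U)$ is open, so it lies in the interior of $\overline{\sigma(X)}$, which equals $\sigma(X)$. Hence $d(\theta)\in d(U)\subset\sigma(X)$.

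The main technical obstacle is guaranteeing that the whole ray $z(t)$ stays in the same component $X$ so that the limit $d(\theta)$ genuinely accumulates values of $\sigma\big|_{X}$; this is handled cleanly by $T_{CH}$-invariance and connectedness of $r(z_{0})$. A secondary subtle point is the upgrade from $\overline{\sigma(X)}$ to $\sigma(X)$ itself in the nondegenerate case, which rests on the openness of $d$ when $\deg Q-\deg P\neq 0$. No uniform control of the error term in the asymptotic expansion of $R$ is required, since the argument is purely pointwise in $\theta$.
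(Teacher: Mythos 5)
Your proof is correct, and it takes a somewhat different route from the paper's. The paper invokes Lemma~\ref{lem:cone} to place an entire open cone $C(z_0,I)$ of \emph{straight} half-lines inside $X$, computes $\sigma$ along each such half-line to get $d(I)\subseteq\overline{\sigma(\mathcal{C})}$, and then uses openness of $d(I)$ to conclude $d(I)\subseteq\sigma(\mathcal{C})$. You instead work with a single \emph{associated ray} $r(z_0)\subseteq X$, which is guaranteed to lie in $X$ directly by item (iii) of Theorem~\ref{th:charact} together with connectedness of $r(z_0)$ --- this bypasses the cone construction (and hence the index-theoretic Lemma~\ref{lem:ArcCone} behind it), at the cost of only obtaining the pointwise conclusion $d(\theta)\in\overline{\sigma(X)}$ first; the upgrade to $d(\theta)\in\sigma(X)$ when $\deg Q-\deg P\neq 0$ is then done by applying $d$ to an open set of directions and noting that the open set $d(U)$ must lie in the interior of $\overline{\sigma(X)}$. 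The asymptotic computation $\sigma(z(t))\to\phi_\infty+(\deg Q-\deg P)\theta$ is identical in both arguments. One point worth making explicit in your final step: the identity $\operatorname{int}\bigl(\overline{\sigma(X)}\bigr)=\sigma(X)$ is false for a general open subset of $\mathbb{S}^1$ (e.g.\ an interval with a point removed); it holds here precisely because Lemma~\ref{lem:interval} guarantees that $\sigma(X)$ is a \emph{connected} open subset, i.e.\ an interval, so you should cite that lemma at this step and not only for the first assertion of the first bullet. With that caveat recorded, your argument is complete, and arguably more economical than the paper's.
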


\begin{proof}
For any $\theta \in \sigma(X)$, there is a point $z_{0} \in X$ and an open interval $I$ containing $\theta$ such that the cone $\mathcal{C}=C(z_{0},I)$ is contained in $X$ (see Lemma~\ref{lem:cone}). $R(\mathcal{C})$ is an open set of $\mathbb{C}$ and $\sigma(C)$ is an open interval of $\mathbb{S}^{1}$.

Further for any $\eta \in I$, $R(z_{0}+te^{i\eta}) \simeq \frac{q_{0}}{p_{0}}(te^{i\eta})^{\deg Q-\deg P}$ as $t$ tends to $+\infty$.  Thus, $\sigma(z_{0}+te^{i\eta})$ tends to $d(\eta)$ and $d(\eta)$ is in the closure of $\sigma(\mathcal{C})$. 

Next if $\deg Q-\deg P\neq 0$, then $d(I)$ is an open interval. The closure of $\sigma(\mathcal{C})$ is a closed interval containing the open interval $d(I)$. Consequently we obtain $d(I) \subset \sigma(\mathcal{C})$ and thus $d(\theta) \in \sigma(X)$.
\end{proof}

\begin{corollary}\label{cor:RawArg}
For a connected component  $X$ of $S^{c}$,  one of the following statements holds:
\begin{itemize}
    \item[\rm{(i)}] $\sigma(X)=\mathbb{S}^{1}$;
    \item [\rm{(ii)}] $\deg Q-\deg P =1$ and $\phi_{\infty}=0$;
    \item [\rm{(iii)}] $\deg Q-\deg P =0$ and $\phi_{\infty}$ belongs to $\overline{\sigma(X)}$;
    \item[\rm{(iv)}] $\deg Q-\deg P = -1$ and either $\frac{\phi_{\infty}}{2}$ or $\frac{\phi_{\infty}}{2}+\pi$ is the bisector of open interval $\sigma(X)$. Besides, the length of $\sigma(X)$ is at most $\pi$.
\end{itemize}
\end{corollary}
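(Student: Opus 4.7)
The plan is to dispose of the case $\sigma(X) = \mathbb{S}^{1}$ immediately (this is case (i)), and then, in the remaining case, exploit the fact that by Lemma~\ref{lem:interval} the set $J \coloneqq \sigma(X)$ is a proper open arc of $\mathbb{S}^{1}$ which, by Lemma~\ref{lem:SlopesCC2}, is either forward-invariant or has $\phi_{\infty}$ in its closure under the self-map $d(\theta) = \phi_{\infty} + k\theta$ of $\mathbb{S}^{1}$, where $k \coloneqq \deg Q - \deg P$. The classification (ii)--(iv) will then follow from the different behaviors of $d$ depending on $k$.

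For $k = 0$ the conclusion is exactly the second bullet of Lemma~\ref{lem:SlopesCC2}, giving (iii). For $|k| \geq 2$, the map $d$ stretches arc length by a factor $|k| > 1$, so $d(J)$ is either all of $\mathbb{S}^{1}$ (when $|k|\cdot |J| \geq 2\pi$) or else an open arc of length $|k|\cdot |J| > |J|$; in either situation $d(J) \not\subseteq J$, contradicting Lemma~\ref{lem:SlopesCC2}, so this case cannot occur outside of (i). For $k = 1$ the map $d$ is a rigid rotation by $\phi_{\infty}$, and a proper open arc can be invariant under a rotation only if the rotation is trivial, so $\phi_{\infty} = 0$, giving (ii). For $k = -1$ the map $d$ is a reflection whose fixed points on $\mathbb{S}^{1}$ are precisely the antipodal pair $\phi_{\infty}/2$ and $\phi_{\infty}/2 + \pi$; since $d$ is an involution the containment $d(J) \subseteq J$ is in fact equality, and a reflection-invariant proper open arc must have its midpoint at a fixed point of the reflection, yielding the bisector statement of (iv).

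The main obstacle will be proving the length bound $|J| \leq \pi$ in case (iv). The plan is to argue by contradiction: assuming $|J| > \pi$, the arc $J$ contains an antipodal pair $\theta, \theta + \pi$, and applying Lemma~\ref{lem:cone} at each point produces cones $C(z_{0}, I_{\theta})$ and $C(z_{1}, I_{\theta + \pi})$ contained in $X$, both converging to the unique happy end $\kappa$ of $X$ furnished by Lemma~\ref{lem:ConstantEnd}. I would then use the definition of an end as a nested sequence of connected components of $X \setminus \mathbb{B}(0, n)$ to show that two cones lying in one and the same end and pointing in antipodal directions force $I_{\kappa} = \mathbb{S}^{1}$. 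Combined with the asymptotic formula $\sigma(z) = \phi_{\infty} - \arg(z) + o(1)$ as $|z| \to \infty$ (valid when $k = -1$), this would produce $\sigma(X) = \mathbb{S}^{1}$, contradicting $J \subsetneq \mathbb{S}^{1}$.

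The routine part of the argument is the case analysis on $k$, which reduces to understanding which proper open arcs of $\mathbb{S}^{1}$ can be invariant under a linear self-map. The delicate step is the purely topological claim that two antipodal cones approaching the same end of $X$ force the direction set $I_{\kappa}$ of that end to be all of $\mathbb{S}^{1}$; this is where I would spend most of the effort, carefully chasing the definitions of ends and of $I_{\kappa}$ and using the connectedness of the components $U_{n}$.
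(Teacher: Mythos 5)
Your handling of cases (i)--(iii) and of the bisector statement in (iv) coincides with the paper's proof: both arguments reduce everything to the invariance of the open interval $\sigma(X)$ under $d(\theta)=\phi_{\infty}+(\deg Q-\deg P)\theta$ via Lemma~\ref{lem:interval} and Lemma~\ref{lem:SlopesCC2}, and then classify the invariant proper open arcs according to whether $d$ is expanding, a rotation, or a reflection. The divergence, and the problem, is in your proof of the length bound $|\sigma(X)|\le\pi$ in case (iv).

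The step that fails is the ``purely topological'' claim that two cones contained in $X$, pointing in antipodal directions and converging to the same end $\kappa$, force $I_{\kappa}=\mathbb{S}^{1}$. This is false for a general open connected set: take $X=\mathbb{C}\setminus\{z:\Re z\ge 0,\ \Im z\le 0\}$, the complement of a closed quadrant. It is connected with a single end, it contains cones in the antipodal directions $\pi/4$ and $5\pi/4$, yet $I_{\kappa}$ is the closed three-quarter arc $[0,3\pi/2]$, not all of $\mathbb{S}^{1}$. What definition-chasing with the connected components $U_{n}$ actually yields (via the lemma that $I_{\kappa}$ is a closed interval) is only that $I_{\kappa}$ contains one of the two closed half-circles joining $\theta$ to $\theta+\pi$; and feeding a half-circle into the asymptotics $\sigma(z)=\phi_{\infty}-\arg z+o(1)$ gives back only that $\overline{\sigma(X)}$ contains a half-circle, i.e.\ $|\sigma(X)|\ge\pi$ --- which is consistent with, not contradictory to, the hypothesis $|\sigma(X)|>\pi$. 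So the contradiction never materializes. The paper closes this case by a different mechanism: using Lemma~\ref{lem:ArcCone} it produces inside $X$ an open cone whose interval of directions is all of $\sigma(X)$ (of length $>\pi$, hence a reflex cone), takes a maximal such cone $\Gamma$ ordered by inclusion, and observes that the associated rays emanating from points of $\Gamma$ sweep out a strictly larger cone still contained in $S^{c}$; this forces $S^{c}=\mathbb{C}$, contradicting the fact that $S$ contains $\mathcal{Z}(PQ)$. If you want to keep the antipodal-pair idea you would still have to import this ray-expansion argument, since the purely end-theoretic route does not close.
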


\begin{proof}
We first assume $\deg Q-\deg P \neq 0$. Following Lemma~\ref{lem:interval}, then $\sigma(X)$ is an open interval of $\mathbb{S}^{1}$. Following Lemma~\ref{lem:SlopesCC2}, $\sigma(X)$ is invariant under the action of $d$. Consequently, if $|\deg Q-\deg P| \geq 2$, then $\sigma(X)$ coincides with $\mathbb{S}^{1}$.\newline
If $\deg Q-\deg P=1$, then $\sigma(X)$ is an open interval preserved by  rotation by the angle $\phi_{\infty}$. Thus, either $\sigma(X)$ coincides with the whole circle of directions or the rotation is trivial.\newline
If $\deg Q-\deg P=0$, Lemma~\ref{lem:interval} proves that $\phi_{\infty}$ belongs to $\overline{\sigma(X)}$.\newline
If $\deg Q-\deg P=-1$, then $\sigma(X)$ is an open interval preserved by the involution $\theta \mapsto \phi_{\infty}-\theta$. This implies that the line with the slope $\frac{\phi_{\infty}}{2}$
is the symmetry axis of the interval $\sigma(X)$. If the length of $\sigma(X)$ is strictly bigger than $\pi$, then we consider the family 
of open cones of directions $\sigma(X)$ contained in $X$. Lemma~\ref{lem:ArcCone} proves that such cones exist. We consider a maximal cone $\Gamma$ of this family ordered by inclusion. If $\Gamma \neq \mathbb{C}$, the same lemma proves that associated rays of $\Gamma$ cover a cone strictly bigger than $\Gamma$. This is impossible since $S^{c}$ is nonempty. It follows that the length of $\sigma(X)$ is at most $\pi$ for any connected component $X$ of $S^{c}$.
\end{proof}

\begin{lemma}\label{lem:multipleEnds}
Let $X$ be a connected component of $S^{c}$. We assume that $X$ has at least two distinct topological ends. Let $\kappa$ be its happy end and $\kappa'$ be some other end. Then one of the following statement holds:
\begin{itemize}
    \item $\deg Q-\deg P=0$ and $I_{\kappa'}=\lbrace{ \phi_{\infty}+\pi \rbrace}$;
    \item $\deg Q-\deg P=-1$ and $I_{\kappa'}$ is a singleton $\lbrace{ \theta \rbrace}$ where $\theta \equiv \frac{\phi_{\infty}+\pi}{2}~[\pi]$. Besides, $\sigma(X)$ coincides with either $(\frac{\phi_{\infty}-\pi}{2},\frac{\phi_{\infty}+\pi}{2})$ or $(\frac{\phi_{\infty}+\pi}{2},\frac{\phi_{\infty}+3\pi}{2})$.
\end{itemize}
In both cases, the interior of the closure of $X$ in $\mathbb{C} \cup \mathbb{S}^{1}$ has  connected complement (in $\mathbb{C} \cup \mathbb{S}^{1}$).  
\end{lemma}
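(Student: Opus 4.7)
The plan is to control the directions at infinity for the non-happy end $\kappa'$ via a distance analysis of the rays $r(z_n)=\{z_n+tR(z_n):t\ge 0\}$, where $(z_n)\subset X$ satisfies $z_n\to\kappa'$ and $\arg z_n\to\theta'\in I_{\kappa'}$. The crux is the following criterion: if either the Euclidean distance from $0$ to the affine line supporting $r(z_n)$ tends to $\infty$ with $n$, or if $r(z_n)$ is a radially outward half-line (so $|r(z_n)|\ge|z_n|$ throughout), then for each fixed $N$ the whole ray eventually lies in $X\setminus\mathbb{B}(0,N)$. Being connected and containing $z_n$, it sits in the component of $X\setminus\mathbb{B}(0,N)$ corresponding to $\kappa'$ and thus converges to $\kappa'$, contradicting Lemma~\ref{lem:ConstantEnd}.

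I would first use this criterion to restrict $\deg Q-\deg P$. If $|\deg Q-\deg P|\ge 2$, Corollary~\ref{cor:RawArg}(i) gives $\sigma(X)=\mathbb{S}^1$, whence Corollary~\ref{cor:TourCone} makes $S$ compact and $X$ one-ended---excluded. For $\deg Q-\deg P=1$, Corollary~\ref{cor:RawArg}(ii) forces $\phi_\infty=0$, so $R(z)\sim\lambda z$ with $\lambda>0$ real and $r(z_n)$ is radially outward, again violating the criterion. Thus $\deg Q-\deg P\in\{0,-1\}$.

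Next I would apply the exact line-distance formula $\mathrm{dist}(0,\mathrm{line})=|z_n|\,|\sin(\arg z_n-\arg R(z_n))|$ in each remaining case. For $\deg Q-\deg P=0$, $\arg R(z_n)\to\phi_\infty$ yields distance asymptotic to $|z_n|\,|\sin(\theta'-\phi_\infty)|$, forcing $\theta'\in\{\phi_\infty,\phi_\infty+\pi\}$; the value $\theta'=\phi_\infty$ makes the ray radially outward, so only $\theta'=\phi_\infty+\pi$ survives, and since $I_{\kappa'}$ is a closed connected subset of a singleton, $I_{\kappa'}=\{\phi_\infty+\pi\}$. For $\deg Q-\deg P=-1$, $\arg R(z_n)\to\phi_\infty-\theta'$ yields distance asymptotic to $|z_n|\,|\sin(2\theta'-\phi_\infty)|$, forcing $\theta'\equiv\phi_\infty/2\pmod{\pi/2}$; the radial cases $\theta'\equiv\phi_\infty/2\pmod{\pi}$ are excluded as before, leaving $\theta'\equiv\tfrac{\phi_\infty+\pi}{2}\pmod{\pi}$---a pair of antipodal points---so $I_{\kappa'}$ is again a singleton. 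The $\sigma(X)$-statement then follows from Corollary~\ref{cor:RawArg}(iv) combined with the requirement that $\phi_\infty-\theta'\in\overline{\sigma(X)}$ for a surviving $\theta'$: the length $\le\pi$ and prescribed bisector force $\sigma(X)$ to have length exactly $\pi$, producing the two displayed intervals.

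For the final assertion, observe that $S$ is non-compact when $\deg Q-\deg P\in\{0,-1\}$ (Theorem~\ref{th:main2}), so the contrapositive of Proposition~\ref{prop:compact} makes every component of $S^c$, and in particular $X$, simply connected in $\mathbb{C}$. The intersection $\overline{X}\cap\mathbb{S}^1$ consists of the closed arc $\overline{I_\kappa}$ (from the happy end) together with the isolated singletons from all non-happy ends. By Lemma~\ref{lem:cone}, cones inside $X$ witness that $\mathrm{int}_{\mathbb{S}^1}(I_\kappa)$ lies in $(\overline{X})^\circ$, whereas the isolated singletons are not interior points. Hence $(\overline{X})^\circ$ is the union of the simply connected planar domain $X$ with an open circular arc, glued along cones, and is therefore a simply connected open subset of the closed topological disk $\mathbb{C}\cup\mathbb{S}^1$; its complement is consequently connected. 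The main obstacle in this last step is the local topology around $\partial X\cap\mathbb{C}$ and around the singletons on $\mathbb{S}^1$ coming from non-happy arms: one must rule out that additional interior points are created that would produce a ``hole'' in $(\overline{X})^\circ$, and it is precisely the extreme narrowness of the non-happy ends (reduced to a single direction) that makes this verification succeed.
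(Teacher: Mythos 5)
Your proposal is correct and follows essentially the same route as the paper's proof: you take a sequence $z_n\to\kappa'$ with $\arg z_n\to\theta'$, observe that the associated rays $r(z_n)$ must return toward the happy end (your distance-to-line computation is just a quantitative version of the paper's ``the rays must cross a fixed compact $K$, hence their slopes tend to $\theta'+\pi$''), and thereby derive the relation $\phi_\infty+(\deg Q-\deg P)\theta'\equiv\theta'+\pi$, after which the case analysis via Corollaries~\ref{cor:TourCone} and~\ref{cor:RawArg}, the determination of $\sigma(X)$ from $d$-invariance and $d(\theta')\in\overline{\sigma(X)}$, and the final connectivity argument via simple connectedness of $X$ all match the paper. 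No gaps of substance.
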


\begin{proof}
Let $(z_{n})_{n \in \mathbb{N}}$ be a sequence of points of $X$ converging to $\kappa'$ and such that $\arg(z_{n})$ converges to $\theta \in I_{\kappa'}$. Then there exists a compact $K$ such that $\kappa$ and $\kappa'$ belong to the closures of distinct components of $X \cap K^{c}$. Thus, the associated rays $r(z_{n})$ have to cross $K$ to converge to $\kappa$. 

As $n\to \infty$, $\arg(z_{n})$ converges to $\theta$ and thus the slopes of $r(z_{n})$ converge to $\theta + \pi$. Finally, we obtain that $\phi_{\infty}+(\deg Q-\deg P)\theta=\theta+\pi$.     
Since $X$ has several topological ends, the complement of $X$ cannot be compact. Therefore Corollary~\ref{cor:TourCone} implies that $\sigma(X) \neq \mathbb{S}^{1}$. Corollary~\ref{cor:RawArg} then gives restrictions on $\deg P-\deg Q$ and $\phi_{\infty}$. 

We treat separately the three cases depending on the value of $\deg Q-\deg P$.\newline
If $\deg Q-\deg P=1$, $\phi_{\infty}+(\deg Q-\deg P)\theta=\theta+\pi$ implies that $\phi_{\infty}=\pi$. This contradicts the conclusion of Corollary~\ref{cor:RawArg}.

If $\deg Q-\deg P=0$, then we obtain $\phi_{\infty}=\theta+\pi$. Thus, $I_{\kappa'}$ coincides with the singleton $\lbrace{ \phi_{\infty}+\pi \rbrace}$.

If $\deg Q-\deg P=-1$, then we obtain $\phi_{\infty}=2\theta+\pi$. Thus, $\theta \equiv \frac{\phi_{\infty}+\pi}{2}~[\pi]$ and $I_{\kappa'}$ is automatically a singleton. Besides, the following facts are true:
\begin{itemize}
    \item $I_{\kappa'}$ is disjoint from $\sigma(X)$, see Lemma~\ref{lem:disjoint};
    \item $\sigma(X)$ is an open interval of $\mathbb{S}^{1}$ invariant under the map $d$, see Lemma~\ref{lem:SlopesCC2};
    \item $d(\theta)$ belongs to the closure of $\sigma(X)$;
    \item $d$ is an involution ($\deg Q-\deg P=-1$).
\end{itemize}
It follows that $\theta+\pi$ does not belong to $\sigma(X)$ either. Consequently $\sigma(X)$ coincides with either $(\frac{\phi_{\infty}-\pi}{2},\frac{\phi_{\infty}+\pi}{2})$ or with $(\frac{\phi_{\infty}+\pi}{2},\frac{\phi_{\infty}+3\pi}{2})$. 

In order to prove that the interior of the closure of $X$ in the extended plane has a connected complement it suffices to observe that this interior has a connected intersection $\sigma(X)$ with $\mathbb{S}^{1}$. Besides, it is contractible because otherwise some connected complement of $S$ would be compact which by Proposition~\ref{prop:compact} proves can only happen  if $\deg Q-\deg P=1$.
\end{proof}

\begin{corollary}\label{cor:Raw1}
For a $T_{CH}$-invariant set $S\subseteq \bC$,  one of the following statements holds:
\begin{itemize}
    \item[\rm{(i)}] $S=\mathbb{C}$, i.e. $S$ is trivial;
    \item[\rm{(ii)}] $\deg Q-\deg P = 1$ and $S$ is compact and contractible;
    \item[\rm{(iii)}] $\deg Q-\deg P = 1$, $S$ is non-trivial (i.e. different from $\bC$) and non-compact and $\phi_{\infty}=0$; 
    \item[\rm{(iv)}] $\deg Q-\deg P =0$ and $S$ is non-trivial  and non-compact;
    \item[\rm{(v)}] $\deg Q-\deg P =-1$ and $S$ is non-trivial and  non-compact.
\end{itemize}
\end{corollary}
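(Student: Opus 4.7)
The plan is to distinguish the five cases according to the asymptotic direction set $\sigma(X)$ of a single connected component $X$ of $S^c$, and then invoke the earlier structural results to pin down $\deg Q-\deg P$, the value of $\phi_\infty$, and whether $S$ is compact. We may clearly assume $S\neq \bC$ (otherwise case (i) holds) and pick any connected component $X$ of the nonempty open set $S^c$. By \cref{cor:RawArg}, exactly one of four alternatives is available to $X$, and we simply walk through them.

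If some component $X_0$ of $S^c$ satisfies $\sigma(X_0)=\mathbb{S}^1$ (alternative (i) of \cref{cor:RawArg}), then \cref{cor:TourCone} forces $X_0^c$ to be compact in the extended plane, and hence $S\subseteq X_0^c$ is bounded. Since $S$ is closed by definition, it is compact. As $S$ is compact and nonempty, the unbounded component of $S^c$ contains a loop enclosing $S$ and so fails to be simply connected; \cref{prop:compact} then applies and yields both the contractibility of $S$ and the identity $\deg Q-\deg P=1$. This places us in case (ii).

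If no component of $S^c$ has $\sigma(X)=\mathbb{S}^1$, then every component satisfies one of alternatives (ii)-(iv) of \cref{cor:RawArg}, which in particular force $\deg Q-\deg P\in\{-1,0,1\}$. If $S$ happens to be compact, the same argument as above via \cref{prop:compact} forces $\deg Q-\deg P=1$ and we are back in case (ii). Otherwise $S$ is non-compact, and we split according to the degree gap: $\deg Q-\deg P=1$ combined with \cref{cor:RawArg}(ii) gives $\phi_\infty=0$, which is case (iii); $\deg Q-\deg P=0$ is case (iv); and $\deg Q-\deg P=-1$ is case (v).

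The main conceptual obstacle in the plan is ensuring that the dichotomy on $\sigma(X)$ is really exhaustive and compatible across components, so that the global invariant $\deg Q-\deg P$ is consistent with the local data on each component; this is exactly what \cref{cor:RawArg} and \cref{lem:SlopesCC2} were designed to deliver. The compactness implication requires only the mild remark that a nonempty compact $S\subsetneq\bC$ automatically produces a non-simply-connected component of $S^c$ (the one containing a neighbourhood of infinity), so that \cref{prop:compact} is applicable. No further machinery is needed, and the corollary follows by assembling these observations.
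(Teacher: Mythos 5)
Your proposal is correct and follows essentially the same route as the paper: treat the alternative $\sigma(X)=\mathbb{S}^{1}$ via \cref{cor:TourCone} and \cref{prop:compact} to get case (ii), and otherwise read off the degree gap and the constraint $\phi_{\infty}=0$ from \cref{cor:RawArg}. Your added remark that a nonempty compact $S\subsetneq\bC$ forces the unbounded component of $S^{c}$ to be non-simply-connected is exactly the (implicit) bridge the paper also uses to invoke \cref{prop:compact}, so nothing is missing.
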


\begin{proof}
We assume that $S$ is non-trivial, i.e. does not coincide with $\mathbb{C}$. Let $X$ be a connected component of $S^{c}$. It follows from Corollary \ref{cor:TourCone} that if $\sigma(X)$ coincides with $\mathbb{S}^{1}$, then $S$ is compact and thus $S^{c}$ is not simply connected. Proposition \ref{prop:compact} then proves that $S$ is compact and contractible. Besides we have $\deg Q-\deg P=1$.

Next let us assume that items (i) and (ii) do not hold. This implies in particular that $\sigma(X)$ does not coincide with $\mathbb{S}^{1}$. Corollary~\ref{cor:RawArg} then proves that $|\deg Q-\deg P|\leq 1$. In each case, there are constraints on $\phi_{\infty}$. In particular, if $\deg Q-\deg P=1$, the only possible case implies $\phi_{\infty}=0$.
\end{proof}

\begin{remark} In \S~\ref{sec:triviality} we provide more details on the realizability of the cases mentioned in Corollary~\ref{cor:Raw1}. 
\end{remark}

As we have shown above non-trivial $T_{CH}$-invariant sets can only occur if $\deg Q-\deg P=-1, 0, 1$. Let us provide more details in each of  these three cases below. 

\begin{proposition}
For $\deg Q-\deg P=1$ and  $S$ being a nontrivial $T_{CH}$-invariant set (i.e. different from $\mathbb{C}$), one of the following statements holds:
\begin{itemize}
    \item[{\rm (i)}] $S$ is compact and contractible;
    \item[{\rm (ii)}] $S$ is non-compact and $\phi_{\infty}=0$.
\end{itemize}
\end{proposition}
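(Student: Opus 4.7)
The plan is to reduce the statement to Corollary \ref{cor:Raw1}, which was just established. I would apply that corollary to the given $T_{CH}$-invariant set $S$; its list of five possibilities is almost entirely killed off by the hypotheses. Item (i) of Corollary \ref{cor:Raw1} is excluded because $S \neq \mathbb{C}$ by assumption, while items (iv) and (v) require $\deg Q - \deg P = 0$ or $-1$ and are thus incompatible with our hypothesis $\deg Q - \deg P = 1$. What remains is precisely items (ii) and (iii), which correspond exactly to the two alternatives of the present proposition: compact and contractible on the one hand, non-compact with $\phi_\infty = 0$ on the other.

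If a more self-contained argument were preferred, I would split into cases based on whether $S$ is compact. In the compact case, any unbounded connected component of $S^c$ surrounds $S$ and hence fails to be simply connected, so Proposition \ref{prop:compact} applies and yields contractibility of $S$. In the non-compact case, pick any connected component $X$ of $S^c$; by Corollary \ref{cor:TourCone} we cannot have $\sigma(X) = \mathbb{S}^1$, for otherwise $S$ would be compact, contradicting the assumption. Thus $\sigma(X)$ is a proper open subinterval of $\mathbb{S}^1$, and the $\deg Q - \deg P = 1$ clause of Corollary \ref{cor:RawArg} (which asserts that $\sigma(X)$ is preserved by rotation by $\phi_\infty$, and that the only such rotation leaving a proper open subinterval invariant is the identity) forces $\phi_\infty = 0$.

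No substantial obstacle is expected here, since the preparatory work has already been carried out in the lemmas of this section; the proposition simply isolates the entries of Corollary \ref{cor:Raw1} that are relevant to the degree condition $\deg Q - \deg P = 1$, and the two remaining alternatives are manifestly mutually exclusive (one forces compactness, the other excludes it).
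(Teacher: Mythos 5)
Your proposal is correct and follows essentially the same route as the paper, which simply observes that both claims follow from Corollary~\ref{cor:Raw1} by eliminating the cases incompatible with $\deg Q-\deg P=1$ and $S\neq\mathbb{C}$. The extra self-contained argument you sketch is a sound unpacking of the same underlying lemmas, but the direct appeal to Corollary~\ref{cor:Raw1} already suffices.
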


\begin{proof}
Both claims follow from Corollary \ref{cor:Raw1}.
\end{proof}

In particular, if we consider the case $Q(z)=z$ and $P(z)=\lambda$ where $\lambda \in \mathbb{R}_{+}^{\ast}$, any radially invariant set is $T_{CH}$-invariant. Therefore, $S^{c}$ can have an arbitrarily large number of connected components.\newline

\begin{proposition} For $\deg Q-\deg P=0$, assume that $S$ is a non-trivial $T_{CH}$-invariant set. For any connected component $X$ of $S^{c}$, $\phi_{\infty}$ belongs to $\overline{\sigma(X)}$. Moreover,  $S^c$ has at most two connected components and if $\kappa$ is an end of $X$ 
that is not the happy end of $X$, then $\kappa$ satisfies the condition: 
\[I_{\kappa}= \lbrace{ \phi_{\infty}+\pi \rbrace}.\]
\end{proposition}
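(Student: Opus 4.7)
My plan is to derive all three assertions by combining results already established about connected components of $S^c$ and their images under $\sigma$.

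The first assertion is essentially a bookkeeping observation. Since $S$ is non-trivial and $\deg Q - \deg P = 0$, Corollary~\ref{cor:Raw1} rules out items (i), (ii), (iv) and (v), leaving only item (iv) of Corollary~\ref{cor:RawArg}, which is exactly $\phi_\infty \in \overline{\sigma(X)}$ for every connected component $X$ of $S^c$. So this is immediate.

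The third assertion is likewise immediate: if $\kappa$ is an end of $X$ different from the happy end of $X$, then Lemma~\ref{lem:multipleEnds} applies (it treats the cases $\deg Q - \deg P \in \{0,-1\}$ and rules out $\deg Q - \deg P = 1$). In the regime $\deg Q - \deg P = 0$ the lemma states precisely $I_\kappa = \{\phi_\infty + \pi\}$.

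The substantive claim is that $S^c$ has at most two connected components, and this is the place where I would focus. Suppose towards a contradiction that $X_1, X_2, X_3$ are three pairwise distinct components of $S^c$. By Lemma~\ref{lem:interval} each $\sigma(X_i)$ is an open connected subset of $\mathbb{S}^1$, by the first assertion each contains $\phi_\infty$ in its closure, and by Corollary~\ref{cor:distinctarg} the three sets $\sigma(X_i)$ are pairwise disjoint. I would first rule out the possibility that $\phi_\infty \in \sigma(X_i)^\circ$ for some $i$ (say $i = 1$): in that case $\sigma(X_1)$ would be an open neighborhood of $\phi_\infty$ disjoint from $\sigma(X_2)$, contradicting $\phi_\infty \in \overline{\sigma(X_2)}$. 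So $\phi_\infty$ is a boundary point of every $\sigma(X_i)$, i.e.\ each $\sigma(X_i)$ is an open sub-arc of $\mathbb{S}^1 \setminus \{\phi_\infty\}$ with $\phi_\infty$ as an endpoint. A small arc of $\mathbb{S}^1$ around $\phi_\infty$ has only two sides, so there is room for at most one $\sigma(X_i)$ approaching $\phi_\infty$ from each side without overlap, giving at most two distinct indices, contradicting the assumption of three components.

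The argument is essentially just combining Corollary~\ref{cor:RawArg}, Corollary~\ref{cor:distinctarg}, Lemma~\ref{lem:interval}, and Lemma~\ref{lem:multipleEnds}, so I do not anticipate any serious obstacle; the only subtle point is the bookkeeping step that forbids $\phi_\infty$ from being an interior point of some $\sigma(X_i)$, which is a short topological observation.
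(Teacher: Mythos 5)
Your proof is correct and follows essentially the same route as the paper: Corollary~\ref{cor:Raw1} together with Corollary~\ref{cor:RawArg} for the first claim, pairwise disjointness of the open arcs $\sigma(X_i)$ all accumulating at $\phi_{\infty}$ (Corollary~\ref{cor:distinctarg}) for the two-component bound, and Lemma~\ref{lem:multipleEnds} for the ends, with your expanded justification of the two-component count being exactly what the paper leaves implicit after the word ``Consequently''. The only blemishes are cosmetic: the relevant item of Corollary~\ref{cor:RawArg} is (iii), not (iv), and one should first exclude $\sigma(X)=\mathbb{S}^{1}$ via non-compactness (Corollary~\ref{cor:Raw1}) and Corollary~\ref{cor:TourCone} before reading off $\phi_{\infty}\in\overline{\sigma(X)}$.
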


\begin{proof}
 Corollary \ref{cor:Raw1} implies that $S$ is non-compact. Corollary~\ref{cor:TourCone} shows that for any connected component $X$ of $S^{c}$, $\sigma(X) \neq \mathbb{S}^{1}$. Corollary~\ref{cor:RawArg} then proves that $\phi_{\infty}$ belongs to $\overline{\sigma(X)}$.
 
 Further following Corollary~\ref{cor:distinctarg}, if there are two distinct connected components $X$ and $Y$ of $S^{c}$, then $\sigma(X) \cap \sigma(Y) = \emptyset$ while $\phi_{\infty}$ belongs $\overline{\sigma(X)}\cap\overline{\sigma(Y)}$. Consequently, $S^{c}$ has at most two connected components. Lemma~\ref{lem:contractible} proves that each of them is contractible.
 
By Lemma~\ref{lem:argkappa}, if an end $\kappa$ is  different from the happy end of  some connected component of $S^{c}$, then $I_{\kappa}= \lbrace{ \phi_{\infty}+\pi \rbrace}$.
\end{proof}

\begin{remark} 
Just like in the (trivial) case when $R(z)\equiv const$ we expect that $S^{c}$ can have an arbitrarily large number of ends.
\end{remark} 

\begin{proposition}
For $\deg Q-\deg P=-1,$ let $S$ be a non-trivial $T_{CH}$-invariant set.  Then $S^{c}$ has at most two connected components each of which is contractible.

For each connected component $X$ of $S^{c}$, one of the following statements holds:
\begin{itemize}
    \item[\rm{(i)}] $X$ has exactly one end;
    \item[\rm{(ii)}] $\sigma(X)$ is an open interval $(\theta_{0}-\frac{\pi}{2},\theta_{0}+\frac{\pi}{2})$ where $\theta_{0} \equiv \frac{\phi_{\infty}}{2}~[\pi]$. For any end $\kappa$ of $X$ distinct from the happy end, $I_{\kappa}$ coincides with either $\lbrace{ \theta_{0}-\frac{\pi}{2} \rbrace}$ or $\lbrace{ \theta_{0}+\frac{\pi}{2} \rbrace}$.
\end{itemize}

\end{proposition}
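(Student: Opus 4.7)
The plan is to assemble the three claims (bound on components, contractibility, description of ends) from the results already proved in the preceding lemmas, treating the case $\deg Q - \deg P=-1$ specifically.

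\medskip\noindent
\textbf{Non-compactness and contractibility.}
First I invoke Corollary~\ref{cor:Raw1} to conclude that, under $\deg Q-\deg P=-1$ and $S$ non-trivial, $S$ is necessarily non-compact (case (v) of that corollary). Contractibility of each connected component of $S^c$ is then immediate from Lemma~\ref{lem:contractible}, which applies to any connected component of the complement of a $T_{CH}$-invariant set.

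\medskip\noindent
\textbf{Bound on the number of components.}
For any connected component $X$ of $S^c$, Corollary~\ref{cor:RawArg}(iv) tells us that $\sigma(X)$ is a nonempty open interval of $\mathbb{S}^{1}$ of length at most $\pi$, whose bisector is either $\tfrac{\phi_{\infty}}{2}$ or $\tfrac{\phi_{\infty}}{2}+\pi$. Thus the set of admissible intervals $\sigma(X)$ is contained in the family of open arcs of length $\le \pi$ centered at one of these two diametrically opposite points of $\mathbb{S}^{1}$. By Corollary~\ref{cor:distinctarg}, the images $\sigma(X)$ and $\sigma(Y)$ of two distinct components are disjoint, and because each such arc of length $\le \pi$ covers its own bisector, at most one component can use each of the two admissible bisectors. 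Hence $S^c$ has at most two connected components.

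\medskip\noindent
\textbf{Structure of the ends.}
Fix a connected component $X$ of $S^{c}$. By Lemma~\ref{lem:ConstantEnd}, $X$ carries a distinguished happy end; if no other end exists, we are in case (i). Otherwise, let $\kappa'$ be any end of $X$ different from its happy end. Lemma~\ref{lem:multipleEnds} applies in the setting $\deg Q-\deg P=-1$ and gives directly that $I_{\kappa'}$ is a singleton $\{\theta\}$ with $\theta\equiv \tfrac{\phi_{\infty}+\pi}{2}\pmod{\pi}$, and that $\sigma(X)$ is one of the two open arcs
\[
\Bigl(\tfrac{\phi_{\infty}-\pi}{2},\tfrac{\phi_{\infty}+\pi}{2}\Bigr)
\quad\text{or}\quad
\Bigl(\tfrac{\phi_{\infty}+\pi}{2},\tfrac{\phi_{\infty}+3\pi}{2}\Bigr).
\]
Setting $\theta_{0}\coloneqq \tfrac{\phi_{\infty}}{2}$ in the first case and $\theta_{0}\coloneqq \tfrac{\phi_{\infty}}{2}+\pi$ in the second, the interval $\sigma(X)$ becomes $(\theta_{0}-\tfrac{\pi}{2},\theta_{0}+\tfrac{\pi}{2})$, and the two possible values of $\theta$ are exactly $\theta_{0}\pm\tfrac{\pi}{2}$. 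This is precisely case (ii).

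\medskip\noindent
I do not expect a genuine obstacle here: the statement is essentially a packaging of Corollary~\ref{cor:RawArg}, Corollary~\ref{cor:distinctarg}, Lemma~\ref{lem:contractible} and Lemma~\ref{lem:multipleEnds}. The only subtle point, and the one most worth checking carefully, is that in the multiple-end situation $\sigma(X)$ is an arc of length \emph{exactly} $\pi$ and not strictly less; this is handled already in Lemma~\ref{lem:multipleEnds}, which forces $\sigma(X)$ to be one of the two explicit half-circles. Once this is observed, the reconciliation of the two parametrizations ($\theta_0 \equiv \phi_\infty/2 \pmod\pi$ versus $\theta \equiv (\phi_\infty+\pi)/2 \pmod \pi$) is just a matter of notation.
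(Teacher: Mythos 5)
Your proposal is correct and follows essentially the same route as the paper's own proof: non-compactness via Corollary~\ref{cor:Raw1}, contractibility via Lemma~\ref{lem:contractible}, the two-component bound via Corollaries~\ref{cor:RawArg} and~\ref{cor:distinctarg}, and the description of the ends via Lemma~\ref{lem:multipleEnds}. The only cosmetic difference is that the paper explicitly cites Corollary~\ref{cor:TourCone} to exclude the alternative $\sigma(X)=\mathbb{S}^{1}$ of Corollary~\ref{cor:RawArg} before invoking case (iv); in your write-up this exclusion is implicit in the non-compactness of $S$, which you have already established.
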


\begin{proof}
 Corollary \ref{cor:Raw1} implies that $S$ is non-compact. Corollary~\ref{cor:TourCone} proves that for any connected component $X$ of $S^{c}$, $\sigma(X) \neq \mathbb{S}^{1}$. Corollary~\ref{cor:RawArg} then implies that the diameter of the slope $\frac{\phi_{\infty}}{2}$ is the symmetry axis of the interval $\sigma(X)$. Since there are at most two  such disjoint symmetric intervals (see Corollary~\ref{cor:distinctarg}), then $S^{c}$ has at most two connected components. Lemma~\ref{lem:contractible} then provides that each of them is contractible.
 
Lemma~\ref{lem:multipleEnds} settles the case of connected components of $S^{c}$ having several topological ends.\newline
\end{proof}

\begin{theorem}\label{thm:connectedclosure}
For any $T_{CH}$-invariant set $S\subset \bC$, its closure $\overline{S}$ in the extended plane $\mathbb{C}\cup \mathbb{S}^{1}$ is connected and contractible. 
\end{theorem}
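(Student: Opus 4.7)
The plan is to analyze the complement of $\overline{S}$ in the extended plane $D=\mathbb{C}\cup\mathbb{S}^{1}$, which is topologically a closed $2$-disk, and exhibit it as a disjoint union of open topological disks each meeting $\partial D=\mathbb{S}^{1}$ in a nondegenerate arc. The trivial cases are handled first: if $S=\mathbb{C}$ then $\overline{S}=D$ is clearly connected and contractible, and if $S$ is compact then Proposition~\ref{prop:compact} already yields $S$ connected and contractible while $\overline{S}=S$. So I assume henceforth that $S$ is nontrivial and noncompact, which by Corollary~\ref{cor:Raw1} places $\deg Q-\deg P\in\{-1,0,1\}$, and by the same proposition every connected component $X$ of $S^{c}$ is simply connected in $\mathbb{C}$.

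For each such $X$ I would set $W_{X}:=X\cup\sigma(X)\subset D$ and verify that the sets $W_{X}$ are open in $D$, pairwise disjoint, and exhaust $D\setminus\overline{S}$. Openness at a point $\theta\in\sigma(X)$ follows since Lemma~\ref{lem:cone} supplies a cone $C(z_{0},I)\subset X$ with $\theta\in I$, giving a basic neighborhood $I\cup C(z_{0},I)$ of $\theta$ inside $W_{X}$. Disjointness on $\mathbb{C}$ is obvious and on $\mathbb{S}^{1}$ follows from Corollary~\ref{cor:distinctarg}; the covering property for a direction $\theta\in\mathbb{S}^{1}\setminus\overline{S}$ comes from pulling back any basic neighborhood disjoint from $S$ to the component of $S^{c}$ containing its cone part.

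The crux is to show each $W_{X}$ is a topological open disk in $D$. With $X$ simply connected in $\mathbb{C}$, the cones of Lemma~\ref{lem:cone} stitch $X$ and $\sigma(X)$ together as a half-plane with a boundary arc at infinity; formally, one deformation retracts $W_{X}$ onto a curve inside $X$ converging to the happy end of $X$. Once this is established, a simultaneous deformation retraction inward along the cones contracts each $W_{X}$ onto the portion of its frontier contained in $\overline{S}$, and these retractions patch together (the $W_{X}$ being pairwise disjoint) to a deformation retraction of $D$ onto $\overline{S}$, yielding connectedness and contractibility. The main obstacle is the rigorous verification of this disk structure: $X$ may have non-happy ends whose arcs $I_{\kappa}$ are isolated points of $\mathbb{S}^{1}$ by Lemma~\ref{lem:multipleEnds}, and one must check that these isolated directions lie on $\partial\sigma(X)\subset\overline{S}$ so that $\overline{W_{X}}\cap\mathbb{S}^{1}=\overline{\sigma(X)}$ is a single closed arc and no topological hole appears at infinity.
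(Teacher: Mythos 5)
There is a genuine gap in your decomposition of $D\setminus\overline{S}$. You set $W_{X}=X\cup\sigma(X)$ and claim these sets exhaust $D\setminus\overline{S}$, but $\sigma(X)=\arg R(X)$ is the set of \emph{directions of the associated rays} emanating from points of $X$; it is not the set of asymptotic directions in which $X$ opens up at infinity, and the two can differ drastically. For a concrete failure, take $\deg Q=\deg P$ and let $S$ be an invariant half-plane $H_{\ell,\theta}$ as in Lemma~\ref{secondcase}: then $X=S^{c}$ is the complementary open half-plane, so $\mathbb{S}^{1}\setminus\overline{S}$ is an open half-circle of directions, whereas $R$ is uniformly close to $\lambda$ on $X$ and $\sigma(X)$ is a small interval around $\phi_{\infty}$. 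Hence $W_{X}$ misses most of $D\setminus\overline{S}$ on the circle at infinity, your covering claim ("pulling back any basic neighborhood disjoint from $S$ to the component containing its cone part") only places $\theta$ in the closure of $X$, not in $\sigma(X)$, and the asserted identity $\overline{W_{X}}\cap\mathbb{S}^{1}=\overline{\sigma(X)}$ is false. Consequently the patched deformation retraction of $D$ onto $\overline{S}$ does not exist as described. The object you actually need is the interior of the closure of $X$ in $D$ (whose trace on $\mathbb{S}^{1}$ \emph{contains} $\sigma(X)$ but is generally larger); this is precisely what the paper works with.

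Beyond that, the step you yourself flag as the crux --- that each $W_{X}$ (or its corrected version) is an open disk and that the retractions glue --- is left unproved, and it is not needed. The paper's argument is by contradiction and much lighter: if $\overline{S}$ were disconnected, some component $X$ of $S^{c}$ would have to separate two pieces of $\overline{S}$ in $D$, which is ruled out because Lemma~\ref{lem:multipleEnds} shows the interior of the closure of $X$ in $D$ has connected complement; and if $\overline{S}$ were not contractible, some $X$ would have closure-interior disjoint from $\mathbb{S}^{1}$, contradicting the fact that this interior always contains the nonempty open interval $\sigma(X)$. Note that for contractibility only the \emph{inclusion} $\sigma(X)\subset\overline{X}\cap\mathbb{S}^{1}$ is used --- exactly the inclusion that survives of your identification --- so your intuition points in the right direction, but the equality you build the whole proof on is the wrong statement.
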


\begin{proof}
The statement is trivially true if $S=\mathbb{C}$. If $S$ has a compact connected component in $\mathbb{C}$, then Proposition~\ref{prop:compact} proves that $S$ coincides with this component.\newline
In the remaining cases, every connected component of $S$ is unbounded. Let $S_{0}$ be such a connected component. If $\overline{S}$ is not connected in the extended plane $\mathbb{C}\cup \mathbb{S}^{1}$, then there exist two connected components $\overline{S_{1}}$ and $\overline{S_{2}}$ of $S$ such that each of them belong to a distinct connected component of the complement of $\overline{X}$. Here, $\overline{X}$ is the interior of the closure in $\mathbb{C}\cup \mathbb{S}^{1}$ of a connected component $X$ of $S^{c}$. Lemma~\ref{lem:multipleEnds} rules out this case.   

It remains to show that $\overline{S}$ is contractible. If this were not true, then we can find a connected component $X$ of $S^{c}$ such that the interior of its closure in $\mathbb{C}\cup \mathbb{S}^{1}$ has no intersection with $\mathbb{S}^{1}$. However, this intersection always contains $\sigma(X)$ which is an open interval. This claim finishes the proof.
\end{proof}

\section{Criterion for the existence of non-trivial $T_{CH}$-invariant subsets}\label{sec:triviality}

In the previous section we have shown that any $T_{CH}$-invariant set for any operator $T$ with $|\deg Q- \deg P|>1$ is trivial, i.e. equal to $\bC$. In this section we will show that in two of the remaining cases $\deg Q -\deg P=-1$ and $\deg Q -\deg P=0$ non-trivial $T_{CH}$-invariant sets always exist and, in particular, $\minvset{CH}$ is non-trivial. For the existence of non-trivial $T_{CH}$-invariant subsets in the last remaining case $\deg Q-\deg P=1$, the natural additional restriction $\Re \lambda  \ge 0$ is required. This section provides the proofs of Theorems~\ref{th:main1} and \ref{th:main2}
 
\subsection{Case $\deg Q -\deg P=-1$}

\begin{lemma}\label{firstcase}
For any operator $T$ as above with $\deg Q -\deg P=-1$, there exist non-trivial $T_{CH}$-invariant sets. 
\end{lemma}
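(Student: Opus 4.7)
The plan is to exhibit a non-trivial $T_{CH}$-invariant set explicitly as the complement of a suitable open half-plane ``near infinity'', using the asymptotic behavior $R(z) = \lambda/z + O(1/|z|^{2})$ that comes from the assumption $\deg Q - \deg P = -1$. By \cref{th:charact}, it suffices to produce a nonempty open set $U \subset \mathbb{C}$ disjoint from $\zeros(PQ)$ and forward-invariant under the family $\{z \mapsto z + tR(z) : t \geq 0\}$; then $S = \mathbb{C} \setminus U$ will be the desired non-trivial $T_{CH}$-invariant set.

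Concretely, set $\theta_{0} = \phi_{\infty}/2$ and for a real parameter $L$ consider the open half-plane
\[
H_{L} \coloneqq \{z \in \mathbb{C} : \Re(e^{-i\theta_{0}}z) > L\}.
\]
The choice of $\theta_{0}$ is dictated by \cref{cor:RawArg}(iv): at infinity in direction $\theta_{0}$, the argument of $R(z)$ tends to $\phi_{\infty} - \theta_{0} = \theta_{0}$, so associated rays in that region should remain inside $H_{L}$. The forward invariance of $H_{L}$ reduces to the single inequality $\Re(e^{-i\theta_{0}}R(z)) \geq 0$ for every $z \in H_{L}$, because then $\Re(e^{-i\theta_{0}}(z+tR(z))) > L$ for all $t \geq 0$.

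To verify this inequality, write $z = re^{i\theta}$. Using $\phi_{\infty} = 2\theta_{0}$, the leading term of $R$ contributes
\[
\Re\!\left(e^{-i\theta_{0}}\tfrac{\lambda}{z}\right) = \frac{|\lambda|}{r}\cos(\theta - \theta_{0}).
\]
For $z \in H_{L}$ we have $r\cos(\theta-\theta_{0}) > L$, hence this leading contribution is bounded below by $|\lambda| L/r^{2}$. Since $\deg Q - \deg P = -1$, a direct Laurent expansion of $R = Q/P$ gives $|R(z) - \lambda/z| \leq C/|z|^{2}$ uniformly for $|z|$ sufficiently large, so
\[
\Re\!\left(e^{-i\theta_{0}}R(z)\right) \geq \frac{|\lambda|L - C}{r^{2}}.
\]
Choosing $L$ large enough so that simultaneously $L > C/|\lambda|$ and $H_{L} \cap \zeros(PQ) = \emptyset$ (possible since $\zeros(PQ)$ is finite and $H_{L} \subset \{|z| > L\}$), we obtain $\Re(e^{-i\theta_{0}}R(z)) > 0$ throughout $H_{L}$. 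Thus $H_{L}$ is forward-invariant and avoids $\zeros(PQ)$, and \cref{th:charact} yields that $S = \mathbb{C} \setminus H_{L}$ is a non-trivial closed $T_{CH}$-invariant set.

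I do not expect any serious obstacle in this approach: the construction is essentially forced by the asymptotic analysis already carried out in \cref{cor:RawArg}(iv) and \cref{lem:SlopesCC2}. The only technicality is ensuring that the remainder estimate $|R(z) - \lambda/z| \leq C/|z|^{2}$ is uniform over the unbounded region $H_{L}$, which is immediate once one notes the inclusion $H_{L} \subset \{|z| > L\}$ and chooses $L$ larger than the radius beyond which the Laurent expansion of $R$ converges with the stated tail bound.
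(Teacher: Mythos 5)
Your proof is correct, but it takes a genuinely different route from the paper's. The paper constructs its non-trivial invariant set as the complement of a thin open cone whose apex lies on the inflection curve $\infl_R$ and whose opening is a small interval around $\phi_\infty/2$: the key step there is that the two boundary half-lines of the cone meet $\infl_R$ only at the apex, so $\arg R$ is monotone along each of them, and together with the openness of $R$ this confines $\arg R$ on the whole cone to the closed opening interval. You instead take the complement of a half-plane $H_L=\{\Re(e^{-i\phi_\infty/2}z)>L\}$ far from the origin and verify forward invariance of $H_L$ by the direct quantitative estimate $\Re\bigl(e^{-i\phi_\infty/2}R(z)\bigr)\ge (|\lambda|L-C)/|z|^2>0$; this estimate is indeed uniform over $H_L$, since $H_L\subset\{|z|>L\}$ and points near the boundary still satisfy $r\cos(\theta-\phi_\infty/2)>L$, so the argument goes through. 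Your approach is more elementary and self-contained (no appeal to the inflection-curve machinery of the appendix) and is closer in spirit to the paper's own treatment of the case $\deg Q-\deg P=0$ in \cref{secondcase}, which also uses half-planes. What the paper's cone construction buys in exchange is flexibility: the apex and the opening (which can be pushed arbitrarily close to $\pi$) can be varied and placed at both asymptotic directions, and this family of cone-complement invariant sets is exactly what drives the finer description of $\minvset{CH}$ in \cref{cor:minpq-1}; your half-planes exist only for the two special normal directions $\phi_\infty/2$ and $\phi_\infty/2+\pi$, which suffices for the lemma but yields less directional information.
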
 

\begin{proof}
Notice that in this case $R(z)=\frac{\lambda}{z}+o(\frac{1}{z})$ as  $z\to \infty$ where $\lambda \in \bC^\ast$. The curve of inflection $\mathfrak{I}_{R}$ defined by $Im(R')=0$ has four asymptotic branches whose asymptotic directions are $\frac{\phi_{\infty}}{2}+\frac{\pi}{2}\mathbb{Z}$ where $\phi_{\infty}=\arg(\lambda)$, see Appendix A.2.
We consider an open cone $\Gamma$ satisfying the following conditions:
\begin{itemize}
\item the apex $z_{0}$ of the cone belongs to $\mathfrak{I}_{R}$;
\item the opening of $\Gamma$ is $I=(\frac{\phi_{\infty}}{2}-\epsilon,\frac{\phi_{\infty}}{2}+\epsilon)$ for some small $\epsilon>0$ in such a way that $\sigma(z_{0}) \in I$;
\item no root of $PQ$ belongs to $\Gamma$ or to its closure;
\item the branch of the curve $\mathfrak{I}_{R}$ starting at $z_{0}$ and having the asymptotic direction equal to $\frac{\phi_{\infty}}{2}$ is contained in $\Gamma$. No other point of  $\mathfrak{I}_{R}$ belongs to $\Gamma$ or its closure.
\end{itemize}
We want to prove that for any point $x \in \Gamma$, we have $\sigma(x) \in \bar{I}$, which would imply that the complement to the interior of $\Gamma$ is a nontrivial $T_{CH}$-invariant set.\newline

Since $R$ is an open map and
\[\lim_{t\to +\infty} \arg R \left(z_0+te^{i\left(\frac{\arg \phi_\infty}{2}+\theta\right)}\right)=\frac{\arg \phi_\infty}{2}-\theta,\]xi
the extremal values of $\arg R$ on the (compact) closure of $\Gamma$ in $\mathbb{C} \cup \mathbb{S}^{1}$ are achieved on  the limit half-lines $z_0+\bR_{\ge 0}e^{i\left(\frac{\arg \phi_\infty}{2}\pm\epsilon \right)}.$ Besides, such half-lines meet $\mathfrak{I}_{R}$ only in $z_{0}$ and therefore $\arg R$ is a \emph{monotone} function on each of them. On each of these half-lines, the range of the function  $\arg R$ is an interval whose endpoints are $\arg R(z_{0})$ for the first one and one slope of the form $\frac{\phi_{\infty}}{2} \pm \epsilon$ for the second one. It follows then that for any point $x$ in the closure of $\Gamma$, $\arg R(x) \in \bar{I}$. 
\end{proof}

We can also deduce a more precise description of minimal invariant sets in this case.

\begin{corollary}\label{cor:minpq-1}
For any operator $T$ as above with $\deg Q -\deg P=-1$, the complement of $\minvset{CH}$ in $\mathbb{C}$ has exactly two connected components $X_{1},X_{2}$.\newline
For each $X_{i}$, $\sigma(X_{i})$ are respectively $(\frac{\phi_{\infty}-\pi}{2},\frac{\phi_{\infty}+\pi}{2})$ and $(\frac{\phi_{\infty}+\pi}{2},\frac{\phi_{\infty}+3\pi}{2})$.
\end{corollary}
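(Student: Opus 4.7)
The plan is to realize both components of $\minvset{CH}^c$ as regions ``visible through'' the cone construction of Lemma~\ref{firstcase}, in the two antipodal directions $\frac{\phi_\infty}{2}$ and $\frac{\phi_\infty}{2}+\pi$, and then invoke the structural results already established in this section.

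First I would apply the construction of Lemma~\ref{firstcase} twice. The original lemma produces, for sufficiently small $\epsilon>0$, an open cone $\Gamma_\epsilon \subset \minvset{CH}^c$ with opening $I_\epsilon = (\frac{\phi_\infty}{2}-\epsilon, \frac{\phi_\infty}{2}+\epsilon)$ and with $\sigma(\Gamma_\epsilon) \subset \bar I_\epsilon$. By the parallel argument applied to the branch of $\mathfrak{I}_R$ with asymptotic direction $\frac{\phi_\infty}{2}+\pi$ (and using $\sigma(z) \to \phi_\infty - \arg z$ at infinity), one obtains a cone $\Gamma'_\epsilon \subset \minvset{CH}^c$ with opening around $\frac{\phi_\infty}{2}+\pi$ and $\sigma$-values in a neighborhood of $\frac{\phi_\infty}{2}+\pi$. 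Since $\sigma(\Gamma_\epsilon)$ and $\sigma(\Gamma'_\epsilon)$ lie in disjoint neighborhoods of antipodal directions, Corollary~\ref{cor:distinctarg} forces $\Gamma_\epsilon$ and $\Gamma'_\epsilon$ into different connected components $X_1,X_2$ of $\minvset{CH}^c$. Combined with the preceding proposition (saying $\minvset{CH}^c$ has at most two components when $\deg Q - \deg P = -1$), exactly two components exist.

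Second, to identify $\sigma(X_1)$ and $\sigma(X_2)$, I would show that Lemma~\ref{firstcase}'s construction in fact goes through for every $\epsilon<\frac{\pi}{2}$ provided the apex $z_0$ is pushed far enough out on the relevant branch of $\mathfrak{I}_R$. The only obstructions to the lemma's hypotheses are (a) the three other branches of $\mathfrak{I}_R$, whose asymptotic directions $\frac{\phi_\infty}{2}\pm\frac{\pi}{2}$ and $\frac{\phi_\infty}{2}+\pi$ lie strictly outside $I_\epsilon$ for $\epsilon<\frac{\pi}{2}$ and thus stay clear of the cone for $|z_0|$ large, and (b) the finite zero set $\mathcal{Z}(PQ)$, which is trivially avoided for large $|z_0|$. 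Tracking the monotonicity of $\sigma$ along each of the two boundary half-lines (whose endpoint values are $\sigma(z_0)\in I_\epsilon$ at $z_0$ and $\frac{\phi_\infty}{2}\mp\epsilon$ at infinity) then yields $\sigma(\Gamma_\epsilon) = [\frac{\phi_\infty}{2}-\epsilon, \frac{\phi_\infty}{2}+\epsilon]$. Since all the cones $\Gamma_\epsilon$ sit in $X_1$ (their $\sigma$-images pairwise overlap, and Corollary~\ref{cor:distinctarg} therefore puts them in the same component), taking the union over $\epsilon \nearrow \frac{\pi}{2}$ gives $\sigma(X_1) \supset (\frac{\phi_\infty}{2}-\frac{\pi}{2}, \frac{\phi_\infty}{2}+\frac{\pi}{2})$.

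Finally, Corollary~\ref{cor:RawArg}(iv) forces $\sigma(X_1)$ to be an open interval of length at most $\pi$ whose bisector is either $\frac{\phi_\infty}{2}$ or $\frac{\phi_\infty}{2}+\pi$; the inclusion just obtained pins the bisector to $\frac{\phi_\infty}{2}$ and the length to exactly $\pi$, giving $\sigma(X_1) = (\frac{\phi_\infty}{2}-\frac{\pi}{2}, \frac{\phi_\infty}{2}+\frac{\pi}{2})$. The symmetric argument delivers $\sigma(X_2) = (\frac{\phi_\infty}{2}+\frac{\pi}{2}, \frac{\phi_\infty}{2}+\frac{3\pi}{2})$. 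The main obstacle will be the second step: one must quantitatively verify that the apex $z_0$ can always be placed far enough along the branch to keep the transverse branches of $\mathfrak{I}_R$ and the roots of $PQ$ outside a cone of opening approaching $\pi$, while simultaneously preserving the monotonicity of $\sigma$ on the cone's boundary rays on which the entire $\sigma$-image computation rests.
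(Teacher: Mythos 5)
Your proposal follows essentially the same route as the paper: run the cone construction of Lemma~\ref{firstcase} in both antipodal directions $\frac{\phi_\infty}{2}$ and $\frac{\phi_\infty}{2}+\pi$, observe that the opening of the cone can be pushed arbitrarily close to $\pi$, and then invoke the structure theory of Section~5 (Corollaries~\ref{cor:TourCone}, \ref{cor:RawArg}, \ref{cor:distinctarg}) to pin down the number of components and their $\sigma$-images. The paper packages the first step as a contradiction: if $\minvset{CH}^{c}$ were connected, then $\sigma$ of that component would be a connected open set containing both $\frac{\phi_\infty}{2}$ and $\frac{\phi_\infty}{2}+\pi$ and invariant under $\theta \mapsto \phi_\infty-\theta$, hence all of $\mathbb{S}^1$, contradicting Corollary~\ref{cor:TourCone}; your direct two-cone argument is a cosmetic variant of this.

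One step is stated backwards. You claim that because $\sigma(\Gamma_\epsilon)$ and $\sigma(\Gamma'_\epsilon)$ lie in disjoint neighborhoods of antipodal directions, Corollary~\ref{cor:distinctarg} forces the two cones into different components. That corollary says distinct components have disjoint $\sigma$-images; it does not say that subsets with disjoint $\sigma$-images lie in distinct components (you are using its converse). The conclusion is still true, but the correct justification is Corollary~\ref{cor:RawArg}(iv) (or Lemma~\ref{lem:interval}): if both cones lay in a single component $X$, then $\sigma(X)$ would be an open interval of length at most $\pi$ whose bisector is $\frac{\phi_\infty}{2}$ or $\frac{\phi_\infty}{2}+\pi$, and no such interval can contain points arbitrarily close to both antipodal directions. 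With that repair, and granting the quantitative claim you yourself flag as the main obstacle (that the apex can be placed far enough along the branch of $\mathfrak{I}_R$ to keep the transverse branches and $\zeros(PQ)$ outside a cone of opening approaching $\pi$ --- a point the paper likewise asserts without detailed verification), your argument is sound and matches the paper's.
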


\begin{proof}
We already know that $\minvset{CH}$ is different from $\mathbb{C}$ (Lemma~\ref{firstcase}) and noncompact (Corollary~\ref{cor:Raw1}). The construction of Lemma~\ref{firstcase} can be carried out for both directions $\frac{\phi_{\infty}}{2}$ and $\frac{\phi_{\infty}}{2}+\pi$. Besides, the opening of the cone can be made arbitrarily close to $\pi$. It follows that if the complement $X$ of $\minvset{CH}$ in $\mathbb{C}$ is connected, then $\sigma(X)$ is a connected open subset of $\mathbb{S}^{1}$ containing both $\frac{\phi_{\infty}}{2}$ and $\frac{\phi_{\infty}}{2}+\pi$. Since $\sigma(X)$ is invariant by the map $\theta \mapsto \phi_{\infty} - \theta$, it follows that $\sigma(X)=\mathbb{S}^{1}$. This contradicts Corollary~\ref{cor:TourCone}.\newline
Since the complement of $\minvset{CH}$ cannot be connected, it follows from Corollary~\ref{cor:RawArg} that it is formed by exactly two connected components $X_{1},X_{2}$, such that $\frac{\phi_{\infty}}{2}$ and $\frac{\phi_{\infty}}{2}+\pi$ are respectively the bisectors of $\sigma(X_{1})$ and $\sigma(X_{2})$. These two intervals are of length $\pi$.
\end{proof}

\subsection{Case $\deg Q -\deg P=0$}

\begin{lemma}\label{secondcase} For any operator $T$ with $\deg Q -\deg P=0$, there exist non-trivial  $T_{CH}$-invariant sets. 
\end{lemma}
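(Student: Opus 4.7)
The plan is to produce, for any operator $T$ with $\deg Q = \deg P$, a closed half-plane $S\subsetneq\bC$ which is $T_{CH}$-invariant. By \cref{th:charact} it suffices to find a closed $S\neq\bC$ containing $\zeros(PQ)$ and such that for every $x\in S^{c}$ the associated ray $r(x)=\{x+tR(x):t\geq 0\}$ is contained in $S^{c}$. I will not need to appeal to the curve of inflections $\infl_R$, in contrast with the $\deg Q-\deg P=-1$ case treated in \cref{firstcase}; the asymptotic constancy of $R$ at $\infty$ does all the work.

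The key point is that when $\deg Q=\deg P$, the rational function $R(z)=Q(z)/P(z)$ satisfies $R(z)\to\lambda$ as $|z|\to\infty$, with $\lambda=q_{\infty}/p_{\infty}\in\bC^{\ast}$ and $\phi_{\infty}=\arg\lambda$. For a real parameter $M>0$ I will consider the open half-plane
\[
H_{M}=\{z\in\bC:\Re(e^{-i\phi_{\infty}}z)>M\},
\]
whose boundary is the line perpendicular to the direction $\phi_{\infty}$ through level $M$, and which points in direction $\phi_{\infty}$. Since $z\in H_{M}$ forces $|z|>M$, by choosing $M$ large enough I can simultaneously ensure that $H_{M}$ contains no zero of $PQ$ and that $|R(z)-\lambda|<|\lambda|/2$ for every $z\in H_{M}$. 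The second inequality yields
\[
\Re\bigl(e^{-i\phi_{\infty}}R(z)\bigr)\geq |\lambda|-|R(z)-\lambda|>\tfrac{|\lambda|}{2}>0\qquad\text{for all }z\in H_{M}.
\]

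I then set $S=\bC\setminus H_{M}$, a closed half-plane containing $\zeros(PQ)$, so conditions (i) and (ii) of \cref{th:charact} are immediate. For condition (iii), take $x\in S^{c}=H_{M}$ and $t\geq 0$; the computation
\[
\Re\bigl(e^{-i\phi_{\infty}}(x+tR(x))\bigr)=\Re(e^{-i\phi_{\infty}}x)+t\,\Re\bigl(e^{-i\phi_{\infty}}R(x)\bigr)>M
\]
shows $r(x)\subset H_{M}=S^{c}$. Hence $S$ is a non-trivial $T_{CH}$-invariant set.

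The only nontrivial ingredient is the uniform closeness $R\approx\lambda$ throughout the half-plane, but this is essentially free: the inclusion $H_{M}\subset\{|z|>M\}$ converts the asymptotic statement $R(z)\to\lambda$ at $\infty$ into a uniform estimate on $H_{M}$. No obstruction arises from the finite singularities of $R\partial_{z}$ because $M$ can be taken larger than the modulus of every zero and pole. The argument is therefore short once one has identified the right half-plane, and the construction is compatible with the description of ends in \cref{cor:RawArg}, since $\sigma(H_{M})$ shrinks around the single direction $\phi_{\infty}$ as $M\to\infty$.
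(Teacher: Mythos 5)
Your proof is correct and follows essentially the same route as the paper: both arguments exploit that $R(z)\to\lambda$ at infinity to show that a sufficiently distant closed half-plane (with outward normal in direction $\phi_{\infty}$) satisfies the criterion of \cref{th:charact}, since associated rays of points in the open complementary half-plane point into that half-plane. The only difference is that the paper sets up a one-parameter family of such half-planes $H_{\ell,\theta}$ for $\theta\in(-\frac{\pi}{2},\frac{\pi}{2})$ (which it needs later for \cref{cor:minpq0}), whereas you verify the single case $\theta=0$, which suffices for the lemma and is written out with slightly more explicit estimates.
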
 
\begin{proof} 
Under our assumptions, $\vert R(z)-\lambda \vert=\vert \frac{Q(z)}{P(z)}-\lambda\vert$ can be made arbitrary small  when $|z|$ is sufficiently large. Consider a $1$-parameter family of the halfplane $H_\ell,\theta: \{\Re \left(\frac{ze^{i\theta}}{\lambda}\right) \leq \ell\}$ for $\theta \in (-\frac{\pi}{2},\frac{\pi}{2})$. We will show that for $\ell$ sufficiently large (depending on $\theta$), $H_\ell,\theta$ is $T_{CH}$-invariant.  Indeed, for any $\epsilon >0$, there exists $\ell_0$ such that for any $\ell\ge \ell_0$ and any point $z$ in the complement of $ H_\ell,\theta$, we have that $\arg z\in (\phi_\infty-\epsilon, \phi_\infty +\epsilon)$. Therefore the associated ray of such $z$ remains in the complement of $H_\ell,\theta$ which finishes the proof by Theorem~\ref{th:charact}. 
\end{proof}

A more precise description of minimal invariant sets follows.

\begin{corollary}\label{cor:minpq0}
For any operator $T$ as above with $\deg Q -\deg P=0$ and any $\epsilon>0$, there is an open cone $\Gamma$ whose interval of directions is $(\phi_{\infty}+\pi-\epsilon,\phi_{\infty}+\pi+\epsilon)$ such that $\minvset{CH} \subset \Gamma$.
\end{corollary}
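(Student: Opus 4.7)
The plan is to deduce the bound from the preceding Lemma~\ref{secondcase} by intersecting two $T_{CH}$-invariant halfplanes obtained from the two extreme admissible values of the parameter $\theta$ in that lemma, and observing that the resulting intersection is precisely a closed wedge of the prescribed opening around direction $\phi_{\infty}+\pi$.

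Given $\epsilon>0$, I would pick $\epsilon'\in(0,\epsilon)$ and apply Lemma~\ref{secondcase} to the two parameter values $\theta_1=\tfrac{\pi}{2}-\epsilon'$ and $\theta_2=-\tfrac{\pi}{2}+\epsilon'$, both of which lie inside the open interval $(-\tfrac{\pi}{2},\tfrac{\pi}{2})$. This yields constants $\ell_1,\ell_2>0$ for which the halfplanes
\[
H_j=\bigl\{z\in\bC:\Re(ze^{i\theta_j}/\lambda)\leq\ell_j\bigr\},\qquad j=1,2,
\]
are both $T_{CH}$-invariant. By the minimality of $\minvset{CH}$, I then obtain $\minvset{CH}\subseteq H_1\cap H_2$.

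To identify $H_1\cap H_2$ geometrically, I would expand $\Re(ze^{i\theta_j}/\lambda)=\frac{|z|}{|\lambda|}\cos(\arg z+\theta_j-\phi_{\infty})$, so that the complement of $H_j$, for sufficiently large $|z|$, consists of those $z$ whose arguments lie in $(\phi_{\infty}-\theta_j-\tfrac{\pi}{2},\phi_{\infty}-\theta_j+\tfrac{\pi}{2})$. Substituting the two chosen values of $\theta_j$ gives the intervals $(\phi_{\infty}-\pi+\epsilon',\phi_{\infty}+\epsilon')$ and $(\phi_{\infty}-\epsilon',\phi_{\infty}+\pi-\epsilon')$, whose union covers all of $\mathbb{S}^{1}$ except the closed arc $[\phi_{\infty}+\pi-\epsilon',\phi_{\infty}+\pi+\epsilon']$. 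Since the two halfplane boundaries are non-parallel (their normals differ by a rotation of approximately $\pi-2\epsilon'$) and thus meet transversally at a single apex point, $H_1\cap H_2$ is a closed convex wedge with opening exactly $2\epsilon'$ symmetric around the direction $\phi_{\infty}+\pi$.

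Finally, any closed wedge of opening $2\epsilon'<2\epsilon$ around direction $\phi_{\infty}+\pi$ is contained in an open cone $\Gamma$ whose interval of directions is $(\phi_{\infty}+\pi-\epsilon,\phi_{\infty}+\pi+\epsilon)$: it suffices to place the apex of $\Gamma$ far enough from the apex of the wedge in the direction $\phi_{\infty}$ (i.e.\ opposite to the opening), so that the wedge's two boundary rays lie strictly inside the cone's wider angular sector. Combining the three steps yields $\minvset{CH}\subseteq H_1\cap H_2\subset \Gamma$, as required. The one nontrivial point to verify is the wedge identification in the third paragraph; I expect this to be the main obstacle, but it reduces to an elementary convex-geometry computation, and the fact that the opening is exactly $2\epsilon'$ is forced by the symmetry of the two chosen values of $\theta$ around $0$.
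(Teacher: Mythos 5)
Your proposal is correct and follows essentially the same route as the paper: both take two of the invariant half-planes from Lemma~\ref{secondcase} with $\theta$ close to the endpoints $\pm\frac{\pi}{2}$, note that $\minvset{CH}$ lies in their intersection (a wedge of small opening around the direction $\phi_{\infty}+\pi$), and then enclose that wedge in an open cone with interval of directions $(\phi_{\infty}+\pi-\epsilon,\phi_{\infty}+\pi+\epsilon)$. You merely fill in the elementary angle computations that the paper leaves implicit.
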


\begin{proof}
The construction in Lemma~\ref{secondcase} can be carried out to provide a pair of half-planes whose boundary lines have directions arbitrarily close to $\phi_{\infty}+\pi$. Hence, we can find two invariant planes that are $T_{CH}$-invariant, for which the intersection, which is again $T_{CH}$-invariant, is contained in a cone $\Gamma$ whose interval of directions is $(\phi_{\infty}+\pi-\epsilon,\phi_{\infty}+\pi+\epsilon)$.
\end{proof}

\subsection{Case $\deg Q -\deg P=1$}

This situation requires more work and an additional restriction. The existence of non-trivial $T_{CH}$-invariant sets in this case follows from the existence of compact $T_{CH}$-invariant sets for any such linear differential operator. As above, we will use the following expression 
\begin{equation}\label{eq:expN}
R(z){\partial_z}= \frac{Q(z)}{P(z)}{\partial_z}=\left(\lambda z + O(1)\right){\partial_z}. 
\end{equation}

\medskip\noindent
Notice that introducing $z=1/w$ yields
\begin{equation*}
R(w){\partial_w}= -w^2\frac{Q(w)}{P(w)}{\partial_w}=\left(-\lambda w + O(w^2)\right){\partial_w},
\end{equation*}
as $w\to 0$ and so $R(z){\partial_z}$ has a zero of order 1 at $\infty$. 
If $\Re(\lambda)>0$, $\infty$ is a sink, if $\Re(\lambda)=0$ it is a center, and if $\Re(\lambda)<0$ it is a source of the vector field $R(z){\partial_z}$. 

The case where $\deg P=0$ and $\deg Q=1$ is almost trivial. It has been proved in Section~\ref{ssec:special} that if $\lambda \in \mathbb{R}_{<0}$ any $T_{CH}$-invariant set is trivial. Conversely, for any other value of $\lambda$, the unique root of $Q$ forms a (compact) $T_{CH}$-invariant set.

In the following, we will assume $\deg P \geq 1$.


\begin{proposition}\label{th:negativeResidueAtInfty}
If $\Re \lambda<0$, then $\minvset{CH}=\bC$.
\end{proposition}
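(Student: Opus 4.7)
The plan is to argue by contradiction, assuming $\minvset{CH}\ne\bC$. First I invoke \cref{cor:Raw1}: under the standing hypotheses $\deg Q-\deg P=1$ and $\deg P\ge 1$, a non-trivial $\minvset{CH}$ is either (ii) compact and contractible, or (iii) non-compact with $\phi_\infty=0$. Since $\Re\lambda<0$ forces $\phi_\infty=\arg\lambda\in(\pi/2,3\pi/2)$, in particular $\phi_\infty\ne 0$, case (iii) is eliminated. It therefore suffices to rule out the existence of any compact $T_{CH}$-invariant set, which I do by showing that any such set would have to contain points of arbitrarily large modulus.

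Pick a modulus-maximal point $u^*\in\minvset{CH}$ with $M:=|u^*|<\infty$. The $t$-trace $\gamma$ starting at $u^*$ is governed by the time-dependent vector field $V(z,t)=-R(z)/(tR'(z)+1)$ from \cref{th:addit}, and by $T_{CH}$-invariance must remain inside $\{z:|z|\le M\}$. Using the expansions $R(z)=\lambda z+O(1)$ and $R'(z)=\lambda+O(z^{-2})$ at infinity, the linearised equation $\dot\gamma\approx-\lambda\gamma/(1+t\lambda)$ integrates to $\gamma(t)\approx u^*/(1+t\lambda)$ while $|\gamma|$ remains large. When $\Im\lambda\ne 0$, the modulus $|1+t\lambda|$ attains its minimum $|\Im\lambda|/|\lambda|<1$ at $t^*=-\Re\lambda/|\lambda|^2>0$, which gives
\[
\max_{t\ge 0}|\gamma(t)|\;\approx\;c\cdot|u^*|,\qquad c:=|\lambda|/|\Im\lambda|>1.
\]
When $\lambda\in\bR_{<0}$ the polynomial $tQ(z)+(z-u^*)P(z)$ has leading coefficient $tq_\infty+p_\infty$ that vanishes at the positive real value $t=-1/\lambda$, so one of its roots escapes to $\infty$ at that time and $\ttt_{u^*}$ is already unbounded in $\bC$. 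In either subcase, once $|u^*|$ exceeds a threshold $R_0$ depending only on the subdominant terms of $R$, the root trail $\ttt_{u^*}$ leaves $\{|z|\le M\}$, contradicting $T_{CH}$-invariance.

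To close the argument for arbitrary $M$, I iterate the $T_{CH}$-extension starting from any generator of $\minvset{CH}$ (a zero of $PQ$ works by \cref{le:zerosPQ} and \cref{lm:trivial}), selecting a modulus-maximal point in the root trail at each step. The key claim is that this sequence eventually enters the asymptotic regime $|u|>R_0$, after which the growth factor $c>1$ (respectively escape to $\infty$ when $\lambda\in\bR_{<0}$) forces $|u_n|\to\infty$, contradicting boundedness of $\minvset{CH}$. The main technical obstacle I anticipate is verifying this finite-time entry into the asymptotic regime: one needs a quantitative, uniform-in-$\arg u$ version of the inequality $\Re(R(u)\overline{u})=\Re\lambda\cdot|u|^2+O(|u|)<0$ for $|u|\ge R_0$, together with a non-asymptotic argument showing that the modulus along the iteration must grow strictly even below $R_0$. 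Once unboundedness of $\minvset{CH}$ is established, the first paragraph concludes $\minvset{CH}=\bC$.
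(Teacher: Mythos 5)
Your reduction via \cref{cor:Raw1} to ruling out compact invariant sets is sound, and your subcase $\lambda\in\bR_{<0}$ is correct: the leading coefficient $tq_\infty+p_\infty$ of $tQ(z)+(z-u)P(z)$ vanishes at $t=-1/\lambda>0$, so every root trail is unbounded and \cref{cor:Raw1} (with $\phi_\infty=\pi\neq 0$) forces triviality. The problem is the main subcase $\Re\lambda<0$, $\Im\lambda\neq 0$, where there is a genuine gap that you yourself flag but do not close, and which I do not believe can be closed along the lines you propose. Your expansion $\gamma(t)\approx u^*/(1+t\lambda)$ with growth factor $c=|\lambda|/|\Im\lambda|>1$ is only valid for $|u^*|$ above a threshold $R_0$ determined by the subdominant terms of $R$; it therefore shows at most that \emph{if} $\minvset{CH}$ contains a point of modulus $>R_0$ then it is unbounded. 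It gives no mechanism forcing $\minvset{CH}$ to reach modulus $R_0$ in the first place: the generators you start from (roots of $PQ$) may all lie deep inside the disk $\{|z|\le R_0\}$, and below that threshold there is no monotonicity — a root trail starting at a root of $P$ terminates at roots of $Q$ and nothing in your argument prevents it, and all its iterates, from staying in a small disk. The "non-asymptotic argument showing that the modulus along the iteration must grow strictly even below $R_0$" that you ask for is precisely the missing content of the proposition, not a technical verification.

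The paper closes exactly this hole by a global, non-perturbative input: it interprets $-R(z)\partial_z$ through the translation structure of the $1$-form $dz/R(z)$, for which $\infty$ is a simple pole with residue $\lambda$ of negative real part, and invokes a result on translation surfaces (Proposition 5.5 of \cite{Ta}) guaranteeing a trajectory from $\infty$ to a zero of $dz/R$, i.e.\ an unbounded $P$-starting separatrix. By \cref{pro:separatrixIsInInv} this separatrix lies in every $T_{CH}$-invariant set, so every such set is unbounded, and then \cref{cor:Raw1} finishes as in your first paragraph. If you want to avoid the translation-surface citation, you would need some substitute global argument producing an unbounded forward trajectory of $-R(z)\partial_z$ emanating from $\zeros(P)$; the local/asymptotic analysis of the time-dependent field $V(z,t)$ near $\infty$ does not supply one.
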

\begin{proof}
Integral curves of $-R(z) {\partial_z}$ coincide with real negative trajectories of the translation structure induced by meromorphic $1$-form $\frac{dz}{R(z)}$ on $\mathbb{C} \cup \lbrace{ \infty \rbrace}$. Here, $\infty$ is a simple pole of the differential and its residue $\lambda$ has a negative real part. Assuming that $\deg P \geq 1$, there always exists a trajectory from $\infty$ to a zero of $\frac{dz}{R(z)}$ (see Section 5 of \cite{Ta} and in particular Proposition 5.5). Such a zero is automatically a root of $P$.\newline
We found an integral curve $\varphi(t)$ of $-R(z) {\partial_z}$ that is $P$-starting separatrix and hence is contained in any $T_{CH}$-invariant set $S$, see \cref{pro:separatrixIsInInv}. Corollary~\ref{cor:Raw1} then proves that unless $\lambda \in \mathbb{R}^{+}$, any such noncompact $T_{CH}$-invariant set coincides with $\mathbb{C}$. 
\end{proof}

\smallskip
We now concentrate on the case $\deg Q -\deg P=1$ and $\Re \lambda\ge 0$ and show that this condition guarantees the existence of compact $T_{CH}$-invariant sets. 

\begin{lemma}\label{third case} For any operator $T$ with $\deg Q -\deg P=1$ and $\Re(\lambda)>0$, there exist compact $T_{CH}$-invariant sets. 
\end{lemma}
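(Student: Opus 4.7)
The plan is to show that for any sufficiently large radius $\rho > 0$ the closed disk $S = \overline{D(0,\rho)}$ is a compact $T_{CH}$-invariant set. By the criterion in Theorem~\ref{th:charact}, this reduces to verifying two things: (a) that $S$ contains every zero of $PQ$, which is automatic for large $\rho$, and (b) that for every $p \in S^{c}$ the associated ray $r_p = \{p + tR(p) : t \geq 0\}$ remains in $S^{c}$.

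The key observation for (b) exploits the hypothesis $\Re\lambda > 0$. Since $\deg Q - \deg P = 1$, the quotient $R(z)/z$ tends to $\lambda$ as $|z|\to\infty$, so writing $\mu(z) := R(z)/z$ one has $\Re \mu(z) > 0$ for all sufficiently large $|z|$. I would therefore choose $\rho$ large enough to simultaneously enclose every zero of $PQ$ inside $D(0,\rho)$ and to guarantee $\Re\mu(z) > 0$ whenever $|z| \geq \rho$. In particular $R$ is holomorphic and nonvanishing on $S^{c}$, since the poles and zeros of $R$ are located among the roots of $PQ$.

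With such a $\rho$ fixed, the verification of (b) is a one-line computation: for any $p$ with $|p| > \rho$, using $p + tR(p) = p(1+t\mu(p))$, one has
\[
|p + tR(p)|^{2} \;=\; |p|^{2}\bigl(1 + 2t\,\Re\mu(p) + t^{2}|\mu(p)|^{2}\bigr) \;\geq\; |p|^{2} \;>\; \rho^{2}
\]
for every $t \geq 0$. Hence the associated ray moves strictly outward from the origin and never re-enters $S$, so (b) holds and $S$ is a compact $T_{CH}$-invariant set.

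The only (mild) obstacle in this scheme is confirming that the asymptotic regime $\Re\mu > 0$ is already in force at radii larger than every root of $PQ$; this is immediate from the finiteness of the zero set of $PQ$ combined with the limit $\mu(z) \to \lambda$ at infinity. No finer dynamical information about $R(z)\partial_{z}$ or its separatrices is needed here, as the positivity $\Re\lambda > 0$ alone provides enough outward control near infinity to rule out any return of the associated rays.
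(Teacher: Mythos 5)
Your proof is correct and takes essentially the same approach as the paper: both choose a large disk containing $\zeros(PQ)$ and show via Theorem~\ref{th:charact} that the associated rays of exterior points point outward because $\Re\bigl(R(z)/z\bigr)>0$ near infinity when $\Re\lambda>0$. Your explicit computation of $|p+tR(p)|^{2}$ is just a cleaner algebraic rendering of the paper's geometric observation that each such ray stays in the half-plane bounded by the tangent line $z+iz\mathbb{R}$ away from the disk.
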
 
\begin{proof} 
For $|z|$  sufficiently large, the expression $\vert \arg R(z)-\arg(\lambda z) \vert$ can be made arbitrary small. Consider a $1$-parameter family of the disks $D_\ell: \{|z| \leq \ell\}$. We will show that for $\ell$ sufficiently large, $D_\ell$ is 
$T_{CH}$-invariant. Indeed, set $\arg \lambda \in (-\frac{\pi}{2},\frac{\pi}{2} )$. Then there exists $\ell_0$ such that for any $\ell\ge \ell_0$ and any point $z \notin D_\ell$, the associated ray of such $z$ remains in the half-plane bounded by  the line $z+iz\mathbb{R}$ that does not contain disk $D_{\ell_{0}}$. It follows that $D_{\ell_{0}}$ is a $T_{CH}$-invariant set.
\end{proof}

Finally, let us consider the limiting case. 

\begin{proposition}
If $\Re \lambda =0$,  then there exist compact $T_{CH}$-invariant sets.
\end{proposition}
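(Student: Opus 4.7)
The plan is to produce an explicit compact $T_{CH}$-invariant set by exploiting a local holomorphic linearization at $\infty$. In the chart $w=1/z$ the vector field $R(z)\partial_z$ takes the form $(-\lambda w+O(w^2))\partial_w$, so $\infty$ is a simple zero of the meromorphic field on $\bar{\mathbb{C}}$ with non-vanishing multiplier $-\lambda$. By Koenigs' linearization theorem for germs of one-dimensional holomorphic vector fields with non-zero linear part (whose convergence in one complex variable is automatic, requiring no resonance hypothesis), there is a biholomorphism $h$ from a neighborhood $U$ of $\infty$ in $\bar{\mathbb{C}}$ onto a disk $\{|w|<\epsilon_{0}\}$, with $h(\infty)=0$ and $h'(z)R(z)=-\lambda h(z)$ on $U$. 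Its inverse expands as $h^{-1}(w)=c/w+c_{0}+O(w)$ near the origin, with $c\in\mathbb{C}^{*}$.

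For each small $\epsilon>0$ I will consider the Jordan curve $\gamma_{\epsilon}:=h^{-1}(\{|w|=\epsilon\})$ and the compact region $S_{\epsilon}:=\mathbb{C}\setminus h^{-1}(\{|w|<\epsilon\})$ it bounds. Two facts about $\gamma_{\epsilon}$ drive the argument. First, since $\Re\lambda=0$ every circle $\{|w|=\epsilon\}$ is a closed orbit of the linearized field $-\lambda w\partial_{w}$, so $\gamma_{\epsilon}$ is a closed orbit of $R(z)\partial_{z}$, and $R(p)$ is tangent to $\gamma_{\epsilon}$ at every $p\in\gamma_{\epsilon}$. Second, the expansion of $h^{-1}$ presents $\gamma_{\epsilon}$ as a relative $O(\epsilon^{2})$ perturbation of the (large) circle of radius $|c|/\epsilon$ centred at $c_{0}$; a direct curvature computation from the Laurent expansion will yield $|\kappa_{\gamma_{\epsilon}}|=\epsilon/|c|+O(\epsilon^{2})>0$ for $\epsilon$ sufficiently small, so $\gamma_{\epsilon}$ has non-vanishing curvature and $S_{\epsilon}$ is strictly convex.

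The conclusion then follows cleanly from Proposition~\ref{prop:raysGiveCHinv}. Strict convexity of $S_{\epsilon}$ implies that the tangent line to $\gamma_{\epsilon}$ at any point $p$ meets $\gamma_{\epsilon}$ only at $p$, so the rest of that tangent line lies in the exterior $\mathbb{C}\setminus S_{\epsilon}$. Since $R(p)$ is tangent to $\gamma_{\epsilon}$, the associated ray $r_{p}=\{p+tR(p):t\geq 0\}$ is contained in this tangent line, giving $r_{p}\subset\overline{S_{\epsilon}^{\,c}}$. Choosing $\epsilon$ small enough that $\mathcal{Z}(PQ)$ lies in the interior of $S_{\epsilon}$, Proposition~\ref{prop:raysGiveCHinv} produces the desired compact $T_{CH}$-invariant set. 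I expect the main technical obstacle to be the curvature estimate guaranteeing strict convexity of $\gamma_{\epsilon}$: while geometrically clear from the fact that $\gamma_{\epsilon}$ is close to a very large circle, it needs to be written out carefully using the Laurent coefficients of $h^{-1}$, or equivalently via standard quantitative distortion estimates for the univalent germ $h^{-1}$ on $\{|w|<\epsilon_{0}\}$.
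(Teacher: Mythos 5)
Your proposal is correct and follows essentially the same route as the paper: both arguments produce a closed convex trajectory of $R(z)\partial_z$ encircling $\infty$ (which is a center since $\Re\lambda=0$), observe that the associated ray at each of its points lies on the tangent line and hence outside the enclosed compact region, and conclude via Proposition~\ref{prop:raysGiveCHinv}. The only difference is bookkeeping: where you invoke a holomorphic linearization at $\infty$ and a direct curvature estimate on the Laurent expansion of $h^{-1}$, the paper obtains the closed orbits from the center classification and their convexity immediately from the compactness of the inflection curve $\infl_R$ (Lemma~\ref{lem:blueCurveCompact}, applicable since $\Im\lambda\neq 0$), which spares the explicit computation you flag as the main technical obstacle.
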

\begin{proof}
In case $\Re \lambda=0, \; \lambda\neq 0$ we get that $\infty\in \bar \bC$ is a center of $R(z)\partial_z$, i.e. its trajectories  near $\infty$ are closed curves. Moreover sufficiently close to $\infty$ these trajectories (which look approximately like circles of very large radius) are convex: by \cref{lem:blueCurveCompact} they are  locally convex, hence convex. Let $\Gamma$ be one such closed convex trajectory. Since the vector $R(z)\partial_z$ is tangent to $\Gamma$ at any  point $z\in \Gamma$ and it points ``towards $\infty$", the associated ray at this point never crosses $\Gamma$ for $t>0$. Thus by Proposition~\ref{prop:raysGiveCHinv} the compact domain bounded by $\Gamma$ is $T_{CH}$-invariant. 
\end{proof}

\section{(Ir)regularity of $T_{CH}$-invariant sets: general properties} \label{sec:irreg}


In this and the two following sections we  discuss irregularity of $T_{CH}$-invariant sets, see Definition~\ref{def:regularSets}. Our interest in irregularity is related to the facts that  (i) it is easier to describe when a regular set is $T_{CH}$-invariant than an irregular one, (ii) $T$ under some not very strong assumptions admits irregular $T_{CH}$-invariant sets only if the vector field $R(z)\partial z$ has a line of symmetry which only happens if $R(z)$ becomes real after an appropriate affine change of the variable $z$. So morally the study of irregular sets is reduced to the study of operators $T$ with real-valued coefficients which is a natural subclass of all operators under consideration. We begin with the following definition. In this section we assume only that neither $P$ nor $Q$ is vanishing identically.

\begin{definition}
For a given rational function $R(z)$, a line is defined to be $R$-invariant if for any $z \in \Lambda$ such that $R(z)$ is defined, we have $z+R(z) \in \Lambda$.
\end{definition}

\begin{lemma}\label{lem:Dihedral}
For a given nonzero rational function $R(z)$, if two distinct lines are $R$-invariant, then one of the following statements holds:
\begin{enumerate}
    \item $R(z)=\lambda$ for some $\lambda \in \mathbb{C}^{\ast}$;
    \item $R(z)=\lambda(z-\alpha)$ for some $\lambda \in \mathbb{C}^{\ast}$ and $\alpha \in \mathbb{C}$;
    \item there are finitely $R$-invariant lines, they intersect in some point $\alpha$ and conjugations along (i.e. reflection through) them determine a finite dihedral group.
\end{enumerate}
\end{lemma}

\begin{proof}
A rational function $R(z)$ for which the real axis is a $R$-invariant line is a real rational function and is therefore invariant by conjugation. It follows that invariant lines of $R(z)$ generate a reflection group $G$ of maps commuting with $R$. In particular, any element in $G$ sends the finite set of $R(z)$ to itself.
\par
If $G$ is finite, then the classification of reflection groups of the plane proves that $G$ is a finite dihedral group. Invariant lines of $R(z)$ thus intersect in some point $\alpha$.
\par
If $G$ is infinite, then it admits an infinite subgroup formed by maps fixing each zero or pole of $R(z)$. It follows that there is at most one such singular point. If $R(z)$ has no finite poles or zeroes, then we have $R(z)=\lambda$ for some $\lambda \in \mathbb{C}^{\ast}$. Otherwise, $R(z)=\lambda(z-\alpha)^{k}$ for some $\lambda \in \mathbb{C}$, $\alpha \in \mathbb{C}$ and $k \in \mathbb{Z}$. Every $R$-invariant line contains $\alpha$ and $G$ thus contains an infinite group of rotations around $\alpha$. As every element in $G$ commutes with $R$, it follows that $R(z)=\lambda(z-\alpha)$. 
\end{proof}

In the cases $\deg Q - \deg P = \pm 1$, we obtain more precise statements.

\begin{lemma}\label{lem:OddInvariant}
We consider a rational function $R(z)=\frac{Q(z)}{P(z)}$ such that $\deg Q - \deg P =-1$. If there are two distinct $R$-invariant lines, then up to an affine change of variable, $R(z)$ is a real odd rational function on $\mathbb{R}$ (nonzero coefficients of its Laurent series have odd order). Besides, there are only two $R$-invariant lines and they coincide with the real and imaginary axis.
\end{lemma}

\begin{proof}
Following Lemma~\ref{lem:Dihedral}, conjugations along $R$-invariant lines generate a finite dihedral group. At infinity, we have $R(z)=\frac{\lambda}{z}+o(1/z)$ for some $\lambda \in \mathbb{C}^{\ast}$. From this it follows that any two $R$-invariant lines have to be perpedincular. Up to an affine change of variable, we will assume that these $R$-invariant lines coincide with $\mathbb{R}$ and $i\mathbb{R}$. Writing down the two conjugations that leave $R(z)$ invariant in terms of the coefficients of the Laurent series of $R(z)$ in $0$ provides the characterization of $R(z)$ as a real odd rational function.
\end{proof}

\begin{lemma}\label{lem:qp1LambdaInvariant}
Let $R(z)=\frac{Q(z)}{P(z)}$ be such that $\deg Q - \deg P =1$. If there is at least one $R$-invariant line, then we have $R(z)=\lambda z +o(z)$ with $\lambda \in \mathbb{R}^{\ast}$.
\end{lemma}

\begin{proof}
This is immediate from the definition of a $R$-invariant line.
\end{proof}

\smallskip
The next statement provides an important necessary condition for the existence of irregular $T_{CH}$-invariant sets.



\begin{lemma}\label{lem:IrrLine}
Assume that a linear differential operator $T = Q(z)\frac{d}{dz} + P(z)$ has an irregular $T_{CH}$-invariant set $S \subset \mathbb{C}$. Then any irregular point of $S$ is contained in some $R$-invariant line.
\end{lemma}

\begin{proof}
We consider a $t$-trace $\gamma(t)$ that solves 
\[
    tQ(z)+(z-z_0)P(z)=0,
\]
such that  $\gamma(0)=z_0$ where $z_{0}$ is an irregular point of a $T_{CH}$-invariant set $S$. We have that $\gamma'(t)$ is given by \eqref{eq:field}, which is non-singular for small $t>0$. 
Furthermore, the forward trajectories of $\gamma(t)$ are contained in $S$. 

By irregularity of $z_0$, $\gamma'(t)=\frac{-R(z)}{tR'(z)+1}=-k(t)R(z)$ for some $k(t)\in \mathbb{R}\setminus\{0\}$ and sufficiently small $t>0$. Since $\gamma(0)=\gamma(t)+tR(\gamma(t))$, it follows that $\gamma$ is locally a segment of some line $\Lambda$. Besides, for any $t$, the direction of $R(\gamma(t))\partial_z$ is parallel to one of the directions of $\Lambda$.

Consequently, up to an affine change of variables, we can assume that $\Lambda = \mathbb{R} \subset \mathbb{C}$. It follows that $R(z)$ is a real rational function. In other words, for any $z \in \Lambda$, $R(z)$ has the direction of $\Lambda$.
\end{proof}

Next, we have the following.
\begin{lemma}\label{le:zerosofPQinIrregular}
Assume that a linear differential operator $T = Q(z)\frac{d}{dz} + P(z)$ has an irregular $T_{CH}$-invariant set $S \subset \mathbb{C}$. Any irregular point $z_{0}$ of $S$ that is a zero or a pole of $R(z)$ is a simple zero or a simple pole.
\end{lemma}
\begin{proof}
Suppose first that $z_0$ is a pole of $R(z)$ order $k\geq 2$. Using \cref{prop:rootTrajectoryDirections} twice, first with $u=z_0$ and then with $u$ belonging to the root root trail of $z_0$ sufficiently close to $z_0$, we deduce that $z$ is not irregular. Hence, suppose $z_0$ is a zero of $R(z)$
with multiplity $k$, and consider
the root trail of any point $u\neq z_0$, which exists since $P(z)$ is assumed to not vanish identically. There are $k$ solutions of $tQ(z)+(z-z_0)P(z)=0$ that tend to
$z_0$ as $t\to \infty$. Moreover, $tQ(z)+(z-z_0)P(z)=0$ has only simple zeros for all but a finite set
of $t$, since $\frac{(z_0-z)P(z)}{Q(z)}$ have only a finite number of critical values (see \cref{le:genericu}). We have that $R(z)\approx a (z-z_0)^k$ close to $z_0$ and after a change of variables we may assume that $a=1, z_0=0$. Further, $tR(z)+z=u$ for points $z$ in the root trail of $u$. By the assumption that $z_0$ is an irregular point $U\cap S$ contains only irregular points for sufficiently small neighborhoods of $z_0$. It follows that $k\leq 2$, $u\in \mathbb R$, $\mathfrak{tr}_u\cap U\subset \mathbb R$ for sufficiently small open neighborhoods $U$ of $z_0$ and that one of the zeros in $\mathfrak{tr}_u$ tends to $0$ from the positive side and another from the negative side. However, there are then points close to 0 with negative real part and non-zero imaginary part whose associated rays intersect $\mathfrak{tr}_u$, and these points does not belong to any $R$-invariant line. We thereby deduce from the assumption that $z_0$ is irregular that k=1.
\end{proof}
In the proposition and its corollary, we give a more precise description of the irregular locus of $T_{CH}$-invariant sets. The reader may recall the definition of the \emph{irregular locus} from \cref{def:regularSets}.

\begin{proposition}\label{prop:IrrTail}
For any linear differential operator $T=Q(z)\frac{d}{dz}+P(z)$,  suppose that $z_{0} \notin \mathcal{Z}(PQ)$ is an irregular point of some $T_{CH}$-invariant set $S$, then one of the following statements holds:
\begin{enumerate}
    \item\label{item:1} $R(z)=\lambda$ for $\lambda \in \mathbb{C}^{\ast}$;
    \item\label{item:2} $R(z)=\lambda(z-\alpha)$ for $\lambda \in \mathbb{R}_{>0}$ and $\alpha \in \mathbb{C}$;
    \item\label{item:3} $S$ is contained in the line $z_{0}+R(z_{0})\mathbb{R}$;
    \item\label{item:4} half-line $L=z_{0}+R(z_{0})\mathbb{R}_{>0}$ is contained in an open cone $\Gamma$ with vertex at   $z_{0}$ such that $\Gamma \cap S \subset L$. Besides, for any $z \in L$, $R(z) \in R(z_{0})\mathbb{R}_{>0}$.
\end{enumerate}
\end{proposition}

\begin{proof}
We assume we are not in the cases of items (1), (2), or (3) and let us conclude that  item (4) holds. \cref{lem:Dihedral}   together with \cref{lem:IrrLine} proves the existence of an $R$-invariant line $\Lambda$ and a cone $\Gamma$ such that $\Lambda$ that contains the intersection $S\cap \Gamma$.

\end{proof}
We call a set of the form $\Gamma\cap S$ as in  \cref{item:4} above a \defin{tail}.

\begin{corollary}
If $\minvset{CH}$ is not fully irregular, then the irregular locus is formed by finitely many straight segments $(\alpha_{1},\beta_{1}],\dots,(\alpha_{k},\beta_{k}]$ where for any $i$:
\begin{itemize}
    \item the segment $(\alpha_{i},\beta_{i}]$ belongs to an $R$-invariant line;
    \item for any $z \in (\alpha_{i},\beta_{i}]$, $\frac{\beta_{i}-\alpha_{i}}{R(z)} \in \mathbb{R}_{>0}$;
    \item $\alpha_{i}$ belongs to the regular locus of $\minvset{CH}$;
    \item $\beta_{i} \in \mathcal{Z}(P)\cap \mathcal{Z}(Q)$;
    \item $\beta_{i}$ is a root of the same multiplicity for $P$ and $Q$.
\end{itemize}
In particular, if no point of $\mathcal{Z}(PQ)$ satisfies the latter condition, then $\minvset{CH}$ is either regular or fully irregular.
\end{corollary}

\begin{proof}
Assuming $\minvset{CH}$ is not fully irregular, we consider a point $z_{0}\minvset{CH}$ that does not belong to $\mathcal{Z}(PQ)$ nor the regular locus of $\minvset{CH}$. Following \cref{thm:connectedclosure}, $z_{0}$ is not an isolated point. We apply \cref{prop:IrrTail} to $z_{0}$.
\par
Since $\minvset{CH}$ is not fully irregular, \cref{item:3} in \cref{prop:IrrTail} does not apply. If $R(z)=\lambda$ for some $\lambda \in \mathbb{C}^{\ast}$, then the minimal set clearly coincides with a finite union of parallel half-lines starting at points of $\mathcal{Z}(PQ)$. In this case, $\minvset{CH}$ is also fully irregular so \cref{item:1} does not apply. If $R(z) = \lambda (z-z_{1})$ for some $\lambda \in \mathbb{C}^{\ast}$ and $z_{1} \in \mathbb{C}$, then $\minvset{CH}$ is regular unless $\lambda \in \mathbb{R}_{>0}$ (see \cref{th:negativeResidueAtInfty} and Lemma~\ref{lem:qp1LambdaInvariant}), provided there is $z_0\notin \zeros(PQ)$ belonging to $\minvset{CH}$. In the case when $\lambda \in \mathbb{R}_{>0}$, $\minvset{CH}$ is clearly a finite union of segments between $z_{0}$ and points of $\mathcal{Z}(PQ)$. Therefore $\minvset{CH}$ is fully irregular and \cref{item:2} does not apply either.
\par
Therefore, \cref{item:4} applies and $z_{0}$ belongs to a tail. Such a tail can be removed harmlessly from any $T_{CH}$-invariant set and still render it $T_{CH}$-invariant set unless its final vertex (for the orientation defined by $R(z)\partial_{z}$) belongs to $\mathcal{Z}(PQ)$. If the initial point $\alpha_{i}$ of the tail does not belong to the regular locus of $\minvset{CH}$, then it has to be a simple zero or a simple pole of $R(z)$ (see \cref{le:zerosofPQinIrregular}) and applying again \cref{prop:IrrTail} we deduce easily that in this situation, $\minvset{CH}$ is contained in a line.
\end{proof}

\begin{remark} 
$T_{CH}$-invariant sets can be partially irregular. 
For instance, this can happen in the case of the cocheloid, see  Example~\ref{ex:coch} below, where $Q(z)=z^2$, $P(z)=z-1$.
Indeed, we can add to the minimal invariant set $\minvset{CH}$ any 
interval $I=[-\alpha,0)$ for $\alpha>0,$  and obtain $S=\minvset{CH}\cup I$  which is also $T_{CH}$-invariant. This set is then also the minimal $T_{CH}$ invariant set for the operator $\tilde T$ with $\tilde Q(z)=Q(z)(z+\alpha), \tilde P(z)=P(z)(z+\alpha)$.
\end{remark}

\begin{example}\label{ex:coch}
Take $Q(z)=z^2$, $P(z)=z-1$. Then in  polar coordinates  the closure of the 
union of the $1$-starting separatrices is given by $\frac{\sin \theta}{\theta}$. 
Let us denote  by $S$ this union together with the region it encloses.
By the above,  the boundary of $S$ is contained in $\minvset{CH}$. 
For any point $z$ in the interior of $S$, $R(z)\neq 0$, 
so there exists $t$ such that $z+tR(z)\in \partial S$ and it
follows that $z\in \minvset{CH}$. Now, using the same argument 
as in \cref{prop:raysGiveCHinv} one can  show that no $t$-trace 
can reach the complement of $S$ through a point which is not a root of $P$.
In order to deduce that $S$ is indeed equal to $\minvset{CH}$, 
we need to show that the $t$-traces defined by the solutions to
\begin{equation}\label{eq:pgraph}
tz^2+(z-1)^2=0
\end{equation}
are in $S$ for any small (and hence for all) $t$. 
We do so by comparing the second order terms in the Taylor 
expansion of the solutions to \eqref{eq:pgraph} as $t\to 0$ as well 
as the $1$-starting separatrices as $\theta\to 0^+$.

Now, following \cref{prop:rootTrajectoryDirections} and introducing 
the change of variables $t\to t^2$, we obtain that the 
solutions to \eqref{eq:pgraph} are given by 
\[
1+i t-t^2+O\left(t^3\right), \quad 1-i t-t^2+O\left(t^3\right)
\]
and the separatrices close to $\theta=0$ are given by
\[
1+i t-\frac{2 t^2}{3}+O\left(t^3\right), \quad 1-i t-\frac{2 t^2}{3}+O\left(t^3\right).
\]
We conclude that $S=\minvset{CH}$.

\begin{figure}[!ht]
\centering
\includegraphics[width=0.27\textwidth]{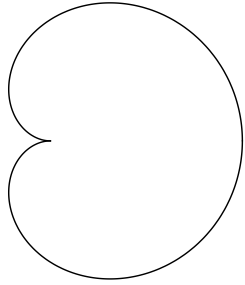} 
\caption{(The boundary of) $\minvset{CH}$ for $T=z^2\frac{d}{dz}+(z-1)$.}
\label{fig:cardiodid}
\end{figure}

\end{example}

\section{Existence of fully irregular $T_{CH}$-invariant sets}\label{sub:realirregular}

Below we will show that there exists a $T_{CH}$-invariant set contained in $\bR$ if and only if (up to a scalar factor) both $Q$ and $P$ are real and have only real and interlacing roots, see \cref{thm:classifying1d}. 
This result will explain which operators $T$ admit fully irregular $T_{CH}$-invariant sets. 

Under the assumptions of \cref{ssec:trivial} as well as $R(z)\neq \lambda(z-\alpha)$, we know from the previous section we know that any fully irregular set is contained in a finite union of lines such that $R$ is invariant under conjugation along these lines. If $\deg P=1$, $\deg Q=0$, then using \cref{prop:rootTrajectoryDirections} and \cref{th:addit}, we obtain that if there is a fully irregular set, it is contained in a line. In the same vein, but only using \cref{prop:rootTrajectoryDirections}, for any other case when $\deg P\geq 1$, we conclude that if there is a fully irregular set, it is contained in a line.
Supposing now that there is a fully irregular set, we have that $|\deg Q-\deg P|\leq 1$. 
As the roots of $Q$ and $P$ are necessarily contained in any $T_{CH}$-invariant set $S$, 
it follows that the roots of $P$ and $Q$ must 
lie on a line. Applying an affine change of variable and multiplication by a scalar, we may assume that the polynomials $P$ and $Q$  are real. 
From \cref{le:zerosofPQinIrregular} we know that $P$ and  $Q$ can only have simple roots.

%

We say that two roots $z_0,z_1 \in \zeros(PQ)$ are \defin{adjacent} 
(along the $x$-axis) if there is no root $z_2\in \zeros(PQ)$ 
such that $z_0<z_2<z_1$ or $z_1<z_2<z_0$.

It is now clear that if $T$ admits a fully irregular set $S$ then no two roots of $P$ can be adjacent. Indeed, 
if there were adjacent roots $z_1$, $z_2$ of $P$, since $S\subset \bR$ and the $P$-starting separatrices are contained in $S$, it follows that some of the $P$-starting separatrices of the vector field $-R(z){\partial_z}$ intersect, which is impossible.

The next lemma shows that the roots of $P$ and $Q$ must \defin{interlace}.
In other words, all roots of $P$ and $Q$ are simple, and roots of $P$ can only be adjacent to roots of $Q$ and vice versa.
\begin{lemma}\label{le:onlyInterlacing}
If $S\subset\mathbb{R}$ is  $T_{CH}$-invariant and $P$ is non-zero 
then $Q$ has no adjacent roots.
\end{lemma}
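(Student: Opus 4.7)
My plan is to argue by contradiction. Assume $Q$ has two adjacent real simple roots $z_0 < z_1$ (with no root of $PQ$ in $(z_0,z_1)$) and produce a starting point $u \in S$ together with a value $t > 0$ such that the polynomial $tQ(z) + (z-u)P(z)$ has a non-real root; since $S \subset \mathbb{R}$, this will contradict the $T_{CH}$-invariance of $S$.

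First I would show that $[z_0, z_1] \subset S$: for any $u \in (z_0, z_1)$ the value $R(u) \in \mathbb{R} \setminus \{0\}$, so its associated ray $r_u$ lies on the real line and contains either $z_0$ (when $R(u) < 0$) or $z_1$ (when $R(u) > 0$). Since both endpoints lie in $S$, \cref{th:charact} forces $u \in S$, and closedness of $S$ yields the claim. Next I would invoke the critical-point computation from the proof of \cref{le:genericu}: the map $z \mapsto u(z) := z - R(z)/R'(z)$ sends every $z \in (z_0, z_1)$ with $R'(z) < 0$ to a starting point $u$ for which $\Psi_u(w) = (u-w)P(w)/Q(w)$ has $z$ as a critical point with positive critical value $-1/R'(z)$. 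Such $z$ exist because $R'(z_0)$ and $R'(z_1)$ are nonzero and of opposite signs, so $R' < 0$ on some subinterval $J \subset (z_0, z_1)$ whose closure contains exactly one of the endpoints, which I call $z^*$; a second-order Taylor expansion gives $u(z) - z^* = \frac{R''(z^*)}{2R'(z^*)}(z-z^*)^2 + O((z-z^*)^3)$, so depending on the sign of $R''(z^*)/R'(z^*)$ the point $u(z)$ approaches $z^*$ from either the interior or the exterior side of $[z_0, z_1]$.

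The main obstacle is to ensure that $u(z) \in S$. If $u(z)$ lies on the interior side of $z^*$, then for $z \in J$ close enough to $z^*$ one has $u(z) \in (z_0, z_1) \subset S$ and we are done. Otherwise I would enlarge $S$ by following the root trail of the point $z_0 \in S$: after cancelling the trivial factor $w - z_0$, that trail on $\mathbb{R}$ is cut out by $t = -P(w)/\tilde{Q}(w)$ with $\tilde{Q}(w) = Q(w)/(w-z_0)$, and since $\tilde{Q}(z_0) \neq 0$ the right-hand side is continuous at $z_0$ with a single definite sign on both sides. A sign comparison then shows that the positive-$t$ portion of the trail contains an open interval of the form $(z_0 - \delta, z_0)$ or $(z_1, z_1 + \delta)$, lying on precisely the side of $z^*$ at which $u(z)$ accumulates; hence $u(z) \in S$ in this case too.

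Finally, once $u \in S$ is chosen so that $\Psi_u$ has a non-degenerate critical point at $z$ with positive critical value $t^* = -1/R'(z)$ (which is the generic situation and can be ensured by a small perturbation of $z$ in $J$), the local quadratic model $\Psi_u(w) \approx t^* + \frac{1}{2}\Psi_u''(z)(w-z)^2$ implies that for $t$ slightly on one side of $t^*$ the equation $tQ(w) + (w-u)P(w) = 0$ has a pair of complex-conjugate roots near $w = z$. By $T_{CH}$-invariance these non-real roots lie in $\ttt_u(t) \subset S$, contradicting $S \subset \mathbb{R}$. I expect the sign bookkeeping in the trail-extension step to be the main technical obstacle, since the construction must match up across all four combinations of the signs of $R$ on $(z_0, z_1)$ and of $R''$ at $z^*$.
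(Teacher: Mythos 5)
Your argument is correct, but it takes a genuinely different route from the paper's. The paper disposes of the lemma in two lines: between two adjacent simple roots of $Q$ the function $R$ keeps a constant sign, so exactly one of the two roots satisfies $R'<0$ and is therefore a source of $-R(z)\partial_z$; since $S\subset\mathbb{R}$ has empty interior, that root lies on the boundary of $S$, contradicting \cref{prop:qRootSinks} (which forces simple roots of $Q$ on the boundary of an invariant set to be sinks). You instead make the escape of roots into the complex plane explicit: using the critical-point computation of \cref{le:genericu}, you locate $z$ near the source endpoint $z^*$ with $R'(z)<0$, show that $u(z)=z-R(z)/R'(z)$ lies in $S$, and observe that the level set $\Psi_{u(z)}=t$ acquires a complex-conjugate pair of roots as $t$ crosses the positive critical value $-1/R'(z)$. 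Your route is self-contained (it does not rely on the forward-trajectory machinery behind \cref{prop:qRootSinks}) and exhibits concretely which pairs $(u,t)$ witness the failure of invariance, at the price of the case analysis and sign bookkeeping. Two small points: your Taylor-expansion step is inconclusive when $R''(z^*)=0$, but $u(z)-z^*$ is a nonzero real-analytic function near $z^*$ (it vanishes identically only for linear $R$), so it still has a definite sign on each side of $z^*$; and the whole Case A/Case B split can be avoided, since the root trail of $z^*$ itself already contains a two-sided real neighborhood of $z^*$ (the relevant parameter $-P(w)(w-z^*)/Q(w)$ tends to $-1/R'(z^*)>0$ as $w\to z^*$), so $u(z)\in S$ for $z$ close to $z^*$ regardless of which side of $z^*$ it falls on.
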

\begin{proof}

If there are two simple roots of $Q$, it follows that one of them is a source of $-R(z) {\partial_z}$,
and this violates \cref{prop:qRootSinks}.

\end{proof}

\begin{lemma}\label{le:singularities}
Let $\gamma_u(t)$ be a $t$-trace, i.e. solution of \eqref{eq:main}. 
Then $V(\gamma(t),t)$ has a singularity at $t_0$ only 
if there is another $t$-trace $\eta_u(t)$ such that $\gamma_u(t_0)=\eta_u(t_0)$.
\end{lemma}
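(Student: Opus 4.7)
The plan is to relate the singular locus of the time-dependent vector field $V(z,t)=-\dfrac{R(z)}{tR'(z)+1}$ to multiple roots of the defining polynomial of \eqref{eq:main}. First I would observe that since $R(\gamma_0(t_0))$ remains finite (being a rational function evaluated at $\gamma_0(t_0)\in\bC$, and if $\gamma_0(t_0)$ were a pole of $R$ then it would be a root of $P$, a case that is consistent with the conclusion by \cref{prop:rootTrajectoryDirections}), a singularity of $V(\gamma_0(t),t)$ at $t=t_0$ means that
\[
    t_0\, R'(\gamma_0(t_0))+1=0.
\]

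Next I would differentiate the polynomial $F_u(z,t)\coloneqq tQ(z)+(z-u)P(z)$ in the variable $z$ and evaluate at $z=\gamma_0(t_0)$ and $t=t_0$. Writing $z_0=\gamma_0(t_0)$ and $u=z_0+t_0R(z_0)$, so that $z_0-u=-t_0 Q(z_0)/P(z_0)$ (assuming $P(z_0)\neq 0$; the case $P(z_0)=0$ is handled separately via \cref{prop:rootTrajectoryDirections}), a direct computation gives
\[
    \frac{\partial F_u}{\partial z}(z_0,t_0)=t_0Q'(z_0)+P(z_0)+(z_0-u)P'(z_0)=P(z_0)\bigl(1+t_0R'(z_0)\bigr).
\]
Therefore the singularity condition $1+t_0R'(z_0)=0$ is exactly the statement that $z_0$ is a multiple root of the polynomial $F_u(\,\cdot\,,t_0)$.

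Once the equivalence with a multiple root is established, I would invoke the classical continuity of roots of a polynomial as functions of its coefficients. Since $F_u(z,t_0)$ has a root of multiplicity $\geq 2$ at $z_0$, at least two distinct roots of $F_u(z,t)$ coalesce at $z_0$ as $t\to t_0$. For $t$ in a punctured neighborhood of $t_0$, these roots remain distinct (because the discriminant of $F_u(\,\cdot\,,t)$ in $z$ is a nonzero polynomial in $t$, hence vanishes only on a finite set), and each extends by continuity to a $t$-trace $\gamma_0(u,t)$ and $\gamma_1(u,t)$ respectively, with $\gamma_0(u,t_0)=\gamma_1(u,t_0)=z_0$.

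The main obstacle is the mild bookkeeping in the borderline case $\gamma_0(t_0)\in\zeros(PQ)$, where the identity above degenerates. However, on the one hand zeros of $Q$ are fixed zeros of $V$ (not singularities) for all $t>0$ by \cref{th:addit}(ii); on the other hand, roots of $P$ correspond to $t=0$ and the statement is about $t_0>0$, or else can be covered by noting that several $t$-traces emanate from each root of $P$ by \cref{prop:rootTrajectoryDirections}. So these degenerate situations are consistent with the conclusion and do not require separate argument beyond a brief remark.
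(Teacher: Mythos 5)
Your proposal is correct and follows essentially the same route as the paper, whose entire proof is the one-line observation that simple roots of a polynomial are analytic in the coefficients (so a simple root yields an analytic $t$-trace with finite derivative $V(\gamma(t),t)$, and by contraposition a singularity forces a multiple root, i.e.\ a coalescence with another $t$-trace). Your version merely makes this explicit by computing $\frac{\partial F_u}{\partial z}(z_0,t_0)=P(z_0)\bigl(1+t_0R'(z_0)\bigr)$, which correctly identifies the singular locus of $V$ with the multiple-root locus, and your handling of the degenerate cases in $\zeros(PQ)$ is consistent with the paper's conventions.
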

\begin{proof}
This follows from the fact that simple roots of a 
polynomial are analytic in the coefficients.
\end{proof}

\begin{theorem}\label{thm:classifying1d}
Let $P$ and $Q$ have only real roots.
Then the following facts hold:  

{\rm (i)} If $\deg Q>0$ and $\deg P = 0$, then there exist fully irregular 
$T_{CH}$-invariant sets if and only if $R=\lambda(z-z_0)$ where $\lambda\notin \mathbb R_{<0}$. 

{\rm (ii)} If $\deg P>0$  and $Q$ is non-zero, then there exist fully irregular $T_{CH}$-invariant 
sets if and only if 

\quad\quad {\rm $\bullet$} the roots of $P$ and $Q$ interlace;

\quad\quad $\bullet$ there are no zeros of multiplicity greater than $1$;

\quad\quad $\bullet$ and for any root $\alpha$ of $P$, we have $r_{\alpha}<0$ where $R(z)=\frac{r_{\alpha}}{z-\alpha}+o(|z-\alpha|^{-1})$.

In case \rm{(ii)}, these invariant sets are either intervals, half-lines or lines.
\end{theorem}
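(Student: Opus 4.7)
My plan is to handle both parts in parallel by first reducing to a real setting, then treating necessity and sufficiency.  For the ``only if'' direction, suppose a fully irregular $T_{CH}$-invariant set $S$ exists.  Since cases (1) and (2) of \cref{lem:IrrLine} correspond to $\deg Q \leq 1$ with $\deg P = 0$, which is a subcase of (i) only, I can otherwise invoke case (3) of \cref{lem:IrrLine} to get a line $\Lambda \supseteq S$; an affine change of variable plus a scalar multiplication of $P,Q$ puts us in the situation $\Lambda=\mathbb{R}$ with $P,Q$ real.  All roots of $PQ$ lie in $S\subseteq\mathbb{R}$, and by \cref{le:zerosofPQinIrregular} they are simple.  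For each root $\alpha$ of $P$ the $t$-trajectories starting at $u=\alpha$ remain in $S\subseteq\mathbb{R}$, and \cref{prop:rootTrajectoryDirections}(ii) with $m=1$ gives initial directions satisfying $\dot\eta(0)^{2}=-Q(\alpha)/P'(\alpha)=-r_\alpha$, which forces $r_\alpha<0$.

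In case (i), $\deg P=0$ so \cref{le:onlyInterlacing} forbids adjacent roots of $Q$, which forces $\deg Q=1$ and $R(z)=\lambda(z-z_{0})$.  The value $\lambda\in\mathbb{R}_{<0}$ is excluded by \S~\ref{ssec:special}, which makes $\minvset{CH}=\mathbb{C}$ and precludes any fully irregular set.  Conversely, for $\lambda\notin\mathbb{R}_{<0}$, equation~\eqref{eq:main} becomes $(\lambda t+1)(z-z_{0})=0$, with $\lambda t+1\neq 0$ for all $t\geq 0$, so $\{z_{0}\}$ is itself $T_{CH}$-invariant and trivially fully irregular.  In case (ii), \cref{le:onlyInterlacing} gives non-adjacency of $Q$-roots.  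If two simple roots $\alpha_{1}<\alpha_{2}$ of $P$ were adjacent (no $Q$-root between), then $P$ has constant sign on $(\alpha_{1},\alpha_{2})$ so $P'(\alpha_{1})$ and $P'(\alpha_{2})$ have opposite signs, while $Q$ has constant sign on $[\alpha_{1},\alpha_{2}]$; then $r_{\alpha_{1}}$ and $r_{\alpha_{2}}$ would have opposite signs, contradicting $r_{\alpha_{i}}<0$.  Hence $P$-roots are also non-adjacent, giving interlacing.

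For sufficiency in case (ii) I would use \cref{th:charact} to show that a suitable closed interval $S=[a,b]$ containing $\zeros(PQ)$ (respectively a half-line or all of $\mathbb{R}$, depending on the sign of $\deg Q-\deg P$ and the leading coefficient) is $T_{CH}$-invariant.  Under the three standing conditions the real restriction of $R$ is a real rational function whose only poles on $\mathbb{R}$ are the $P$-roots with negative residues; hence on $\mathbb{R}$, $R$ moves from $+\infty$ to $-\infty$ across each pole and vanishes exactly once (at a $Q$-root) between consecutive poles.  This lets me choose $a$ and $b$ so that $R(x)<0$ for $x$ just right of $a$ (pushing leftward) and $R(x)>0$ for $x$ just right of $b$ (pushing rightward), so associated rays of real points outside $S$ stay outside.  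For $z\in\mathbb{C}\setminus\mathbb{R}$, a partial-fraction expansion $R(z)=\lambda z^{\deg Q-\deg P}+\sum_{j}\frac{r_{\alpha_{j}}}{z-\alpha_{j}}+\ldots$ with each $r_{\alpha_{j}}<0$ shows that $\Im R(z)$ and $\Im z$ have opposite signs, so the associated ray of a non-real point can only cross $\mathbb{R}$ outside $[a,b]$.  The shape claim then follows from \cref{thm:connectedclosure}: a closed disconnected subset of $\mathbb{R}$ would remain disconnected after adding at most the two extended-plane points $\pm\infty\in\mathbb{S}^{1}$, so any fully irregular $S$ must be an interval, half-line, or the whole line.

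The main obstacle I anticipate is the non-real part of the sufficiency argument: while the local direction condition is controlled by \cref{prop:rootTrajectoryDirections}, verifying globally that associated rays from complex $z$ never intrude into the chosen interval requires the sign assertion $\Im z \cdot \Im R(z) < 0$, which should follow from the negativity of all residues of $R$ together with interlacing, but needs careful handling when $\deg Q-\deg P=\pm 1$ at large $|z|$; in those boundary cases one may need to enlarge $S$ to a half-line or all of $\mathbb{R}$ rather than a finite interval.
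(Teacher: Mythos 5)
Your overall architecture is sound and the necessity half is essentially correct: reducing to the real line via \cref{lem:IrrLine}, extracting simplicity of the roots from \cref{le:zerosofPQinIrregular}, getting $r_\alpha<0$ from the reality of $\dot\eta(0)$ in \cref{prop:rootTrajectoryDirections}, and then deducing non-adjacency of $P$-roots from the signs of $Q(\alpha_i)/P'(\alpha_i)$ is a clean variant of the paper's argument (the paper instead rules out adjacent $P$-roots by observing that their $P$-starting separatrices would have to intersect inside $\bR$). Your sufficiency route is also genuinely different from the paper's: you verify invariance of an interval/half-line/line through the associated-ray criterion of \cref{th:charact}, whereas the paper never leaves the real axis --- it confines each $t$-trace to a subinterval $[z_*,z^*]$ with endpoints in $\zeros(P)$ and $\zeros(Q)$ by combining the initial directions from \cref{prop:rootTrajectoryDirections} with \cref{le:singularities} and analyticity of the time-dependent flow. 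Your route, if executed correctly, buys a statement about \emph{all} points of $S^c$ at once; the paper's buys a sharper identification of $\minvset{CH}$ as a specific interval.

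The one concrete problem is the sign computation at the heart of your non-real-ray argument, and as written the step does not close. For $\alpha\in\bR$ and $z=x+iy$ one has $\Im\frac{r_\alpha}{z-\alpha}=\frac{(-r_\alpha)\,y}{|z-\alpha|^2}$, so with every $r_\alpha<0$ each pole term contributes to $\Im R(z)$ with the \emph{same} sign as $\Im z$, not the opposite sign; the same holds for the linear term $\lambda z$ when $\deg Q-\deg P=1$, since interlacing together with $r_\alpha<0$ forces $\lambda>0$ (otherwise $R$ would have an even number of zeros to the right of its largest pole). In other words $R$ is a Herglotz-type function preserving the half-planes. Your stated conclusion --- ``$\Im R(z)$ and $\Im z$ have opposite signs, so the associated ray of a non-real point can only cross $\mathbb{R}$ outside $[a,b]$'' --- is a non sequitur twice over: with opposite signs the ray $z+tR(z)$ \emph{would} meet $\bR$ at the positive time $t=-\Im z/\Im R(z)$, and nothing in your proposal locates that crossing point. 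The correct (same-sign) relation actually gives the stronger and simpler statement that $\Im(z+tR(z))=\Im z\,(1+tc)$ with $c>0$ never vanishes for $t\ge0$, so associated rays of non-real points never meet $\bR$ at all; with that fix, plus the endpoint check that $R\ge0$ to the right of $b$ and $R\le0$ to the left of $a$ on the real axis (your phrasing ``just right of $a$'' tests points inside $S$ rather than in $S^c$), the sufficiency argument goes through. The concluding shape claim via \cref{thm:connectedclosure} is fine.
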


\begin{proof}

We begin with item (i). If  $\lambda\notin \mathbb{R}_{<0}$, then $\{z_0\}$ is an invariant set and if $\lambda\notin \mathbb{R}_{<0}$, the only invariant set is $\bC$, see \cref{ssec:special}

Let us turn to item (ii).  
The assumption $r_{\alpha}<0$ implies that 
every $P$-starting separatrix starts with the argument equal either 0 or $\pi$, 
which is necessary for the full irregularity. 
The ``only if'' - statement follows from \cref{le:onlyInterlacing} above. 
We need to prove the ``if'' - statement. There are three cases to consider: 

\begin{enumerate}
\item \label{it:qpq}
the smallest and the largest roots of $PQ$ belong to $\zeros(Q)$;
\item \label{it:pq}
one of the smallest and the largest roots belong to $\zeros(Q)$ and the other to $\zeros(P)$;
\item \label{it:pqp}
the smallest and the largest roots are roots of $\zeros(P)$.
\end{enumerate}

We claim that in  case  (\ref{it:qpq}) the minimal invariant set  $\minvset{CH}$ 
is the interval between the smallest and the largest roots of $Q$. 
Let us denote the latter interval by $I$. Indeed, the closures of the 
$P$-starting separatrices belong to $I$. 
Furthermore, the condition $\deg Q =\deg P+1$ yields that all $t$-traces 
which do not start at $u$, 
should start in the roots of $P$. 
By \cref{prop:rootTrajectoryDirections}, 
if $u\notin \zeros(P)$ we find  that 
these $t$-traces  must have  the initial direction pointing away from $u$. 
If $u\in \zeros(P)$ we similarly find that the $t$-traces that do not start 
in $u$ have  the initial direction pointing away from $u$ and the two $t$-traces that 
start at $u$ have the directions of the $u$-starting separatrices. 
In particular, the $t$-traces belong to the 
intervals of the form 
\[
[z_*,z^*], \;  [z^*,z_*], \quad z_*\in \zeros(P), \; z^*\in \zeros(Q),
\]
having disjoint interiors.
By \cref{le:singularities}, for $t>0$, 
the vector field $V(\gamma(t),t)$ in \eqref{eq:field}
has no singularities for a $t$-trace $\gamma(t)$.
Therefore by analyticity, they remain inside  
these intervals for all $t\geq 0$.
Hence, $I$ is $T_{CH}$-invariant. 
Moreover, the closure of the $P$-starting separatrices of $-R(z) {\partial_z}$
is the convex hull of the roots of $Q$. Thus 
the interval $[a,b]$  indeed coincides with the minimal invariant set $\minvset{CH}$.
 Moreover any $T_{CH}$-invariant set containing $a_1\leq a$ contains $[a_1,b]$ and any set 
containing $b_1\geq b$ contains $[a,b_1]$ and these sets are $T_{CH}$-invariant.
Furthermore, any interval of the form $[a_1,\infty), (-\infty,b_1], (\infty,\infty)$ 
is $T_{CH}$-invariant as well.

In case (\ref{it:pq}), we suppose without loss of generality that the smallest
root is that of $P$ and the largest one is that of $Q$. Let $z^*_0$ be the largest 
root of $Q$. Then $\minvset{CH}$ coincides with $(-\infty, z^*_0]$. 
Since $\deg(Q)<\deg(P)+1$, the $t$-traces are bounded for finite $t$. 
Hence, by the same argument as above, all $t$-traces belong to the 
intervals whose endpoints are in $\zeros(PQ) \cup \{ -\infty\}$ and 
the closure of the union of the said 
intervals is the minimal invariant set $\minvset{CH}$.

For any $b>z^*_0,$ we  again find that $(-\infty, b]$ 
is $T_{CH}$-invariant and so is $(-\infty, \infty)$.

Lastly, in case (\ref{it:pqp}), the minimal invariant 
set is the real line with is settled precisely the same way as in case (2). 
Finally, by \cref{lem:IrrLine},  there are no other fully irregular 
$T_{CH}$-invariant sets in (\ref{it:qpq})--(\ref{it:pqp}).
\end{proof}
 
 \begin{corollary}\label{cor:irregularInvSets}
Suppose $P$ and $Q$ are as in the above item (ii) of \cref{thm:classifying1d}. 
  Consider the cases:  
 
{\rm (i)} both the smallest and the largest roots in $\zeros(PQ)$ are roots of $Q$;

{\rm (ii)} one of the smallest and largest roots is a root of $Q$ and the other of $P$;

{\rm (iii)} both the smallest and largest roots are roots of $P$.

The fully irregular $T_{CH}$-invariant sets are as follows. 
 
In case {\rm (i)},
any (potentially infinite) line segment containing the roots of $Q$ is $T_{CH}$-invariant.

In case {\rm (ii)}, 
if the largest root is that of $P$, then any 
line segment of the form $[a,\infty)$ containing the roots of $Q$ is 
$T_{CH}$-invariant. Otherwise, any line segment of the form $(-\infty,b]$ 
containing the roots of $Q$ is $T_{CH}$-invariant.  In both cases, $(-\infty,\infty)$ is $T_{CH}$-invariant.

In case {\rm (iii)}, 
$\mathbb{R}$ is the only fully irregular $T_{CH}$-invariant set.
\end{corollary}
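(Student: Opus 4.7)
The plan is to deduce the corollary directly from the analysis carried out in the proof of \cref{thm:classifying1d}, combined with \cref{th:charact} and the topological constraint from \cref{thm:connectedclosure}. That proof has already identified the minimal $T_{CH}$-invariant set in each of the three cases: it is the bounded interval $[q_{1},q_{k}]$ spanned by the extreme roots of $Q$ in case (i); one of the half-lines $[q_{1},+\infty)$ or $(-\infty,q_{k}]$ in case (ii), depending on which extreme root belongs to $\zeros(P)$; and the whole real line $\mathbb{R}$ in case (iii). By \cref{lem:IrrLine}, any fully irregular $T_{CH}$-invariant set $S$ is contained in a single line (here $\mathbb{R}$), and by \cref{thm:connectedclosure} its closure in the extended plane $\bC\cup\mathbb{S}^{1}$ is connected, which forces $S$ to be a (possibly unbounded) interval satisfying $\minvset{CH}\subseteq S\subseteq \mathbb{R}$. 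Case (iii) is then immediate, since the only interval of $\mathbb{R}$ containing all of $\mathbb{R}$ is $\mathbb{R}$ itself.

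For cases (i) and (ii) the remaining candidates for $S$ are therefore of the form $[a,b]$, $[a,+\infty)$, $(-\infty,b]$, or $\mathbb{R}$, and the next step is to apply \cref{th:charact}: such a set is $T_{CH}$-invariant if and only if it is closed, contains $\zeros(PQ)$, and the associated ray $r_{z}=\{z+tR(z):t\geq 0\}$ of every $z\in S^{c}$ stays in $S^{c}$. The first two conditions hold for each candidate by construction, so the problem reduces to a sign check for $R$ on the outermost complementary intervals $(-\infty,m)$ and $(M,+\infty)$, where $m=\min\zeros(PQ)$ and $M=\max\zeros(PQ)$. Since $R$ is real-valued on $\mathbb{R}$ under our assumptions, $r_{z}$ is itself a real half-line, and it stays in $S^{c}$ precisely when $R(z)$ has the appropriate outward-pointing sign.

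The key computation is therefore the sign analysis of $R$ on these outer intervals. Because $P$ and $Q$ have only simple real interlacing roots, $R$ changes sign across each of them, and the hypothesis $r_{\alpha}<0$ at every $P$-root $\alpha$ pins down the pattern: $R>0$ immediately to the left and $R<0$ immediately to the right of such an $\alpha$. A straightforward induction through the interlacing sequence then yields $R<0$ on $(-\infty,q_{1})$ and $R>0$ on $(q_{k},+\infty)$ in case (i), so that rays from either outer interval point outward and every $[a,b]$, $[a,+\infty)$, $(-\infty,b]$, or $\mathbb{R}$ containing $[q_{1},q_{k}]$ is invariant. In case (ii) with largest root $p_{k}\in\zeros(P)$, the same induction gives $R<0$ on both $(-\infty,q_{1})$ and $(p_{k},+\infty)$, showing that the left-extensions $[a,+\infty)$ with $a\leq q_{1}$ are invariant while right-extensions $(-\infty,b]$ with $b\geq p_{k}$ would let rays escape from the right. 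The opposite orientation of case (ii) is then handled by the affine change of variables $z\mapsto -z$ (see \cref{re:affineChange}).

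The main obstacle I anticipate is purely combinatorial bookkeeping: tracking the sign alternation across a long interlacing pattern uniformly in the number of roots, and confirming that the intervals listed in the corollary really exhaust the full family of fully irregular invariant sets (the connectedness constraint from \cref{thm:connectedclosure} is what rules out more exotic disconnected candidates such as $(-\infty,a]\cup[b,+\infty)$). Once the two outermost signs of $R$ are fixed, \cref{th:charact} closes out each sub-case essentially automatically.
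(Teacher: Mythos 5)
Your reduction of the problem to intervals of $\mathbb{R}$ containing the minimal set (via \cref{lem:IrrLine} and \cref{thm:connectedclosure}) is sound, and it matches the exhaustiveness part of the paper's argument, which is folded into the proof of \cref{thm:classifying1d} rather than stated separately. The gap is in the verification step. \cref{th:charact} requires the associated ray $r_z$ to avoid $S$ for \emph{every} $z\in S^{c}=\bC\setminus S$, and when $S$ is a fully irregular set contained in $\mathbb{R}$, the complement is almost all of $\bC$: your ``sign check for $R$ on the outermost complementary intervals'' only handles the real points of $S^{c}$. For a non-real point $z$, the ray $z+tR(z)$ is generally not horizontal and can cross the real axis; whether it does, and whether the crossing lands in $S$, is governed by the sign of $\Im R(z)$ off the axis, not by the sign of $R$ on $(-\infty,m)$ and $(M,+\infty)$. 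You cannot fall back on \cref{prop:raysGiveCHinv} to restrict attention to the boundary either, since that proposition requires $S$ to be regular and a fully irregular set has empty interior. To close the gap along your route you would need the additional (true, but unproved in your sketch) Herglotz-type fact that the interlacing hypothesis together with $r_{\alpha}<0$ at every pole forces $\operatorname{sign}\Im R(z)=\operatorname{sign}\Im z$, so that rays issued from non-real points never return to $\mathbb{R}$; this follows from the partial-fraction expansion of $R$ but is a genuinely separate piece of analysis.

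The paper sidesteps this entirely by verifying invariance directly from the definition rather than through \cref{th:charact}: in the proof of \cref{thm:classifying1d} one shows that the root trails of real starting points are trapped in the closed subintervals of $\mathbb{R}$ between consecutive elements of $\zeros(PQ)$, using \cref{prop:rootTrajectoryDirections} for the initial directions and \cref{le:singularities} together with analyticity of simple roots to rule out the collisions that would be needed for a $t$-trace to leave the real line. The corollary is then read off from that confinement argument plus the observations about which one-sided extensions remain invariant. If you want to keep your approach, state and prove the half-plane-preservation property of $R$ explicitly; otherwise the cleaner path is the paper's root-trail confinement.
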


The cases above handled the situation when $P(z)$ and $Q(z)$ have no common factors. We mention here what occurs if one allows this. If $R(z)=\lambda (z-\alpha)$, $\lambda \in \mathbb C^*$, and $P(z)$ and $Q(z)$ have a common root that does not equal $\alpha$, then $T$ has fully irregular invariant sets if and only if $\lambda\in \mathbb R_{>0}$. If $R(z)$ is not on the form $\lambda (z-\alpha)$ and $P(z)$ and $Q(z)$ have common factors $(z-\alpha_1)^{k_1}(z-\alpha_2)^{k_2}\cdots (z-\alpha_l)^{k_l}$. Writing $G(z)=(z-\alpha_1)^{k_1}(z-\alpha_2)^{k_2}\cdots (z-\alpha_l)^{k_l}$. Then $T$ has fully irregular sets if and only if has interlacing zeros and $Q(z)/G(z)$ and $P(z)/G(z)$ fulfill the criterion of \cref{thm:classifying1d} (taking the roles of $Q$ and $P$) and all $\alpha_j$ belong to the real line (after the change of variables so that the non-common zeros of $P$ and $Q$ lie on the real). It is easy to prove this using the methods above and we omit the details.

\section{Existence of partially irregular $T_{CH}$-invariant sets}\label{sec:partirreg}

Below we present necessary and sufficient conditions describing  for which operator $T$, there exist partially irregular $T_{CH}$-invariant sets. Observe that by definition, the boundary of a partially irregular $T_{CH}$-invariant set necessarily contains both regular and irregular points. 

Since nontrivial  $T_{CH}$-invariant sets only exist if $|\deg Q-\deg P|\le 1$ and in case $\deg Q-\deg P=1$ one has to assume additionally that $\Re \lambda \ge 0$, we only need to consider the special cases below. 

\subsubsection{Case $\deg Q-\deg P=1$} 
We have the following sufficient condition for the existence of partially irregular invariant sets  in the case $\deg Q-\deg P=1$.
\begin{proposition}\label{prop:irregularForKEqualToOne}
Take an operator $T=Q\diffz+P$  such that $\deg Q-\deg P=1$ and  such that $R(z)=\frac{Q(z)}{P(z)}=\sum_{k=-1}^\infty a_{-k} z^{-k}$ is a real rational function. Then if $\lambda=a_{1}>0$,  there exist irregular sets.  
\end{proposition}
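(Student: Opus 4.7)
My plan is to construct a partially irregular $T_{CH}$-invariant set explicitly as the union of a large closed disk and a real half-line tail emanating from its boundary. Write $R(z) = \lambda z + c + \widetilde R(z)$, where $c \in \mathbb{R}$ and $\widetilde R(z) = O(1/z)$ as $z \to \infty$; reality of $R$ forces $c$ to be real. For $M > 0$ sufficiently large I will take
\[
S \coloneqq D_M \cup [M, +\infty), \qquad D_M = \{z \in \bC : |z| \leq M\}.
\]
The disk $D_M$ is itself $T_{CH}$-invariant by \cref{third case} applied with $\arg \lambda = 0 \in (-\pi/2, \pi/2)$, and for $M$ large it contains all roots of $PQ$.

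To check that $S$ is $T_{CH}$-invariant I will invoke \cref{th:charact}: since $S$ is closed and contains $\zeros(PQ)$, I only need to verify that for every $z \in S^c$ the associated ray $r_z = \{z + t R(z) : t \geq 0\}$ stays in $S^c$. I split this into two cases. If $z$ is real, then $z \in S^c \cap \bR$ forces $z < -M$, so $R(z) \sim \lambda z < 0$ for $M$ large and $r_z \subset (-\infty, z]$ is disjoint from the tail $[M, +\infty)$ (and from $D_M$ by the disk's invariance). If $z \notin \bR$, then $\Im z \neq 0$ and it suffices to show $r_z$ never crosses $\bR$, because the entire tail lies on the real axis.

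The heart of the argument is to prove that, for $M$ large, $\Im R(z)$ and $\Im z$ have the same sign whenever $|z| > M$ and $\Im z \neq 0$. For $|\Im z|$ bounded below by any fixed positive constant this follows from the bound $|\Im R(z) - \lambda \Im z| \leq C/|z|$, which comes from $R(z) = \lambda z + c + O(1/z)$ together with the reality of $c$. For small $|\Im z|$, the reality of $R$ gives $\Im R(x) = 0$ for $x \in \bR$, and a Taylor expansion transverse to the real axis yields $\Im R(z) = R'(\Re z)\, \Im z + O((\Im z)^2)$; and for $|\Re z| > M$ large one has $R'(\Re z) = \lambda + O(1/(\Re z)^2) > \lambda/2$. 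Matching the two regimes gives $\operatorname{sign} \Im R(z) = \operatorname{sign} \Im z$, so $\Im(z + t R(z)) = \Im z + t \Im R(z)$ never vanishes for $t \geq 0$ and $r_z$ indeed avoids $\bR$.

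Finally, $S^\circ = D_M^\circ$ is nonempty and $\overline{S^\circ} = D_M$, so every point of $(M, +\infty)$ belongs to $S \setminus \overline{S^\circ}$, exhibiting $S$ as partially irregular. I expect the main obstacle to be the uniform control of $\Im R(z)$ in the thin strip where $|\Re z|$ is large but $|\Im z|$ is very small, where the bare asymptotic $R(z) = \lambda z + O(1)$ is insufficient and must be combined with the transverse Taylor expansion around $\bR$.
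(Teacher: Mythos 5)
Your construction and verification coincide with the paper's own proof: both take a large $T_{CH}$-invariant disk $D_M$ with a real tail attached at $\pm M$, and both reduce invariance to the claim that $\operatorname{sign}\Im R(z)=\operatorname{sign}\Im z$ for $|z|>M$, so that associated rays of non-real points never cross $\mathbb{R}$. The only difference is technical bookkeeping — the paper derives the sign claim from the bound $\Re R'(z)>|\Im R'(z)|$ for $|z|>M$ by comparing $R(z)$ with real values $R(\Re z)$ or $R(\pm M)$ along segments, whereas you use the asymptotics $R(z)=\lambda z+c+O(1/z)$ away from the axis together with a transverse Taylor expansion near it — and both routes are sound.
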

\begin{proof}
We have that $R(z)=\sum_{k=-1}^\infty a_{-k} z^{-k}$ 
where $a_k\in \mathbb{R}$ so that $R'(z)=a_{1}+\sum_{k=1}^\infty-k a_{-k} z^{-k-1}$. 
Pick $M>0$ large enough so that
\begin{enumerate}
\item $\{z:|z|\leq M\}$ is $T_{CH}$–invariant; 
\item if $|z|>M$ then $|\sum_{k=1}^\infty-k a_ {-k} z^{-k-1}|<a_1/2$.
\end{enumerate}  
If $|z|>M$, then $\Re R'(z) >|\Im R'(z)|$ and the associated ray at $z$, $\{z+tR(z):t\geq 0\}$, does not intersect $\{z:|z|\leq M\}$.
Take $z_0=a+bi$ such that $|z_0|>M$. 
If $a\geq M$ or $a\leq -M$, comparing $R(z_0)$ with $R(a)\in \mathbb{R}$ 
and using the fact that $\Re R'(z_0)>|\Im R'(z_0)|$, 
it follows that $\text{sign}(\Im R(z_0))=\text{sign}(b)$ and 
hence that the associated ray at $z$ does not intersect the real line. 
Next, if $a\in (-M,M)$, comparing with $z_1=M$ or $z_1=-M$, 
recalling that $R(z_1)\in \mathbb{R}$ and noting 
that $|\Re(z_0-z_1)|>|\Im(z_0-z_1)|$ yields the equality  $\text{sign}(\Im R(z_0))=\text{sign}(b)$. 
Once again we obtain that the associated ray at $z$ does not 
intersect the real line. In particular, appending to $\{z:|z|\leq M\}$ 
any real interval containing $z=M$ or $z=-M$ 
yields a partially irregular set.
\end{proof}

Taking into account \cref{th:negativeResidueAtInfty}, \cref{lem:qp1LambdaInvariant} together with \cref{prop:irregularForKEqualToOne} imply the following claim.

\begin{corollary}
Given $T$ with $\deg Q - \deg P=1$, there exist irregular $T_{CH}$-invariant 
sets if and only if there is an affine change of 
variables after which $R(z){\partial_z}$ 
is such that $\lambda>0$ and if $z\in \bR$, then one has $R(z)\in \bR$ (recall the definition of $\lambda$
in \eqref{eq:expN}).
\end{corollary}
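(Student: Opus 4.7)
The plan is to prove the two implications separately, using the cited results as the main inputs.

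For the ``if'' direction, I would simply invoke \cref{prop:irregularForKEqualToOne}: its hypotheses are precisely $\deg Q - \deg P = 1$, $R$ real on $\mathbb{R}$, and $\lambda > 0$, and its conclusion yields partially irregular $T_{CH}$-invariant sets (which are in particular irregular). Since an affine change of variables is a biholomorphism, regularity and irregularity are preserved by it, so the irregular set constructed in the transformed coordinates pulls back to an irregular $T_{CH}$-invariant set for the original operator $T$.

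For the ``only if'' direction, I would suppose that some $T_{CH}$-invariant set $S$ is irregular and apply \cref{lem:IrrLine}, which leaves three possibilities. Case (1), $R \equiv \lambda$, is incompatible with $\deg Q - \deg P = 1$. In case (3), there exists a line $\Lambda$ stabilized by $z \mapsto z + R(z)$ off $\zeros(P)$; an affine change mapping $\Lambda$ to the real axis then forces $R(z) = (z + R(z)) - z \in \mathbb{R}$ for all real $z$ outside $\zeros(\hat P)$, hence $\hat R$ is real on $\mathbb{R}$. In case (2), $R(z) = \lambda(z-\alpha)$; a translation yields $\hat R(z) = \lambda z$, and one checks directly (using the explicit formula $z = (u - \lambda t \alpha)/(1+\lambda t)$ for the $t$-traces) that when $\lambda \notin \mathbb{R}$ the only candidate irregular invariant set is the discrete singleton $\{\alpha\}$, which is excluded by the non-discreteness requirement in \cref{def:regularSets}. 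Thus, in every remaining case, after an affine change $\hat R$ is real-valued on $\mathbb{R}$.

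Next, I would verify that $\lambda$ is preserved under affine change of variables when $\deg Q - \deg P = 1$: expanding $R(aw+b) = \lambda a w + O(1)$ and then dividing by $a$ as prescribed in \cref{re:affineChange}, one obtains $\hat R(w) = \lambda w + O(1)$, so $\hat\lambda = \lambda$. Reality of $\hat R$ on $\mathbb{R}$ therefore forces $\lambda \in \mathbb{R}$. To close the argument I would apply \cref{th:negativeResidueAtInfty}: if $\Re \lambda < 0$ then $\minvset{CH} = \mathbb{C}$, so every $T_{CH}$-invariant set equals $\mathbb{C}$ (which is regular), contradicting the existence of an irregular invariant set. Hence $\Re \lambda \geq 0$, and combined with $\lambda \in \mathbb{R} \cap \mathbb{C}^*$ this gives $\lambda > 0$.

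The main obstacle I anticipate is the rigorous disposal of case (2) of \cref{lem:IrrLine} when $\lambda \notin \mathbb{R}$: ruling out non-discrete irregular invariant sets requires analysing the geometry of the logarithmic-spiral $t$-traces $z = (u - \lambda t \alpha)/(1 + \lambda t)$ and verifying that their $T_{CH}$-extensions never fit inside a line, a step which demands a short geometric argument rather than a direct appeal to one of the previously established lemmas.
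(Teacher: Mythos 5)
Your proposal is correct and follows essentially the same route as the paper, whose entire proof is the one-line observation that \cref{th:negativeResidueAtInfty}, \cref{lem:IrrLine} and \cref{prop:irregularForKEqualToOne} together imply the claim; you simply fill in the details (affine invariance of $\lambda$, disposal of cases (1)--(2) of \cref{lem:IrrLine}, and $\Re\lambda\ge 0$ plus reality forcing $\lambda>0$). The only wrinkle is case (2), where your appeal to the non-discreteness clause of \cref{def:regularSets} to discard the singleton $\{\alpha\}$ sits in tension with the paper's own later convention (Class Ic in Table~\ref{tab:ClassReg} and \cref{thm:classifying1d}(i) treat that singleton as fully irregular), but this is an imprecision in the paper's statement of the corollary for the degenerate subcase $\deg P=0$ rather than a defect of your argument.
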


\subsubsection{Case $\deg Q-\deg P=0$} 

\begin{proposition}\label{prop:irregularK0}
Given $T=Q\frac{d}{dz}+P$, suppose that $\deg Q- \deg P =0$ and $Q,P$ are real. 
Then there exist irregular $T_{CH}$-invariant sets. 
\end{proposition}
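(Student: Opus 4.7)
I would follow the template of \cref{prop:irregularForKEqualToOne}: exhibit a concrete partially irregular $T_{CH}$-invariant set $S$ obtained by adjoining a one-dimensional real tail to a regular two-dimensional invariant ``core''.

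Since $\deg P=\deg Q$ and $P,Q$ are real, $\lambda=q_\infty/p_\infty\in\mathbb{R}^{*}$. By \cref{secondcase} the half-plane $H_\ell=\{z:\Re(z/\lambda)\le\ell\}$ is $T_{CH}$-invariant and contains $\zeros(PQ)$ for every sufficiently large $\ell$; by \cref{cor:minpq0} intersections of such half-planes give arbitrarily thin invariant cones around direction $\phi_\infty+\pi$. On the other hand, because $R$ is real on $\mathbb{R}$ and $R(x)\to\lambda$ as $|x|\to\infty$, for $M$ large enough the real half-line $L=[M,\infty)$ (when $\lambda>0$) or $L=(-\infty,-M]$ (when $\lambda<0$) lies outside $H_\ell$ and is forward-invariant under the real ray map $x\mapsto x+tR(x)$. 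The candidate invariant set is $S=H_\ell\cup L$.

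Invariance of $S$ is checked via \cref{th:charact}: $S$ is closed, contains $\zeros(PQ)$, and the only substantive point is that $r_p\cap S=\emptyset$ for $p\in S^c$. The inclusion $r_p\cap H_\ell=\emptyset$ follows at once from the invariance of $H_\ell$, so we are reduced to showing $r_p\cap L=\emptyset$. I would prove this by a sign analysis of $\Im R$ on $S^c$, paralleling the one performed in \cref{prop:irregularForKEqualToOne}: the Laurent expansion $R(z)=\lambda+c_1/z+O(|z|^{-2})$ at infinity, combined with the conjugation symmetry $R(\bar z)=\overline{R(z)}$ (consequence of $P,Q$ being real), yields $\Im R(a+bi)=R'(a)\,b+O(b^3)$ for small $b$ and $\Im R(p)\sim -c_1\Im p/|p|^2$ for large $|p|$. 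Choosing $L$ at the end of $\mathbb{R}$ corresponding to the sign of $c_1$, one ensures that rays emanating from $p\in S^c$ either stay in the same open half-plane $\{\Im z>0\}$ or $\{\Im z<0\}$ as $p$, or cross $\mathbb{R}$ outside of $L$. Irregularity of $S$ is then immediate: $L$ has empty interior in $\mathbb{C}$ and every point of $L\setminus H_\ell$ fails to belong to $\overline{S^\circ}=H_\ell$, so $S$ is partially irregular.

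The main obstacle is obtaining uniform sign control of $\Im R$ on all of $S^c$. Unlike the setting $\deg Q-\deg P=1$ of \cref{prop:irregularForKEqualToOne}, where $R'(z)\to\lambda$ supplies the sharp quasi-conformal inequality $\Re R'>|\Im R'|$ for $|z|$ large, here $R'(z)\to 0$ at infinity, so the argument must rely on the first nontrivial Laurent coefficient $c_1$ of $R$ at $\infty$ and split into cases according to its sign, in each case making an appropriate choice of $L$ and (if necessary) intersecting several invariant half-planes $H_{\ell,\theta}$ to sharpen the core.
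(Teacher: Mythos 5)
Your overall strategy -- an invariant half-plane core with a real tail attached, verified through the associated-ray criterion of \cref{th:charact} and a sign analysis of $\Im R$ near $\mathbb{R}$ -- is the same as the paper's, but the specific set you propose is not invariant in general, and the gap is exactly at the point you flag as ``the main obstacle.'' Take $Q=z+1$, $P=z$, so $R(z)=1+1/z$, $\lambda=1>0$, $c_1=1$. The tail must sit in the complement of the invariant half-plane, i.e.\ at the $\phi_\infty$ end of $\mathbb{R}$ (here $[M,\infty)$); you cannot relocate it to the other end according to the sign of $c_1$, because $\{\Re z\ge -M'\}$ is not invariant (rays of points far to the left point in direction $\approx\phi_\infty$ and enter it). Now for $p=x+i\epsilon$ with $x>M$ large and $\epsilon>0$ small, $\Im R(p)=-\epsilon/|p|^2<0$, so the associated ray descends and meets the real axis at time $t^*=|p|^2$, at the point $x+t^*\Re R(p)\approx x^2$ -- which still lies in your infinite tail $L=[M,\infty)$. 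So $p\in S^c$ but $r_p\cap L\neq\emptyset$, and $S=H_\ell\cup[M,\infty)$ violates \cref{th:charact}. Your own asymptotic $\Im R(p)\sim -c_1\Im p/|p|^2$ already shows this: when $c_1>0$ the rays from the upper half-plane do cross $\mathbb{R}$, and they cross it \emph{inside} an unbounded tail, not outside it.

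The paper's proof avoids this by making the tail a short bounded interval $(M,M+\delta]$ with $\delta<1$, and replacing the sign argument by a quantitative one: after normalizing $\lambda>0$, one arranges $|R'|<\lambda/2$ near $M$ so that $|\Im R(z_0)|<\lambda y/2$ and $\Re R(z_0)>\lambda/2$ for $z_0=x+iy$ near the tail; hence a ray can only reach $\Im=0$ after time $t>2/\lambda$, by which time its real part has advanced by more than $1>\delta$, i.e.\ past the end of the tail. For points farther from the tail one uses $\Re R>|\Im R|$ to the same effect. This ``the ray does come down, but only beyond the tail'' estimate is the idea missing from your argument; with it, the rest of your write-up (invariance of the half-plane core, and the observation that the tail consists of irregular points so $S$ is partially irregular) goes through as you describe.
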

\begin{proof}
First, after a possible affine change of variables assume that \[R(z){\partial_z}=\left(\lambda +o(1)\right) {\partial_z}\] at $\infty$ with $\lambda>0$. 
Take $M>0$ large enough so that  if $\Re z>M$, then $\Re R(z) >|\Im R(z) |$ and $R'(z)$ satisfies the conditions $|R'(z)|<\lambda/2$ and $|\Re R'(z)|>|\Im R'(z)|$ in the open disk with center $M$ and radius  $\delta<1$. Take a point $z_0=x+iy\in\{ z:|z-M|\leq \delta,\; y>0,\; x >M\}$. We find that $|\Im R(z_0)|< y \lambda/2$ and $\Re R(z_0)>\lambda/2$.
Suppose that the associated ray $z_0+tR(z_0)$ intersects the real line for some $t>0$. Then $y-t\frac{y \lambda}{2} <0$ which gives $t>\frac{2}{\lambda}$.
Further  
\[
\Re(z_0+t R(z_0))>M+\frac{2}{\lambda} \frac{\lambda}{2}=M+1>M+\delta.
\]
Similarly,  we obtain that if $y<0$, then the intersection occurs only when $z_0+tR(z_0)>M+\delta$. 
Since $\Re R(z) >|\Im R(z)|$ no associated ray of a point in the complement outside of 
the disk of radius $\delta$ can intersect $(M,M+\delta]$. 
We conclude that the set $\{z:\Re z\leq M\}\cup (M,M+\delta]$ is $T_{CH}$-invariant, 
since the associated rays of points in the complement to the latter set lie in the complement.
\end{proof}

Together with \cref{lem:IrrLine}, \cref{prop:irregularK0} implies 
the following necessary and sufficient conditions for irregularity for $\deg Q -\deg P=0$.

\begin{corollary}
Given an operator $T$ with $\deg Q -\deg P=0$, then there exist irregular $T_{CH}$-invariant 
sets if and only if $R(z)$ is real after a suitable affine change of variables.
\end{corollary}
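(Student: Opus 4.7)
The plan is to prove the two implications separately. The reverse direction $(\Leftarrow)$ follows directly from Proposition~\ref{prop:irregularK0}: after the indicated affine change of variables (composed, if necessary, with a common scalar multiplication of $P$ and $Q$, which does not affect the family of $T_{CH}$-invariant sets), the operator $T$ is transported into one whose polynomials $\hat P, \hat Q$ are real and still satisfy $\deg \hat Q - \deg \hat P = 0$. Proposition~\ref{prop:irregularK0} then yields irregular $T_{CH}$-invariant sets for the transformed operator, whose preimages under the affine change are irregular $T_{CH}$-invariant sets for $T$.

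For the forward direction $(\Rightarrow)$, I invoke Lemma~\ref{lem:IrrLine}, which asserts that an operator admitting an irregular $T_{CH}$-invariant set must satisfy one of three alternatives. The second alternative, $R(z) = \lambda(z-\alpha)$, forces $\deg Q - \deg P = 1$ and is therefore excluded by our hypothesis. In the first alternative $R(z) = \lambda \in \mathbb{C}^{\ast}$, the scaling $z = \lambda w$ transforms $R$ into $\hat R \equiv 1$, which is manifestly real. In the third alternative there is a line $\Lambda = z_0 + \mathbb{R} v$ with $v \in \mathbb{C}^{\ast}$ such that $z + R(z) \in \Lambda$ for every $z \in \Lambda \setminus \zeros(P)$; this is equivalent to $R(z)$ being a real multiple of $v$ for all such $z$. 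The affine change $z = vw + z_0$ sends $\Lambda$ onto the real axis, and the resulting function $\hat R(w) = R(vw+z_0)/v$ takes real values at every real $w$ outside its poles. Since a rational function taking real values on an infinite subset of $\mathbb{R}$ necessarily has real coefficients, $\hat R$ is a real rational function, as required.

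The only real obstacle is alternative~(3): one must verify that the geometric line-preservation condition supplied by Lemma~\ref{lem:IrrLine} translates cleanly into the coefficient-level statement that $\hat R$ is real after the chosen affine change. Once this translation is carried out, the remaining verifications amount to routine bookkeeping.
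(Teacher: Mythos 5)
Your proposal is correct and follows essentially the same route as the paper, which derives the corollary directly by combining Proposition~\ref{prop:irregularK0} (sufficiency) with Lemma~\ref{lem:IrrLine} (necessity). You merely spell out the details the paper leaves implicit — excluding alternative (2) on degree grounds, handling the constant case, and translating the line-preservation condition of alternative (3) into realness of $R$ via the fact that a rational function real on an infinite subset of $\mathbb{R}$ is a real rational function — all of which is sound.
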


\subsubsection{Case $\deg Q-\deg P=-1$} 

If a $T_{CH}$-invariant set is partially irregular, then case (4) of Proposition~\ref{prop:IrrTail} applies. Up to a change of variables and multiplication of $P$ and $Q$ by the same constant, we can assume that $P,Q$ are real polynomials and the condition on the argument of $R(z)$ along the tail implies that $R(z)=\frac{\lambda}{z}+o(1/z)$ where $\lambda >0$. It remains to prove that  for any linear operator $T$ satisfying these conditions we are able de construct a partially irregular $T_{CH}$-invariant set.

\begin{proposition}
Suppose that $P,Q$ are such that after an affine change of variables,
$R(z){\partial_z}=\left(\frac{\lambda}{z}+\text{ higher order terms in }\frac{1}{z}\right){\partial_z}$
with $\lambda>0$ and $R(z)\in \bR$ if $z\in \bR$. Then there exist irregular $T_{CH}$-invariant sets.
\end{proposition}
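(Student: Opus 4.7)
The plan is to mimic the strategy of Propositions~\ref{prop:irregularForKEqualToOne} and \ref{prop:irregularK0}: first build a regular $T_{CH}$-invariant ``base'' set $S_0 \subset \bC$ with nonempty interior, then attach a short real segment $[M, M+1]$ to obtain a $T_{CH}$-invariant set $S = S_0 \cup [M, M+1]$ that is partially irregular, hence irregular. Since $\deg Q - \deg P = -1$, the hypothesis refines at infinity to
\[
  R(z) = \frac{\lambda}{z} + O\!\left(\frac{1}{z^2}\right), \qquad R'(z) = -\frac{\lambda}{z^2} + O\!\left(\frac{1}{z^3}\right),
\]
which together with $\lambda > 0$ and $R|_\bR \subset \bR$ is the essential technical input.

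I would first show that for $M$ sufficiently large, the vertical strip $S_0 := \{z \in \bC : |\Re z| \leq M\}$ is $T_{CH}$-invariant. The expansion gives $\Re R(p) = \lambda\,\Re p\,/\,|p|^2 + O(1/|p|^2)$, so for $M$ larger than both $\max_{z \in \mathcal{Z}(PQ)} |\Re z|$ and an absolute constant determined by the $O(1/z^2)$ correction, the sign of $\Re R(p)$ matches the sign of $\Re p$ whenever $|\Re p| > M$. Hence each of the two components of $S_0^c$ is forward-invariant under the associated-ray map $z \mapsto z + tR(z)$, and $S_0$ contains $\mathcal{Z}(PQ)$, so Theorem~\ref{th:charact} yields $T_{CH}$-invariance of $S_0$.

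Next I would study where the associated ray of a point $p = x + iy$ with $x > M$ and $y \neq 0$ meets the real axis. Writing $t^* = -y/\Im R(p)$ for the crossing time and using $\Im R(x+iy) = R'(x)\,y + O(y^2)$ (needed in the regime $y \to 0$, where numerator and denominator of $t^*$ both vanish), together with the expansions above, one derives the uniform estimate
\[
  \Re z(t^*) \;=\; x - y\,\frac{\Re R(p)}{\Im R(p)} \;=\; 2x + O(1)
\]
as $\Re p > M \to \infty$, covering the three regimes $|y| \to 0$, $|y|$ comparable to $x$, and $|y| \to \infty$. In particular, $\Re z(t^*) > M+1$ for every such $p$ once $M$ is large enough. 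Moreover, for $0 \leq t < t^*$ one has $\Im z(t) \neq 0$, so $z(t) \notin \bR$; and for $t > t^*$ one has $\Re z(t) \geq \Re z(t^*) > M+1$; in either case $z(t)$ lies off the segment $[M, M+1]$.

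To conclude, set $S := S_0 \cup [M, M+1]$, which is closed and contains $\mathcal{Z}(PQ)$. Its complement $S^c$ is the disjoint union of $\{\Re z < -M\}$ (handled symmetrically) and $\{\Re z > M\} \setminus [M, M+1]$, both of which are forward-invariant under associated rays by the analysis above. Theorem~\ref{th:charact} then yields that $S$ is $T_{CH}$-invariant. Since $\overline{S^\circ} = S_0 \subsetneq S$, the points of $(M, M+1]$ lie in $S \setminus \overline{S^\circ}$, so $S$ is the desired partially irregular $T_{CH}$-invariant set. The main obstacle will be the uniform estimate $\Re z(t^*) = 2x + O(1)$ across all regimes of $|y|$; its most delicate regime is $y \to 0$, where the Cauchy--Riemann identity $\partial_y \Im R(x+iy)|_{y=0} = R'(x)$ and the explicit expansion of $R'$ must be combined carefully to yield $\Re z(t^*) \to x - R(x)/R'(x) = 2x + O(1)$.
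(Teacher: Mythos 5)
Your proposal follows essentially the same route as the paper's proof: both take a large vertical strip $\{z:|\Re z|\le M\}$ as the regular $T_{CH}$-invariant base, attach a short real segment at its right edge, and verify invariance of the union via Theorem~\ref{th:charact} by showing that the associated ray of any $p=x+iy$ with $x>M$, $y\neq 0$ can meet the real axis only at a point of real part well beyond the attached segment. The only difference is bookkeeping: the paper controls the correction $f(z)=R(z)-\lambda/z$ by the uniform bounds $|\Im f(z)|\le|\Im(\lambda/2z)|$ and $|\Re f(z)|\le|\Re(\lambda/2z)|$ (which automatically handle the $y\to 0$ regime, since both sides vanish linearly in $y$) to place the crossing at real part at least $\tfrac{4}{3}x$, whereas you aim for the sharper asymptotic $2x+O(1)$; both suffice.
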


\begin{proof}
Find $M$ large enough so that $\{z:|\Re z|\leq M\}$ is $T_{CH}$-invariant and such that if $ z\geq M$, $R(z)=\frac{\lambda}{z}+f(z)$ where $f(z)$ is such that $|\Im(f(z))|\leq|\Im\left(\frac{\lambda}{2z}\right)|$ and $|\Re(f(z))|\leq|\Re\left(\frac{\lambda}{2z}\right)|$. (Note that this is possible since the rational function $R(z)=\sum_{l=1}^\infty a_l/z^l$ is real and the coefficients $a_l$ are uniformly bounded by  $C^l$ for some $C>0$). Take $z=x+iy$  with $y>0$, $x>M$. Then 
\[
\Im(R(z))\geq\Im\left(\frac{3\lambda}{2z}\right)=\frac{-3\lambda y}{2(x^2+y^2)}
\] and 
\[
\Re(R(z))\geq \frac{\lambda x}{2(x^2+y^2)}.
\]  Hence, if $\Im(z+tR(z))=0$ then $t\geq\frac{2(x^2+y^2)}{3 \lambda}$ so that $\Re(z+tR(z))\geq x+\frac{x}{3}>\frac{4}{3}M$. By symmetry, we obtain the estimate for $y<0$. In particular, $\{z:|\Re(z)|\leq M\}\cup\{(M,\frac{4}{3}M]\}$ is $T_{CH}$-invariant.
\end{proof}

We may note that even if $P$ and $Q$ have factors in common, the results of this section hold.
Summarizing, the results of this section give necessary and sufficient conditions for the existence of irregular $T_{CH}$-invariant sets in terms of operator $T$ stated below. 

\begin{theorem}
There exist irregular $T_{CH}$-invariant sets if and only if there is an affine change of variables such that we get one of the following four cases:

{\rm (1)}  $P$ and $Q$ are as in \cref{thm:classifying1d};

{\rm (2)} $\deg Q -\deg P=1$ and $R(z){\partial_z}$ is such that if $z\in \bR$ then $R(z)\in \bR$ and $\lambda>0$;

{\rm (3)} $\deg Q - \deg P=0$ and $R(z){\partial_z}$ is such that if $z\in \bR$ then $R(z)\in \bR$;

{\rm (4)} $\deg Q -\deg P=-1$ and $R(z){\partial_z}=\left(\frac{\lambda}{z}+o\left(\frac{1}{z}\right)\right){\partial_z}$ with $\lambda >0$ is such that if $z\in \bR$ then $R(z)\in \bR$;

\end{theorem}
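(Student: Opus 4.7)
The plan is to derive this theorem by assembling the constructions and necessary conditions established throughout this section and the preceding ones. Since the statement is in essence a consolidated summary, sufficiency will follow from explicit constructions already in place, while necessity will be extracted from \cref{lem:IrrLine} together with the degree and sign restrictions coming from \cref{sec:triviality}.

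For the sufficiency direction, I would observe that each of the four listed situations has already been shown to produce an irregular $T_{CH}$-invariant set. Case (1) is handled by \cref{thm:classifying1d}, which not only gives existence of fully irregular sets but describes them explicitly as intervals, half-lines, or lines on the real axis. Cases (2), (3) and (4) are handled respectively by \cref{prop:irregularForKEqualToOne}, \cref{prop:irregularK0}, and the last unlabeled proposition of this section; in each of them a partially irregular set is constructed by adjoining a small real segment to a disk, a half-plane, or a strip-like invariant region. No new construction is required here: one simply cites these results and notes that an affine change of variables, together with multiplication of $P$ and $Q$ by a non-zero constant, reduces the general hypotheses of the theorem to the normalized form used in each proposition.

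For the necessity direction, I would start by supposing $S$ is an irregular $T_{CH}$-invariant set and apply \cref{lem:IrrLine}. This yields three alternatives: either $R(z)$ is constant, or $R(z) = \lambda(z-\alpha)$ with $\lambda \notin \mathbb{R}_{<0}$, or all irregular points of $S$ lie on a line $\Lambda$ that is forward-invariant under $z \mapsto z + R(z)$. The first two alternatives are absorbed respectively into case (3) (with $\deg Q = \deg P = 0$, after an affine change making $\lambda$ real) and into case (1) via \cref{thm:classifying1d}(i). In the third alternative, an affine change of variables sending $\Lambda$ to $\mathbb{R}$ renders $R$ real-valued on $\mathbb{R}$, so all roots of $P$ and $Q$ are real. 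Since any nontrivial $T_{CH}$-invariant set forces $|\deg Q - \deg P| \le 1$ by \cref{cor:Raw1}, a case split on this difference matches the remaining items: if $\deg Q - \deg P = 1$, then \cref{th:negativeResidueAtInfty} excludes $\Re \lambda < 0$, and item (4) of \cref{prop:IrrTail} combined with real-valuedness of $R$ along the tail forces $\lambda > 0$, giving case (2); if $\deg Q - \deg P = 0$, we land directly in case (3); if $\deg Q - \deg P = -1$, then \cref{cor:minpq-1} together with \cref{prop:IrrTail}(4) forces the leading coefficient of $R$ at $\infty$ to be positive real, yielding case (4). When $S$ is in fact fully irregular, \cref{thm:classifying1d} already enumerates all possibilities, including the interlacing of the roots and the sign requirement $r_\alpha < 0$, placing us in case (1).

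The main obstacle I anticipate is bookkeeping: ensuring that the affine change of variables and the scaling of $P,Q$ used at each reduction step are applied consistently across the four cases, and that the boundary between fully irregular sets (case (1)) and partially irregular sets that are not fully irregular (cases (2)--(4)) is respected, noting that some operators can in principle admit irregular sets of both types. A subtle point is that the four cases are not disjoint---an operator with real interlacing roots and $\deg Q-\deg P = 1$ may fit both (1) and (2)---so I would frame the statement as four sufficient conditions whose union is also necessary, rather than attempting a disjoint partition.
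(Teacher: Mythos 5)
Your proposal is correct and follows essentially the same route as the paper, which presents this theorem as a summary of the section's results without a separate proof: sufficiency from the explicit constructions in \cref{thm:classifying1d}, \cref{prop:irregularForKEqualToOne}, \cref{prop:irregularK0} and the final unlabeled proposition, and necessity from \cref{lem:IrrLine}, \cref{cor:Raw1}, \cref{th:negativeResidueAtInfty} and \cref{prop:IrrTail}, exactly as the corollaries following each construction already record. Your remarks on the degenerate alternatives of \cref{lem:IrrLine} and on the non-disjointness of the four cases are accurate and consistent with the paper's framing.
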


\section{Outlook}  \label{sec:outlook} 
In this short section we formulate some of the very many open questions related to the above topic. 

\smallskip
\noindent
{\bf I.} In the next statement we reconnect our area of study to the classical complex dynamics. 

\smallskip
Let $\julia(f) \subseteq \bC$ denote the Julia set of a rational function $f$.
 In the above notation, for an operator $T=Q(z)\frac{d}{dz}+P(z)$,  introduce the family of rational functions $z\mapsto  f_t(z)$ given by 
\[
f_t(z) \coloneqq z + t R(z), 
\]
where $t$  ranges over the non-negative reals. 

\begin{proposition}\label{prop:juliaSetIsSubset}
If $\max \left(\deg Q, \deg P\right) \ge 2$, the for any $t > 0$, $\julia(f_t)$ is contained in the minimal $T_{CH}$-invariant set $\minvset{CH}$.
\end{proposition}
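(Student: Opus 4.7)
The plan is to reduce the statement to the classical characterization of the Julia set as the closure of the iterated preimages of any non-exceptional point. The starting point is immediate from \eqref{eq:juliaConnection}: on closed sets, $T_{CH}$-invariance of $S$ is equivalent to $f_t^{-1}(S)\subseteq S$ for every $t\ge 0$. Applied to $S=\minvset{CH}$ and iterated, this yields $f_t^{-n}(\minvset{CH})\subseteq \minvset{CH}$ for every $n\ge 1$ and every $t\ge 0$.

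Next, fixing $t>0$, I would verify that $f_t(z)=(zP(z)+tQ(z))/P(z)$ has degree at least $2$ as a rational self-map of $\setRS$. Its generic degree is $\max(\deg P+1,\deg Q)$, which is $\ge 2$ under the hypothesis $\max(\deg Q,\deg P)\ge 2$; at most finitely many $t$ can cause cancellation of leading terms, and at each such $t$ either the degree of $f_t$ remains $\ge 2$ (so the argument below applies verbatim) or $\julia(f_t)$ is empty, in which case the conclusion is vacuous. With this in hand I would invoke the classical theorem (see Milnor, \emph{Dynamics in One Complex Variable}, Theorem~4.10): for any rational map $f$ of degree $\ge 2$, the exceptional set $E_f$ consists of at most two points, and for every $w\notin E_f$,
\[
\julia(f) \;=\; \overline{\bigcup_{n\ge 0} f^{-n}(\{w\})}.
\]

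The remaining task is to exhibit a point $w\in \minvset{CH}\setminus E_{f_t}$. By Lemma~\ref{le:zerosPQ}, $\minvset{CH}$ contains every root of $PQ$, and by the definition of $T_{CH}$-invariance it also contains the entire root trail $\ttt_{z_0}$ of any such root $z_0$. Since under the standing hypothesis these trails are genuine real semi-algebraic arcs in $\bC$, the set $\minvset{CH}$ is uncountable, so $\minvset{CH}\setminus E_{f_t}\ne\emptyset$. Picking any $w$ in this difference and combining with the backward invariance from the first paragraph yields $\bigcup_{n\ge 0} f_t^{-n}(\{w\})\subseteq \minvset{CH}$; closedness of $\minvset{CH}$ then gives $\julia(f_t)\subseteq \minvset{CH}$. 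The main technical step I expect to need is confirming the non-degeneracy of the root trails, i.e.\ that they are not reduced to isolated points; this can be done by direct inspection of $tQ(z)+(z-z_0)P(z)=0$ as $t$ varies, noting that the roots move continuously and cannot all remain stuck at $z_0$ without forcing $P\equiv 0$ or violating $\max(\deg Q,\deg P)\ge 2$.
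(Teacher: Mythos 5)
Your proposal is correct and follows essentially the same route as the paper's own proof: backward invariance of $\minvset{CH}$ under $f_t$, the classical theorem that $\julia(f_t)$ lies in the closure of the iterated preimages of any non-exceptional point, and the observation that $\minvset{CH}$ contains a nondegenerate arc and hence more than two points, so it must contain a non-exceptional point. The only cosmetic differences are the reference used (Milnor vs.\ Beardon) and your slightly more careful treatment of the finitely many $t$ where $\deg f_t$ could drop below $2$.
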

\begin{proof}
First, if $\deg f_t \leq 1$ then $\julia(f_t)\subset\zeros(Q)\subset \minvset{CH}$.
Next, suppose that for a given $t\geq 0$,  $\deg f_t\geq 2$ and  $u \in \minvset{CH}$. 
Then all roots of \eqref{eq:main}  also lie in $\minvset{CH}$.
Indeed,  given $u$, we search for $z$ solving
\begin{equation}
 \label{eq:juliaSolutions} 
 u=\frac{ tQ(z)+zP(z)}{P(z)} = f_t(z). 
 \end{equation}
Hence, $f^{-1}_t(u) \subseteq  \minvset{CH}$. 
Iteration of  this argument yields that
$\cup_{j=0}^\infty f_t^{-j}(u)\subseteq \minvset{CH}$.
Recall that since $f_t$ is of degree at least 2, 
by \cite[Thm. 4.2.7]{Beardon2000}, $\julia(f_t) \subseteq \overline {\cup_{j=0}^\infty f_t^{-j}(u)}$
except possibly for two  values $u\in \setC$. 
Using the fact that under our assumptions the set $\minvset{CH}$ contains a curve, we obtain that
 $\minvset{CH}$ has at least three distinct points which in turn yields that
$\julia(f_t) \subseteq \overline {\minvset{CH}} =\minvset{CH}$.
\end{proof}

\smallskip
\cref{prop:juliaSetIsSubset} proves that 
\[
\bigcup_{t\geq 0 } \julia(f_t(z))
\]
is a subset of $\minvset{CH}$. In general, we suspect that, 
for example, in case of $\deg Q-\deg P=1$ and $\Re \lambda =0$ the union of the 
above Julia sets can be strictly smaller than the minimal set $\minvset{CH}$. 
Motivated by \cite{MR1043403} we formulate the following guess. 
 
\begin{conjecture} \label{conj:boundsep} 
If the boundary of $\minvset{CH}$ consists only of $P$-starting separatrices then $\minvset{CH}$ coincides with the above union of Julia sets. 
\end{conjecture}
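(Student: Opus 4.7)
The plan is to prove the non-trivial inclusion $\minvset{CH} \subseteq \overline{\bigcup_{t > 0} \julia(f_t)}$, since the reverse containment is exactly \cref{prop:juliaSetIsSubset}. The starting observation is that $\minvset{CH}$ is closed and backward invariant under every $f_t$ with $t > 0$: the reformulation of $T_{CH}$-invariance in \eqref{eq:juliaConnection} is precisely $f_t^{-1}(u) \subseteq \minvset{CH}$ for every $u \in \minvset{CH}$. Iterating this and invoking the classical density statement $\julia(f_t) = \overline{\bigcup_n f_t^{-n}(u)}$ for any non-exceptional $u$ recovers $\julia(f_t) \subseteq \minvset{CH}$; the strategy for the reverse is to arrange backward iterates of the $f_t$ to spread densely across $\minvset{CH}$ as $t$ varies over $(0,\infty)$.

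Under the hypothesis that $\partial \minvset{CH}$ consists only of $P$-starting separatrices of $-R(z)\partial_z$, the next step is to show each such separatrix lies in $\overline{\bigcup_{t > 0} \julia(f_t)}$. A separatrix emanates from a zero $z_\ast \in \mathcal{Z}(P)$ which is a pole of $f_t$, so $z_\ast \in f_t^{-1}(\infty) \subseteq \julia(f_t)$ provided $\infty$ is not an attracting fixed point of $f_t$ (compatible with the standing hypothesis via \cref{th:negativeResidueAtInfty} and \cref{not:pq}). Since $f_t(z) = z + tR(z)$ is an Euler-step approximation of the flow $\dot z = R(z)$, for small $t$ the iterated preimages $f_t^{-n}(z_\ast)$ with $n \approx 1/t$ should concentrate along the backward flow-trajectories of $-R(z)\partial_z$ issuing from $z_\ast$, which by \cref{prop:rootTrajectoryDirections} are exactly the $P$-starting separatrices. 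Taking $t \to 0^+$ along a suitable sequence and using that each $f_t^{-n}(z_\ast) \subseteq \julia(f_t)$, every point of a separatrix arises as a limit of points in $\bigcup_{t > 0} \julia(f_t)$, placing $\partial \minvset{CH}$ in the desired closure.

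For the interior $(\minvset{CH})^\circ$, if non-empty, I would exploit that $(\minvset{CH})^c$ is open, forward $f_t$-invariant, and under the boundary hypothesis is a union of Fatou components of $f_t$. Their boundaries $\partial U_t$ lie in $\julia(f_t)$, and as $t$ varies continuously the Fatou components deform; for each interior point $z \in (\minvset{CH})^\circ$, a connectedness argument in the parameter $t$ should yield some $t > 0$ with $z \in \partial U_t \subseteq \julia(f_t)$. The main obstacle is precisely the Euler-step approximation claim of Step 2 together with the bifurcation analysis underlying Step 3: one must control the Julia sets $\julia(f_t)$ in the degenerate limit $t \to 0^+$ (where $f_t \to \mathrm{id}$ and the Julia set loses its usual meaning) and through the parameter values where $f_t$ undergoes bifurcations. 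A natural tool is to renormalise by $f_t^{\lfloor 1/t \rfloor}$, whose limit as $t \to 0^+$ is the time-$1$ flow of $R(z)\partial_z$, and to extract the separatrices as rigid invariant skeletons of the limiting dynamics; combined with holomorphic motions over hyperbolic components of the parameter axis, this should yield the desired equality.
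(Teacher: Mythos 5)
The statement you are addressing is stated in the paper as \cref{conj:boundsep}, i.e., as a conjecture: the authors explicitly call it a ``guess'' and offer no proof of it anywhere in the text. So there is no argument in the paper to compare yours against; the only proven ingredient in this circle of ideas is the easy inclusion $\julia(f_t)\subseteq \minvset{CH}$ of \cref{prop:juliaSetIsSubset}, which you correctly identify and reproduce.

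Your proposal for the hard inclusion is a strategy, not a proof, and you acknowledge this yourself (``the main obstacle is precisely the Euler-step approximation claim of Step 2 together with the bifurcation analysis underlying Step 3''). The two places that would have to be filled in are genuine gaps, not routine verifications. First, the claim that the backward iterates $f_t^{-n}(z_\ast)$ with $n\approx 1/t$ concentrate along the $P$-starting separatrices as $t\to 0^+$ is unsubstantiated: $f_t$ has degree $N=\max(\deg Q,\deg P+1)\ge 2$, so $f_t^{-n}(z_\ast)$ contains on the order of $N^n$ points, exponentially many in $n\approx 1/t$, and only a vanishing proportion of the inverse branches shadow the flow of $-R(z)\partial_z$; you would need to show both that some branch shadows a separatrix over its whole length (a long-time validity statement for an Euler scheme near a singular point of the field) and, since the conjecture asserts an \emph{equality}, that the remaining exponentially many preimages do not accumulate outside the union of Julia sets either. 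Nothing in \cref{prop:rootTrajectoryDirections} or \cref{prop:invSetClosedUnderForwardIntegralCurves} supplies this; those results concern $t$-traces and forward trajectories inside $\minvset{CH}$, not the Julia sets of $f_t$. Second, the treatment of the interior relies on the complement components being Fatou components of $f_t$ and on ``holomorphic motions over hyperbolic components of the parameter axis'': hyperbolicity of $f_t$ is not established, Julia sets are in general only lower semicontinuous in the parameter, and the family degenerates at $t=0$ where $f_t\to\mathrm{id}$, so continuity and motion arguments cannot be invoked off the shelf. As written, the proposal records a plausible plan of attack on what remains an open problem; it does not prove the conjecture.
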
 

\smallskip
Here is a list of further natural questions.  

\smallskip\noindent
{\bf II.}
Is it possible to extend the main results of this paper to linear differential operators $T$ of order exceeding $1$?

\smallskip\noindent
{\bf III.}
Is it possible to give a description of the boundary of a non-trivial $\minvset{CH}$  starting with the roots of $Q$ and $P$  and making finitely many steps of taking $t$-trajectories and trajectories of the vector field $R(z) {\partial_z}$?

Our best guess is that, in general, this is impossible, i.e. one needs countably many such steps. 

\smallskip\noindent
{\bf IV.}
Extend the supply of cases with ``explicit" description of $1$-point generated $T_{CH}$-invariant sets.  Two special situations which seem to be fundamental, but we do not have an answer. For example,  $T = (z^3-1)\diffz + 3z^2$, see \cref{fig:triFlower}. This figure however is not accurate. It only shows some approximation of the actual $\minvset{CH}$. 

\smallskip\noindent
{\bf V.}
Show that the boundary of $\minvset{CH}$ is always piecewise analytic. 

\smallskip\noindent
{\bf VI.}
In this paper we were mainly studying continuously Hutchinson invariant sets $\minvset{CH}$ for linear differential operators of order $1$ while our original interest was to study  minimal Hutchinson invariant sets $\minvset{H}$, see \S~\ref{sec:intro}. Numerical experiments show that their boundaries typically have a fractal structure, see e.g. \cref{fig:cochleoid}, but we do not have conclusive results yet.

\begin{figure}[!ht]
\centering
\includegraphics[width=0.35\textwidth]{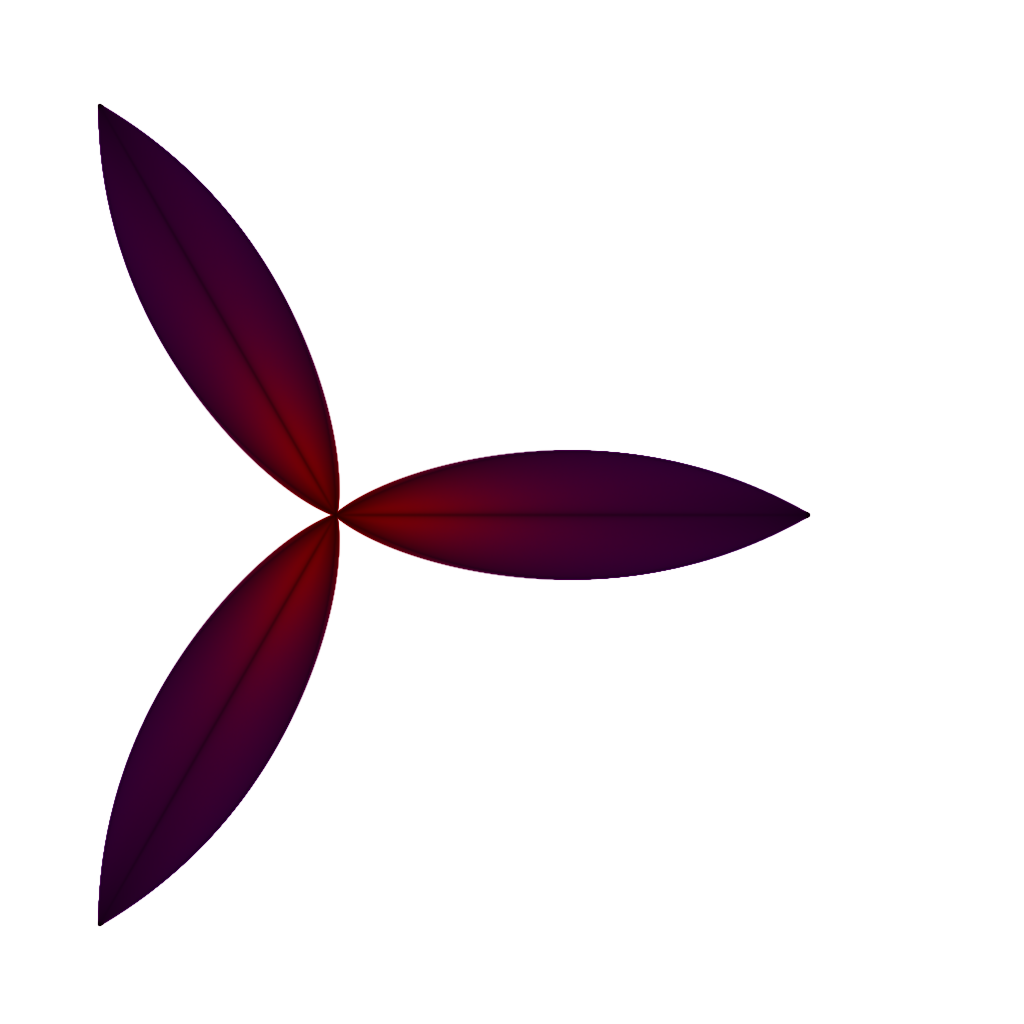} 
\caption{Monte--Carlo approximation of $\minvset{CH}$ for $T = (z^3-1)\diffz + 3z^2$. At first glance it can appear that $\minvset{CH}$ has boundary given by circle arcs. This is not the case, one can use associated rays to prove this and the actual $\minvset{CH}$ contains these circle arcs but is somewhat bigger. However, we have not been able to find an explicit description of $\minvset{CH}$ in this situation.}
\label{fig:triFlower}
\end{figure}

\begin{appendices}

\section{Appendix. Rational vector fields in $\setRS$ and their curves of inflections}\label{sec:ratfields}

For convenience of our readers, we include in this section some basic information about rational vector
fields in $\setRS$ which is frequently used in the paper.
Our exposition follows mainly the recent preprint~\cite{DiasGarijo2020x}.

\subsection{Separatrices and separatrix graphs of rational vector fields}\label{sec:septrix}

Consider the first order differential equation 
\begin{equation}\label{eq:rationaldiff}
\dot{z}(t)=\frac{dz}{dt}=-R(z),
\end{equation}
where $-R(z)=-\frac{Q(z)}{P(z)}: \setRS \to \setRS$ is a rational map. With this differential equation we associate 
the rational vector field $v_R=-R(z) {\partial_z}$. 
An important notion in this context is that of a \emph{separatrix} of $v_R$. 
Namely, separatrices are solutions of \eqref{eq:rationaldiff} whose maximal domain 
of definition is strictly smaller than $\mathbb{R}$. 

To be precise, let $\phi(t,\eta)$  be a solution of \eqref{eq:rationaldiff} with 
initial condition $\phi(0,\eta)=\eta$. 
We assume that $\phi(\cdot,\eta): (t_{min},t_{max})\to \setRS$ is defined on its maximum interval of definition. 
Then $\phi(t,\eta)$ is a \defin{separatrix} (of $v_R$) if at least one of $t_{min},t_{max}$ is finite. 

We will often use the notation $\phi(t)$ instead of $\phi(t,\eta)$ when it is 
unnecessary to specify  $\eta$. If $\lim_{t\to t_{min}}\phi(t)=z_*$, 
we say that $\phi(t)$ is \defin{$z_*$-starting} and if $\lim_{t\to t_{max}}\phi(t)=z_*$
then $\phi(t)$ is \defin{$z_*$-ending}. 
In general, if $z_*$ is not specified, we say that $\phi(t)$ is \defin{$P$-starting} resp.~\defin{$P$-ending}. 
If $t_{min}$ is finite then $\lim_{t\to t_{min}}\phi(t,\eta)$ is a pole  of $R(z)$ (equivalently, a saddle point of $v_R$). 
Similarly, if $t_{max}$ is finite then $\lim_{t\to t_{max}}\phi(t,\eta)$ is a pole of $R(z)$.
 Therefore there exist four distinct types of separatrices:

\begin{enumerate}
    \item $t_{min}$ is finite and $t_{max}$ is 
    infinite $\rightarrow$ $\phi(\eta,t)$ is called an \defin{outgoing} separatrix.
    
    \item $t_{min}$ is infinite and $t_{max}$ is 
    finite $\rightarrow$ $\phi(\eta,t)$ is called an \defin{ingoing} separatrix.
    
    \item $t_{min}$ and $t_{max}$ are both finite, but
    \[
    \lim_{t\to t_{min}}\phi(\eta,t)\neq \lim_{t\to t_{max}}\phi(\eta,t),
    \]
$\phi(\eta,t)$ is called a \defin{heteroclinic} separatrix.

    \item $t_{min}$ and $t_{max}$ are both finite and
    \[
    \lim_{t\to t_{min}}\phi(\eta,t)= \lim_{t\to t_{max}}\phi(\eta,t),
    \]
$\phi(\eta,t)$ is called a \defin{homoclinic} separatrix.
\end{enumerate}

The collection of separatrices of $v_R$ subdivides $\setRS$ into disjoint open domains
in each of which integral trajectories of $v_R$  have similar properties.
These domains can be of the  following $4$ different types: 
center zone, annular zone, parallel and elliptic zones defined as follows.

A \emph{center zone} is a simply connected region that contains a zero 
to $-R(z)$ which is a center. The integral trajectories of $v_R$ in center zones
are periodic orbits with the same period.

An \emph{annular zone} is a  doubly connected region in which integral
trajectories are periodic orbits with the same period and there exists a
positive number $L$ such that all these integral trajectories have length greater than $L$.

 Both \emph {elliptic and parallel zones} are  simply connected regions. 
The integral trajectories in elliptic zones have the property  
that all $\omega$- and $\alpha$-limits coincide with  a critical
point $z_0$ of $\phi(\eta,t)$ solving \eqref{eq:rationaldiff}. 
For parallel zones, all $\omega$-limits are equal to one critical point $z_0$ 
while all $\alpha$-limits are equal to another 
critical point $z_1$, see illustrations in \cref{fig:vectorFieldTypes}. 
\begin{figure}[!ht]
\begin{center}
\includegraphics[width=0.18\textwidth,page=1]{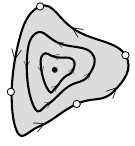}\hskip1.5cm
\includegraphics[width=0.18\textwidth,page=2]{separatrix-regions}

\includegraphics[width=0.18\textwidth,page=3]{separatrix-regions}\hskip1.5cm
\includegraphics[width=0.18\textwidth,page=4]{separatrix-regions}
\end{center}
\caption{Illustrations of different types of zones: center (top left), 
annular (top right), parallel (bottom left), and elliptic (bottom right).%
}\label{fig:vectorFieldTypes}
\end{figure}

\smallskip
As mentioned above, the poles of $-R$ are saddle points of $v_R$. 
More exactly, if $z_*$ is a pole of order $k$ of $-R$,  
then it is a saddle point of $v_R$  with $k+1$ $z_*$-starting 
and $k+1$ $z_*$-ending separatrices. Furthermore, if one goes
around $z_*$ once along the boundary of a sufficiently small,
convex neighborhood of $z_*$, one traverses segments of the
boundary where a trajectory starts and ends in $z_*$ consecutively. 
Additionally, the angles between two adjacent separatrices are  
equal to  $\frac{2\pi}{2k+2}$, see~\cref{fig:locPort}. 
More specifically, we have the following statement.

\begin{proposition}[See~\cite{needham}]\label{prop:dirSeptrix}
Suppose that $R(z)$ is a rational function having a pole of order $k$ at $z_0$ and 
\[
R(z)=\frac{c}{(z-z_0)^k}+f(z)
\]
where $\lim_{z\to z_0}f(z)(z-z_0)^k=0$. 
Then the directions of the $z_0$-starting separatrices
of $R(z) {\partial_z}$ at $z_0$ are given by the solutions to $z^{k+1}= c$
and the directions of the $z_0$-ending separatrices
are given by the solutions to $z^{k+1}= -c$.
\end{proposition}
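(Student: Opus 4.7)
My strategy is to reduce the problem to a purely local computation via the holomorphic time function
\[
\tau(z)=\int_{z_{1}}^{z}\frac{dw}{R(w)},
\]
based at some nearby regular point $z_{1}$, and to extract the directions of incidence of separatrices from its leading behaviour at the pole. After translating, I assume $z_{0}=0$. The hypothesis $f(z)z^{k}\to 0$ as $z\to 0$ gives
\[
\frac{1}{R(z)}=\frac{z^{k}}{c}+o\bigl(z^{k}\bigr),
\]
so termwise integration yields $\tau(z)=\dfrac{z^{k+1}}{(k+1)\,c}+g(z)$ with $g$ analytic at $0$; set $\tau_{0}:=g(0)$.

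Along any integral curve $\phi(t)$ of $R(z)\partial_{z}$ one has $\tau(\phi(t))\equiv t+\mathrm{const}$, so a trajectory meeting $0$ at some finite time $t_{\ast}$ corresponds to $\tau(\phi(t))\to\tau_{0}$. For a $0$-starting separatrix ($t>t_{\ast}$), combining this with the expansion of $\tau$ forces
\[
\phi(t)^{k+1}=(k+1)\,c\,(t-t_{\ast})+o(t-t_{\ast}),
\]
so $\phi(t)^{k+1}/c$ approaches $0$ through positive real values. The limiting unit direction $d=\lim_{t\downarrow t_{\ast}}\phi(t)/|\phi(t)|$ therefore satisfies $d^{k+1}=c/|c|$; equivalently, $d$ is, up to positive scaling, a solution of $z^{k+1}=c$. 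Replacing $t-t_{\ast}>0$ by $t_{\ast}-t>0$ for $0$-ending separatrices gives the same computation with $-c$ in place of $c$, so such directions satisfy $z^{k+1}=-c$. This yields the stated $k+1$ candidate directions in each family.

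The main obstacle is showing that each of the $k+1$ candidate directions in each family is actually realised by a genuine separatrix, not merely that every separatrix has such a direction. I would address this by observing that $\tau$ is a branched covering of degree $k+1$ in a punctured neighbourhood of $0$, with $z=0$ as its unique ramification point; locally, the $k+1$ analytic branches of the inverse $\tau^{-1}$ are distinguished precisely by a choice of $(k+1)$-th root of unity. Pulling back the trivial field $\partial_{\tau}$ along each branch recovers $R(z)\partial_{z}$, and the positive real ray $\{\tau_{0}+s:s>0\}$ lifts through each branch to a $0$-starting separatrix with the prescribed direction; the negative ray yields the ending ones. The technical content is the standard local normal form of a holomorphic map at a ramification point, which can be established either by direct inspection of $\tau$, or equivalently by a contraction-mapping argument applied to the Ansatz $z(t)=\zeta\bigl((k+1)\,c\,(t-t_{\ast})\bigr)^{1/(k+1)}(1+h(t))$ for each fixed $(k+1)$-th root $\zeta$ of the appropriate unit vector, producing a unique analytic correction $h$ with $h(t_{\ast})=0$.
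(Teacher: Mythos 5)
The paper offers no proof of this proposition at all --- it is stated with a pointer to \cite{needham} --- so there is no in-house argument to compare against; your write-up is, in effect, supplying the missing proof. The argument is correct and is the standard one: passing to the rectifying coordinate $\tau(z)=\int \frac{dw}{R(w)}$, whose expansion $\tau(z)-\tau_0=\frac{(z-z_0)^{k+1}}{(k+1)c}\left(1+O(z-z_0)\right)$ converts the question into reading off $(k+1)$-st roots, and you correctly separate the two halves of the claim: every $z_0$-starting (resp.\ $z_0$-ending) separatrix must approach along a root of $z^{k+1}=c$ (resp.\ $z^{k+1}=-c$), and conversely all $k+1$ directions in each family are realized because $\tau$ is a degree-$(k+1)$ branched cover near $z_0$, so the positive and negative real rays through $\tau_0$ each lift to $k+1$ distinct trajectories reaching $z_0$ in finite time. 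Two small points are worth tightening. First, the existence of the limit $d=\lim_{t\downarrow t_*}\phi(t)/|\phi(t)|$ does not follow from $\arg\bigl(\phi(t)^{k+1}\bigr)\to\arg c$ alone; add the one-line remark that $\arg\phi(t)$ eventually lies in one of $k+1$ disjoint small arcs and, being continuous, must converge to a single root. Second, the step ``termwise integration yields $g$ analytic'' silently uses that $R$ is \emph{rational} with a pole of order exactly $k$, so that $1/R$ is analytic at $z_0$ with a zero of order $k$ and leading coefficient $1/c$; the bare hypothesis $f(z)(z-z_0)^k\to 0$ would not by itself let you integrate an $o\bigl((z-z_0)^k\bigr)$ error term, so this deserves an explicit sentence. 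Neither point is a gap in substance.
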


\begin{figure}[!ht]
\begin{center}
\includegraphics[width=0.25\textwidth,page=1]{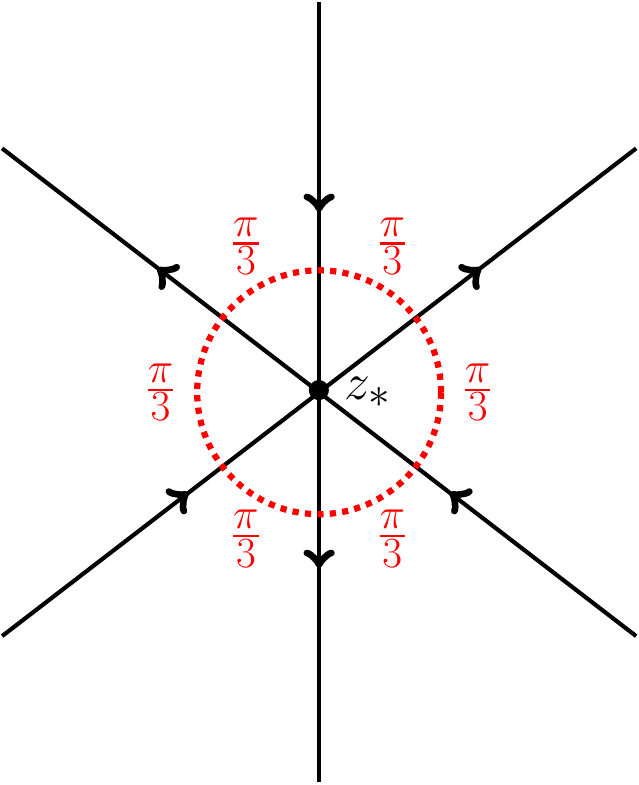}
\end{center}
\caption{Schematic description of separatrices near 
a pole $z_*$ of order $k=2$.}\label{fig:locPort}
\end{figure}

Moreover, if $z_0$ is a zero of $-R(z)$ of order $1$, then the phase portrait near $z_0$
is either a sink (if $-\Re R'(z) <0$), a source (if $-\Re R'(z)>0$), or 
a center (if $-\Re R'(z)=0$). If $z_0$ is zero of $-R(z)$ of order $k$ then 
the phase portrait near $z_0$ is the union of $2(k-1)$ elliptic sectors. 
Examples of phase portraits near singularities of analytic vector fields are shown in \cref{fig:vectorFieldSing}.

\begin{figure}[!ht]
\begin{center}
\includegraphics[width=0.2\textwidth,page=1]{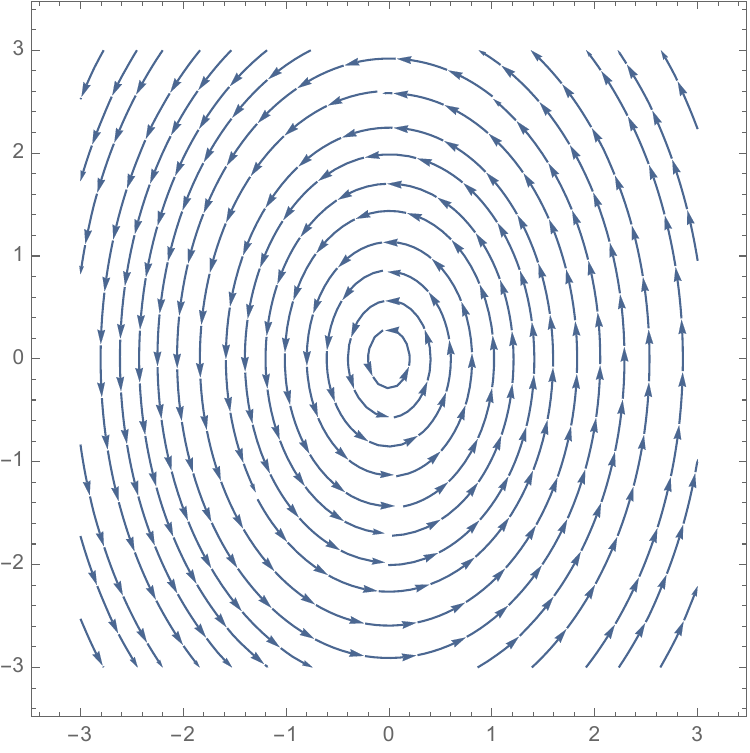}
\includegraphics[width=0.2\textwidth,page=1]{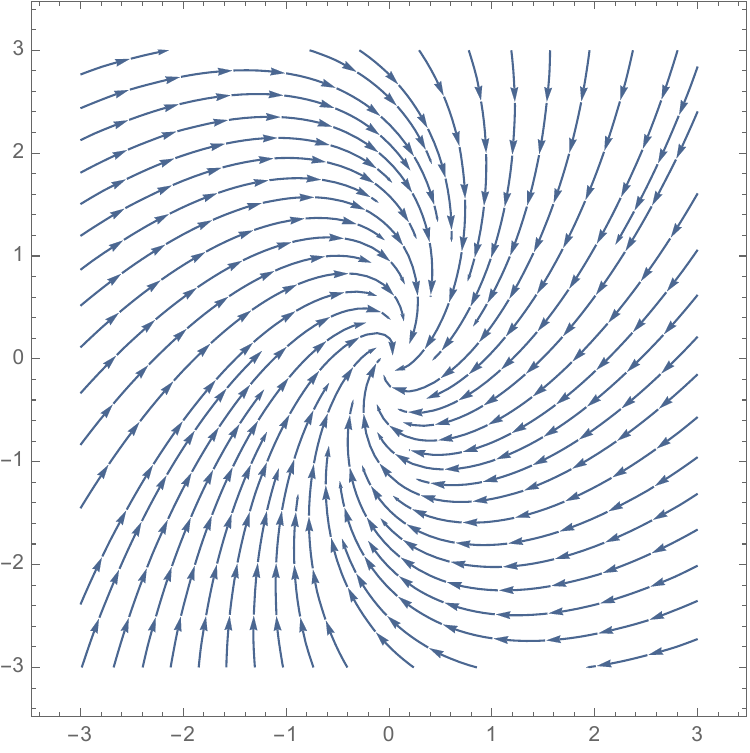}
\includegraphics[width=0.2\textwidth,page=1]{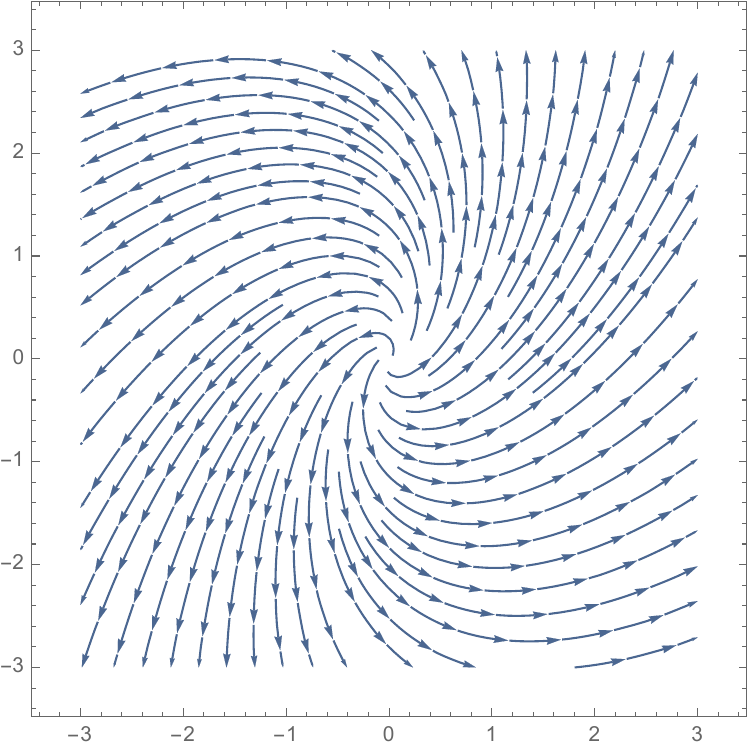}

\includegraphics[width=0.2 \textwidth,page=1]{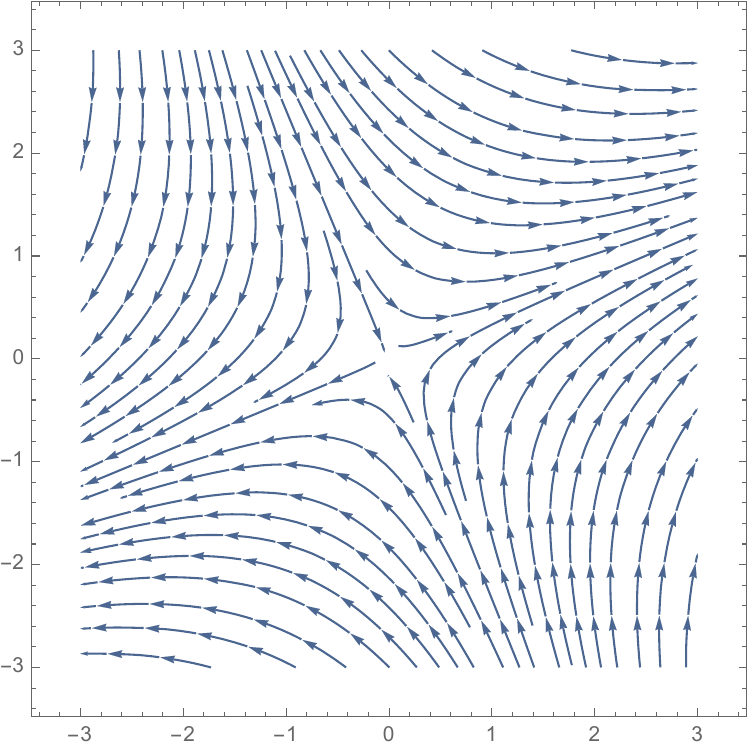}
\includegraphics[width=0.2 \textwidth,page=1]{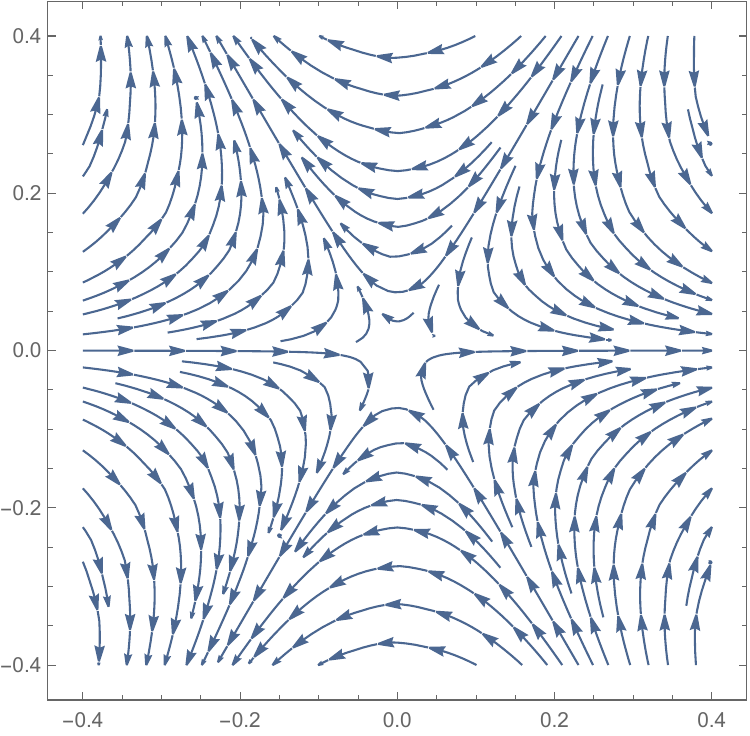}
\includegraphics[width=0.2 \textwidth,page=1]{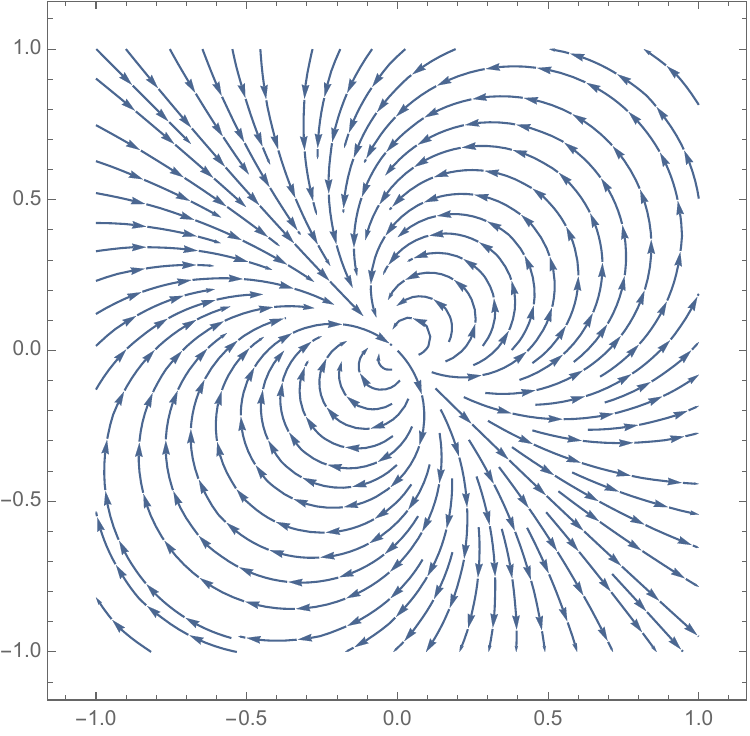}
\end{center}
\caption{Singular points of analytic vector fields (left-to-right; top-to-bottom): center, sink, source,  saddle, saddle of order $2$, and zero of order $2$.}\label{fig:vectorFieldSing}
\end{figure}

For a given rational vector field,  the union of the closures of its separatrices 
is called the \emph{separatrix graph}.  Given a rational function $R$ as above, 
let us denote by $\Gamma_R$ the separatrix graph of $v_R$. As mentioned above, there are four different types 
of zones for a rational vector field $v_R$ and its separatrix graph $\Gamma_R$ constitutes  
the boundary of these zones. Different topological  possibilities for separatrix
graphs of rational vector fields in $\setRS$ are completely characterized in \cite{DiasGarijo2020x}. 
In this paper, we will, in particular, use the property that the complement of $\setRS\setminus \Gamma_R$ is
the union of simply and doubly connected domains and that the
boundary of an annular or a  center zone consists of homo/heteroclinic separatrices.

\subsection{Separatrices and $T_{CH}$-invariant sets} 
In this subsection we discuss one general property of $T_{CH}$-invariant sets which will be especially useful when studying irregularity of these  sets. 
Namely, we will  show that the root-starting separatrices of the vector field $-R(z){\partial_z}$ (if they exist) belong to any $T_{CH}$-invariant set. \cref{prop:invSetClosedUnderForwardIntegralCurves} and its proof are  quite similar to  statements which can be found in several texts on numerical methods for differential equations, such as \cite{HairerErnst1993SODE}. (For a stronger result see \cite{Frank1981TheCO}.)  Since we were not able to find the exact statement of \cref{prop:invSetClosedUnderForwardIntegralCurves}  in the literature  and for convenience of the readers, we include it and its proof below. $||$ denotes the absolute value in the Euclidian metric.

\begin{proposition}\label{prop:invSetClosedUnderForwardIntegralCurves}
Let $z(t)$ be a solution to the initial value problem 
\[
\begin{cases}
z'(t)=-R(z)\\
 z(0)=z_0
\end{cases}
\] 
for which there exist $M>0$, $\epsilon>0$, and  $s_0>0$ such that if $|z(t)-z|<\epsilon$ for
some $t\in [0,s_0]$ then $|R(z)|<M_0$.
Then, for any $s_1\in [0,s_0]$ and $\delta>0$, there exist $t_0>0$, 
a sequence $\{z_k\}_{k=0}^\infty$ where each $z_k$ satisfies 
\[
t_0 Q(z_k)+(z_k-z_{k-1})P(z_k)=0
\]
and  a positive integer  $N$ such that $|z(s_1)-z_N|<\delta$.

\end{proposition}
\begin{proof}
Note first that the assumption that  $|R(z)|<M_0$ for all $z$ such that $|z(t)-z|<\epsilon$ for
some $t\in [0,s_0]$ implies, since $R$ is rational, that there is $M$ such that  $|R(z)|<M$, $|R'(z)|<M$, $|R''(z)|<M$ if $|z(t)-z|<\epsilon$ for
some $t\in [0,s_0]$.
Next we note that if $z(t)$ is  not a separatrix then the above criteria of boundedness of $R$ in a neighborhood of $z(t)$ is fulfilled with $s_0=\infty$. 
If it is a separatrix, but $z_0$ not a root of $P$ while $\lim_{t\to s_2}z(t)$ is a root of $P$
for some $s_2>0$ then the above criteria is fulfilled for all $s_0\in (0,s_2)$ provided that $\epsilon>0$ is chosen sufficiently small and $M$ is sufficiently large. 
Now, let $z(t)$ be as in the statement of the proposition and define the sequence $\{z_k\}$ as follows. 
Consider the map 
\[
\varphi(z)=z+t_0R(z). 
\]
(We may pick small enough $t_0$  so that $t_0M<1$.)  Then $\varphi(z)$ has a non-vanishing 
derivative in the $\epsilon$-neighborhood $U$ of the curve segment $z([0,s_0])$ and therefore by the
inverse function theorem, $\varphi$ has an  inverse $\varphi^{-1}: \varphi(U) \to U$.

We define 
\[
z_k:=\varphi^{-1}(z_{k-1})
\]
and note that $z_k$ is one of the solutions to
\[
t_0Q(z)+(z-z_{k-1})P(z)=0.
\]
Namely, we choose $z_k$ to be such a solution  that if we fix $z_{k-1}$ and take the limit $t_0\to 0$, then  $z_k\to z_{k-1}$. 
Now, we will inductively approximate $z((k+1)t_0)$.
To that end, we consider 
\begin{equation}\label{eq:approximation}
    z(kt_0+\Delta t)=z(kt_0)+\Delta tR(z(kt_0)+\Delta t))-r_{k}
\end{equation} where $r_k$ is what is usually referred to as the \emph{local residue}.
We can estimate its absolute value by using the Taylor expansion of $z(kt_0+\Delta t)$ and $z'(kt_0+\Delta t)=-R(kt_0+\Delta t)$ around $\Delta t=0$. Namely, 
\[z(kt_0+\Delta t)=z(kt_0)+\Delta t z'(kt_0)+\frac{\Delta t^2}{2} z''(kt_0)+O(\Delta t^3);\]
\[z'(kt_0+\Delta t)=z'(kt_0)-\Delta tR'(z(kt_0))+O(\Delta t^2)\]
\[=z'(kt_0)+\Delta tz''(z(kt_0))+O(\Delta t^2).\]
We substitute these two relations in \eqref{eq:approximation}, let $\Delta t=t_0$ and use the equality $z'(kt_0+\Delta t)=-R(kt_0+\Delta t)$ to obtain
\begin{equation}\label{eq:sizeoflocalres}
|r_{k}|=\frac{|t_0^2 z''(kt_0)|}{2}+O(t_0^3)=\frac{|t_0^2 M|}{2}+O(t_0^3)<|t_0^2 M|\end{equation}
for small $t_0$.

Next, as $t_0M<1$, $\varphi^{-1}$ is Lipschitz in $V$. 
Denoting the Lipschitz constant of $\varphi^{-1}$ in $V$ by $L$ note 
that $L\leq \frac{1}{1-t_0M}$. 

Now, recall that $z_0$ is in $U$ and we suppose that $z_l\in U $ for $ l=1,\dots,k-1$. Then
\[
\varphi(z_{k})=z_{k-1}, \qquad \varphi(z(kt_0))=z((k-1)t_0)-r_{k-1},
\]
and thus 
\[
|z_k-z(kt_0)|<L|z_{k-1}-z((k-1)t_0)+r_{k-1}|.
\]
Using the notation $e_k:=|z_k-z(kt_0)|$  and \eqref{eq:sizeoflocalres}, we get  
\[
e_{k}\leq \frac{|e_{k-1}|+|r_{k-1}|}{1-t_0M}=(|e_{k-1}|+|r_{k-1}|)\sum_{n=0}^\infty(t_0M)^n<|e_{k-1}|(1+2t_0M)+2|r_{k-1}|
\]
provided we pick $t_0$ small enough. 

Using \cite[Eq. 1--12]{henrici} together with $|e_{k}|\leq (1+2t_0M)e_{k-1}+2|r_{k-1}|$ and $e_0=0$ 
we get 
\[
e_k\leq 2\max |r_k|\frac{e^{2kt_0M}-1}{2t_0M}.
\]
Hence,
\begin{equation}\label{eq:finalapprox}
    |z_k-z(kt_0)|< Mt_0(e^{2kt_0M}-1). 
\end{equation}
In particular, fix $\delta>0$ and let $\delta_0=\min(\delta,\epsilon)$ and $s_1\in [0,s_0]$. For any $N\in \mathbb{N}$, let $t_{0,N}$ be such that $t_{0,N}N=s_1$. Now pick $N$ large enough so that $Mt_{0,N}( e^{N2t_{0,N}M}-1)<\delta_0$. Then we have that $z_k\in U$ for $k\leq N$. Using $t_0=t_{0.N}$ we may thereby  apply \eqref{eq:finalapprox} and by induction conclude that $|z_N-z(s_1)|<\delta$ as asserted.
\end{proof}

\begin{definition}\label{fwdTrajectories}
If the initial value problem 
\begin{equation}\label{eq:diffEq}
\begin{cases}
z'(t)=-R(z)\\
z(0)=z_0
\end{cases}
\end{equation}
has a solution $z(t)$ for $t\in [0,s_0)$ ($s_0$ may equal $\infty$), then $\overline{z([0,s_0))}$ 
is called the \emph{forward trajectory} of $z_0$.
\end{definition}
\cref{prop:invSetClosedUnderForwardIntegralCurves} states that if $S\subset \setC$ 
is a $T_{CH}$-invariant set then the forward trajectories of 
all $u\notin \zeros(P)$ are contained in $S$, provided $z(t)\in \bC$ for all $t\in [0,s_0)$. Indeed, for any $t\in [0,s_0)$ we can find an open neighborhood of $z([0,t])$ such that $|R(z)|$ is bounded. Then, if $z_0\in S$ the proposition shows that for any $\delta>0$ we can find a point in $S$ of distance less than $\delta$ from $z(t)$. Since $S$ is closed, the statement follows.

The next important definition is that of a separatrix.
\begin{definition}\label{def:septrix}
A \defin{separatrix} is a solution $\phi(t)$ of \eqref{eq:diffEq} 
whose maximal domain 
of definition $(t_{min},t_{max})$ is a proper subset of $\mathbb{R}$. 
If $t_{min}$ is finite, then $\phi(t)$ is called a 
\defin{$P$-starting separatrix}
and if $t_{min}$ is finite it is called a \defin{$P$-ending separatrix}. 
Furthermore, if $\lim_{t\to t_{min}}\phi(t)=z_0\in \zeros(P)$, 
it is called \defin{$z_0$-starting},
and if $\lim_{t\to t_{max}}\phi(t)=z_0\in \zeros(P)$, 
it is called \defin{$z_0$-ending}.
\end{definition}
When we refer to $P$-starting separatrices without 
mention of the vector field, we will always mean 
the $P$-starting separatrices of $-R(z)\partial_z$. 
The next statement shows that the $P$-starting separatrices
of $-R(z){\partial_z}$, when they exist, also lie in any $T_{CH}$-invariant set $S$.

\begin{proposition}\label{pro:separatrixIsInInv}
Let $S\subset \bC$ be a  $T_{CH}$-invariant set for an operator $T$ given by \eqref{eq:1st} 
such that  $\deg P(z)\ge 1$  and $Q(z)$ is not identically vanishing and $P$ and $Q$ have no zeros in common. Suppose that $z_0 \in \zeros(P)$ 
and let $\phi$ be a $z_0$-starting separatrix of $-R(z) {\partial_z}$ such that $\lim_{t\to 0}\phi(t)=z_0$.
Then $\overline{\phi((0,s_0))} \in S$ for all $s_0$ such that $\phi(t)\in \bC$  for $0< t<s_0$.
\end{proposition}
\begin{proof}

Near $z_0$ the vector field $-R(z) {\partial_z}$ looks
like $\frac{\alpha}{z^k} {\partial_z}$ for some $k\geq 1$ and $\alpha\in \bC$, where $k$ is the multiplicity of $z_0$.
By Proposition \ref{prop:rootTrajectoryDirections} there is a $t$-trace $\gamma_u(t)$ with $u=z_0$, $\gamma_u(0)=z_0$ which is tangent to $\phi$. We may suppose that $\gamma_u(t), t\in (0,t_0)$ is not a subset of $\phi(0,t_1)$ for some $t_1>0$ and sufficiently small $t_0$, because in this case the statement directly follows from  \cref{prop:invSetClosedUnderForwardIntegralCurves}.
Then, in a sufficiently small neighborhood of $z_0$, we can assume that $\gamma$ is confined 
in a curvilinear cone---both sides of which being separatrices for $-R(z) {\partial_z}$---
and that additionally $\gamma$ splits this cone into two domains, see Figure~\ref{fig:fig-separatrix-included}.
\begin{figure}[!ht]
 \includegraphics[width=0.4\textwidth]{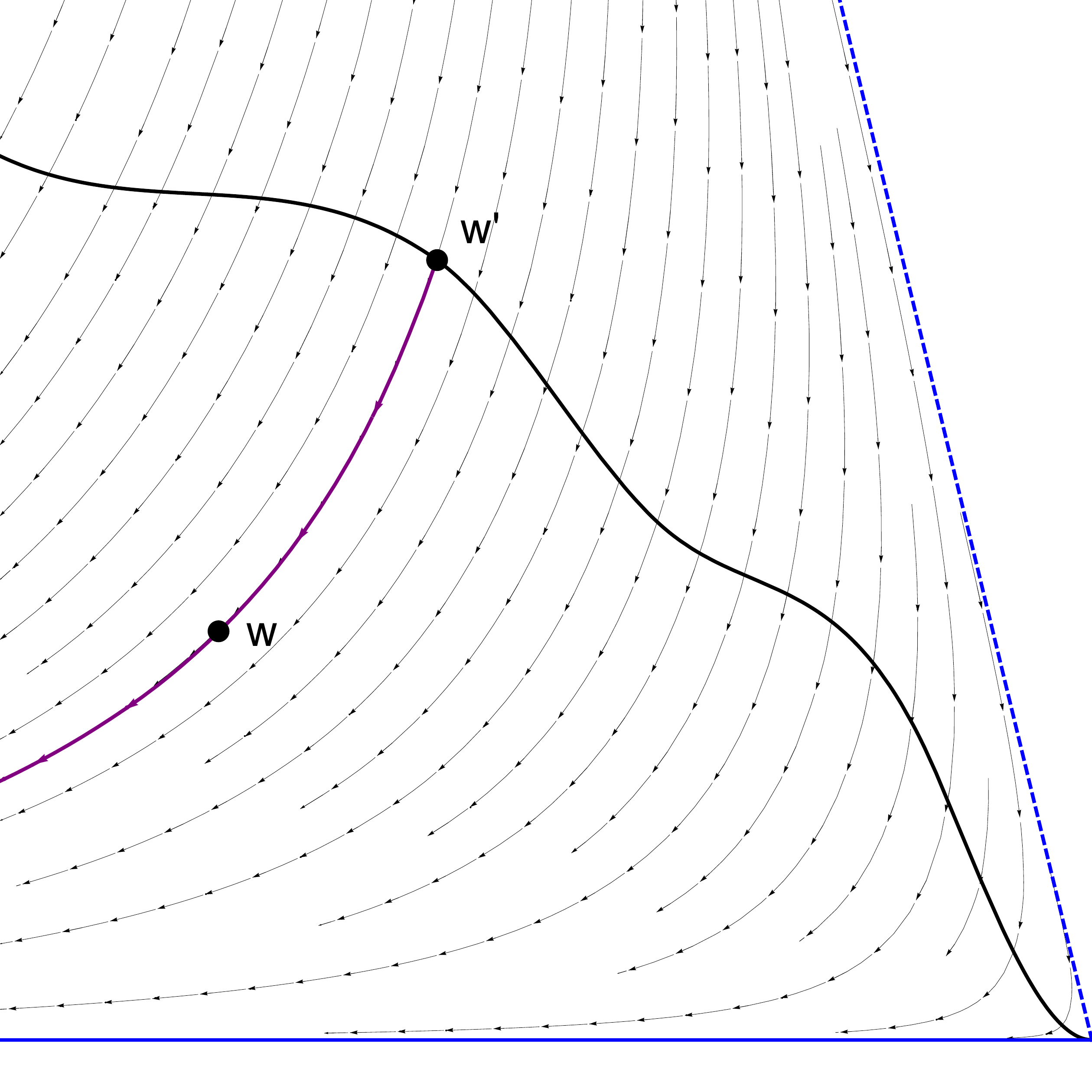}
 \caption{The separatrix $\phi$ (solid blue). In this particular example, $k=2$.
 }\label{fig:fig-separatrix-included}
\end{figure}
Now for any point $w$ sufficiently close to $z_0$ in the domain bounded by $\phi$ and $\gamma_u(t)$
there is a point $w'$ on $\gamma_u(t)$, such that the integral curve of $-R(z) {\partial_z}$
originating at $w'$ passes through $w$.
By Lemma~\ref{prop:invSetClosedUnderForwardIntegralCurves}, we obtain 
that $w \in S$. Since $S$ is closed and $w$ can be chosen arbitrary close to $\phi$, we obtain  that $\phi \subseteq S$.
\end{proof}

\smallskip
The last statement in this part is required for classification of fully irregular $T_{CH}$-invariant sets. 

\begin{proposition}\label{prop:qRootSinks}
Suppose that $T$ is not of the form
\begin{itemize} 

\item $P$ vanishes identically 

or

\item $P$ is a non-zero constant  and $Q$ has degree 1.
\end{itemize}
Then the simple roots of $Q$ 
belonging to the boundary of a  $T_{CH}$-invariant set $S$
are poles of the $1$-form  $-\frac{1}{R} dz$ with negative residues. 
\end{proposition}
\begin{proof}
Let $z^*$ be a simple zero of $Q$.

First assume that $z^*$ is a center of the vector field $-R(z) {\partial_z}$.
There is a $t$-trace going to $z^*$ that is contained in $S$.
Since the forward trajectories of points on this $t$-trace near $z^*$ are closed 
loops and they are contained in $S$, 
 we obtain $z^*\notin \partial S$.

Next, suppose that $z^*$ is a source of $-R(z) {\partial_z}$ lying on the 
boundary of $S$. Then there are points close to $z^*$ included in $S$;  
 a simple geometric argument involving associated rays
gives that there exists $z\notin S$ such 
that $z+tR(z)\in S$, contradicting \cref{th:charact}.

Finally, suppose that $z^*$ is a sink of $-R(z) {\partial_z}$, 
but the $1$-form $-\frac{1}{R} dz$ has a non-real residue at $z^*$. 
Then any integral curve approaching $z^*$ rotates about $z^*$ infinitely many times.
Now let $z_1\in S$ be a point close enough to $z^*$ and such that the forward 
trajectory of $z_1$ has $z^*$ as its limit. By picking  a neighborhood  $U$ of $z^*$ 
small enough, we have that for all $z\in U$, the ray $z+tR(z)$ intersects the 
forward trajectory of $z_1$. Since the forward trajectory of $z_1$ is contained in $S$, 
\cref{th:charact} implies that $U\subset S$.
\end{proof}

%

\subsection{Inflection points of trajectories of an analytic vector field}\label{ssec:inflections} 

In what follows we will need a description of the set of inflection points of trajectories 
of an analytic vector field in the complex plane.  
Let $W(\zvec)\partial_{\zvec}$ be a vector field where $ W(\zvec): \setC \to \setC:$
$W(\zvec) = (u(x,y), v(x,y))$. Typically, for real-analytic $W(\zvec)\partial_{\zvec}$,  there
exists a curve $\infl_W \subset \setR^2$ consisting of all points at which
trajectories of $W(\zvec)\partial_{\zvec}$ have inflections. 
Our nearest goal is to describe  $\infl_W$ when $W(\zvec)\partial_{\zvec}$ is given by
the real and imaginary parts of a complex-analytic function.  

Suppose now that $\zvec$ is an inflection point of a trajectory of $W(\zvec)\partial_{\zvec}$.
This means that the vector field at the  point $\zvec + \epsilon W(\zvec)$ for $\epsilon$ small 
is close to  $W(\zvec)$.
In other words,  $W(\zvec)$ and $W(\zvec + \epsilon W(\zvec))$
are almost parallel. 
More exactly, we require that 
\[
 \lim_{\epsilon \to 0} 
 \frac{1}{\epsilon}
 \begin{vmatrix}
  u(x,y) & v(x,y) \\
  u(x + \epsilon u(x,y),y + \epsilon v(x,y)) & v(x + \epsilon u(x,y),y + \epsilon v(x,y))
 \end{vmatrix}
=0.
\]
Expanding the determinant and applying l'Hospital's rule, we end up with
\begin{equation}\label{eq:generalFormula}
u\,  ( v'_x u + v'_y v )
-
v\, ( u'_x u + u'_y v ) = 0.
\end{equation}

\noindent In the special case when
\[
W(\zvec) = (u(x,y), v(x,y)), \quad
u(x,y) + iv(x,y) = R(\zvec), \; \quad  \zvec = x+i y \in \setC,
\]
and $R(\zvec) : \setRS \to \setRS$ is analytic/meromorphic one can say more. 
In this case we will write $\infl_R$ instead of $\infl_W$ 
and use notation $z$ instead of $\zvec$.

\begin{lemma}\label{lm:infl} 
For an analytic function $R(z)$, the curve of inflections 
of the vector field $W=(\Re R, \Im R)$ satisfies the condition $\Im R^\prime=0$. 
\end{lemma}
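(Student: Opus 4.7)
The plan is to specialize the general inflection condition \eqref{eq:generalFormula} to the holomorphic case using the Cauchy--Riemann equations. Write $R(z) = u(x,y) + i v(x,y)$ with $z = x + iy$. Since $R$ is analytic, we have the relations $u'_x = v'_y$ and $u'_y = -v'_x$, and the complex derivative satisfies $R'(z) = u'_x + i v'_x$, so in particular $\Im R'(z) = v'_x(x,y)$.

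First I would substitute these relations into the two parenthesized expressions appearing in \eqref{eq:generalFormula}. Specifically,
\[
v'_x u + v'_y v = v'_x u + u'_x v, \qquad u'_x u + u'_y v = u'_x u - v'_x v.
\]
Plugging these into the determinant on the left-hand side of \eqref{eq:generalFormula} gives
\[
u(v'_x u + u'_x v) - v(u'_x u - v'_x v) = v'_x(u^2 + v^2) = v'_x \, |R(z)|^2.
\]

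Thus the inflection condition reduces to $v'_x \, |R|^2 = 0$. Away from the zeros of $R$ (which are fixed points of the vector field and therefore are not points on a genuine trajectory where the notion of inflection is defined in the usual sense), this is equivalent to $v'_x = 0$, i.e., $\Im R'(z) = 0$, which is the asserted equation for $\infl_R$.

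The computation is entirely routine and the only mild subtlety is noting that the zero locus of $|R|^2$ corresponds to singular points of the vector field, so it is natural to exclude them; this is a minor bookkeeping issue rather than a real obstacle. No step presents a genuine difficulty.
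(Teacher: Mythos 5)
Your proof is correct and follows essentially the same route as the paper: both apply the Cauchy--Riemann relations to \eqref{eq:generalFormula} and reduce the condition to $\Im R'=0$ away from the zeros of $R$, which the paper likewise dismisses as the uninteresting case $u=v=0$. Your version merely carries out the algebra explicitly, exhibiting the factorization $v'_x\,(u^2+v^2)$, which is a harmless (and slightly more informative) elaboration of the paper's argument.
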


\begin{proof} 
Recall that the Cauchy--Riemann equations for $R(z)$ have the form
\[
 \frac{\partial u}{\partial x} = \frac{\partial v}{\partial y}, \qquad 
 \frac{\partial v}{\partial x} = -\frac{\partial u}{\partial y}.
\]
If we apply them to \eqref{eq:generalFormula} assuming that either $u$ or $v$ is non-vanishing,
we end up with the simple condition
\[
\frac{\partial u}{\partial y} = \frac{\partial v}{\partial x} = 0
\]
which is equivalent to the requirement that $R^\prime(z)$ attains a real value.  
If $u=v=0$ we get a zero of $R$ which is an uninteresting case.
\end{proof}

Set $\flex(x,y) \coloneqq - \frac{\partial u}{\partial y}$, i.e., let 
  $\flex(x,y)$ be the imaginary part of $R^\prime(z)$. 
  By \cref{lm:infl}, $\infl_R$ is the
  locus of $\flex(x,y)=0$. 
Note that for the rational function $R(z)=\frac{Q(z)}{P(z)}$, one gets 
\begin{equation}
 \flex(x,y) = \frac{1}{|P|^4}\Im\left( (PQ'- QP' )\overline{P}^2 \right).
\end{equation}

For further use let us denote by $\infl_R^+\subset \infl_R$ 
the portion where $R^\prime>0$ and by $\infl_R^-\subset \infl_R$ the portion where $R^\prime<0$.

\smallskip
The singularities on the curve $\infl_R$ correspond to the points at which both $R^\prime(z)$ and 
$R^{\prime\prime}(z)$ attain real values.  In particular, since 
\[
R^\prime(z)=\frac{Q^\prime P-QP^\prime}{P^2}\quad \text{and} \quad 
R^{\prime\prime}(z)=\frac{Q^{\prime\prime}P^2-QPP^{\prime\prime} - 2PP^\prime Q^\prime-2Q(P^\prime)^2}{P^3}
\]
then zeros of $P$ are the singular points of $\infl_R$ which can be seen in \cref{fig:bluecurve}.
Moreover, for generic rational $R(z)$,  its poles are the only singular points of $\infl_R$.

\begin{lemma}
Any bounded connected component of the solutions to $\flex(x,y)=0$
contains at least one pole of $\RR(z)$.
\end{lemma}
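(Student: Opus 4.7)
The plan is to argue by contradiction via the maximum principle for the harmonic function $h(z) := \Im R'(z) = \flex(x,y)$. Suppose $\Gamma$ is a bounded connected component of $\{\flex = 0\}$ containing no pole of $R$; then $R'$ is holomorphic on an open neighborhood of $\Gamma$, so $h$ is harmonic there.

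First I would verify that $\Gamma$ encloses a nontrivial open region. As a connected component of the zero set of a non-constant harmonic function, $\Gamma$ is a finite real-analytic $1$-complex whose vertices are the critical points of $h$ (i.e.\ zeros of $R''$) lying on $\Gamma$. Near a critical point of order $k$, the zero set of $h$ consists locally of $2k$ rays, so each vertex of $\Gamma$ has even valence at least $2$. A finite connected graph with every vertex of even positive valence must contain a cycle, so $\Gamma$ contains a Jordan curve $C$ bounding a region $D \subset \bC$.

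Next I would apply the maximum principle. If $R$ has no poles in $\bar D$, then $h$ is harmonic on the open set $D \setminus \Gamma$ and vanishes on its boundary, which lies in $\Gamma \subset \{h = 0\}$. The maximum principle forces $h \equiv 0$ on $D$, so $R'$ is real-valued on the open set $D$; by the identity theorem $R'$ is then a real constant everywhere, contradicting the hypothesis that $R$ is a non-affine rational function. Hence $R$ must have at least one pole in $\bar D$.

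Since by assumption $\Gamma$ (and in particular $C$) contains no pole, any such pole $z_{0}$ lies strictly inside $D$. But $z_{0}$ is a singular point of $\infl_R$ at which $2(m+1)$ branches emanate (with $m = \operatorname{ord}_{z_0} R$), so $z_{0}$ belongs to another bounded component $\Gamma' \subset D$ of $\{\flex = 0\}$, distinct from $\Gamma$. To conclude that $\Gamma$ itself must contain a pole, I would iterate: remove from $D$ all the bounded components of $\{\flex = 0\}$ strictly inside $D$ together with the finitely many poles they contain; the remaining open subregion $D' \subset D$ has boundary entirely in $\{h = 0\}$ and is pole-free for $R$, so the maximum principle once more yields $h \equiv 0$ on $D'$, a contradiction. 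Because $R$ has only finitely many poles the iteration terminates, ruling out a pole-free $\Gamma$.

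The main technical obstacle is the topological bookkeeping in this last step---carefully identifying the ``annular'' remainder $D'$ and verifying that its boundary lies entirely in $\{\flex = 0\}$; beyond that, the proof is a clean application of the maximum principle. As an alternative, one could apply the argument principle to $R'$ along $C$: since $R'(C) \subset \bR$, the winding of $R'$ around any non-real $w$ vanishes, forcing the zero--pole balance of $R' - w$ inside $D$ to be zero and again yielding a pole of $R$ inside $\bar D$, after which the same nesting argument pushes the pole onto $\Gamma$.
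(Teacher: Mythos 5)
Your overall strategy --- harmonicity of $h=\Im R'$ plus the maximum principle --- is the same as the paper's (which runs the argument on a thin $\epsilon$-neighbourhood of $\Gamma$ and is much terser), and the first part of your argument is sound and in fact more carefully justified than the paper's: extracting a Jordan curve $C\subset\Gamma$ from the even-valence graph structure of the zero set, and then using the maximum principle on the enclosed region $D$ to force $R$ to have a pole in $\overline{D}$, are both correct.

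The genuine gap is in the final step, where you try to move the pole from the interior of $D$ onto $\Gamma$ itself. After deleting the inner components $\Gamma'$ of $\{\flex=0\}$ together with the poles they carry, the function $h$ is \emph{not} bounded on the remaining region $D'$: at a pole $z_0$ of $R$ of order $m$, the set $\{\flex=0\}$ consists of $2(m+1)$ arcs emanating from $z_0$, and on each of the open sectors between consecutive arcs $h$ tends to $+\infty$ or $-\infty$. These sectors are not contained in any component of the zero set, so they remain in $D'$ and accumulate at the boundary point $z_0$ of $D'$. Hence $h$ is neither continuous up to $\partial D'$ nor bounded on $D'$, and the maximum principle cannot be applied; the assertion ``$h\equiv 0$ on $D'$'' is unjustified, and this is precisely the configuration you are trying to rule out. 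So this is not ``topological bookkeeping'' --- it is the step that needs a new idea, and the argument-principle variant you mention inherits exactly the same defect (it, too, only places a pole inside $D$). A clean repair that bypasses all of this: since $\Gamma$ is compact and pole-free, $R'$ is holomorphic near $\Gamma$ and real-valued on it, so $R'|_{\Gamma}$ attains a maximum at some $z_1\in\Gamma$; writing $R'(z)-R'(z_1)=c(z-z_1)^m\bigl(1+o(1)\bigr)$ with $c\neq 0$ and $m\geq 1$, the set $\{\Im R'=0\}$ near $z_1$ is a union of $2m$ arcs on which $R'-R'(z_1)$ is real and alternates in sign, so some arc of $\Gamma$ issuing from $z_1$ carries values $R'>R'(z_1)$ --- a contradiction that rules out a bounded pole-free component outright, with no Jordan curve or maximum principle required.
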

\begin{proof}
Let $\Gamma$ be such a bounded connected component, and assume that $\Gamma$
does not contain any pole of $\RR$.

We can then let $D_\epsilon \supset \Gamma$ be the domain consisting of all points 
of distance at most $\epsilon$ from $\Gamma$. By choosing $\epsilon$ sufficiently small,
we can guarantee that $D_\epsilon$ is  a bounded domain which does not contain poles of $\RR$.
Moreover, we can ensure that $D_\epsilon$ does not intersect any other connected component of $\flex$.

But since $\flex$ is the imaginary part of the analytic function $R'(z)$ 
it is harmonic in $D_\epsilon$, and so $|\flex(x,y)|$ must be $0$ on the boundary of $D_\epsilon$.
This violates the choice of $\epsilon$, and our assumption must have been false.
\end{proof}

One can also ask under what conditions on $R$ the curve $\infl_R$ is compact.   
In our main application, we have that $\deg Q=\deg P+1$. As before, expand $R(z)\partial_z$ at $\infty$: 
\[
\frac{Q(z)}{P(z)}\partial_z =\left( \lambda z + O\left( 1\right)\right)\partial_z.
\]
The following claim holds. 
\begin{lemma}\label{lem:blueCurveCompact}
The set  $\infl_R$ is compact whenever $\Im \lambda \neq 0$.
\end{lemma}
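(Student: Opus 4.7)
The plan is to exploit the asymptotic behavior of $R'(z)$ at infinity. Since $\deg Q - \deg P = 1$, the rational function $R(z)=Q(z)/P(z)$ admits a Laurent expansion at infinity of the form $R(z) = \lambda z + c_0 + c_1 z^{-1} + c_2 z^{-2} + \cdots$, which is valid for $|z|$ sufficiently large. Differentiating term by term gives $R'(z) = \lambda - c_1 z^{-2} - 2c_2 z^{-3} - \cdots$, so $R'(z) \to \lambda$ as $z \to \infty$, and in particular $\Im R'(z) \to \Im \lambda$.

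Now suppose $\Im \lambda \neq 0$. By the above convergence, there exists $M>0$ such that $|\Im R'(z) - \Im \lambda| < |\Im \lambda|/2$ for every $z$ with $|z| \geq M$. In that region $\Im R'(z)$ therefore has the same sign as $\Im \lambda$, and in particular is nonzero. By \cref{lm:infl}, the curve $\infl_R$ is contained in the locus $\{\Im R' = 0\}$ (together with its closure, which adds only poles of $R$, all of which lie in a fixed compact subset of $\bC$). Thus $\infl_R \subset \{|z| \leq M\}$, i.e., $\infl_R$ is bounded.

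Finally, from the explicit formula
\[
\flex(x,y) = \frac{1}{|P|^4}\Im\bigl( (PQ' - QP')\overline{P}^2 \bigr),
\]
one sees that $\infl_R$ coincides (away from zeros of $P$) with the zero locus of the real polynomial $\Im\bigl((PQ' - QP')\overline{P}^2\bigr)$, and by the convention noted in the text, the zeros of $P$ are also included in $\infl_R$ as singular points. Hence $\infl_R$ is closed in $\bC$. A closed bounded subset of $\bC$ is compact, completing the argument. There is no real obstacle here: the proof is essentially a one-line observation about the residue of $R'$ at infinity, and the only minor point to verify is that $\infl_R$ is indeed a closed set, which follows immediately from its description as the vanishing locus of a polynomial together with the poles of $R$.
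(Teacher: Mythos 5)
Your proof is correct and follows essentially the same route as the paper: both arguments rest on the observation that $R'(z)\to\lambda$ as $z\to\infty$ (since $\deg Q=\deg P+1$), so $\Im R'$ is bounded away from zero outside a large disk and $\infl_R=\{\Im R'=0\}$ must be bounded. Your additional remark that $\infl_R$ is closed (being the zero locus of the real polynomial $\Im\bigl((PQ'-QP')\overline{P}^2\bigr)$, which already vanishes at the zeros of $P$) is a small but legitimate completion of the step the paper leaves implicit when passing from boundedness to compactness.
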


\begin{proof}
Since
\[
\infl_R = \{ z\in \setC : \Im R^\prime(z)=0 \}
= \{ z\in \setC : \Im \lambda  +  O\left(\frac{1}{z}\right)=0 \},
\]
we see that $\Im R^\prime(z) \approx \Im \lambda \neq 0$ whenever $|z|$ is large.
Hence, $\infl_R$ is bounded.
\end{proof}

\begin{example}\label{ex:5}
 In \cref{fig:bluecurve}, we show  $\infl_R$ for the vector field $R(z) {\partial_z}$ with 
 \[
 \RR(z) \coloneqq  (1+i)z+\frac{2}{z}+\frac{1}{z-i}+\frac{4}{z-(1+i)}.
 \]
 \begin{figure}[H]
 \begin{center}
 \includegraphics[width=0.5\textwidth]{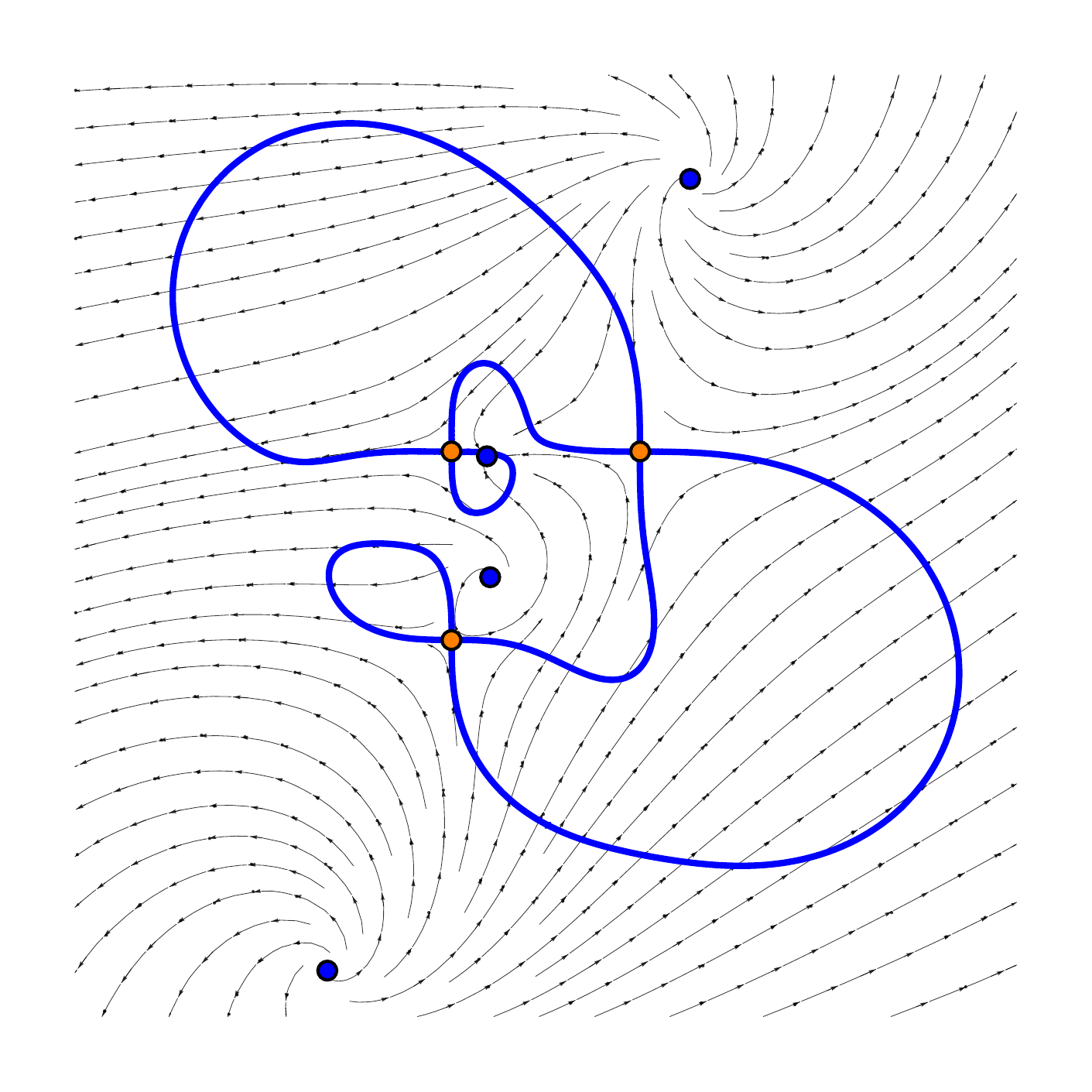}
 \end{center}\caption{ Example of a curve of inflections. }\label{fig:bluecurve}
\end{figure}
Here 
\[
 \flex(x,y) = -1 -\frac{4 x y}{\left(x^2+y^2\right)^2}-\frac{x (2
   y-2)}{\left(x^2+y^2-2 y+1\right)^2}-\frac{(4 x-4)
   (2 y-2)}{\left(x^2-2 x+y^2-2 y+2\right)^2}.
\]
\end{example}

\begin{proposition} 
For generic $R(z)$, the direction of the curve of inflections at a pole $z_0$ of $R$ 
coincides with the direction of separatrices emanating from $z_0$. 
\end{proposition}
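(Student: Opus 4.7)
The plan is to compute the tangent directions of $\infl_R$ at the pole $z_0$ directly from the leading term of the Laurent expansion of $R$, and to match them with the separatrix directions supplied by Proposition~\ref{prop:dirSeptrix}. Let $z_0$ be a pole of $R$ of order $k\ge 1$ and write
\[
 R(z) = \frac{c}{(z-z_0)^k} + (\text{higher order}), \qquad c\neq 0,
\]
so that $R'(z) = -kc(z-z_0)^{-(k+1)} + (\text{higher order})$. Substituting $z-z_0 = re^{i\theta}$ and invoking \cref{lm:infl}, the defining equation $\Im R'(z) = 0$ of $\infl_R$ near $z_0$ becomes, after multiplication by $-r^{k+1}/k$,
\[
 \Im\bigl(c\, e^{-i(k+1)\theta}\bigr) + O(r) = 0.
\]

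Next I would read off the asymptotic tangent directions by solving the leading equation $\Im(c\, e^{-i(k+1)\theta}) = 0$, i.e., $(k+1)\theta \equiv \arg c \pmod{\pi}$. This yields $2(k+1)$ angles
\[
 \theta_j = \frac{\arg(c) + j\pi}{k+1}, \qquad j = 0, 1, \dots, 2k+1.
\]
At each such $\theta_j$ the $\theta$-derivative of the leading term equals $\mp(k+1)|c| \neq 0$, so the implicit function theorem (applied to the equation in $(r,\theta)$ with $r\to 0^+$) produces $2(k+1)$ real-analytic branches of $\{\Im R' = 0\}$ emanating from $z_0$, one tangent to each $\theta_j$.

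To conclude I would invoke Proposition~\ref{prop:dirSeptrix}: the $z_0$-starting separatrices of $R(z)\partial_z$ emerge along directions $w$ with $w^{k+1} = c$, and the $z_0$-ending separatrices along $w^{k+1} = -c$. Together these directions form the set $\{w\in\mathbb{C}^{*} : w^{k+1}\in\mathbb{R}^{*}\cdot c\}$, i.e., precisely the directions $\theta_j$ above. This matches the tangent directions of $\infl_R$ at $z_0$ with the separatrix directions at $z_0$.

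The main delicate point—and where the word ``generic'' enters—is that the $2(k+1)$ branches produced in the second step must remain pairwise distinct as $r$ leaves zero, so that they deliver $2(k+1)$ geometric tangent half-lines rather than fewer through collisions induced by the higher-order terms. Since the leading angular polynomial has simple roots in $\theta$ and the perturbation is of order $r$, distinctness is automatic for small $r$ whenever the subdominant Laurent coefficients of $R$ at $z_0$ do not conspire degenerately, which is a mild genericity condition on $R$.
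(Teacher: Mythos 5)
Your proof is correct and follows essentially the same route as the paper: expand $R'$ at the pole via the leading Laurent coefficient, reduce $\Im R'=0$ in polar coordinates to $\Im\bigl(c\,e^{-i(k+1)\theta}\bigr)=0$ after multiplying by $r^{k+1}$, and match the resulting angles with the starting and ending separatrix directions from \cref{prop:dirSeptrix}. Your additional implicit-function-theorem step establishing that the $2(k+1)$ branches genuinely exist and stay distinct is a small strengthening the paper omits, but it does not change the underlying argument.
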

\begin{proof}
Given an arbitrary rational $R(z)$, suppose that it has
a pole $z_0$ of order $k$. 
Without loss of generality, let us assume that $z_0=0$.
We can then write 
\[
R(z)=\frac{c}{z^k}+ f(z)
\]
where $f(z)$ is rational and $\lim_{z\to0}f(z)z^k=0$. 
Hence, the derivative of $R(z)$ in a punctured neighborhood of $0$ is
\[
R'(z)=\frac{-k c}{z^{k+1}}+f'(z)
\]
where
$\lim_{z\to 0}f'(z)z^{k+1}=0$. 
Now, expanding $c=a+bi$ we get  
\begin{align*}
\Im R'(z) &=\Im\left(\frac{-k c}{z^{k+1}}+f'(z)\right) \\
&=\frac{ka}{z^{k+1}}\sin((k+1)\arg z)-\frac{kb}{z^{k+1}}\cos((k+1)\arg z)+\Im f'(z).
\end{align*}
Setting this expression equal to 0 and multiplying both sides by $z^{k+1}$ we get 
\[
ka\sin((k+1)\arg z)-kb\cos((k+1)\arg z)+\Im f(z)z^{k+1}=0.
\]
Letting $|z|\to 0$ we obtain
\[
\lim_{z\to 0}(a\sin((k+1)\arg z)-b\cos((k+1)\arg z)=0.
\]
Comparing the above with \cref{prop:dirSeptrix}, we find that
this equation is solved precisely when $\lim_{z\to0}\arg z$
is equal to the argument of the separatrices emanating from $0$.
\end{proof}

\end{appendices}

\bibliographystyle{alphaurl}
\bibliography{firstOrderBib}

\end{document}